\documentclass{article}
\usepackage[utf8]{inputenc}
\usepackage{amsfonts}
\usepackage{amsmath}
\usepackage{graphicx}
\usepackage{amssymb}
\usepackage{graphicx}
\usepackage{amsthm}
\usepackage{t1enc}
\usepackage{subfig}
\usepackage{MnSymbol}
\usepackage{mathrsfs}
\usepackage{bbm}
\usepackage{mathtools}
\usepackage[shortlabels]{enumitem}
\usepackage{tikz}
\usetikzlibrary{matrix}
\usepackage{tikz-cd}
\usepackage[cal=boondoxo]{mathalfa}
\usetikzlibrary{angles,quotes}

\usepackage{url}

\usepackage[a4paper ]{geometry}

\newtheorem{theorem}{Theorem}[section]
\newtheorem{lemma}[theorem]{Lemma}
\newtheorem{proposition}[theorem]{Proposition}
\newtheorem{corollary}[theorem]{Corollary}
\newtheorem{definition}[theorem]{Definition}

\newtheorem{conjecture}[theorem]{Conjecture}
\newtheorem{remark}[theorem]{Remark}
\newtheorem{statement}[theorem]{Statement}

\def\ord{\textup{\textrm{ord}}}
\def\N{\textup{\textrm{N}}}

\def\sign{\textup{\textrm{sign}}}

\def\char{\textup{\textrm{char}}}

\def\Gal{\textup{\textrm{Gal}}}
\def\Hom{\textup{\textrm{Hom}}}
\def\Log{\textup{\textrm{Log}}}

\def\Xint#1{\mathchoice
{\XXint\displaystyle\textstyle{#1}}{\XXint\textstyle\scriptstyle{#1}}%
{\XXint\scriptstyle\scriptscriptstyle{#1}}{\XXint\scriptscriptstyle\scriptscriptstyle{#1}}\!\int}

\def\XXint#1#2#3{{\setbox0=\hbox{$#1{#2#3}{\int}$}\vcenter{\hbox{$#2#3$}}\kern-.5\wd0}}

\def\multint{\Xint\times}

\usepackage[colorlinks,
pdfpagelabels,
pdfstartview = FitH,
bookmarksopen = true,
bookmarksnumbered = true,
linkcolor = black,
plainpages = false,
hypertexnames = false,
citecolor = black, urlcolor=black]{hyperref}

\tikzset{mynode/.style={font=\footnotesize,inner sep=0pt,text=black}
}

\title{Comparing Two Formulas for the Gross-Stark Units}
\author{Matthew H. Honnor }
\date{\today}

\begin{document}

\maketitle

\begin{abstract}
Let $F$ be a totally real number field. Dasgupta conjectured an explicit $p$-adic analytic formula for the Gross-Stark units of $F$. In a later paper, Dasgupta-Spie\ss \ conjectured a cohomological formula for the principal minors and the characteristic polynomial of the Gross regulator matrix associated to a totally odd character of $F$. Dasgupta-Spie\ss \ conjectured that these conjectural formulas coincide for the diagonal entries of Gross regulator matrix. In this paper, we prove this conjecture when $F$ is a cubic field.
\end{abstract}

\tableofcontents

\section{Introduction}

Let $F$ be a number field of degree $n$ with ring of integers $\mathcal{O}=\mathcal{O}_F$. Let $\mathfrak{p}$ be a prime of $F$, lying above $p \in \mathbb{Q}$, and let $H$ be a finite abelian extension of $F$ such that $\mathfrak{p}$ splits completely in $H$. In 1981, Tate proposed the Brumer-Stark conjecture (Conjecture $5.4$, \cite{MR656067}), stating the existence of $\mathfrak{p}$-unit $u$ in $H$, the Gross-Stark unit. This unit has $\mathfrak{P}$ order equal to the value of a partial zeta function at 0 for a prime $\mathfrak{P}$ above $\mathfrak{p}$. Since the unit $u$ is only non-trivial when $F$ is totally real and $H$ is totally complex containing a complex multiplication (CM) subfield, we assume this for the remainder of the paper. Recent work of Dasgupta-Kakde in \cite{Brumerstark} has shown that the Brumer-Stark conjecture holds away from $2$.

We begin by studying a conjecture of Dasgupta-Spie\ss \ presented in \cite{MR3968788}. In (Conjecture 3.1, \cite{MR3968788}) Dasgupta-Spie\ss \ conjecture a cohomological formula for the principle minors and the characteristic polynomial of the Gross regulator matrix associated to a totally odd character of the totally real field $F$. The diagonal terms of the Gross regulator matrix are defined via the Gross-Stark units. Let $\chi$ be our chosen totally odd character. Then the diagonal terms are expressed via the ratio of the $p$-adic logarithm and the $\mathfrak{p}$-order of the $\chi^{-1}$ component of the Gross-Stark unit. By considering (Conjecture 3.1, \cite{MR3968788}) for the $1 \time 1$ principle minors, Dasgupa-Spie\ss \ conjecture a formula for this value.

In \cite{MR2420508}, Dasgupta constructed explicitly, in terms of the values of Shintani zeta functions at $s=0$, an element $u_D \in F_\mathfrak{p}^\ast$ (Definition 3.18, \cite{MR2420508}). In (Conjecture 3.21, \cite{MR2420508}), Dasgupta conjectured that this unit is equal to the image of the Gross-Stark unit inside $F_\mathfrak{p}^\ast$. This formula has recently been shown to be correct up to a root of unity by Dasgupta-Kakde in \cite{intgrossstark}. Since the diagonal terms of the Gross regulator matrix are defined via the Gross-Stark units, one can use Dasgupta's formula to conjecture a second formula for their values.

The main result of this paper (Theorem \ref{thmforneq3}) is that, when $F$ is a cubic field ($n=3$), Dasgupta's conjecture agrees with the conjecture of Dasgupta-Spie\ss . This result was conjectured by Dasgupata-Spie\ss \ for $F$ of any degree (Remark 4.5, \cite{MR3968788}) and they proved the case when $F$ is a quadratic field ($n=2$) (Theorem 4.4, \cite{MR3968788}). 

We note that our main result (Theorem \ref{thmforneq3}) has been attempted previously by Tsosie in \cite{tsosie2018compatibility}. However, as we show in the appendix, we find a counterexample to the statement of a lemma necessary for his proof (Lemma 2.1.3, \cite{tsosie2018compatibility}). The statement concerns having a nice translation property of Shintani sets, for more details see Statement \ref{STconj} in the appendix. The main contribution of this paper is the methods we have developed to recover some control of the translation properties of Shintani sets. This is done in $\S 6.2$.

Our main result (Theorem \ref{thmforneq3}) combined with (Theorem 1.6, \cite{intgrossstark}) of Dasgupta-Kakde allows us to make progress on another conjecture of Dasgupta-Spie\ss, in particular, (Conjecture 3.1, \cite{MR3968788}). We are able to show that, if $F$ is a cubic field ($n=3$), then (Conjecture 3.1, \cite{MR3968788}) holds for the $1 \times 1$ principle minors of the Gross regulator matrix. I.e., Dasgupta-Spie\ss's cohomological formula for diagonal entries of the Gross-Regulator matrix is correct.

\subsection{Summary of proof}

In this summary we will assume that $F$ is a cubic field ($n=3$). The strategy for the proof of our main result builds on the ideas of Tsosie in \cite{tsosie2018compatibility}. A key element of each of the constructions are Shintani sets. We define these in \S 3, but for this summary it is enough to think of them as subsets of $\mathbb{R}_+^3$. Note that we are able to embed $F$ into $\mathbb{R}_+^3$ via its real embeddings. Let $\mathfrak{f}$ be the conductor of $H/F$ and $E_+(\mathfrak{f})$ the group of totally positive units of $F$ which are congruent to $1$ modulo $\mathfrak{f}$. Since we have assumed $n=3$ we have that $E_+(\mathfrak{f})$ is free of rank 2. 

Each of the formulas require Shintani sets $\mathcal{D}$ which are fundamental domains for the action of $E_+(\mathfrak{f})$. We refer to such Shintani sets as Shintani domains. When trying to show the equality of the formulas it is possible to reduce to showing the equality of Shintani sets. The first problem we need to overcome is that we may be unable to choose generators of $E_+(\mathfrak{f})$ such that they satisfy a required sign property. This sign property is required to make the Shintani domains we use well defined. The central difficulty in proving our main theorem is that in general there is no bound on the translation of a Shintani domain by an element contained in it. Without such control over the translations we are unable to show the required equality of Shintani sets. In the appendix we give more details on this lack of control.

The first step in negotiating both these problems is that we work with a free finite index subgroup $V \subset E_+(\mathfrak{f})$ of rank 2, rather than with the full $E_+(\mathfrak{f})$. In particular, we show that if we can prove the formulas agree when using a Shintani set, say $\mathcal{D}_V$, that is a fundamental domain for the action of $V$ then the formulas agree with the original $\mathcal{D}$. We refer to such Shintani sets as Colmez domains.

Building on the work of Colmez in \cite{MR922806}, we choose a free subgroup and generators $\langle \varepsilon_1, \varepsilon_2 \rangle = V \subset E_+(\mathfrak{f})$ such that $\varepsilon_1, \varepsilon_2$ satisfy the required sign property. With further work and calculations, we show that it is possible to choose $\varepsilon_1, \varepsilon_2$ to satisfy some additional properties which will allow us to have control over the translates of $\mathcal{D}_V$. It is our methods to find conditions we can choose for $\varepsilon_1 , \varepsilon_2$ to gain control of the translates of $\mathcal{D}_V$ that is the novel idea of this paper.

We are then left to explicitly calculate the two formulas, when working with $V$. We will reduce the task of showing the equality of the formulas to showing the equality of a collection of Shintani sets. The translation properties that we show on $\mathcal{D}_V$ allow us to complete the proof.

\subsection{Acknowledgements}

The author would like to thank Mahesh Kakde for many stimulating discussions over the course of this research and for his valuable comments on earlier versions of this paper. He would also like to thank Samit Dasgupta for useful conversations, regarding in particular the counterexample given in the appendix.

The author wishes to acknowledge the financial support of the Engineering and Physical
Sciences Research Council [EP/R513064/1] and King's College London.

\section{The Gross-Stark units}

Let $R$ denote a finite set of places of $F$ such that $\mathfrak{p} \nin R$, $R$ contains the archimedean places and $R$ contains the places that are ramified in $H$. We write $R_\infty$ for the set of archimedean places of $F$ and let $S = R \cup \{ \mathfrak{p} \}$. We also denote $G=\Gal(H/F)$. We fix this notation throughout the paper.

\begin{definition}
For $\sigma \in G$, we define the \textbf{partial zeta function}
\begin{equation}
    \zeta_{R}(\sigma, s)= \sum_{\substack{(\mathfrak{a},R)=1 \\ \sigma_\mathfrak{a}=\sigma}} \N\mathfrak{a}^{-s}.
    \label{pzf}
\end{equation}

Here the sum is over all integral ideals $\mathfrak{a}\subset \mathcal{O}$ that are relatively prime to the elements of $R$ and whose associated Frobenius element $\sigma_\mathfrak{a} \in G$ is equal to $\sigma$.
\end{definition}

Note that the series (\ref{pzf}) converges for $\text{Re}(s)>1$ and has meromorphic continuation to $\mathbb{C}$, regular outside $s=1$. The zeta functions associated to the sets of primes $R$ and $S$
are related by the formula
\[ \zeta_{S}(\sigma, s)= (1-\N\mathfrak{p}^{-s}) \zeta_{R}(\sigma,s). \]
If $K$ is a finite abelian extension of $F$ and $\sigma \in \Gal(K/F)$ we use the notation $\zeta_{R}(K/F, \sigma, s)$ for the partial zeta function defined as above but with the equality $\sigma_\mathfrak{a}=\sigma$ being viewed in $\Gal(K/F)$.

\begin{definition}
Define the group
\begin{equation*}
U_{\mathfrak{p}}=\{ u \in H^\ast : \ \mid u \mid_\mathfrak{P}=1 \ \text{if} \ \mathfrak{P} \ \text{does not divide} \ \mathfrak{p} \}. 
\end{equation*}
\label{updefn}
\end{definition}

Here $\mathfrak{P}$ ranges over all finite and archimedean places of $H$; in particular, each complex conjugation in $H$ acts as an inversion on $U_{\mathfrak{p}}$. We now introduce an auxiliary finite set $T$ of primes of $F$, disjoint from $S$. The partial zeta function associated to the sets $S$ and $T$ is defined by the group ring equation
\begin{equation}
    \sum_{\sigma \in G} \zeta_{S,T}(\sigma, s)[\sigma]= \prod_{\eta \in T}(1-[\sigma_\eta]\N\eta^{1-s}) \sum_{\sigma \in G} \zeta_{S}(\sigma, s)[\sigma].
    \label{pzfT}
\end{equation}

We also assume that the set $T$ contains at least two primes of different residue characteristic or at least one prime $\eta$ with absolute ramification degree at most $l-2$ where $\eta$ lies above $l$. With this in place, the values $\zeta_{S,T}(K/F, \sigma, 0)$ are rational integers for any finite abelian extension $K/F$ unramified outside $S$ and any $\sigma \in \Gal(K/F)$. This was shown by Deligne-Ribet \cite{MR579702} and Cassou-Nogu\'es \cite{MR524276}. The following conjecture was first stated by Tate and called the Brumer-Stark conjecture (Conjecture $5.4$, \cite{MR656067}). We present the formulation given by Gross.

\begin{conjecture}[Conjecture $7.4$, \cite{MR931448}]
Let $\mathfrak{P}$ be a prime in $H$ above $\mathfrak{p}$. There exists an element $u_T \in U_{\mathfrak{p}}$ such that $u_T \equiv 1 \pmod{T}$, and for all $\sigma \in G$, we have 
\begin{equation*}
    \ord_\mathfrak{P}(u_T^\sigma)= \zeta_{R,T}(H/F, \sigma,0).
\end{equation*}
\label{Conj2.5}
\end{conjecture}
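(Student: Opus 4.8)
The statement above is the Brumer–Stark conjecture in Gross's formulation; it is a deep theorem, not something to be proved by a short argument, and in the body of this paper it is invoked only as a known input, since Dasgupta–Kakde have established it away from $2$ (see \cite{Brumerstark}). A proof proposal therefore amounts to outlining their strategy.

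The plan is to work one rational prime $\ell$ at a time and to recast the existence of $u_T$ as a statement about a Fitting ideal. The role of the auxiliary set $T$ is what makes this clean: the $T$-smoothing forces $\zeta_{R,T}(H/F,\sigma,0)\in\mathbb{Z}$ and kills the ambiguity by roots of unity, so that producing $u_T \in U_{\mathfrak{p}}$ with $u_T\equiv 1 \pmod T$ and the prescribed $\mathfrak{P}$-orders becomes equivalent to showing that the Stickelberger-type element $\Theta = \sum_{\sigma\in G}\zeta_{R,T}(H/F,\sigma,0)[\sigma^{-1}]$ generates the $\mathbb{Z}_\ell[G]$-Fitting ideal of a suitable Ritter–Weiss module $\nabla$ built from the $\mathfrak{p}$-units and the ray class group of $H$ --- a ``strong Brumer–Stark'' reformulation in the spirit of Gross, Burns, Greither–Popescu and Dasgupta–Spie\ss. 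Next I would pass to the cyclotomic $\mathbb{Z}_\ell$-tower $H_\infty/H$, replace $\nabla$ by its Iwasawa-theoretic analogue $\nabla_\infty$ over $\mathbb{Z}_\ell[[\Gal(H_\infty/F)]]$, and use a transpose/duality argument (Greither–Popescu style) to show that over the tower this module is quadratically presented, so its Fitting ideal is principal with a canonical generator; the desired statement then reduces to identifying that generator with the $\ell$-adic $L$-function, i.e. to an Iwasawa Main Conjecture, after which a descent back to finite level produces $u_T$ and recovers the displayed equality of orders.

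The main obstacle --- and the genuinely new content --- is that refined Main Conjecture. The classical Main Conjecture for totally real fields (Wiles) pins down the characteristic ideal of the relevant class group but not the precise Fitting ideal; to obtain the latter one runs Ribet's method, constructing a Hida family of Hilbert modular forms congruent modulo the Eisenstein ideal to an Eisenstein series whose constant terms encode the $L$-values in question, and extracting from the attached Galois representation a cohomology class that forces the required containment of ideals. Controlling the level structures, the behaviour at the cusps, and the congruence modulus in this construction is precisely where $2$ must be inverted, which is why the conclusion is obtained only away from $2$.
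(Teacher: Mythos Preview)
Your assessment is correct: the paper does not prove this statement at all. It is labeled as a \emph{Conjecture} (Conjecture~\ref{Conj2.5}) and the paper simply records, immediately afterwards, that ``This conjecture has been recently proved, away from $2$, by Dasgupta--Kakde in \cite{Brumerstark}.'' There is no proof in the paper to compare against; the statement is invoked purely as background for the definition of the Gross--Stark unit $u_T$.

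Your outline of the Dasgupta--Kakde strategy is a reasonable high-level sketch of their approach --- the reformulation via Fitting ideals of Ritter--Weiss modules, passage to the Iwasawa tower, and the Ribet-method construction of congruences between Eisenstein series and cusp forms are indeed the main ingredients --- but none of this appears in the present paper, and it would be out of place here. For the purposes of this paper, the correct ``proof'' is simply the citation, which is exactly what the paper does.
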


Our assumption on $T$ implies that there are no nontrivial roots of unity in $H$ that are congruent to $1$ modulo $T$. Thus, the $\mathfrak{p}$-unit, if it exists, is unique. Note also that our $u_T$ is actually the inverse of the $u$ in (Conjecture $7.4$, \cite{MR931448}).

The conjectural element $u_T \in U_{\mathfrak{p}}$ satisfying Conjecture \ref{Conj2.5} is called the Gross-Stark unit for the data $(S,T,H,\mathfrak{P})$. This conjecture has been recently proved, away from $2$, by Dasgupta-Kakde in \cite{Brumerstark}.

\section{Shintani zeta functions}

Shintani zeta functions are a crucial ingredient in each of the constructions we are studying. The first step in defining these modified zeta functions considers the work of Shintani, initially developed in his paper \cite{MR0427231}, and the definitions of Shintani cones and domains. We establish the necessary notation here.

For each $v \in R_\infty$ we write $\sigma_v : F \rightarrow \mathbb{R}$ and fix the order of these embeddings. We can then embed $F$ into $\mathbb{R}^n$ by $x \mapsto (\sigma_v(x))_{v\in R_\infty}$. We note that $F^\ast$ acts on $\mathbb{R}^n$ with $x \in F$ acting by multiplication by $\sigma_v(x)$ on the $v$-component of any vector in $\mathbb{R}^n$. For linearly independent $v_1, \dots ,v_r \in \mathbb{R}_+^n$, define the simplicial cone 
\[ C(v_1, \dots , v_r)= \left\{ \sum_{i=1}^r c_i v_i \in \mathbb{R}_+^n : c_i>0 \right\}. \]

\begin{definition}
A \textbf{Shintani cone} is a simplicial cone $C(v_1, \dots , v_r)$ generated by elements $v_i \in F \cap \mathbb{R}_+^n$. A \textbf{Shintani set} is a subset of $\mathbb{R}_+^n$ that can be written as a finite disjoint union of Shintani cones. 
\end{definition}

We now give the definition for \textbf{Shintani zeta functions}. Write $\mathfrak{f}$ for the conductor of the extension $H/F$. Let $\mathfrak{b}$ be a fractional ideal of $F$ relatively prime to $S$ and $\overline{T}= \{ q \in \mathbb{Z}, \ \text{prime} : \mathfrak{q} \mid q \ \text{for some} \ \mathfrak{q} \in T \}$. Let $z \in \mathfrak{b}^{-1}$ be such that $z \equiv 1 \pmod{\mathfrak{f}}$, and let ${D}$ be a Shintani set. For each compact open $U \subseteq \mathcal{O}_\mathfrak{p}$, define, for $\text{Re}(s)>1$,
\begin{equation*}
    \zeta_{R}(\mathfrak{b}, {D}, U,s)=\N \mathfrak{b}^{-s} \sum_{\substack{\alpha \in F \cap {D}, \  \alpha \in U \\ (\alpha, R)=1, \alpha \in \mathfrak{b}^{-1} \\ \alpha \equiv 1 \pmod{\mathfrak{f}} }} \N \alpha^{-s} .
    \label{eqn20}
\end{equation*}
We define $\zeta_{R,T}(\mathfrak{b}, {D},U,s)$ in analogy with (\ref{pzfT}) i.e., by the group ring equation
\begin{equation}
    \sum_{\sigma \in G} \zeta_{S,T}(\mathfrak{b}, D, U, s)[\sigma]= \prod_{\eta \in T}(1-[\sigma_\eta]\N\eta^{1-s}) \sum_{\sigma \in G} \zeta_{S}(\mathfrak{b}, D, U, s)[\sigma].
    \label{shintanizetafn}
\end{equation}
It follows from Shintani's work in \cite{MR0427231} that the function $\zeta_{R,T}(\mathfrak{b}, {D},U,s)$ has a meromorphic continuation to $\mathbb{C}$. We now want to define conditions on the set of primes $T$ and the Shintani set $D$ to allow our Shintani zeta functions to be integral at $0$.

\begin{definition}
A prime ideal $\eta$ of $F$ is called \textbf{good} for a Shintani cone $C$ if
\begin{itemize}
    \item $ \N \eta$ is a rational prime $l$; and 
    \item the cone $C$ may be written $C=C(v_1, \dots , v_r)$ with $v_i \in \mathcal{O}$ and $v_i \nin \eta$.
\end{itemize}
We also say that $\eta$ is \textbf{good} for a Shintani set $D$ if $D$ can be written as a finite disjoint union of Shintani cones for which $\eta$ is good.
\end{definition}



\begin{definition}
The set $T$ is \textbf{good} for a Shintani set ${D}$ if ${D}$ can be written as a finite disjoint union of Shintani cones $D=\cup C_i$ so that for each cone $C_i$, there are at least two primes in $T$ that are good for $C_i$ (necessarily of different residue characteristic by our earlier assumption) or one prime $\eta \in T$ that is good for $C_i$ such that $\N\eta \geq n+2$.
\end{definition}

\begin{remark}
Given any Shintani set $D$, it is possible to choose a set of primes $T$ such that $T$ is good for $D$. In fact, all but a finite number of prime ideals will be good for a given Shintani set.
\end{remark}

We can now note the required property to allow our Shintani zeta functions to be integral at zero. This follows from the following proposition of Dasgupta.

\begin{proposition}[Proposition 3.12, \cite{MR2420508}]
If the set of primes $T$ contains a prime $\eta$ that is good for a Shintani cone $C$ and $\N \eta =l$, then
\[ \zeta_{R,T}(\mathfrak{b}, C,U,0) \in \mathbb{Z}[l/l]. \]
Furthermore, the denominator of $\zeta_{R,T}(\mathfrak{b}, C,U,0)$ is at most $l^{n/(l-1)}$.
\label{prop3.12}
\end{proposition}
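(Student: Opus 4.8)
The plan is to adapt the method of Cassou-Nogu\'es --- Shintani's explicit evaluation of cone zeta functions at $s=0$, combined with $l$-adic (rather than $p$-adic) measures --- applied cone by cone. First I would reduce to the case $T=\{\eta\}$ with $\N\eta=l$: any further prime $\eta'\in T$ contributes to (\ref{shintanizetafn}) the Euler factor $(1-[\sigma_{\eta'}]\N(\eta')^{1-s})$, which at $s=0$ is the group-ring element $1-\N\eta'\,[\sigma_{\eta'}]$ with \emph{integer} coefficients, so $\zeta_{R,T}(\mathfrak b,C,U,0)$ is an integral linear combination of values $\zeta_{R,\{\eta\}}(\mathfrak b',C,U,0)$ for ideals $\mathfrak b'$ obtained from $\mathfrak b$ by adjoining the other primes of $T$; since the goodness of $\eta$ for $C$ is a condition on $C$ alone, it passes to all these terms. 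For $T=\{\eta\}$, unwinding (\ref{shintanizetafn}) exhibits $\zeta_{R,\{\eta\}}(\mathfrak b,C,U,s)$ as $\zeta_R(\mathfrak b,C,U,s)$ minus $\N\eta$ times (up to the evident normalization) the Shintani sum over the index-$l$ sublattice $\eta\mathfrak b^{-1}\subset\mathfrak b^{-1}$; here the congruence modulo $\mathfrak f$ plays no role because $\mathfrak f$ is supported on $S$ while $\eta\notin S$, so $(\eta,\mathfrak f)=1$.

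Next I would feed in Shintani's closed formula. Writing $C=C(v_1,\dots,v_r)$ with $v_i\in\mathcal O\setminus\eta$ (possible because $\eta$ is good), the points $\alpha\in\mathfrak b^{-1}\cap C\cap U$ satisfying the congruence modulo $\mathfrak f$ may be written $\alpha=\sum_{i=1}^r (m_i+x_i)v_i$ with $\mathbf m\in\mathbb Z_{\ge 0}^{\,r}$ and $\mathbf x$ ranging over a finite set of rational offsets, so that $\N\alpha=\prod_{v\in R_\infty}\sigma_v\!\big(\sum_i (m_i+x_i)v_i\big)$ is a product of $n$ positive linear forms. Shintani's evaluation then expresses $\zeta_R(\mathfrak b,C,U,0)$ as an explicit finite $\mathbb Q$-linear combination of products of at most $n$ values of Bernoulli polynomials $B_k$ at rational arguments whose denominators divide one fixed integer. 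The structural fact driving the integrality is the Bernoulli distribution relation $\sum_{j=0}^{l-1}B_k\!\big(\tfrac{t+j}{l}\big)=l^{1-k}B_k(t)$, which governs how this expression transforms under the index-$l$ refinement; together with the goodness hypothesis $v_i\notin\eta$ (so that the relevant arguments run over all residues mod $l$ cleanly), it shows that $\zeta_R(\mathfrak b,C,U,s)-\N\eta^{1-s}\zeta_R(\eta\mathfrak b,C,U,s)$ at $s=0$ is the integral of a $\mathbb Z_q$-valued measure for every prime $q\ne l$ --- the $\eta$-smoothing converts the defective Bernoulli distribution into a genuine $q$-adic measure because $l\in\mathbb Z_q^\times$. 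Hence $\zeta_{R,\{\eta\}}(\mathfrak b,C,U,0)\in\bigcap_{q\ne l}\mathbb Z_q=\mathbb Z[1/l]$.

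It remains to prove the sharp bound $l^{\,n/(l-1)}$ on the $l$-part of the denominator, and this I expect to be the main obstacle. One argues directly from Shintani's formula: a von Staudt--Clausen / Kummer-type estimate bounds below the $l$-adic valuation of each Bernoulli-polynomial value that occurs, in a way that contributes at most $\tfrac1{l-1}$ to $-v_l$ of a product, and since each term of Shintani's formula involves at most $n$ such factors (one per real place of $F$) this yields $v_l$ of the denominator $\le n/(l-1)$. The delicate point is the interaction at $l$ between the weight-$k$ factor $l^{1-k}$ coming from the distribution relation and the von Staudt--Clausen denominator of $B_k$, while keeping track of the possibly lower-dimensional and non-unimodular shape of $C$ and of how the congruence and compact-open conditions distribute the fractional parts $x_i$. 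I would finally note that this bound is exactly calibrated for its two downstream uses: if $\N\eta\ge n+2$ then $n/(l-1)<1$ forces the denominator to equal $1$, and two good primes of different residue characteristic give $\mathbb Z[1/l]\cap\mathbb Z[1/l']=\mathbb Z$ --- so in either of the cases packaged into ``$T$ good for $D$'' the smoothed cone zeta value is an honest integer.
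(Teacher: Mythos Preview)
The paper does not prove this proposition at all: it is quoted verbatim as Proposition~3.12 of Dasgupta \cite{MR2420508} and used as a black box, with the corollary on integrality stated immediately after. So there is no ``paper's own proof'' to compare against here.

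That said, your sketch is pointed in the right direction and is essentially the Cassou-Nogu\`es/Shintani argument that Dasgupta carries out in \cite{MR2420508}. The reduction to a single smoothing prime is correct; Shintani's closed formula for $\zeta_R(\mathfrak b,C,U,0)$ as a finite sum of products of Bernoulli-polynomial values (one factor per real place) is the right engine; and the $\eta$-smoothing combined with the distribution relation for Bernoulli polynomials is exactly what kills denominators at primes $q\neq l$. The bound $l^{n/(l-1)}$ then comes, as you say, from von~Staudt--Clausen applied to at most $n$ Bernoulli factors.

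Two places where your write-up is thinner than a full proof would need: first, the passage from the group-ring smoothing in (\ref{shintanizetafn}) to an identity of the form $\zeta_R(\mathfrak b,C,U,0)-l\cdot\zeta_R(\mathfrak b\eta,C,U,0)$ requires checking that twisting $\mathfrak b$ by $\eta$ really corresponds, at the level of the cone sum, to restricting to the sublattice $\eta\mathfrak b^{-1}$ while keeping the same cone and the same $U$ (this is where the hypothesis $v_i\notin\eta$ is used to align the fractional offsets mod $l$). Second, Shintani's formula in dimension $r<n$ and for non-unimodular cones is more intricate than the one-line description you give, and tracking the exact power of $l$ through it is the genuinely technical part of Dasgupta's argument; you flag this as ``the main obstacle,'' which is honest, but a complete proof needs it spelled out. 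If you want to write this up properly, the cleanest route is to follow \S3 of \cite{MR2420508} directly rather than reinventing the bookkeeping.
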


As is noted by Dasgupta at the top of p.15 in \cite{MR2420508}, the corollary below follows easily from Proposition \ref{prop3.12}.

\begin{corollary}
If the set of primes $T$ is good for a Shintani set $D$, then
\[ \zeta_{R,T}(\mathfrak{b}, D,U,0) \in \mathbb{Z}. \]
\end{corollary}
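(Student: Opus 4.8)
The plan is to reduce the statement to Proposition \ref{prop3.12}, applied one Shintani cone at a time, and then to exploit the defining property of a good set $T$ to clear all denominators.

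\emph{Step 1 (reduction to cones).} Since $T$ is good for $D$, I fix a decomposition $D = \bigsqcup_i C_i$ into finitely many Shintani cones witnessing this: for each $i$, either $T$ contains two primes of distinct residue characteristic good for $C_i$, or $T$ contains a single prime $\eta$ good for $C_i$ with $\N\eta \geq n+2$. For $\mathrm{Re}(s)>1$ the series defining $\zeta_R(\mathfrak{b}, D, U, s)$ is a sum over $\alpha \in F \cap D$ subject to conditions that do not involve any chosen decomposition, so $\zeta_R(\mathfrak{b}, D, U, s) = \sum_i \zeta_R(\mathfrak{b}, C_i, U, s)$ identically there; the $T$-smoothing in (\ref{shintanizetafn}) is a fixed linear operation in $\mathbb{Z}[G]$ that does not involve $D$, so the same additivity holds for $\zeta_{R,T}$, and it persists at $s=0$ by meromorphic continuation. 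Hence $\zeta_{R,T}(\mathfrak{b}, D, U, 0) = \sum_i \zeta_{R,T}(\mathfrak{b}, C_i, U, 0)$, and it suffices to prove each summand lies in $\mathbb{Z}$.

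\emph{Step 2 (a single cone).} Fix $i$. In the two-prime case, let $\eta_1,\eta_2 \in T$ be good for $C_i$ with $\N\eta_j = l_j$ and $l_1 \neq l_2$; Proposition \ref{prop3.12} applied with $\eta_1$ gives $\zeta_{R,T}(\mathfrak{b}, C_i, U, 0) \in \mathbb{Z}[1/l_1]$ and applied with $\eta_2$ gives $\zeta_{R,T}(\mathfrak{b}, C_i, U, 0) \in \mathbb{Z}[1/l_2]$, so the value lies in $\mathbb{Z}[1/l_1]\cap\mathbb{Z}[1/l_2] = \mathbb{Z}$ because $l_1,l_2$ are distinct rational primes. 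In the one-prime case, let $\eta \in T$ be good for $C_i$ with $\N\eta = l \geq n+2$; by Proposition \ref{prop3.12} the value lies in $\mathbb{Z}[1/l]$, so its denominator is a power $l^k$, and the same proposition bounds that denominator by $l^{n/(l-1)}$, whence $k \leq n/(l-1)$. Since $l \geq n+2$ forces $l-1 > n$, we get $n/(l-1) < 1$, so $k=0$ and the value is integral. Summing over $i$ via Step 1 then gives the corollary.

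The only step requiring genuine care is the additivity in Step 1: one must check that the passage from the Dirichlet series on $\mathrm{Re}(s)>1$, through the group-ring relation (\ref{shintanizetafn}), and then to the special value at $s=0$ via meromorphic continuation all respect the decomposition $D = \bigsqcup_i C_i$. Steps 2 is then just the two elementary facts that $\mathbb{Z}[1/l_1]\cap\mathbb{Z}[1/l_2]=\mathbb{Z}$ for distinct primes and that a power of $l$ strictly smaller than $l$ must equal $1$.
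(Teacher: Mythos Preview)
Your proof is correct and is precisely the argument the paper has in mind: it states only that the corollary ``follows easily from Proposition \ref{prop3.12}'' (citing Dasgupta), and your Step 2 spells out exactly that easy deduction --- intersecting $\mathbb{Z}[1/l_1]$ and $\mathbb{Z}[1/l_2]$ in the two-prime case, and using $n/(l-1)<1$ when $l\geq n+2$ in the one-prime case. The additivity in Step 1 is routine and implicit in the paper's treatment.
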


We define a $\mathbb{Z}$-valued measure $\nu(\mathfrak{b}, {D})$ on $\mathcal{O}_\mathfrak{p}$ by
\begin{equation}
    \nu(\mathfrak{b}, {D},U) \coloneqq \zeta_{R,T}(\mathfrak{b}, {D},U,0)
    \label{eqn21}
\end{equation}
for $U \subseteq \mathcal{O}_\mathfrak{p}$ compact open.

We are mostly interested in a particular type of Shintani set, one which is a fundamental domain for the action of $E_+(\mathfrak{f})$.

\begin{definition}
We call a Shintani set ${D}$ a \textbf{Shintani domain} if ${D}$ is a fundamental domain for the action of $E_+(\mathfrak{f})$
on $\mathbb{R}_+^n$. That is, when
\[ \mathbb{R}_+^n=\bigcup_{\epsilon \in E_+(\mathfrak{f})} \epsilon {D} \quad \text{(disjoint union).} \]
\end{definition}

The existence of such domains follows the work of Shintani, in particular from (Proposition $4$, \cite{MR0427231}). We note here some simple equalities which follow from the definitions, more details are given in \S 3.3 of \cite{MR2420508}. We write $G_\mathfrak{f}$ for the narrow ray class group of conductor $\mathfrak{f}$. Let $e$ be the order of $\mathfrak{p}$ in $G_\mathfrak{f}$, and suppose that $\mathfrak{p}^e = (\pi)$ with $\pi \equiv 1 \pmod{\mathfrak{f}}$ and $\pi$ totally positive. Let $\mathcal{D}$ be a Shintani domain and write $\mathbb{O}= \mathcal{O}_\mathfrak{p}-\pi \mathcal{O}_\mathfrak{p} $. Then,
\[
    \nu(\mathfrak{b}, \mathcal{D},\mathbb{O}) = \zeta_{S,T}(H/F, \mathfrak{b}, 0) = 0, \quad \text{and} \quad
    \nu(\mathfrak{b}, \mathcal{D},\mathcal{O}_\mathfrak{p}) = \zeta_{R,T}(H/F, \mathfrak{b}, 0).
\]

We now give two technical definitions which are necessary in the definition of Dasgupta's explicit formula and recall a useful lemma which is used repeatedly in the proof of our later results. We will also generalise to working with $V \subset E_+(\mathfrak{f})$ rather than with just $E_+(\mathfrak{f})$.

\begin{definition}
Let $V \subset E_+(\mathfrak{f})$ be a finite index free subgroup of rank $n-1$. We call a Shintani set ${D}$ a \textbf{Colmez domain} for $V$ if ${D}$ is a fundamental domain for the action of $V$
on $\mathbb{R}_+^n$. That is, when
\[ \mathbb{R}_+^n=\bigcup_{\epsilon \in V} \epsilon {D} \quad \text{(disjoint union).} \]
\end{definition}

We note that in the definition of a Colmez domain we allow ourselves to work with $V= E_+(\mathfrak{f})$, thus the definition includes Shintani domains.

\begin{proposition}
Let $V \subset E_+(\mathfrak{f})$ be a finite index free subgroup of rank $n-1$. Let ${D}$ and ${D}^\prime$ be Colmez domains for $V$. We may write ${D}$ and ${D}^\prime$ as finite disjoint unions of the same number of simplicial cones 
\begin{equation}
    {D}= \bigcup_{i=1}^d C_i, \quad {D}^\prime = \bigcup_{i=1}^d C_i^\prime,
    \label{eqn9}
\end{equation}
with $C_i^\prime = \epsilon_i C_i$ for some $\epsilon_i \in V$, $i=1, \dots , d$.
\end{proposition}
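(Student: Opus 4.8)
The plan is to reduce the statement to a claim about two arbitrary Shintani domains for the \emph{same} group, and then to exploit the fact that both $D$ and $D'$ tile $\mathbb{R}_+^n$ under $V$. First I would decompose $D$ into Shintani cones, $D = \bigcup_{j} C_j$, and similarly $D' = \bigcup_k C_k'$; this is possible by the definition of a Shintani set. Since $D'$ is a fundamental domain for $V$, every point of $C_j$ lies in a unique translate $\varepsilon C_k'$ for some $\varepsilon \in V$; conversely, since $D$ is a fundamental domain, every point of $\varepsilon^{-1} C_k' \cap C_j$ is accounted for exactly once. The key geometric input is that the sets
\[
C_j \cap \varepsilon\, C_k', \qquad j,k,\ \varepsilon \in V,
\]
are themselves simplicial cones (being intersections of simplicial cones — here one uses that a finite intersection of simplicial cones in $\mathbb{R}_+^n$ is again a finite disjoint union of simplicial cones, after possibly subdividing), and that only finitely many $\varepsilon \in V$ contribute a nonempty intersection for fixed $j,k$, because $D$ and $D'$ are each contained in (and cover) a common region up to the $V$-action and the cones are finitely generated.

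Next I would form the common refinement: let $\{B_i\}_{i=1}^{d}$ be the collection of all nonempty cones of the form $C_j \cap \varepsilon C_k'$ as above. Because $D = \bigcup_j C_j = \bigcup_j \bigcup_{k,\varepsilon} (C_j \cap \varepsilon C_k')$ and the $\varepsilon C_k'$ tile $\mathbb{R}_+^n$ disjointly, we get $D = \bigcup_{i=1}^{d} B_i$ as a disjoint union of simplicial cones. Now run the symmetric argument with the roles of $D$ and $D'$ reversed: the cones $C_k' \cap \varepsilon^{-1} C_j = \varepsilon^{-1}(C_j \cap \varepsilon C_k')$ give a decomposition $D' = \bigcup_{i=1}^{d} B_i'$ with \emph{the same index set}, where $B_i'$ is the $V$-translate of $B_i$ that lands in $D'$. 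By construction $B_i' = \varepsilon_i B_i$ for the corresponding $\varepsilon_i \in V$, and the count $d$ matches on both sides because the pieces are in explicit bijection (each piece $C_j \cap \varepsilon C_k'$ produces exactly one $B_i$ in $D$ and exactly one $B_i'$ in $D'$). Relabelling, this is precisely the assertion \eqref{eqn9} with $C_i = B_i$, $C_i' = B_i'$, $C_i' = \varepsilon_i C_i$.

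The main obstacle is the geometric bookkeeping in the first paragraph: one must show that intersecting two Shintani sets which are fundamental domains produces only finitely many nonempty pieces $C_j \cap \varepsilon C_k'$ over all $\varepsilon \in V$, and that each such piece is a Shintani set (finite disjoint union of simplicial cones with generators in $F \cap \mathbb{R}_+^n$). Finiteness follows because a fundamental domain for $V$ acting on $\mathbb{R}_+^n$, being a finite union of finitely generated rational cones, meets only finitely many $V$-translates of any fixed finitely generated cone — one can see this by noting the generators of $C_k'$ lie in $D$ up to finitely many translates, or more robustly by a compactness/logarithm-map argument projecting to the unit-norm hypersurface modulo $V$. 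The fact that the intersection of simplicial cones is again a finite disjoint union of simplicial cones over $F$ is standard polyhedral geometry: the intersection is a rational polyhedral cone, hence triangulable into simplicial cones, and one can choose the new rays to be totally positive elements of $F$ by scaling. These facts are implicit in Shintani's and Colmez's work and in \S 3 of \cite{MR2420508}, so I would cite them rather than re-prove them, and the remainder of the argument is the disjointness-and-counting bookkeeping sketched above.
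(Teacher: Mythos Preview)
Your proposal is correct and essentially unpacks the argument that the paper defers to: the paper's own proof is a one-line reference to Proposition~3.15 of \cite{MR2420508} (the case $V = E_+(\mathfrak{f})$), observing that the same proof goes through verbatim for an arbitrary finite-index free subgroup $V$. The common-refinement argument you sketch---intersecting the cones of $D$ with the finitely many relevant $V$-translates of the cones of $D'$ and reading off the bijection---is exactly Dasgupta's argument there.
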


\begin{proof}
Proposition 3.15 of \cite{MR2420508} proves this result when $V=E_+(\mathfrak{f})$. The proof of this proposition is analogous.
\end{proof}

A decomposition as in (\ref{eqn9}) is called a \textbf{simultaneous decomposition} of the Colmez domains $({D}, {D}^\prime)$. 

\begin{definition}
Let $(D,D^\prime)$ be a pair of Colmez domains. A set $T$ is \textbf{good} for the pair $({D}, {D}^\prime)$ if there is a simultaneous decomposition as in (\ref{eqn9}) such that for each cone $C_i$, there are at least two primes in $T$ that are good for $C_i$, or there is one prime $\eta \in T$ that is good for $C_i$ such that $\N \eta \geq n+2$.
\end{definition}

\begin{definition}
Let $D$ be a Colmez domain. If $\beta \in F^\ast$ is totally positive, then $T$ is $\beta$-\textbf{good} for ${D}$ if $T$ is good for the pair $({D}, \beta^{-1} {D})$.
\end{definition}

The following lemma is used throughout the remainder of this paper.
\begin{lemma}[Lemma 3.20, \cite{MR2420508}]
 Let $D$ be a Shintani set and $U$ a compact open subset of $\mathcal{O}_\mathfrak{p}$. Let $\mathfrak{b}$ be a fractional ideal of $F$, and let $\beta \in F^\ast$ be totally positive so that $\beta \equiv 1 \pmod{\mathfrak{f}}$ and $\ord_\mathfrak{p}(\beta) \geq 0$. Suppose that $\mathfrak{b}$ and $\beta$ are relatively prime to $R$ and that $\mathfrak{b}$ is also relatively prime to  $\overline{T}$. Let $\mathfrak{q}= (\beta)\mathfrak{p}^{-\ord_\mathfrak{p}(\beta)}$. Then
 \[ \zeta_{R,T}(\mathfrak{bq}, {D},U,0) = \zeta_{R,T}(\mathfrak{b}, \beta {D},\beta U,0). \]
 \label{changeofvariable}
\end{lemma}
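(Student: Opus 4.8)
The plan is to reduce the identity to a direct comparison of the defining Dirichlet series, unwinding both sides through the group-ring definition (\ref{shintanizetafn}) to the level of the basic series $\zeta_R(\mathfrak{c}, D', U', s)$, and then to exhibit an explicit bijection between the two summation sets that respects all the congruence and coprimality conditions. First I would observe that, by the group-ring equation (\ref{shintanizetafn}) defining $\zeta_{R,T}$ in terms of the $\zeta_R(\cdot)$ for all $\sigma \in G$, and since the Euler factor $\prod_{\eta \in T}(1-[\sigma_\eta]\N\eta^{1-s})$ does not depend on $\mathfrak{b}$, $D$ or $U$, it suffices to prove the analogous identity without $T$, namely
\[
\zeta_R(\mathfrak{bq}, D, U, 0) = \zeta_R(\mathfrak{b}, \beta D, \beta U, 0),
\]
and in fact the identity of the full meromorphic functions for $\mathrm{Re}(s) > 1$, which then specializes at $s=0$. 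Here one must be slightly careful: the group-ring identity lives over $\mathbb{Z}[G]$ and the Frobenius class of the relevant ideal is being tracked, so I would record that multiplication by $\beta$ (with $\beta \equiv 1 \bmod \mathfrak{f}$, totally positive) does not change the relevant ray class, hence does not change which $\sigma$-component a given term contributes to; this is what makes the reduction to the $T$-free statement legitimate componentwise.

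Next I would set up the bijection. On the left we sum over $\alpha \in F \cap D$ with $\alpha \in U$, $\alpha \in (\mathfrak{bq})^{-1}$, $(\alpha, R) = 1$, $\alpha \equiv 1 \pmod{\mathfrak{f}}$, weighted by $\N(\mathfrak{bq})^{-s}\N\alpha^{-s}$. The natural map is $\alpha \mapsto \beta\alpha$. I would check, in order: (i) $\beta\alpha \in \beta D$ and $\beta \alpha \in \beta U$ — immediate from $\alpha \in D$, $\alpha \in U$; (ii) $\beta\alpha \in \mathfrak{b}^{-1}$ — here one uses $\mathfrak{q} = (\beta)\mathfrak{p}^{-\ord_\mathfrak{p}(\beta)}$, so $(\mathfrak{bq})^{-1} = \mathfrak{b}^{-1}(\beta)^{-1}\mathfrak{p}^{\ord_\mathfrak{p}(\beta)}$, and multiplying by $\beta$ gives $\mathfrak{b}^{-1}\mathfrak{p}^{\ord_\mathfrak{p}(\beta)} \subseteq \mathfrak{b}^{-1}$ since $\ord_\mathfrak{p}(\beta)\ge 0$ and, as $\mathfrak b$ is coprime to $\mathfrak p$ (it is coprime to $R$ but one also needs $\mathfrak p \notin R$ — indeed $\mathfrak p \notin R$ is part of the standing hypotheses), the product is genuinely an integral multiple of $\mathfrak b^{-1}$; (iii) the congruence $\beta\alpha \equiv 1 \pmod{\mathfrak{f}}$ follows from $\beta \equiv 1$ and $\alpha \equiv 1 \pmod{\mathfrak{f}}$; (iv) $(\beta\alpha, R) = 1$ from $(\beta, R) = 1$ and $(\alpha, R) = 1$. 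Conversely, given $\alpha'$ in the right-hand sum, $\alpha' / \beta$ should land in the left-hand sum, and one checks the conditions symmetrically, using that $\beta$ is a totally positive element so that division by it preserves $\mathbb{R}_+^n$ and the cone structure. The weights match because $\N(\mathfrak{bq}) = \N\mathfrak{b}\cdot\N\mathfrak{q}$, $\N\mathfrak{q} = \N(\beta)/\N\mathfrak{p}^{\ord_\mathfrak{p}(\beta)} = |\N\alpha \text{ scaling}|$... more precisely $\N\mathfrak q = |N_{F/\mathbb Q}(\beta)|\cdot \N\mathfrak p^{-\ord_\mathfrak p(\beta)}$ while $\ord_\mathfrak p$ of $\beta$ being subtracted is exactly compensated, and $\N(\beta\alpha)^{-s} = \N(\beta)^{-s}\N\alpha^{-s}$, so $\N(\mathfrak{bq})^{-s}\N\alpha^{-s}$ and $\N\mathfrak{b}^{-s}\N(\beta\alpha)^{-s}$ differ exactly by the factor $\N\mathfrak{p}^{s\,\ord_\mathfrak p(\beta)}\cdot(\text{unit factors})$ — I would need to verify this bookkeeping carefully, matching $\N\mathfrak q$ against $\N(\beta)\N\mathfrak p^{-\ord_\mathfrak p(\beta)}$, and this is exactly where the definition $\mathfrak q = (\beta)\mathfrak p^{-\ord_\mathfrak p(\beta)}$ is engineered to make the two sides agree term by term.

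The one genuinely non-formal point — and the step I expect to be the main obstacle — is (ii), the integrality/coprimality verification, together with the matching of norms, because it is where the hypotheses $\ord_\mathfrak{p}(\beta) \ge 0$, $\mathfrak{b}$ coprime to $R$ and to $\overline{T}$, and $\beta$ coprime to $R$ are all used in concert, and one has to be careful that the $\mathfrak p$-part is handled correctly: the series condition asks $\alpha \in \mathfrak b^{-1}$ (no $\mathfrak p$ condition beyond coprimality of $\mathfrak b$), and the factor $\mathfrak q$ precisely absorbs the $\mathfrak p$-valuation of $\beta$ so that after multiplying by $\beta$ the $\mathfrak p$-part is non-negative and the rest integral. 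I would also make sure the coprimality-to-$\overline T$ hypothesis on $\mathfrak b$ is what guarantees $\mathfrak{bq}$ is a legitimate argument of $\zeta_{R,T}$ (i.e. still coprime to $\overline T$), since $\mathfrak q$ is built from $\beta$ which is coprime to $R$ but one needs $\mathfrak q$ coprime to $\overline T$ as well — this should follow since $\mathfrak q \mid (\beta)$ and $\beta$ coprime to $R$... actually one wants $\beta$ coprime to $\overline T$ too, which should be arranged or follows from $T$ disjoint from $S$ combined with the hypotheses; I would state this explicitly. Once all conditions are checked the identity of the $\mathrm{Re}(s)>1$ series is a formal consequence of the bijection, and meromorphic continuation (guaranteed by Shintani's work, as already recalled) gives the equality at $s=0$. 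The statement for $\zeta_{R,T}$ then follows by applying the group-ring relation (\ref{shintanizetafn}) to both sides, since the $T$-Euler factors on the two sides are literally the same.
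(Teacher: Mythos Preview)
The paper does not supply a proof of this lemma; it is quoted verbatim as Lemma~3.20 of \cite{MR2420508} and used as a black box. So there is no in-paper proof to compare against, and your bijection $\alpha \mapsto \beta\alpha$ is exactly the standard argument one would expect (and is essentially what Dasgupta does in the cited paper).

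That said, there is one point you should tighten. You claim you will prove ``the identity of the full meromorphic functions for $\mathrm{Re}(s)>1$, which then specializes at $s=0$'', but this is not true when $\ord_\mathfrak{p}(\beta)>0$. Tracking the weights through your bijection,
\[
\N(\mathfrak{bq})^{-s}\,\N\alpha^{-s}
\quad\text{versus}\quad
\N\mathfrak{b}^{-s}\,\N(\beta\alpha)^{-s},
\]
one finds $\N\mathfrak{q}=\N\beta\cdot\N\mathfrak{p}^{-\ord_\mathfrak{p}(\beta)}$, so the two series differ by the global factor $\N\mathfrak{p}^{\,s\,\ord_\mathfrak{p}(\beta)}$. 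You noticed this factor yourself in the weight discussion but then muddled it by saying the definition of $\mathfrak{q}$ is ``engineered to make the two sides agree term by term''; it is not --- what is engineered is that the discrepancy vanishes precisely at $s=0$, which is why the lemma is stated only at $s=0$. Once you state this cleanly, the proof is complete: the bijection gives
\[
\zeta_R(\mathfrak{bq},D,U,s)=\N\mathfrak{p}^{\,s\,\ord_\mathfrak{p}(\beta)}\,\zeta_R(\mathfrak{b},\beta D,\beta U,s),
\]
hence equality at $s=0$; the passage to $\zeta_{R,T}$ then follows from the group-ring relation, noting that the $T$-smoothing replaces $\mathfrak{b}$ by $\mathfrak{b}\cdot\prod_{\eta\in I}\eta$ for subsets $I\subseteq T$, and since the primes in $T$ are coprime to $\mathfrak{p}$ the same argument applies uniformly to each summand.
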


We end this section with a Lemma of Colmez which allows us to give an explicit Colmez domain. Let $\alpha$ be, up to a sign, one of the standard basis vectors of $\mathbb{R}^n$ then we note that its ray ($\alpha\mathbb{R}_+$) is preserved by the action of $\mathbb{R}_+^n$. We define $\overline{C}_\alpha(v_1, \dots , v_r)$ to be the union of the cone $C(v_1, \dots , v_r)$ with the boundary cones that are brought into the interior of the cone by a small perturbation by $\alpha$, i.e., the set whose characteristic function is given by 
\begin{equation}
    \mathbbm{1}_{\overline{C}_\alpha(v_1, \dots , v_r)}(x) = \lim_{h \rightarrow 0^+} \mathbbm{1}_{C(v_1, \dots, v_r)}(x+h\alpha). 
    \label{overlinec}
\end{equation}
Throughout this paper we will use the notation
\[ [x_1 \mid \dots \mid x_{n-1}] = (1, x_1, x_1 x_2, \dots , x_1 \dots x_{n-1}). \]
Let $x_1, \dots , x_{n-1} \in E_+(\mathfrak{f})$. We define the sign map $\delta : E_+(\mathfrak{f})^{n} \rightarrow \{ -1,0,1 \} $ such that, 
\begin{equation}
    \delta(x_1, \dots , x_n)=  \sign(\det  (\omega(x_1, \dots , x_n))),
    \label{epsilonnotation}
\end{equation}
where $\omega(x_1, \dots , x_n)$ denotes the $n \times n$ matrix whose columns are the images of the $x_i$ in $\mathbb{R}_+^n$. Note that we have the convention that $\sign(0)=0$.

\begin{lemma}[Lemma 2.2, \cite{MR922806}]
Let $\alpha$ be, up to a sign, one of the standard basis vectors of $\mathbb{R}^n$. Let $\varepsilon_1, \dots, \varepsilon_{n-1} \in E_+(\mathfrak{f})$ such that $V=\langle \varepsilon_1, \dots, \varepsilon_{n-1}  \rangle \subset E_+(\mathfrak{f}) $ is a free subgroup of rank $n-1$ and finite index. Suppose that for all $\tau \in S_{n-1}$ we have
\[ \delta([\varepsilon_{\tau(1)} \mid \dots \mid \varepsilon_{\tau(n-1)}])=\sign(\tau). \]
Then the Shintani set
\[ {D}= \bigcup_{\tau \in S_{n-1}}\overline{C}_{\alpha}([\varepsilon_{\tau(1)} \mid \dots \mid \varepsilon_{\tau(n-1)}]),  \]
is a Colmez domain for $V$.
\label{colmezlemma}
\end{lemma}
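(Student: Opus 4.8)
The plan is to verify that the set $D = \bigcup_{\tau \in S_{n-1}} \overline{C}_\alpha([\varepsilon_{\tau(1)} \mid \dots \mid \varepsilon_{\tau(n-1)}])$ tiles $\mathbb{R}_+^n$ under the $V$-action by reducing the problem to a purely combinatorial statement about simplicial cones. First I would note that the rays $\mathbb{R}_+ \varepsilon_1, \dots, \mathbb{R}_+ \varepsilon_{n-1}$, together with the ray $\mathbb{R}_+ \alpha$, are in ``general position'' under the sign hypothesis: the condition $\delta([\varepsilon_{\tau(1)} \mid \dots \mid \varepsilon_{\tau(n-1)}]) = \sign(\tau)$ says precisely that the $n$ points $1, \varepsilon_{\tau(1)}, \varepsilon_{\tau(1)}\varepsilon_{\tau(2)}, \dots, \varepsilon_{\tau(1)}\cdots\varepsilon_{\tau(n-1)}$ (the columns of $\omega([\varepsilon_{\tau(1)} \mid \dots \mid \varepsilon_{\tau(n-1)}])$) are affinely independent and their orientation is governed by $\sign(\tau)$, so the ``partial products'' $1, \varepsilon_1, \varepsilon_1\varepsilon_2, \dots$ form the vertices of a consistent fan structure. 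The key geometric picture is that $V$ acts on $\mathbb{R}_+^n$ (equivalently, on the $(n-1)$-dimensional simplex obtained by projectivising, or on the ``log'' picture $\mathbb{R}^{n-1}$ where $V$ acts by translations via the logarithm map); after taking logarithms the images $\Log \varepsilon_i$ span a lattice of full rank $n-1$, and the cones $C([\varepsilon_{\tau(1)} \mid \dots \mid \varepsilon_{\tau(n-1)}])$ become (up to the $\alpha$-perturbation) the simplices of the standard triangulation of the fundamental parallelepiped of that lattice into $(n-1)!$ simplices indexed by $S_{n-1}$.

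Next I would make this precise by passing to the hyperplane picture. Projecting $\mathbb{R}_+^n$ along the $\alpha$-ray onto a transversal hyperplane $\{x : \langle x, \alpha^\perp\rangle = 1\}$ (or working directly with $\Log$), the problem becomes: the half-open simplices $\sigma_\tau$ with vertices $0, w_{\tau(1)}, w_{\tau(1)}+w_{\tau(2)}, \dots, w_{\tau(1)}+\dots+w_{\tau(n-1)}$, where $w_i = \Log\varepsilon_i$, where the half-open-ness is dictated by the direction $\alpha$, together tile $\mathbb{R}^{n-1}$ under the translation lattice $\bigoplus \mathbb{Z} w_i$. This is the classical fact that the unit cube $[0,1)^{n-1}$ decomposes into $(n-1)!$ half-open simplices $\{0 \le x_{\tau(1)} \le x_{\tau(1)}+x_{\tau(2)} \le \dots \le 1\}$ (Freudenthal/Kuhn triangulation), transported by the linear map sending the standard basis to $w_1, \dots, w_{n-1}$; the sign hypothesis on $\delta$ is exactly what guarantees that this linear map is orientation-compatible on each piece so that the perturbation direction $\alpha$ consistently picks out a half-open fundamental domain rather than producing overlaps or gaps. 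The $\overline{C}_\alpha$ construction, via the limit definition $\mathbbm{1}_{\overline{C}_\alpha(v_1, \dots, v_r)}(x) = \lim_{h\to 0^+}\mathbbm{1}_{C(v_1, \dots, v_r)}(x + h\alpha)$, is precisely the device that records which boundary faces are included, and one checks that summing the characteristic functions of the $\sigma_\tau$ over $\tau$ and over lattice translates gives the constant function $1$.

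Concretely the steps are: (1) reduce to the $\Log$ picture, identifying $\mathbb{R}_+^n$ modulo the $\alpha$-ray with $\mathbb{R}^{n-1}$ and the $V$-action with translation by the lattice $\Lambda = \bigoplus_i \mathbb{Z}\Log\varepsilon_i$; (2) identify $D$ with the image of the half-open cube triangulation under the linear isomorphism $e_i \mapsto \Log\varepsilon_i$; (3) use the sign hypothesis to check that the $\alpha$-perturbation is consistent with the standard half-open structure on each simplex $\sigma_\tau$ — this is where the $\sign(\tau)$ appears, matching the orientation of the $\tau$-simplex; (4) invoke the elementary tiling of the cube by the $(n-1)!$ order simplices to conclude $\sum_{\lambda \in \Lambda}\sum_{\tau}\mathbbm{1}_{\sigma_\tau + \lambda} \equiv 1$, hence $D$ is a fundamental domain for $V$; (5) observe $D$ is a Shintani set since each $\overline{C}_\alpha(\cdots)$ is a finite disjoint union of Shintani cones (the generators $\varepsilon_i$ lie in $F \cap \mathbb{R}_+^n$). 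I expect the main obstacle to be step (3): bookkeeping the boundary faces correctly so that each face of $\mathbb{R}^{n-1}$ lying on a shared wall between two simplices (either two of the $\sigma_\tau$ or a $\sigma_\tau$ and a lattice translate) is counted exactly once — this is where the precise interplay between the perturbation direction $\alpha$, the ordering of partial products $[\varepsilon_{\tau(1)}\mid\dots\mid\varepsilon_{\tau(n-1)}]$, and the sign condition has to be tracked with care, and it is the part where Colmez's original argument does the real work.
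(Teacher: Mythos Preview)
The paper does not give its own proof of this lemma; it simply cites Colmez's original argument (Lemma~2.2 of \cite{MR922806}). So there is no paper-proof to compare against beyond the reference.

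Your sketch, however, contains a genuine gap at steps (1)--(2). You claim that under the $\Log$ map (or under projection along the $\alpha$-ray), the cones $C([\varepsilon_{\tau(1)}\mid\dots\mid\varepsilon_{\tau(n-1)}])$ become the simplices $\sigma_\tau$ of the Freudenthal triangulation of the fundamental parallelepiped of $\Lambda=\bigoplus_i\mathbb{Z}\,\Log\varepsilon_i$. This is false. While it is true that the \emph{extremal rays} $\mathbb{R}_+\cdot 1,\ \mathbb{R}_+\varepsilon_{\tau(1)},\ \dots,\ \mathbb{R}_+\varepsilon_{\tau(1)}\cdots\varepsilon_{\tau(n-1)}$ of the cone map (after $\Log$ and projection to the trace-zero hyperplane) to the points $0,\ \Log\varepsilon_{\tau(1)},\ \dots,\ \sum_j\Log\varepsilon_{\tau(j)}$, the \emph{faces} of the cone do not map to flat faces. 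A $2$-dimensional boundary cone $C(v,w)=\{cv+dw:c,d>0\}$ is a set of additive combinations, and its image under the multiplicative-to-additive map $\Log$ is a strictly convex curve, not the straight segment from $\Log v_\mathcal{H}$ to $\Log w_\mathcal{H}$. Indeed, this curvature is exactly the phenomenon the paper exploits and illustrates throughout \S6.2 (see Figure~\ref{fig:colmezdom} and the curves $\mathcal{C}_{i,l}$): the boundary of $\varphi_{(g_1,g_2)}(\overline{D}(g_1,g_2))$ is visibly not the parallelogram with vertices $0,(1,0),(0,1),(1,1)$. Consequently your step (4), which invokes the tiling of the flat cube by flat simplices, does not apply to the actual image of $D$, and the argument collapses.

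Colmez's original proof does not linearise via $\Log$. It works directly in $\mathbb{R}_+^n$ with a signed characteristic-function identity: one shows that for $x$ in general position the alternating sum $\sum_{\epsilon\in V}\sum_{\tau\in S_{n-1}}\mathbbm{1}_{\overline{C}_\alpha([\varepsilon_{\tau(1)}\mid\dots\mid\varepsilon_{\tau(n-1)}])}(\epsilon^{-1}x)$ equals $1$, using the sign hypothesis to ensure that the cones, together with their $V$-translates, assemble with compatible orientations and that the $\alpha$-perturbation selects boundary faces consistently. The combinatorics you describe in step (3) is morally the same bookkeeping, but it must be carried out for the genuine (curved-boundary) cones in $\mathbb{R}_+^n$, not for affine simplices in $\mathbb{R}^{n-1}$.
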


\section{The Gross regulator matrix}

We continue to let $F$ denote a totally real field of degree $n$, and let 
\[ \chi : \Gal(\overline{F}/ F) \rightarrow \overline{ \mathbb{Q}} \]
be a totally odd character. We fix embeddings $\overline{\mathbb{Q}} \subset \mathbb{C}$ and $\overline{\mathbb{Q}} \subset \mathbb{C}_p $, so $\chi$ may be viewed as taking values in $\mathbb{C}$ or $\mathbb{C}_p$. In this section, we let $H$ denote the fixed field of the kernel of $\chi $. Note that $H$ is a finite cyclic CM extension of $F$. As usual, we view $\chi$ also as a multiplicative map on the semigroup of integral fractional ideals of $F$ by defining $\chi(\mathfrak{q}) = \chi (\sigma_\mathfrak{q})$ if $\mathfrak{q}$ is unramified in $H$ and $\chi(\mathfrak{q})=0$ if $\mathfrak{q}$ is ramified in $H$. Let $S_p$ denote the set of places of $F$ lying above $p$ that split completely in $H$. For each prime $\mathfrak{p} \in S_p$, we define the group of $\mathfrak{p}$-units, $U_\mathfrak{p}$, as in Definition \ref{updefn}. We then write
\begin{align*}
    U_{\mathfrak{p}, \chi} &\coloneqq (U_\mathfrak{p} \otimes \overline{\mathbb{Q}})^{\chi^{-1}} \\
    &= \{ u \in U_\mathfrak{p} \otimes \overline{\mathbb{Q}} \mid \sigma(u) = u \otimes \chi^{-1}(\sigma) \ \text{for all} \ \sigma \in \Gal(H/F) \}.
\end{align*}
The Galois equivarient form of Dirichlet's unit theorem implies that
\[ \text{dim}_{\overline{\mathbb{Q}}} U_{\mathfrak{p}, \chi} = \begin{cases} 1 & \text{if} \ \mathfrak{p} \in S_p, \\ 0 & \text{otherwise}. \end{cases} \]
Let $u_{\mathfrak{p}, \chi}$ denote any generator (i.e., non-zero element) of $U_{\mathfrak{p},\chi}$. Consider the continuous homomorphisms
\begin{align}
    o_\mathfrak{p} \coloneqq \ord_\mathfrak{p} &: F_\mathfrak{p}^\ast \rightarrow \mathbb{Z} \label{ordmap} \\
    l_\mathfrak{p} \coloneqq \log_p \circ \text{Norm}_{F_\mathfrak{p}/\mathbb{Q}_p} &: F_\mathfrak{p}^\ast \rightarrow \mathbb{Z}_p . \label{logmap}
\end{align}
Suppose we choose for each $\mathfrak{p} \in S_p$, a prime $\mathfrak{P}_\mathfrak{p}$ of $H$ lying above $\mathfrak{p}$. Then, for $\mathfrak{p}, \mathfrak{q} \in S_p$, via
\[ U_\mathfrak{p} \subset H \subset H_{\mathfrak{P}_\mathfrak{q}} \cong F_\mathfrak{q}, \]
we can evaluate $o_\mathfrak{q}$ and $l_\mathfrak{q}$ on elements of $U_\mathfrak{p}$, and extend by linearity to maps
\[ o_\mathfrak{q}, l_\mathfrak{q}:U_{\mathfrak{p}, \chi} \rightarrow \mathbb{C}_p. \]
Define the ratio
\[ \mathcal{L}_\text{alg}(\chi)_{\mathfrak{p}, \mathfrak{q}} = -\frac{l_\mathfrak{q}(u_{\mathfrak{p}, \chi})}{o_\mathfrak{q}(u_{\mathfrak{p}, \chi})}, \]
which is clearly independent of the choice of $u_{\mathfrak{p}, \chi} \in U_{\mathfrak{p}, \chi}$. \textbf{Gross's regulator}, $\mathcal{R}_p(\chi)$, is the determinant of the $\# S_p \times \# S_p $ matrix whose entries are given by these values:
\[ \mathcal{R}_p(\chi) \coloneqq \det (\mathcal{M}_p(\chi)), \quad \text{where} \quad \mathcal{M}_p(\chi) \coloneqq (\mathcal{L}_{\text{alg}}(\chi)_{\mathfrak{p}, \mathfrak{q}})_{\mathfrak{p}, \mathfrak{q} \in S_p}. \]
We refer to $\mathcal{M}_p(\chi)$ as the \textbf{Gross regulator matrix}. More generally, for any subset $J \subset S_p$, the principle minor of $\mathcal{M}_p(\chi)$ corresponding to $J$ is defined by
\[ \mathcal{R}_p(\chi)_J \coloneqq \det (\mathcal{L}_\text{alg}(\chi)_{\mathfrak{p}, \mathfrak{q}})_{\mathfrak{p}, \mathfrak{q} \in J}. \]
We note that both $\mathcal{R}_p(\chi)$ and $\mathcal{R}_p(\chi)_J$ are independent of all choices. This is explained in more detail in \S 1 of \cite{MR3968788}. In \S 3 of \cite{MR3968788}, Dasgupta-Spie\ss \ constructed a conjectural formula for the value  $\mathcal{R}_p(\chi)_J$ via group cohomology (Conjecture 3.1, \cite{MR3968788}). If we take $J=\{ \mathfrak{p} \}$ for some $\mathfrak{p} \in S_p$ then the value of $\mathcal{R}_p(\chi)_\mathfrak{p}$ is the diagonal entry at $\mathfrak{p}$ of the Gross regulator matrix i.e.,
\[ \mathcal{R}_p(\chi)_\mathfrak{p} = \mathcal{L}_\text{alg}(\chi)_{\mathfrak{p}, \mathfrak{p}} = -\frac{l_\mathfrak{p}(u_{\mathfrak{p}, \chi})}{o_\mathfrak{p}(u_{\mathfrak{p}, \chi})}. \]
Since we are only concerned with their conjecture in this case we henceforth fix a choice of $\mathfrak{p} \in S_p$. The remainder of this section is leading us to define their formula in this case.

\subsection{The Eisenstein cocycle}

We now define the Eisenstein cocycle. Let $k$ denote the cyclotomic field generated by the values of $\chi$. Now let $\mathfrak{P}$ be the prime of $k$ above $p$ corresponding to the embeddings $k \subset \overline{\mathbb{Q}} \subset \mathbb{C}_p$, where the second embedding is the one fixed at the start of this section. Let $K=k_\mathfrak{P}$, and write $\mathcal{O}_K$ for it's ring of integers. As before write $\mathfrak{f}$ for the conductor of the extension $H/F$. Let $\lambda$ be a prime of $F$ such that $\N \lambda =l$ for a prime number $l\in \mathbb{Z}$ and $l \geq n+2$. We assume that no primes in $S$ have residue characteristic equal to $l$. Let $E_+(\mathfrak{f})_\mathfrak{p}$ denote the group of $\mathfrak{p}$-units of $F$ which are congruent to $1 \pmod{\mathfrak{f}}$. We note that $E_+(\mathfrak{f})_\mathfrak{p}$ is free of rank $n$. For $x_1, \dots , x_n \in E_+(\mathfrak{f})_{\mathfrak{p}}$, a fractional ideal $\mathfrak{b}$ coprime to $S$ and $l$, and compact open $U \subset F_\mathfrak{p}$ we put
\[ \nu_{\mathfrak{b}, \lambda}^\mathfrak{p}(x_1, \dots , x_n)(U) \coloneqq \delta(x_1, \dots , x_n) \zeta_{R, \lambda}(\mathfrak{b}, \overline{C}_{e_1}(x_1, \dots, x_n), U,0). \]
In the above, $e_1$ denotes the basis vector $(1,0, \dots ,0) \in \mathbb{R}^n$ and $\delta$ is defined as, in (\ref{epsilonnotation}). We recall the definition of the Shintani zeta function from (\ref{shintanizetafn}) and the Shintani set $\overline{C}_{e_1}(x_1, \dots, x_n)$ from (\ref{overlinec}). Then, $\nu_{\mathfrak{b}, \lambda}^\mathfrak{p}$ is a homogeneous $(n-1)$-cocycle on $E_+(\mathfrak{f})_{\mathfrak{p}}$ with values in the space of $\mathbb{Z}$-distribution on $F_\mathfrak{p}$. This follows from Theorem $2.6$ of \cite{MR3351752}. Hence, we have defined a class
\[ \omega_{\mathfrak{f}, \mathfrak{b}, \lambda}^\mathfrak{p} \coloneqq [ \nu_{\mathfrak{b}, \lambda}^\mathfrak{p} ] \in H^{n-1}(E_+(\mathfrak{f})_{\mathfrak{p}} , \text{Meas}(F_\mathfrak{p},K)), \]
where we define
\[ \text{Meas}(F_\mathfrak{p},K) \coloneqq \Hom ( C_c(F_\mathfrak{p}, \mathbb{Z}) , \mathcal{O}_K ) \otimes_{\mathcal{O}_K} K, \]
having let $C_c(F_\mathfrak{p}, \mathbb{Z})$ denote the set of compactly supported continuous functions from $F_\mathfrak{p}$ to $\mathbb{Z}$. We also consider
\[ \omega_{\chi,\lambda}^\mathfrak{p} = \sum_{[\mathfrak{b}] \in G_\mathfrak{f}/ \langle \mathfrak{p} \rangle} \frac{\chi(\mathfrak{b})}{1-\chi(\lambda)l} \omega_{\mathfrak{f}, \mathfrak{b}, \lambda}^\mathfrak{p} \in H^{n-1}(E_+(\mathfrak{f})_{\mathfrak{p}} , \text{Meas}(F_\mathfrak{p},K)), \]
where the sum ranges over a system of representatives of $G_\mathfrak{f}/ \langle \mathfrak{p} \rangle$. For more details on this construction, see \S 3.3 of \cite{MR3968788}.

\subsection{1-cocycles attached to homomorphisms}

Let $g:F_\mathfrak{p}^\ast \rightarrow K$ be a continuous homomorphism. We want to define a cohomology class $c_g \in H^1(F_\mathfrak{p}^\ast , C_c (F_\mathfrak{p}, K))$ attached to $g$. We define an $F_\mathfrak{p}^\ast$-action on $C_c(F_\mathfrak{p}^\ast, \mathbb{Z})$ by $(xf)(y)=f(x^{-1}y)$. Here $C_c(F_\mathfrak{p}^\ast, \mathbb{Z})$ is the space of compactly supported continuous functions from $F_\mathfrak{p}^\ast$ to $\mathbb{Z}$. The following definition is due to Spie\ss \ and first appears in Lemma $2.11$ of \cite{MR3179573}. This definition is crucial in making the construction of Dasgupta-Spie\ss's cohomological formula work and we also remark that the definition is unusual in that it appears as though the cocycle $z_g$ should be a coboundary. However, it may not be a coboundary since $g$ does not necessarily extend to a continuous function on $F_\mathfrak{p}$.
 
\begin{definition}
Let $g:F_\mathfrak{p}^\ast \rightarrow K$ be a continuous homomorphism as above and let $f \in C_c(F_\mathfrak{p},\mathbb{Z})$ such that $f(0)=1$. We define $c_g$ to be the class of the cocycle $z_{f,g}:F_\mathfrak{p}^\ast \rightarrow C^\diamond (F_\mathfrak{p},A)$ where $z_{f,g} (x)=``(1-x)(g \cdot f)"$, or more precisely
\begin{equation}
    z_{f,g} (x)(y)=(xf)(y) \cdot g(x) + ((f-xf)\cdot g)(y)
    \label{zeqn}
\end{equation}
for $x \in F_\mathfrak{p}^\ast$ and $y \in F_\mathfrak{p}$.
\label{1cocycle}
\end{definition}

The second term in (\ref{zeqn}) is allowed to be evaluated at $0 \in F_\mathfrak{p}$ since we can extend continuously the function from $F_\mathfrak{p}^\ast$ to ${F}_\mathfrak{p}$ as \[(f-xf)(0)=0.\]

Using this definition, we are able to define an element $c_g \coloneqq [z_{f,g}] \in H^1(F_\mathfrak{p}^\ast, C_c(F_\mathfrak{p},K))$ for any continuous homomorphism $g:F_\mathfrak{p}^\ast \rightarrow K$ and any $f \in C_c( F_\mathfrak{p}, \mathbb{Z})$ with $f(0)=1$. We note that the class is independent of the choice of $f \in C_c( F_\mathfrak{p}, \mathbb{Z})$ with $f(0)=1$. In particular, we can consider the classes $c_{o_\mathfrak{p}},c_{l_\mathfrak{p}} \in H^1(F_\mathfrak{p}^\ast, C_c(F_\mathfrak{p},K))$. The homomorpisms $o_\mathfrak{p}$ and $l_\mathfrak{p}$ are as defined in (\ref{ordmap}) and (\ref{logmap}).

For the results we want to show, Definition \ref{1cocycle} is all that we require. For more information on these objects, see \S 3.2 of \cite{MR3861805} and \S 3.1 of \cite{MR3968788}.

\subsection{The diagonal entries}

We now give the definition of Dasgupta-Spie\ss's conjectural formula for the diagonal entries of the Gross regulator matrix. Recall that we have defined the following objects:
\[ c_{o_\mathfrak{p}},c_{l_\mathfrak{p}} \in H^1(F_\mathfrak{p}^\ast, C_c(F_\mathfrak{p},K)) \quad \text{and} \quad \omega_{\chi,\lambda}^\mathfrak{p}\in H^{n-1}(E_+(\mathfrak{f})_{\mathfrak{p}} , \text{Meas}(F_\mathfrak{p},K)).  \]
We now consider $H_{n}(E_+(\mathfrak{f})_{\mathfrak{p}}, \mathbb{Z})$. By Dirichlet's unit theorem, $E_+(\mathfrak{f})_{\mathfrak{p}}$ is free abelian of rank $n$. Hence,  $H_{n}(E_+(\mathfrak{f})_{\mathfrak{p}}, \mathbb{Z}) \cong \mathbb{Z}$. We are thus able to choose a generator $\vartheta^\prime \in H_{n}(E_+(\mathfrak{f})_{\mathfrak{p}}, \mathbb{Z})$. Cap and cup products are a crucial element of Dasgupta-Spie\ss's formula. For the definitions of these products, refer to chapter 6 of \cite{MR1324339}.

\begin{definition}[Proposition 3.6, \cite{MR3968788}]
Let $\vartheta^\prime \in H_{n}(E_+(\mathfrak{f})_{\mathfrak{p}}, \mathbb{Z})$ be a generator. Then, we define
\begin{equation}
    \mathcal{R}_p(\chi)_{\mathfrak{p},\text{an}} \coloneqq (-1)\frac{c_{l_\mathfrak{p}} \cap (\omega_{\chi,\lambda}^\mathfrak{p} \cap \vartheta^\prime)}{c_{o_\mathfrak{p}} \cap (\omega_{\chi,\lambda}^\mathfrak{p} \cap \vartheta^\prime)}.
\end{equation}
\label{grossanreg2}
\end{definition}

The ``$an$'' notation here is only used to distinguish the formula $\mathcal{R}_p(\chi)_{\mathfrak{p},\text{an}}$ from the algebraic quantity $\mathcal{R}_p(\chi)_{\mathfrak{p}}$. We note that in \cite{MR3968788}, a different formula is initially given for the quantity $\mathcal{R}_p(\chi)_{\mathfrak{p},\text{an}}$. In (Proposition 3.6, \cite{MR3968788}) it is shown that the initial expression is equal to the quantity we define in Definition \ref{grossanreg2}. Since the formula we gave in Definition \ref{grossanreg2} is more useful for our calculations we shall give it here as the definition of $\mathcal{R}_p(\chi)_{\mathfrak{p},\text{an}}$. In \cite{MR3968788}, Dasgupta-Spie\ss \ conjectured that their formula $\mathcal{R}_p(\chi)_{\mathfrak{p},\text{an}}$ is in fact equal to $\mathcal{R}_p(\chi)_{\mathfrak{p}}$.

\begin{conjecture}[Conjecture 3.1, \cite{MR3968788}]
For each $\mathfrak{p} \in S_p$, we have $\mathcal{R}_p(\chi)_\mathfrak{p}= \mathcal{R}_p(\chi)_{\mathfrak{p}, \text{an}}$.
\label{conjDS}
\end{conjecture}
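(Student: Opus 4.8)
The plan is to prove Conjecture~\ref{conjDS} when $F$ is a cubic field by comparing \emph{both} $\mathcal{R}_p(\chi)_\mathfrak{p}$ and $\mathcal{R}_p(\chi)_{\mathfrak{p},\text{an}}$ against a third quantity $\mathcal{R}_p(\chi)_{\mathfrak{p},D}$ built from Dasgupta's explicit $p$-adic analytic unit $u_D\in F_\mathfrak{p}^\ast$ of (Definition~3.18, \cite{MR2420508}). One half of this comparison is arithmetic and already available: since $o_\mathfrak{p}=\ord_\mathfrak{p}$ and $l_\mathfrak{p}=\log_p\circ\mathrm{Norm}_{F_\mathfrak{p}/\mathbb{Q}_p}$ both vanish on roots of unity, the theorem of Dasgupta--Kakde (\cite{intgrossstark}) identifying $u_D$ with the image in $F_\mathfrak{p}^\ast$ of the Gross--Stark unit up to a root of unity shows that $\mathcal{R}_p(\chi)_{\mathfrak{p},D}$ --- the analogue of $-l_\mathfrak{p}/o_\mathfrak{p}$ formed from $u_D$ in place of $u_{\mathfrak{p},\chi}$ --- equals $\mathcal{R}_p(\chi)_\mathfrak{p}$ on the nose. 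Everything then reduces to the identity $\mathcal{R}_p(\chi)_{\mathfrak{p},D}=\mathcal{R}_p(\chi)_{\mathfrak{p},\text{an}}$, which is Theorem~\ref{thmforneq3}; note that this is now a purely ``combinatorial'' statement about two recipes, each assembled from Shintani zeta values at $s=0$, cup and cap products, and a choice of fundamental domain, requiring no further input from the arithmetic of $H$.

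To establish Theorem~\ref{thmforneq3} I would first unwind the two iterated cap products $c_{l_\mathfrak{p}}\cap(\omega_{\chi,\lambda}^\mathfrak{p}\cap\vartheta')$ and $c_{o_\mathfrak{p}}\cap(\omega_{\chi,\lambda}^\mathfrak{p}\cap\vartheta')$ explicitly. Pairing the Eisenstein class $\omega_{\chi,\lambda}^\mathfrak{p}$ against a generator (fundamental class) $\vartheta'\in H_n(E_+(\mathfrak{f})_\mathfrak{p},\mathbb{Z})\cong\mathbb{Z}$ and then against $z_{f,l_\mathfrak{p}}$ (resp.\ $z_{f,o_\mathfrak{p}}$), and using the explicit cap-product formulas together with the expression for $z_{f,g}$ in Definition~\ref{1cocycle}, expresses the numerator as the $p$-adic logarithm of an explicit product of Shintani zeta values $\nu_{\mathfrak{b},\lambda}^\mathfrak{p}$ evaluated on translates of a Shintani domain (and the denominator as the corresponding $\mathfrak{p}$-order) --- exactly the shape of $\log_p/\ord_\mathfrak{p}$ applied to Dasgupta's $u_D$, once the underlying Shintani data are made to match. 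To match them I would follow the strategy of \S 1: replace $E_+(\mathfrak{f})$ by a free finite-index subgroup $V=\langle\varepsilon_1,\varepsilon_2\rangle$ of rank $2$; check, via the change-of-variable Lemma~\ref{changeofvariable} and the simultaneous-decomposition property of Colmez domains, that the identity for a single Colmez domain $\mathcal{D}_V$ of $V$ implies it for the original Shintani domain $\mathcal{D}$; and then invoke Colmez's Lemma~\ref{colmezlemma} (with $\alpha=e_1$) to obtain the \emph{explicit} Colmez domain $\mathcal{D}_V=\bigcup_{\tau\in S_2}\overline{C}_{e_1}([\varepsilon_{\tau(1)}\mid\varepsilon_{\tau(2)}])$, provided $\varepsilon_1,\varepsilon_2$ satisfy the sign condition $\delta([\varepsilon_{\tau(1)}\mid\varepsilon_{\tau(2)}])=\sign(\tau)$. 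With $\mathcal{D}_V$ in hand both formulas become finite sums indexed by the cones of $\mathcal{D}_V$ and by a simultaneous decomposition of $(\mathcal{D}_V,\beta^{-1}\mathcal{D}_V)$ for the finitely many totally positive $\beta$ that arise, so that the identity collapses to the coincidence of an explicit finite collection of Shintani sets.

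The hard part will be controlling these translates $\beta\mathcal{D}_V$: a priori there is no bound on how far a Shintani domain is moved by an element it contains, and it is precisely the (false) assertion of such a bound --- (Lemma~2.1.3, \cite{tsosie2018compatibility}), see Statement~\ref{STconj} and the counterexample in the appendix --- that makes the naive argument collapse. The resolution, which is the novel contribution of the paper and is carried out in \S 6.2, is to choose $V$ and its generators carefully rather than arbitrarily: building on Colmez's construction in \cite{MR922806} one first secures generators $\varepsilon_1,\varepsilon_2$ of a suitable $V$ satisfying the sign hypothesis of Lemma~\ref{colmezlemma}, and then, through an explicit analysis of the three real embeddings, one arranges in addition that $\varepsilon_1,\varepsilon_2$ meet certain ordering and positivity conditions which force every translate $\beta\mathcal{D}_V$ relevant to the comparison to lie in a controlled finite union $\bigcup_{\varepsilon}\varepsilon\mathcal{D}_V$ with the translating units $\varepsilon$ explicitly bounded. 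Granting this, the proof would finish with a bookkeeping computation: expand $\mathcal{R}_p(\chi)_{\mathfrak{p},D}$ and $\mathcal{R}_p(\chi)_{\mathfrak{p},\text{an}}$ in terms of the measures $\nu(\mathfrak{b},\cdot,\cdot)$, use Lemma~\ref{changeofvariable} and the controlled translates to match the two sides cone by cone, and conclude $\mathcal{R}_p(\chi)_{\mathfrak{p},D}=\mathcal{R}_p(\chi)_{\mathfrak{p},\text{an}}$, hence Conjecture~\ref{conjDS} for cubic $F$.
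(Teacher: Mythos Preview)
Your proposal is correct and follows essentially the same route as the paper: reduce Conjecture~\ref{conjDS} (for cubic $F$) to Theorem~\ref{thmforneq3} via the Dasgupta--Kakde result, and prove Theorem~\ref{thmforneq3} by passing to a finite-index subgroup $V\subset E_+(\mathfrak{f})$ (Proposition~\ref{firstchangeofdom}), choosing generators of $V$ carefully as in \S6.2 to control the translates of the Colmez domain (Proposition~\ref{propofcolmezdom2} and Proposition~\ref{whereispiinverse}), and then matching the two formulas by an explicit cone-by-cone computation (\S6.3). The one omission is that the Dasgupta--Kakde theorem you invoke requires hypothesis~(\ref{assumptionforDK}) ($p$ odd and $H\cap F(\mu_{p^\infty})\subset H^+$), so the conclusion you obtain is the paper's Corollary --- Conjecture~\ref{conjDS} for cubic $F$ \emph{under} (\ref{assumptionforDK}) --- not the conjecture unconditionally.
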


It is worth noting that in \cite{MR3968788} Dasgupta-Spie\ss \ conjectured a more general formula than the version we write above. This formula is conjectured to be equal to the value $\mathcal{R}_p(\chi)_J$ for any subset $J \subseteq S_p$. Since we only work in the case $J=\{ \mathfrak{p} \}$ we have only given the definition in this case. We remark also that if we are in rank 1, i.e., $\mid S_p \mid = 1$ then Conjecture \ref{conjDS} matches exactly with Conjecture 3.1 in \cite{MR3968788}.

In the next section, we study an analytic formula, conjectured by Dasgupta in \cite{MR2420508}, for the image of the Gross-Stark units in $F_\mathfrak{p}$. This allows us to give a formula for the image of $u_{\mathfrak{p}, \chi}$ in $F_\mathfrak{p}^\ast \otimes K$ and hence will give us another formula for the quantity $\mathcal{R}_p(\chi)_\mathfrak{p}$. The main result of this paper is that these two formulas for $\mathcal{R}_p(\chi)_\mathfrak{p}$ are equal.

\section{The multiplicative integral formula}

\begin{definition}
Let $I$ be an abelian topological group that may be written as an inverse limit of discrete groups
\[ I= \varprojlim I_\alpha . \]
Denote the group operation on $I$ multiplicatively. For each $i \in I_\alpha$, denote by $U_i$ the open subset of $I$ consisting of the elements that map to $i$ in $I_\alpha$. Suppose that $G$ is a compact open subset of a quotient of $\mathbb{A}_F^\ast$  . Let $f:G \rightarrow I$ be a continuous map, and let $\mu$ be a $\mathbb{Z}$-valued measure. We define the \textbf{multiplicative integral}, written with a cross through the integration sign, by 
\begin{equation*}
    \multint_G f(x) d \mu (x)= \varprojlim \prod_{i \in I_\alpha}i^{\mu (f^{-1}(U_i))} \in I.
\end{equation*}
\end{definition}

The first definition we make towards the formula is that of an element of $E_+(\mathfrak{f})$. We refer to this construction as the error term. After the definition, we check that it is well defined.

\begin{definition}
Let $\mathcal{D}$ be a Shintani domain, and assume that $T$ is $\pi$-good for $\mathcal{D}$. Define the \textbf{error term} 
\begin{equation}
    \epsilon(\mathfrak{b}, \mathcal{D}, \pi ) \coloneqq \prod_{\epsilon \in E_+(\mathfrak{f})} \epsilon^{\nu(\mathfrak{b}, \epsilon \mathcal{D} \cap \pi^{-1} \mathcal{D}, \mathcal{O}_\mathfrak{p})}.
    \label{errorterm}
\end{equation}
\end{definition}

By Lemma $3.14$ of \cite{MR2420508}, only finitely many of the exponents in (\ref{errorterm}) are nonzero. Proposition $3.12$ of \cite{MR2420508} and the assumption that $T$ is $\pi$-good for $\mathcal{D}$ imply that the exponents are integers. We recall the definition of the measure $\nu$ from (\ref{eqn21}). We are now ready to write down Dasgupta's conjectural formula. We note that for any Shintani domain $\mathcal{D}$ we can always impose that $T$ is $\pi$-good for $\mathcal{D}$ by adding a finite number of primes to $T$. Henceforth, we shall assume that we are in this case. We now give the main definition of this section.
 
\begin{definition}
Let $\mathcal{D}$ be a Shintani domain, and assume that $T$ is $\pi$-good for $\mathcal{D}$. Define 
\begin{equation*}
    u_{\mathfrak{p},T}(\mathfrak{b}, \mathcal{D}) \coloneqq \epsilon(\mathfrak{b}, \mathcal{D}, \pi ) \pi^{\zeta_{R,T}(H/F, \mathfrak{b},0)} \multint_\mathbb{O} x d \nu (\mathfrak{b}, \mathcal{D},x) \in F^\ast_\mathfrak{p}.
\end{equation*}
\label{defnformula}
\end{definition}

As our notation suggests, we have the following proposition.

\begin{proposition}[Proposition 3.19, \cite{MR2420508}]
The element $u_{\mathfrak{p},T}(\mathfrak{b}, \mathcal{D})$ does not depend on the choice of generator $\pi$ of $\mathfrak{p}^e$.
\label{prop3.19}
\end{proposition}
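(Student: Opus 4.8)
The plan is to show that the three factors in the definition of $u_{\mathfrak{p},T}(\mathfrak{b},\mathcal{D})$ transform compatibly under the replacement $\pi \mapsto \zeta \pi$, where $\zeta$ is a unit of $F$ such that $\zeta\pi$ is again a totally positive generator of $\mathfrak{p}^e$ congruent to $1 \pmod{\mathfrak{f}}$; such a $\zeta$ lies in $E_+(\mathfrak{f})$. So first I would reduce to analysing the effect of multiplying $\pi$ by an arbitrary $\zeta \in E_+(\mathfrak{f})$ on each of $\epsilon(\mathfrak{b},\mathcal{D},\pi)$, $\pi^{\zeta_{R,T}(H/F,\mathfrak{b},0)}$, and the multiplicative integral $\multint_{\mathbb{O}} x\, d\nu(\mathfrak{b},\mathcal{D},x)$. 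The middle factor obviously picks up $\zeta^{\zeta_{R,T}(H/F,\mathfrak{b},0)}$, so the goal becomes: the product of the error term and the multiplicative integral picks up exactly $\zeta^{-\zeta_{R,T}(H/F,\mathfrak{b},0)}$.

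Second, I would handle the multiplicative integral. Changing $\pi$ to $\zeta\pi$ changes the domain of integration from $\mathbb{O} = \mathcal{O}_\mathfrak{p} - \pi\mathcal{O}_\mathfrak{p}$ to $\mathbb{O}' = \mathcal{O}_\mathfrak{p} - \zeta\pi\mathcal{O}_\mathfrak{p}$. Writing $\mathbb{O}$ and $\mathbb{O}'$ as differences of the full ring and its $\pi$- (resp. $\zeta\pi$-) multiple, and using that $\zeta \in \mathcal{O}_\mathfrak{p}^\ast$ so $\zeta\mathbb{O}$ differs from $\mathbb{O}$, I would express $\multint_{\mathbb{O}'} x\, d\nu - \multint_{\mathbb{O}} x\, d\nu$ (additively in the value group, i.e. as a ratio in $F_\mathfrak{p}^\ast$) in terms of integrals over $\pi\mathcal{O}_\mathfrak{p} \setminus \zeta\pi\mathcal{O}_\mathfrak{p}$ and its complement. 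The key computational tool is Lemma \ref{changeofvariable} (the change-of-variable lemma), which relates $\nu(\mathfrak{b}\mathfrak{q}, \mathcal{D}, U)$ to $\nu(\mathfrak{b}, \beta\mathcal{D}, \beta U)$; applied with $\beta = \pi$ (so $\mathfrak{q} = \mathfrak{p}^e$, and $[\mathfrak{b}\mathfrak{q}] = [\mathfrak{b}]$ in $G_\mathfrak{f}$) it lets me re-fold integrals over $\pi$-multiples of sets back into integrals over $\mathcal{O}_\mathfrak{p}$ against a translated Shintani domain $\pi\mathcal{D}$. The discrepancy between $\mathcal{D}$ and $\pi\mathcal{D}$ as fundamental domains for $E_+(\mathfrak{f})$ is precisely what the error term $\epsilon(\mathfrak{b},\mathcal{D},\pi)$ records via the decomposition $\pi^{-1}\mathcal{D} = \bigcup_{\epsilon} (\epsilon\mathcal{D}\cap\pi^{-1}\mathcal{D})$.

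Third, I would track the error term. By definition $\epsilon(\mathfrak{b},\mathcal{D},\pi) = \prod_{\epsilon \in E_+(\mathfrak{f})} \epsilon^{\nu(\mathfrak{b},\epsilon\mathcal{D}\cap\pi^{-1}\mathcal{D},\mathcal{O}_\mathfrak{p})}$, and I would compute the ratio $\epsilon(\mathfrak{b},\mathcal{D},\zeta\pi)/\epsilon(\mathfrak{b},\mathcal{D},\pi)$ by comparing the decompositions of $\pi^{-1}\mathcal{D}$ and $(\zeta\pi)^{-1}\mathcal{D} = \zeta^{-1}\pi^{-1}\mathcal{D}$ against the tiling by $E_+(\mathfrak{f})$-translates of $\mathcal{D}$. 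Reindexing $\epsilon \mapsto \zeta\epsilon$ in one of the products and using additivity of $\nu$ on disjoint unions (plus the vanishing identities $\nu(\mathfrak{b},\mathcal{D},\mathbb{O}) = 0$ recorded in the excerpt, which force $\nu(\mathfrak{b},\mathcal{D},\mathcal{O}_\mathfrak{p}) = \nu(\mathfrak{b},\mathcal{D},\pi\mathcal{O}_\mathfrak{p})$), I expect the error-term ratio to combine with the integral discrepancy to collapse to $\zeta^{-\zeta_{R,T}(H/F,\mathfrak{b},0)}$, cancelling the change in the $\pi^{\zeta_{R,T}(H/F,\mathfrak{b},0)}$ factor. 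Throughout I must check that $T$ remains $\zeta\pi$-good for $\mathcal{D}$, or enlarge $T$ at the outset so that all the relevant zeta-values are integral simultaneously.

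The main obstacle I anticipate is the bookkeeping in the second and third steps: carefully matching up the measure-theoretic pieces of the multiplicative integral over the annular region $\pi\mathcal{O}_\mathfrak{p} \setminus \zeta\pi\mathcal{O}_\mathfrak{p}$ with the combinatorial pieces of the error-term product, keeping the change-of-variable Lemma \ref{changeofvariable} applied with the correct ideal $\mathfrak{q}$ and translate, and ensuring all exponents stay integral so that the infinite products are literally finite. This is essentially the content of Proposition 3.19 of \cite{MR2420508}, so I would follow that argument, making explicit the cancellation between the error term and the domain-change in the integral.
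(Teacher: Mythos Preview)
The paper itself does not supply a proof of this proposition; it simply records it as Proposition~3.19 of \cite{MR2420508}. So there is no in-paper argument to compare against, only Dasgupta's original.

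Your overall architecture (write $\pi' = \zeta\pi$ with $\zeta \in E_+(\mathfrak{f})$, then track each of the three factors) is correct and is essentially Dasgupta's approach. However, your second step contains an error that makes the plan more complicated than necessary. Since $\zeta \in E_+(\mathfrak{f}) \subset \mathcal{O}_\mathfrak{p}^\ast$, we have $\zeta\pi\,\mathcal{O}_\mathfrak{p} = \pi\,\mathcal{O}_\mathfrak{p}$, and hence $\mathbb{O}' = \mathcal{O}_\mathfrak{p} - \zeta\pi\,\mathcal{O}_\mathfrak{p} = \mathcal{O}_\mathfrak{p} - \pi\,\mathcal{O}_\mathfrak{p} = \mathbb{O}$. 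The multiplicative integral $\multint_{\mathbb{O}} x\, d\nu(\mathfrak{b},\mathcal{D},x)$ is therefore literally unchanged; there is no annular region $\pi\mathcal{O}_\mathfrak{p} \setminus \zeta\pi\mathcal{O}_\mathfrak{p}$ to worry about, and the change-of-variable gymnastics you outline for this factor are unnecessary.

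The entire content of the proof lives in your third step. Reindexing $\epsilon \mapsto \zeta^{-1}\epsilon$ in the product defining $\epsilon(\mathfrak{b},\mathcal{D},\zeta\pi)$, applying Lemma~\ref{changeofvariable} with $\beta = \zeta^{-1}$ (so $\mathfrak{q} = \mathcal{O}$ and $\zeta^{-1}\mathcal{O}_\mathfrak{p} = \mathcal{O}_\mathfrak{p}$), and using that $\pi^{-1}\mathcal{D}$ is again a Shintani domain so that $\sum_{\epsilon} \nu(\mathfrak{b}, \epsilon\mathcal{D} \cap \pi^{-1}\mathcal{D}, \mathcal{O}_\mathfrak{p}) = \nu(\mathfrak{b}, \pi^{-1}\mathcal{D}, \mathcal{O}_\mathfrak{p}) = \zeta_{R,T}(H/F,\mathfrak{b},0)$, one finds
\[
\epsilon(\mathfrak{b},\mathcal{D},\zeta\pi) = \zeta^{-\zeta_{R,T}(H/F,\mathfrak{b},0)}\,\epsilon(\mathfrak{b},\mathcal{D},\pi),
\]
which cancels exactly against the factor $\zeta^{\zeta_{R,T}(H/F,\mathfrak{b},0)}$ picked up by the middle term. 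So once you correct the observation about $\mathbb{O}$, your sketch collapses to the short argument that Dasgupta gives.
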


Dasgupta made the following conjecture concerning his construction.

\begin{conjecture}[Conjecture 3.21, \cite{MR2420508}]
Let $e$ be the order of $\mathfrak{p}$ in $G_\mathfrak{f}$, and suppose that $\mathfrak{p}^e=(\pi)$ with $\pi$ totally positive and $\pi \equiv 1 \pmod{\mathfrak{f}}$. Let $\mathcal{D}$ be a Shintani domain, and let $T$ be $\pi$-good for $\mathcal{D}$. Let $\mathfrak{b}$ be a fractional ideal of $F$ relatively prime to $S$ and $\overline{T}$. We have the following.
\begin{enumerate}
    \item The element $u_{\mathfrak{p},T}(\mathfrak{b}, \mathcal{D}) \in F_\mathfrak{p}^\ast$ depends only on the class of $\mathfrak{b} \in G_\mathfrak{f}/ \langle \mathfrak{p} \rangle$ and no other choices, including the choice of $\mathcal{D}$, and hence may be denoted $u_{\mathfrak{p},T}(\sigma_\mathfrak{b})$, where $\sigma_\mathfrak{b} \in \Gal(H/F)$.
    
    \item The element $u_{\mathfrak{p},T}(\sigma_\mathfrak{b})$ lies in $U_\mathfrak{p}$, and $u_{\mathfrak{p},T}(\sigma_\mathfrak{b}) \equiv 1 \pmod{T}$. 
    
    \item Shimura reciprocity law: For any fractional ideal $\mathfrak{a}$ of $F$ prime to $S$ and to $\char$ $T$, we have 
    \begin{equation*}
        u_{\mathfrak{p},T}(\sigma_\mathfrak{ab})=u_{\mathfrak{p},T}(\sigma_\mathfrak{b})^{\sigma_\mathfrak{a}}.
    \end{equation*}
\end{enumerate}
\label{conj3.21}
\end{conjecture}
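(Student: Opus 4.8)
The plan is to separate the statement into its combinatorial/geometric part --- independence of $u_{\mathfrak{p},T}(\mathfrak{b},\mathcal{D})$ from the Shintani domain $\mathcal{D}$ and from the choice of representative $\mathfrak{b}$ of its class in $G_\mathfrak{f}/\langle\mathfrak{p}\rangle$ (part (1)) --- and its arithmetic part --- that the resulting element is the image in $F_\mathfrak{p}^\ast$ of the Gross--Stark unit, whence it lies in $U_\mathfrak{p}$, is $\equiv 1\pmod T$, and obeys the Shimura reciprocity law (parts (2) and (3)). Independence of $\pi$ is already Proposition \ref{prop3.19}, so I fix $\pi$ once and for all.

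For part (1), given two Shintani domains $\mathcal{D}$ and $\mathcal{D}'$, I would take a simultaneous decomposition $\mathcal{D}=\bigcup_{i=1}^{d} C_i$, $\mathcal{D}'=\bigcup_{i=1}^{d}\varepsilon_i C_i$ with $\varepsilon_i\in E_+(\mathfrak{f})$, after enlarging $T$ so that it is $\pi$-good for both domains and good for the decomposition (this only changes $u_{\mathfrak{p},T}$ by harmless Euler-type factors). Since $\nu(\mathfrak{b},-,-)$ is additive over disjoint cones, $\multint_\mathbb{O} x\,d\nu(\mathfrak{b},\mathcal{D}',x)$ factors over the $\varepsilon_i C_i$; Lemma \ref{changeofvariable} applied with $\beta=\varepsilon_i$ --- a totally positive unit $\equiv 1\pmod{\mathfrak{f}}$ with $\ord_\mathfrak{p}(\varepsilon_i)=0$, so that the ideal $\mathfrak{q}=(\varepsilon_i)$ is trivial and $\varepsilon_i\mathbb{O}=\mathbb{O}$ --- identifies $\nu(\mathfrak{b},\varepsilon_i C_i,-)$ with the pushforward of $\nu(\mathfrak{b},C_i,-)$ along multiplication by $\varepsilon_i$, and therefore writes the $\mathcal{D}'$-integral as $\bigl(\prod_i\varepsilon_i^{\,\nu(\mathfrak{b},C_i,\mathbb{O})}\bigr)$ times the $\mathcal{D}$-integral. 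It then remains to show that $\epsilon(\mathfrak{b},\mathcal{D}',\pi)$ differs from $\epsilon(\mathfrak{b},\mathcal{D},\pi)$ by exactly the reciprocal of this factor: a combinatorial identity about the overlap measures $\nu(\mathfrak{b},\epsilon\mathcal{D}\cap\pi^{-1}\mathcal{D},\mathcal{O}_\mathfrak{p})$, which one proves by cone-decomposing each $\epsilon\mathcal{D}\cap\pi^{-1}\mathcal{D}$, re-indexing the product over $E_+(\mathfrak{f})$, and applying Lemma \ref{changeofvariable} together with $\nu(\mathfrak{b},\mathcal{D},\mathbb{O})=\zeta_{S,T}(H/F,\mathfrak{b},0)=0$, the $\pi$-goodness ensuring via Proposition \ref{prop3.12} that every exponent along the way is an integer. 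Independence of the representative $\mathfrak{b}$ is analogous: $\mathfrak{b}\mapsto(\gamma)\mathfrak{b}$ for $\gamma\in F^\ast$ totally positive with $\gamma\equiv 1\pmod{\mathfrak{f}}$ and $\ord_\mathfrak{p}(\gamma)=0$ is absorbed by another use of Lemma \ref{changeofvariable}, while $\mathfrak{b}\mapsto\mathfrak{b}\mathfrak{p}^e=(\pi)\mathfrak{b}$ uses it together with $\nu(\mathfrak{b},\mathcal{D},\mathbb{O})=0$ and $\nu(\mathfrak{b},\mathcal{D},\mathcal{O}_\mathfrak{p})=\zeta_{R,T}(H/F,\mathfrak{b},0)$ to reconcile the shift with the explicit $\pi^{\zeta_{R,T}(H/F,\mathfrak{b},0)}$ factor.

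For parts (2) and (3), once (1) provides a well-defined $u_{\mathfrak{p},T}(\sigma_\mathfrak{b})\in F_\mathfrak{p}^\ast$, the $\mathfrak{p}$-order follows from the construction directly: $\epsilon(\mathfrak{b},\mathcal{D},\pi)$ is a unit, $\ord_\mathfrak{p}(\pi)=e$, and the order of $\multint_\mathbb{O} x\,d\nu(\mathfrak{b},\mathcal{D},x)$ is $\sum_j j\,\nu(\mathfrak{b},\mathcal{D},\{x:\ord_\mathfrak{p}(x)=j\})$, which with $\nu(\mathfrak{b},\mathcal{D},\mathcal{O}_\mathfrak{p})=\zeta_{R,T}(H/F,\mathfrak{b},0)$ and $\nu(\mathfrak{b},\mathcal{D},\mathbb{O})=0$ yields the order prescribed in Conjecture \ref{Conj2.5}. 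The remaining content --- that $u_{\mathfrak{p},T}(\sigma_\mathfrak{b})$ is a genuine $\mathfrak{p}$-unit of $H$ rather than merely an element of $F_\mathfrak{p}^\ast$, that it is $\equiv 1\pmod T$, and that $u_{\mathfrak{p},T}(\sigma_{\mathfrak{a}\mathfrak{b}})=u_{\mathfrak{p},T}(\sigma_\mathfrak{b})^{\sigma_\mathfrak{a}}$ --- is precisely the statement that $u_{\mathfrak{p},T}(\sigma_\mathfrak{b})$ equals the image of $u_T^{\sigma_\mathfrak{b}}$ for the Gross--Stark unit $u_T$ of Conjecture \ref{Conj2.5}. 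This lies beyond the Shintani-combinatorial tools of this section and must be imported: it is the main theorem of Dasgupta--Kakde \cite{intgrossstark} (proving Dasgupta's formula up to a root of unity), combined with \cite{Brumerstark}, the root-of-unity ambiguity being removed by the standing hypothesis that no nontrivial root of unity in $H$ is $\equiv 1\pmod T$.

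The main obstacle is therefore the arithmetic identification in parts (2)--(3) with the algebraic Gross--Stark unit, which nothing in the present setup brings within reach and which must be quoted wholesale from \cite{intgrossstark} and \cite{Brumerstark}. Within the self-contained part (1), the real labour is the error-term bookkeeping: decomposing all the translates $\epsilon\mathcal{D}\cap\pi^{-1}\mathcal{D}$ compatibly, tracking the compact-open sets $\mathbb{O}$ and $\mathcal{O}_\mathfrak{p}$ under the scalings $x\mapsto\varepsilon_i x$, and invoking $\pi$-goodness at every stage, so as to show that the error term compensates exactly for the change of Shintani domain.
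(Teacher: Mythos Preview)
The paper does not prove this statement: it is recorded there as a \emph{conjecture} (Dasgupta's Conjecture~3.21 from \cite{MR2420508}), and the only information the paper adds is the remark that Dasgupta--Kakde \cite{intgrossstark} have established it \emph{up to a root of unity} and under the extra hypothesis~(\ref{assumptionforDK}). There is thus no ``paper's own proof'' to compare against.

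That said, your sketch for part~(1) --- independence of the Shintani domain via a simultaneous decomposition, Lemma~\ref{changeofvariable} with $\beta=\varepsilon_i$, and cancelling the resulting unit factor against the change in the error term $\epsilon(\mathfrak{b},\mathcal{D},\pi)$ --- is exactly the argument of Theorem~5.3 in \cite{MR2420508}, which the present paper invokes (and generalises to Colmez domains in Proposition~\ref{canchangedom}) rather than reproves. So for the combinatorial half you are on the same track as the literature, though none of this is new content of \emph{this} paper.

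The genuine gap is in your treatment of parts~(2) and~(3). You write that the root-of-unity ambiguity in the Dasgupta--Kakde theorem is ``removed by the standing hypothesis that no nontrivial root of unity in $H$ is $\equiv 1\pmod T$''. This is a confusion of two different roots of unity. The hypothesis on $T$ pins down the \emph{algebraic} Gross--Stark unit $u_T\in H^\ast$ uniquely among elements of $U_\mathfrak{p}$ with the prescribed $\ord_\mathfrak{P}$ values. It says nothing about the \emph{$p$-adic} root of unity by which Dasgupta's analytic element $u_{\mathfrak{p},T}(\mathfrak{b},\mathcal{D})\in F_\mathfrak{p}^\ast$ might differ from the image of $u_T^{\sigma_\mathfrak{b}}$; that ambiguity lives in $\mu(F_\mathfrak{p})$, not in $\{\zeta\in H:\zeta\equiv 1\pmod T\}$, and \cite{intgrossstark} does not eliminate it. Moreover, you silently drop the hypothesis~(\ref{assumptionforDK}), which the cited theorem requires. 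So even granting everything you import, you obtain at best Conjecture~\ref{conj3.21} up to a root of unity and under~(\ref{assumptionforDK}) --- which is precisely what the paper already records --- not the full conjecture.
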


Recent work of Dasgupta-Kakde (Theorem 1.6, \cite{intgrossstark}) proved the above conjecture up to a root of unity under the assumption:
\begin{equation}
    p \ \text{is odd and} \ H \cap F(\mu_{p^\infty}) \subset H^+, \ \text{the maximal totally real subfield of} \ H.
    \label{assumptionforDK}
\end{equation}
The main result of their paper is the $p$-part of the integral Gross-Stark conjecture (Theorem 1.4 \cite{intgrossstark}). The Gross-Stark Conjecture first appears in (Conjecture $7.6$, \cite{MR931448}). Conjecture \ref{conj3.21}, up to a root of unity, then follows from Theorem 5.18 of \cite{MR2420508}.

\section{Comparing the formulas}

Let $\chi$ and $H$ be as given at the start of \S 4, and $\lambda$ as given at the start of \S 4.1 . Let $\mathfrak{p} \in S_p$. In \cite{MR3968788}, Dasgupta-Spie\ss \ conjectured the following.

\begin{conjecture}[Remark 4.5, \cite{MR3968788}]
Conjecture \ref{conjDS} is consistent with Conjecture \ref{conj3.21}, i.e., we have
\begin{equation}
    \mathcal{R}_p(\chi)_{\mathfrak{p},\text{an}} =  -\frac{l_\mathfrak{p}(\mathcal{U}_{\mathfrak{p},\chi})}{o_\mathfrak{p}(\mathcal{U}_{\mathfrak{p},\chi})},
    \label{lasteqn}
\end{equation}
where we define
\begin{equation}
    \mathcal{U}_{\mathfrak{p},\chi} = \sum_{[\mathfrak{b}]\in G_\mathfrak{f}/ \langle \mathfrak{p} \rangle} u_{\mathfrak{p},\lambda}(\mathfrak{b},\mathcal{D}) \otimes \chi(\mathfrak{b})/(1- \chi(\lambda)l).
    \label{Ueqn}
\end{equation}
Here, the sum ranges over a set of representatives $\mathfrak{b}$ for $G_\mathfrak{f}/ \langle \mathfrak{p} \rangle$ with $\mathfrak{b}$ relatively prime to $\mathfrak{f}$, $R$ and $l$, and $\mathcal{D}$ is a Shintani domain.
\label{thmforalln}
\end{conjecture}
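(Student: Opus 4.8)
The plan is to reduce the equality (\ref{lasteqn}) to a statement purely about Shintani sets and cohomology classes, following the overall strategy outlined in the introduction. First I would unwind both sides of the conjectured identity. The left-hand side $\mathcal{R}_p(\chi)_{\mathfrak{p},\text{an}}$ is defined via cap products of the Eisenstein cocycle $\omega_{\chi,\lambda}^\mathfrak{p}$ with the classes $c_{l_\mathfrak{p}}$ and $c_{o_\mathfrak{p}}$; the right-hand side involves the $p$-adic logarithm and $\mathfrak{p}$-order of $\mathcal{U}_{\mathfrak{p},\chi}$, built from Dasgupta's multiplicative integral formula $u_{\mathfrak{p},\lambda}(\mathfrak{b},\mathcal{D})$. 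The key observation is that both the cap product $c_g \cap (\omega_{\chi,\lambda}^\mathfrak{p} \cap \vartheta')$ (for $g = o_\mathfrak{p}$ or $l_\mathfrak{p}$) and the homomorphism $g$ applied to $u_{\mathfrak{p},\lambda}(\mathfrak{b},\mathcal{D})$ can each be expressed as sums, indexed by the combinatorial data of a Shintani domain decomposition, of terms of the form $g(\varepsilon) \cdot \nu(\mathfrak{b}, \text{(some Shintani set)}, \mathcal{O}_\mathfrak{p})$. So the heart of the matter is an explicit identification of the homology class $\vartheta'$ with a specific cycle coming from a Colmez domain, and then a term-by-term comparison.

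Concretely, the steps I would carry out are: (1) Pass from $E_+(\mathfrak{f})$ to a free finite-index subgroup $V = \langle \varepsilon_1, \varepsilon_2 \rangle$ of rank $2$ (recall $n=3$), showing that both $\mathcal{R}_p(\chi)_{\mathfrak{p},\text{an}}$ and $-l_\mathfrak{p}(\mathcal{U}_{\mathfrak{p},\chi})/o_\mathfrak{p}(\mathcal{U}_{\mathfrak{p},\chi})$ are unchanged (or change compatibly) when computed with a Colmez domain $\mathcal{D}_V$ for $V$ instead of a Shintani domain for the full unit group; this uses the restriction/corestriction formalism in group cohomology on the analytic side and Lemma \ref{changeofvariable} together with Proposition \ref{prop3.19} on the Dasgupta side. (2) Use Colmez's Lemma \ref{colmezlemma} to choose $\varepsilon_1, \varepsilon_2$ satisfying the sign condition $\delta([\varepsilon_{\tau(1)} \mid \varepsilon_{\tau(2)}]) = \sign(\tau)$, giving an \emph{explicit} Colmez domain $\mathcal{D}_V = \bigcup_{\tau \in S_2} \overline{C}_{e_1}([\varepsilon_{\tau(1)} \mid \varepsilon_{\tau(2)}])$. (3) Identify the generator $\vartheta' \in H_3(E_+(\mathfrak{f})_\mathfrak{p}, \mathbb{Z})$ explicitly as (up to index corrections) the image of a product of bar-resolution cycles built from $\varepsilon_1, \varepsilon_2$ and $\pi$ (the generator of $\mathfrak{p}^e$), so that capping $\omega_{\chi,\lambda}^\mathfrak{p}$ against it literally produces the measure $\nu(\mathfrak{b}, \mathcal{D}_V, -)$ appearing in Dasgupta's formula. (4) Compute the two remaining cap products with $c_{o_\mathfrak{p}}$ and $c_{l_\mathfrak{p}}$ using Definition \ref{1cocycle}: the $(1-x)$-structure of $z_{f,g}$ is exactly what turns the cap product into $g(\pi)\,\zeta_{R,\lambda}(H/F,\mathfrak{b},0)$ plus a sum of terms $g(\varepsilon)\,\nu(\mathfrak{b}, \varepsilon\mathcal{D}_V \cap \pi^{-1}\mathcal{D}_V, \mathcal{O}_\mathfrak{p})$ plus a multiplicative-integral contribution $\int_{\mathbb{O}} g(x)\, d\nu(\mathfrak{b},\mathcal{D}_V,x)$ — matching $g$ applied to the three factors $\pi^{\zeta_{R,T}(H/F,\mathfrak{b},0)}$, $\epsilon(\mathfrak{b},\mathcal{D}_V,\pi)$, and $\multint_\mathbb{O} x\,d\nu(\mathfrak{b},\mathcal{D}_V,x)$ in Definition \ref{defnformula}. (5) Sum over $[\mathfrak{b}] \in G_\mathfrak{f}/\langle\mathfrak{p}\rangle$ weighted by $\chi(\mathfrak{b})/(1-\chi(\lambda)l)$ to pass from the $\mathfrak{b}$-components to the $\chi$-components, and take the ratio.

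The main obstacle — and, per the introduction, the novel technical content — is step (4), specifically controlling the error-term piece $\epsilon(\mathfrak{b}, \mathcal{D}_V, \pi)$ and matching it against the boundary contribution of the cap product. The issue is that $\varepsilon\mathcal{D}_V \cap \pi^{-1}\mathcal{D}_V$ is nonzero for $\varepsilon$ ranging over a potentially large (though finite) set, and the cohomological boundary computation only naturally sees translations of $\mathcal{D}_V$ by the generators $\varepsilon_1, \varepsilon_2$ and their products up to bounded length; to equate the two one needs a bound on which translates $\varepsilon\mathcal{D}_V$ can meet $\pi^{-1}\mathcal{D}_V$, equivalently control over how far $\pi$ (viewed in $\mathbb{R}_+^3$) moves $\mathcal{D}_V$ around. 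This is precisely the "control of the translates" problem flagged in the introduction, and is where the flawed Lemma 2.1.3 of \cite{tsosie2018compatibility} was invoked. I would handle it by imposing, via further calculations building on Colmez's construction, extra conditions on the choice of $\varepsilon_1, \varepsilon_2$ (for instance controlling the signs $\delta$ of various products involving $\pi$ and the $\varepsilon_i$, or arranging $\mathcal{D}_V$ to be "star-shaped" relative to the relevant rays) that force the translation $\pi^{-1}\mathcal{D}_V$ to decompose into cones each of which is a $V$-translate of a single cone of $\mathcal{D}_V$ by an element of bounded word length — this is carried out in \S 6.2. Once that control is in place, steps (3)–(5) become bookkeeping: the simultaneous decomposition of $(\mathcal{D}_V, \pi^{-1}\mathcal{D}_V)$ furnished by Proposition \ref{colmezlemma}'s analogue matches the cocycle computation cone-by-cone, and the integrality statements (Proposition \ref{prop3.12}, its corollary) guarantee everything is well defined.
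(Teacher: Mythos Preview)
Your proposal is essentially correct and follows the paper's approach: reduce to a finite-index subgroup $V$ via restriction/corestriction (Proposition~\ref{firstchangeofdom}), choose $\varepsilon_1,\varepsilon_2$ carefully \`a la Colmez with extra sign and containment constraints (Proposition~\ref{propofcolmezdom2}) to control the translate $\pi^{-1}\mathcal{B}$ (Proposition~\ref{whereispiinverse}), and then compare term by term. One clarification on step~(4): the cap product does \emph{not} directly yield terms of the form $g(\varepsilon)\,\nu(\mathfrak{b},\varepsilon\mathcal{D}_V\cap\pi^{-1}\mathcal{D}_V,\mathcal{O}_\mathfrak{p})$; rather (see equation~(\ref{calcstep1})) it produces $g(\varepsilon_i)\,\zeta_{R,\lambda}(\mathfrak{b},\mathcal{B}_i,\pi\mathcal{O}_\mathfrak{p},0)$ for $i=1,2$, where $\mathcal{B}_i$ are auxiliary Colmez domains for the groups $\langle\varepsilon_j,\pi\rangle$ ($j\neq i$) --- this is why property~\textit{3)} of Proposition~\ref{propofcolmezdom2} (the sign conditions involving $\pi$) is essential, and the final matching is an explicit equality of Shintani sets (equations~(\ref{shintanieqn1}) and~(\ref{shintanieqn2})) rather than a direct identification.
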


\begin{remark}
It follows from (Theorem 1.6, \cite{intgrossstark}) that the right hand side of (\ref{lasteqn}) is independent of the choices for $\mathfrak{b}$ and $\mathcal{D}$. The independence of $\lambda$ in the sum in (\ref{Ueqn}) follows from Lemma 5.4 of \cite{MR2420508}.
\end{remark}

In \cite{MR3968788}, Dasgupta-Spie\ss \ proved Conjecture \ref{thmforalln} in the case $n=2$ (recall that $n$ is the degree of our field $F$). The main result of this paper is the proof of Conjecture \ref{thmforalln} in the case $n=3$. Note that in this case we have that $E_+(\mathfrak{f})$ is free of rank $2$. We will show the following theorem.

\begin{theorem}
Let $F$ be a totally real field of degree $3$. Then, Conjecture \ref{conjDS} is consistent with Conjecture \ref{conj3.21}, i.e., we have
\[ \mathcal{R}_p(\chi)_{\mathfrak{p},\text{an}} =  -\frac{l_\mathfrak{p}(\mathcal{U}_{\mathfrak{p},\chi})}{o_\mathfrak{p}(\mathcal{U}_{\mathfrak{p},\chi})} \]
where $\mathcal{U}_{\mathfrak{p},\chi}$ is as defined in Conjecture \ref{thmforalln} and $\mathcal{D}$ is a Shintani domain.
\label{thmforneq3}
\end{theorem}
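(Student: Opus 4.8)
The plan is to prove the identity by computing both sides explicitly after reducing to a carefully chosen finite-index free subgroup $V = \langle \varepsilon_1, \varepsilon_2 \rangle \subset E_+(\mathfrak{f})$ of rank $2$. Since $\mathcal{R}_p(\chi)_{\mathfrak{p},\mathrm{an}}$ is manifestly independent of the auxiliary choices (the generator $\vartheta'$ rescales numerator and denominator equally in Definition \ref{grossanreg2}) and, by (Theorem 1.6, \cite{intgrossstark}), so is the right-hand side, it suffices to establish the equality for one convenient Shintani domain $\mathcal{D}$, one auxiliary set $T = \{\lambda\}$, and one set of representatives $\mathfrak{b}$. The first reduction replaces $\mathcal{D}$, a fundamental domain for $E_+(\mathfrak{f})$, by a Colmez domain $\mathcal{D}_V$ for $V$. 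On the analytic side this is formal: $E_+(\mathfrak{f})_\mathfrak{p} = E_+(\mathfrak{f}) \oplus \langle \pi \rangle$ and $V \oplus \langle \pi \rangle$ has finite index in it, so restriction multiplies $\omega_{\chi,\lambda}^\mathfrak{p} \cap \vartheta'$ by that index, which cancels in the ratio. On the unit side I would use Lemma \ref{changeofvariable} and the additivity of $\nu$ in its Shintani set to show that cutting $\mathcal{D}$ into cones and reassembling the pieces via the $V$-action alters $u_{\mathfrak{p},\lambda}(\mathfrak{b},\mathcal{D})$ only through its error term; equivalently, Definition \ref{defnformula} extends verbatim to Colmez domains (with the error product ranging over $V$) and the value is unchanged, so that $\mathcal{U}_{\mathfrak{p},\chi}$ may be computed from the quantities $u_{\mathfrak{p},\lambda}(\mathfrak{b},\mathcal{D}_V)$.

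Next I would fix $V$ and its generators. Following Colmez (Lemma \ref{colmezlemma} and \cite{MR922806}), one can choose $\varepsilon_1, \varepsilon_2 \in E_+(\mathfrak{f})$ generating a finite-index free $V$ with $\delta([\varepsilon_{\tau(1)} \mid \varepsilon_{\tau(2)}]) = \sign(\tau)$ for all $\tau \in S_2$, so that $\mathcal{D}_V = \overline{C}_{e_1}([\varepsilon_1 \mid \varepsilon_2]) \sqcup \overline{C}_{e_1}([\varepsilon_2 \mid \varepsilon_1])$ is a genuine Colmez domain. The decisive step — carried out in \S 6.2, and the one point where this argument diverges from the earlier attempt of \cite{tsosie2018compatibility} — is to impose further conditions on $\varepsilon_1, \varepsilon_2$, and on the choice of $\pi$ (which is allowed by Proposition \ref{prop3.19}), ensuring that the finitely many translates of $\mathcal{D}_V$ that actually arise in the computation — essentially $\pi^{\pm 1}\mathcal{D}_V$, $\varepsilon_i^{\pm 1}\mathcal{D}_V$ and a bounded number of their products — meet $\mathcal{D}_V$ and its immediate neighbours in an a priori controlled way. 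This is precisely the input that the false Statement \ref{STconj} was meant to supply for an arbitrary Shintani domain; the gain here is that by engineering $V$ we recover just enough control for the specific translates we actually need.

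With $V$, $\mathcal{D}_V$, $\pi$ and $\vartheta'$ in hand, I would unwind the analytic side. Representing $\vartheta'$ by (a multiple of) the fundamental cycle of the rank-$3$ group $V \oplus \langle \pi \rangle$ built from $\varepsilon_1, \varepsilon_2, \pi$ in the bar resolution and capping the degree-$(n-1)$ cocycle $\nu_{\mathfrak{b},\lambda}^\mathfrak{p}$ against it, one obtains a $1$-cycle whose coefficients are the distributions $\nu$ attached to the perturbed cones $\overline{C}_{e_1}$ on pairs of the three generators and their translates; summing the $\pi$-translates reassembles these into the measure $\nu(\mathfrak{b},\mathcal{D}_V,\cdot)$ on $\mathcal{O}_\mathfrak{p}$ together with a $\langle\pi\rangle$-boundary term. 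Pairing the resulting $1$-cycle against $c_g$ for $g = o_\mathfrak{p}$ and $g = l_\mathfrak{p}$, via the evaluation pairing $C_c(F_\mathfrak{p},K) \times \mathrm{Meas}(F_\mathfrak{p},K) \to K$ and the explicit cocycle formula \eqref{zeqn}, I expect the outcome to match the (Colmez-domain form of) Definition \ref{defnformula} factor for factor: the $\langle\pi\rangle$-direction yields $g$ of $\pi^{\zeta_{R,T}(H/F,\mathfrak{b},0)}$, the part of the measure supported on $\mathbb{O}$ yields $g$ of the multiplicative integral $\multint_{\mathbb{O}} x\, d\nu(\mathfrak{b},\mathcal{D}_V,x)$ (the $f - xf$ correction terms in \eqref{zeqn} precisely compensating for the failure of $g$ to extend continuously to $0 \in F_\mathfrak{p}$), and the $\varepsilon_i$-directions yield $g$ of the error term $\epsilon(\mathfrak{b},\mathcal{D}_V,\pi)$. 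Summing over $[\mathfrak{b}] \in G_\mathfrak{f}/\langle \mathfrak{p}\rangle$ with the weights $\chi(\mathfrak{b})/(1-\chi(\lambda)l)$ then gives $c_g \cap (\omega_{\chi,\lambda}^\mathfrak{p} \cap \vartheta') = C\cdot g(\mathcal{U}_{\mathfrak{p},\chi})$ for a nonzero constant $C$ independent of $g$, and the theorem follows by taking the ratio in Definition \ref{grossanreg2}.

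The heart of the argument — and where I expect the genuine difficulty — lies in justifying the last two identifications above. Once the cap products are expanded, one is left with equalities that must hold between explicitly written Shintani sets obtained by intersecting translates of $\mathcal{D}_V$ by $\pi$, $\varepsilon_1$, $\varepsilon_2$ and their inverses; the error-term factor, in particular, depends on these set identities holding cone by cone. This is exactly where the refined choice of $V$ from \S 6.2 is used, and without an a priori bound on how far $\mathcal{D}_V$ is displaced by the relevant translates — the property that genuinely fails in general, as the appendix shows — the bookkeeping does not close up. I therefore anticipate that the bulk of the technical work sits in \S 6.2 (producing $\varepsilon_1, \varepsilon_2$ with the required translation properties) and in the subsequent cone-by-cone verification of the Shintani-set identities, with the cohomological unwinding of the cap products being comparatively formal once those identities are in place.
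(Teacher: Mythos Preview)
Your proposal is correct and follows essentially the same approach as the paper: reduce to a finite-index free subgroup $V=\langle\varepsilon_1,\varepsilon_2\rangle$ via the cores/res formalism (Proposition \ref{firstchangeofdom}), engineer $\varepsilon_1,\varepsilon_2,\pi$ so that $\pi^{-1}\mathcal{B}$ is trapped in a bounded union of $V$-translates of the Colmez domain $\mathcal{B}$ (Proposition \ref{propofcolmezdom2} and Proposition \ref{whereispiinverse}), then unwind the cap product against the explicit bar-cycle for $V\oplus\langle\pi\rangle$ and match the three resulting pieces to the $\pi$-power, the multiplicative integral, and the error term of $u_{\mathfrak{p},\lambda}(\mathfrak{b},\mathcal{B})$ via cone-by-cone Shintani-set identities (\S 6.3). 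Two small clarifications: the passage from $\mathcal{D}$ to $\mathcal{D}_V$ on the unit side is not that the value is ``unchanged'' but that it is raised to the power $[E_+(\mathfrak{f}):V]$ (and the nontrivial part of Proposition \ref{firstchangeofdom} is precisely the error-term comparison, which already requires a mild size condition on $V$); and the final Shintani-set identities split into two cases according to whether $\pi^{-1}\mathcal{B}$ needs the $\varepsilon_2^2$-translate or not, a bifurcation you should expect when carrying out the verification.
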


It is worth noting that in the $n=2$ case the proof of the result is much shorter due to the simple nature of the Shintani domains when $F$ is of degree $2$. The main challenge we have to overcome is working with Shintani domains which live in $\mathbb{R}^3_+$ rather than in $\mathbb{R}_+^2$. This difficulty is expanded on further in the later sections.

Theorem \ref{thmforneq3} combined with (Theorem 1.6, \cite{intgrossstark}) of Dasgupta-Kakde allows us to prove Conjecture \ref{conjDS} when $F$ is of degree $3$. Thus, we have the following corollary. We note that the corollary below closes some cases of (Conjecture 3.1, \cite{MR3968788}). In particular, for the $1 \times 1$ principle minors when $F$ is a cubic field.

\begin{corollary}
Let $F$ be a totally real field of degree $3$ and suppose that condition (\ref{assumptionforDK}) holds. Then, for each $\mathfrak{p} \in S_p$, we have $\mathcal{R}_p(\chi)_\mathfrak{p}= \mathcal{R}_p(\chi)_{\mathfrak{p}, \text{an}}$.
\end{corollary}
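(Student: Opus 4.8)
The plan is to prove Theorem~\ref{thmforneq3} by reducing both sides of the claimed identity to an equality of Shintani sets and then exploiting carefully chosen translation properties of a Colmez domain. First I would set up the cohomological side: by Definition~\ref{grossanreg2} the quantity $\mathcal{R}_p(\chi)_{\mathfrak{p},\text{an}}$ is a ratio of cap products $c_{l_\mathfrak{p}} \cap (\omega_{\chi,\lambda}^\mathfrak{p} \cap \vartheta')$ and $c_{o_\mathfrak{p}} \cap (\omega_{\chi,\lambda}^\mathfrak{p} \cap \vartheta')$, which are elements of $H_0$ and hence numbers in $K$. I would make the class $\omega_{\chi,\lambda}^\mathfrak{p} \cap \vartheta'$ explicit by representing $\vartheta' \in H_n(E_+(\mathfrak{f})_\mathfrak{p},\mathbb{Z})$ via the bar resolution with a chosen basis $\varepsilon_1,\dots,\varepsilon_{n-1}$ of a finite-index free subgroup $V \subset E_+(\mathfrak{f})$ together with the extra generator coming from $\mathfrak{p}$ (using that $E_+(\mathfrak{f})_\mathfrak{p}$ has rank $n$), and then use Lemma~\ref{colmezlemma} of Colmez to turn the cocycle $\nu_{\mathfrak{b},\lambda}^\mathfrak{p}$ evaluated on these generators into the measure $\nu(\mathfrak{b},\mathcal{D}_V,-)$ attached to an explicit Colmez domain $\mathcal{D}_V = \bigcup_{\tau \in S_{n-1}} \overline{C}_{\alpha}([\varepsilon_{\tau(1)} \mid \dots \mid \varepsilon_{\tau(n-1)}])$. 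Pairing further with $c_{o_\mathfrak{p}}$ and $c_{l_\mathfrak{p}}$ and unwinding Definition~\ref{1cocycle} should produce expressions of the shape $o_\mathfrak{p}$ (resp. $l_\mathfrak{p}$) applied to the multiplicative-integral/error-term combination appearing in $u_{\mathfrak{p},\lambda}(\mathfrak{b},\mathcal{D})$ from Definition~\ref{defnformula}, so that formally $\mathcal{R}_p(\chi)_{\mathfrak{p},\text{an}}$ matches $-l_\mathfrak{p}(\mathcal{U}_{\mathfrak{p},\chi})/o_\mathfrak{p}(\mathcal{U}_{\mathfrak{p},\chi})$ once the Shintani data on both sides agree.

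Next I would reduce from the full unit group to a subgroup. Using the transfer/corestriction between $H^{n-1}(E_+(\mathfrak{f})_\mathfrak{p},-)$ and $H^{n-1}(V_\mathfrak{p},-)$, and the fact (stated in the summary) that one can replace $\mathcal{D}$ by a fundamental domain $\mathcal{D}_V$ for $V$ at the cost of summing over coset representatives, I would show both the analytic formula $u_{\mathfrak{p},\lambda}(\mathfrak{b},\mathcal{D})$ and the cohomological ratio are unchanged when computed with $\mathcal{D}_V$. This is exactly the step that makes it legitimate to use the explicit Colmez domain from Lemma~\ref{colmezlemma}; the point is that a generic basis of $E_+(\mathfrak{f})$ need not satisfy the sign condition $\delta([\varepsilon_{\tau(1)} \mid \dots \mid \varepsilon_{\tau(n-1)}]) = \sign(\tau)$, whereas after passing to a suitable finite-index $V$, Colmez's results in \cite{MR922806} guarantee generators that do.

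With $n=3$ fixed, the core computation is to expand the error term $\epsilon(\mathfrak{b},\mathcal{D}_V,\pi) = \prod_{\epsilon \in V} \epsilon^{\nu(\mathfrak{b},\,\epsilon \mathcal{D}_V \cap \pi^{-1}\mathcal{D}_V,\,\mathcal{O}_\mathfrak{p})}$ and the multiplicative integral $\multint_{\mathbb{O}} x\, d\nu(\mathfrak{b},\mathcal{D}_V,x)$ in terms of the cone decomposition of $\mathcal{D}_V$, and to match this term by term with the result of evaluating the cap product $c_{l_\mathfrak{p}} \cap (\omega_{\chi,\lambda}^\mathfrak{p} \cap \vartheta')$ on the bar-resolution cycle. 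After using Lemma~\ref{changeofvariable} to move the ideal $\mathfrak{b}$ and the element $\pi$ (or $\beta$) around, the whole identity collapses to an equality between two Shintani sets: the translate of $\mathcal{D}_V$ appearing on the analytic side (roughly $\varepsilon \mathcal{D}_V \cap \pi^{-1}\mathcal{D}_V$ and related intersections with $\mathbb{O}$) and the corresponding piece coming from the boundary terms of the homogeneous cocycle $\nu_{\mathfrak{b},\lambda}^\mathfrak{p}$ on the cohomological side. I would establish this by a direct decomposition into the six cones $\overline{C}_{e_1}([\varepsilon_{\tau(1)} \mid \varepsilon_{\tau(2)}])$, $\tau \in S_2$, tracking signs via $\delta$.

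The main obstacle, flagged explicitly in the introduction and the appendix, is the lack of \emph{a priori} control over how far a Colmez domain is translated by an element it contains: for the equality of Shintani sets to hold one needs something like a bound ensuring $\varepsilon \mathcal{D}_V$ meets $\pi^{-1}\mathcal{D}_V$ only for a controlled set of $\varepsilon \in V$, and the naive translation property (Lemma~2.1.3 of \cite{tsosie2018compatibility}, our Statement~\ref{STconj}) is false. So the hard part will be, in \S6.2, to refine the choice of $\varepsilon_1,\varepsilon_2$ (beyond Colmez's sign condition) so that $\mathcal{D}_V$ has enough extra structure — concretely, some convexity or star-shapedness relative to the ray of $\alpha$, plus explicit inequalities on the coordinates of $\varepsilon_1,\varepsilon_2$ — to force the needed finiteness and the term-by-term matching of cones. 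Once that combinatorial-geometric input is in place, the remaining work is the bookkeeping of signs, the change-of-variables from Lemma~\ref{changeofvariable}, and assembling the $\chi$-twisted sums over $G_\mathfrak{f}/\langle \mathfrak{p} \rangle$ into $\mathcal{U}_{\mathfrak{p},\chi}$.
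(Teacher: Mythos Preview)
Your proposal sketches a proof of Theorem~\ref{thmforneq3}, not of the Corollary in question, and this is a genuine gap. Theorem~\ref{thmforneq3} only asserts the \emph{consistency} of the two conjectural formulas, i.e.\ the identity
\[
\mathcal{R}_p(\chi)_{\mathfrak{p},\text{an}} \;=\; -\,\frac{l_\mathfrak{p}(\mathcal{U}_{\mathfrak{p},\chi})}{o_\mathfrak{p}(\mathcal{U}_{\mathfrak{p},\chi})}.
\]
The Corollary asserts something stronger, namely that this common value actually equals the algebraic quantity $\mathcal{R}_p(\chi)_\mathfrak{p} = -\,l_\mathfrak{p}(u_{\mathfrak{p},\chi})/o_\mathfrak{p}(u_{\mathfrak{p},\chi})$. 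For that one needs to know that Dasgupta's explicit element $u_{\mathfrak{p},\lambda}(\mathfrak{b},\mathcal{D})$ is (up to a root of unity) the genuine Gross--Stark unit, so that $\mathcal{U}_{\mathfrak{p},\chi}$ is a valid choice of $u_{\mathfrak{p},\chi}$ in the ratio. This is precisely the content of Theorem~1.6 of Dasgupta--Kakde \cite{intgrossstark}, and it is the reason the hypothesis~(\ref{assumptionforDK}) appears in the statement of the Corollary. You never invoke this input; without it, even a perfect execution of your plan leaves the Corollary unproved.

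In the paper the proof of the Corollary is accordingly a single line: apply Theorem~1.6 of \cite{intgrossstark} to Theorem~\ref{thmforneq3}. Your outline is, in substance, a reasonable summary of how the paper proves Theorem~\ref{thmforneq3} (reduction to a finite-index subgroup $V$ via restriction/corestriction, choice of $\varepsilon_1,\varepsilon_2$ refining Colmez's sign condition to control translates of the Colmez domain, and reduction to equalities of Shintani sets using Lemma~\ref{changeofvariable}), but it belongs to the proof of that theorem, not of this Corollary.
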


\begin{proof}
We apply Theorem 1.6 in \cite{intgrossstark} to Theorem \ref{thmforneq3}.
\end{proof}

We now observe that, due to Definition \ref{grossanreg2}, Conjecture \ref{thmforalln} follows immediately from the following conjecture.
\begin{conjecture}
Let $[\mathfrak{b}] \in G_\mathfrak{f}$ and $\mathcal{D}$ a Shintani domain. Then, 
\[ l_\mathfrak{p}(u_{\mathfrak{p}, \lambda}(\mathfrak{b},  \mathcal{D}))= \pm c_{l_\mathfrak{p}} \cap (\omega_{\mathfrak{f}, \mathfrak{b}, \lambda}^\mathfrak{p} \cap \vartheta^\prime), \quad o_\mathfrak{p}(u_{\mathfrak{p}, \lambda}(\mathfrak{b},  \mathcal{D})) = \pm c_{o_\mathfrak{p}} \cap (\omega_{\mathfrak{f}, \mathfrak{b}, \lambda}^\mathfrak{p} \cap \vartheta^\prime). \]
\label{finalconj}
\end{conjecture}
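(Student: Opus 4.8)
The plan is to prove Conjecture \ref{finalconj} by explicitly unwinding both cap products using a concrete chain-level representative for the generator $\vartheta^\prime$ of $H_n(E_+(\mathfrak{f})_\mathfrak{p},\mathbb{Z})$, and then comparing the resulting combinatorial sums with the definitions of $o_\mathfrak{p}(u_{\mathfrak{p},\lambda}(\mathfrak{b},\mathcal{D}))$ and $l_\mathfrak{p}(u_{\mathfrak{p},\lambda}(\mathfrak{b},\mathcal{D}))$. Since $o_\mathfrak{p}$ is a homomorphism to $\mathbb{Z}$ while $l_\mathfrak{p}$ lands in $\mathbb{Z}_p$, and both $c_{o_\mathfrak{p}}$, $c_{l_\mathfrak{p}}$ are built from the Spie{\ss} cocycle $z_{f,g}$ of Definition \ref{1cocycle} by the \emph{same} formula, it suffices to treat a general continuous homomorphism $g:F_\mathfrak{p}^\ast\to K$ and prove the single identity
\[
 \langle c_g , \omega_{\mathfrak{f},\mathfrak{b},\lambda}^\mathfrak{p}\cap\vartheta^\prime\rangle \;=\; \pm\, g\bigl(u_{\mathfrak{p},\lambda}(\mathfrak{b},\mathcal{D})\bigr),
\]
specialising at the end to $g=o_\mathfrak{p}$ and $g=l_\mathfrak{p}$ (for $g=l_\mathfrak{p}$ one must keep track of the multiplicative integral, using that $l_\mathfrak{p}$ of a multiplicative integral equals the additive integral of $l_\mathfrak{p}$ against the same measure).

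First I would reduce from $E_+(\mathfrak{f})$ to a finite-index free rank-$n$ subgroup, matching the Colmez-domain reduction sketched in \S1.2 and Lemma \ref{colmezlemma}: choose generators $\varepsilon_1,\dots,\varepsilon_{n-1}$ of a finite-index $V\subset E_+(\mathfrak{f})$ together with $\pi$ (so $\langle\varepsilon_1,\dots,\varepsilon_{n-1},\pi\rangle$ is finite index of rank $n$ in $E_+(\mathfrak{f})_\mathfrak{p}$), satisfying the sign conditions $\delta([\varepsilon_{\tau(1)}\mid\cdots\mid\varepsilon_{\tau(n-1)}])=\sign(\tau)$, so that $\mathcal{D}_V=\bigcup_{\tau\in S_{n-1}}\overline{C}_{e_1}([\varepsilon_{\tau(1)}\mid\cdots\mid\varepsilon_{\tau(n-1)}])$ is an explicit Colmez domain. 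The bar-resolution fundamental class for the rank-$n$ group $\langle\varepsilon_1,\dots,\varepsilon_{n-1},\pi\rangle$ is then $\vartheta^\prime=\sum_{\sigma\in S_n}\sign(\sigma)[\,y_{\sigma(1)}\mid\cdots\mid y_{\sigma(n)}\,]$ where $(y_1,\dots,y_n)=(\varepsilon_1,\dots,\varepsilon_{n-1},\pi)$; capping $\omega_{\mathfrak{f},\mathfrak{b},\lambda}^\mathfrak{p}$ (represented by $\nu_{\mathfrak{b},\lambda}^\mathfrak{p}$, which evaluates on cones $\overline{C}_{e_1}(y_{\sigma(1)},\dots,y_{\sigma(n)})$) against $\vartheta^\prime$ produces, by the very formula for $\nu_{\mathfrak{b},\lambda}^\mathfrak{p}$ with its $\delta$ factor, a measure supported on $F_\mathfrak{p}$ which is (a version of) $\nu(\mathfrak{b},\mathcal{D}_V,\cdot)$; this is where Proposition \ref{prop3.12} and the $\lambda$-good hypothesis guarantee integrality, and where the identities relating $\nu(\mathfrak{b},\mathcal{D},\mathbb{O})$, $\nu(\mathfrak{b},\mathcal{D},\mathcal{O}_\mathfrak{p})$ to partial zeta values get used. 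Then capping with $c_g$: the $H^1$-class $c_g$ paired against a $1$-chain built from $\pi$ contributes the factor $g(\pi)^{\zeta_{R,T}(H/F,\mathfrak{b},0)}$ (matching the $\pi^{\zeta_{R,T}(H/F,\mathfrak{b},0)}$ term in Definition \ref{defnformula} after applying $g$), the part built from the $\varepsilon_i$ contributes $\prod_\epsilon g(\epsilon)^{\nu(\mathfrak{b},\epsilon\mathcal{D}_V\cap\pi^{-1}\mathcal{D}_V,\mathcal{O}_\mathfrak{p})}$ (the image under $g$ of the error term $\epsilon(\mathfrak{b},\mathcal{D}_V,\pi)$), and the ``$y$'' part of $z_{f,g}$ — the genuinely non-coboundary piece — contributes $\int g\,d\nu(\mathfrak{b},\mathcal{D}_V,\cdot)$ over $\mathbb{O}$, matching $g$ of the multiplicative integral.

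The main obstacle, exactly as the introduction flags, is controlling translates of the Colmez domain: the combinatorial identification of the capped-down measure with $\nu(\mathfrak{b},\epsilon\mathcal{D}_V\cap\pi^{-1}\mathcal{D}_V,\cdot)$ requires rewriting sums indexed by $S_n$-permutations of $(\varepsilon_1,\dots,\varepsilon_{n-1},\pi)$-cones as sums over $\epsilon\in V$ of intersections $\epsilon\mathcal{D}_V\cap\pi^{-1}\mathcal{D}_V$, and this hinges on a translation/tessellation property of $\mathcal{D}_V$ that fails in general (this is precisely the flawed Lemma 2.1.3 of \cite{tsosie2018compatibility} discussed in the appendix). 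So the real work — carried out in \S6.2 — is to choose $\varepsilon_1,\varepsilon_2$ (for $n=3$), building on Colmez \cite{MR922806}, satisfying additional explicit sign/size conditions that force $\epsilon\mathcal{D}_V\cap\pi^{-1}\mathcal{D}_V$ to be nonempty only for a controlled finite set of $\epsilon$, and to verify that with these choices the cone-by-cone bookkeeping goes through. Once the measure identification is in place, applying Lemma \ref{changeofvariable} (change of variables for Shintani zeta functions) to pass between $\epsilon\mathcal{D}_V$ and $\mathcal{D}_V$, and finally pushing the $V$-level identity back up to $E_+(\mathfrak{f})$ via the reduction in the first paragraph, completes the proof; the $\pm$ sign is exactly $\sign$ of the change-of-basis from $\vartheta^\prime$ to the chosen ordered basis.
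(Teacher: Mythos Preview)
Your proposal is essentially correct and follows the same strategy as the paper. Note, however, that the statement you are addressing is Conjecture~\ref{finalconj}, which the paper does \emph{not} prove in full generality: only the case $n=3$ is established (Theorem~\ref{finalprop}), and your proposal correctly identifies that the translation-control step (your penultimate paragraph) is where the restriction to $n=3$ enters. The paper's actual argument proceeds exactly as you outline: the reduction to a finite-index $V\subset E_+(\mathfrak{f})$ is Proposition~\ref{firstchangeofdom} (\S6.1), the careful choice of $\varepsilon_1,\varepsilon_2,\pi$ and the resulting bound on translates is \S6.2 (culminating in Proposition~\ref{whereispiinverse}), and the explicit cap-product computation in \S6.3 uses precisely the bar-resolution generator $\vartheta_V'=\sum_{\tau\in S_3}\sign(\tau)[\varepsilon_{\tau(1)}\mid\varepsilon_{\tau(2)}\mid\varepsilon_{\tau(3)}]$ and the cocycle $z_g$ with $f=\mathbbm{1}_{\pi\mathcal{O}_\mathfrak{p}}$, yielding the three pieces you describe (the $\pi$-power, the multiplicative integral over $\mathbb{O}$, and the error term), after which the remaining work is the Shintani-set identities (\ref{toshow1}) and (\ref{toshow2}).
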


Thus, to prove our main result (Theorem \ref{thmforneq3}), we prove the following theorem.

\begin{theorem}
Let $F$ be of degree $3$, $[\mathfrak{b}] \in G_\mathfrak{f}$ and $\mathcal{D}$ a Shintani domain. Then, 
\[ l_\mathfrak{p}(u_{\mathfrak{p}, \lambda}(\mathfrak{b},  \mathcal{D}))= \pm c_{l_\mathfrak{p}} \cap (\omega_{\mathfrak{f}, \mathfrak{b}, \lambda}^\mathfrak{p} \cap \vartheta^\prime), \quad o_\mathfrak{p}(u_{\mathfrak{p}, \lambda}(\mathfrak{b},  \mathcal{D})) = \pm c_{o_\mathfrak{p}} \cap (\omega_{\mathfrak{f}, \mathfrak{b}, \lambda}^\mathfrak{p} \cap \vartheta^\prime). \]
\label{finalprop}
\end{theorem}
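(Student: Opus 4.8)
The plan is to unwind both sides of the two identities and reduce them to a comparison of explicit cocycle evaluations, then exploit the structure of Shintani/Colmez domains in $\mathbb{R}_+^3$ to finish. Since the statement is symmetric in $o_\mathfrak{p}$ and $l_\mathfrak{p}$ (both arise from continuous homomorphisms $g : F_\mathfrak{p}^\ast \to K$, namely $o_\mathfrak{p}$ and $l_\mathfrak{p}$, via Definition \ref{1cocycle}), I would prove a single statement: for every continuous homomorphism $g : F_\mathfrak{p}^\ast \to K$,
\[ g\bigl(u_{\mathfrak{p}, \lambda}(\mathfrak{b}, \mathcal{D})\bigr) = \pm\, c_g \cap (\omega_{\mathfrak{f}, \mathfrak{b}, \lambda}^\mathfrak{p} \cap \vartheta^\prime), \]
and then specialise to $g = o_\mathfrak{p}$ and $g = l_\mathfrak{p}$. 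The first step is to compute the right-hand side: using that $\vartheta^\prime$ generates $H_n(E_+(\mathfrak{f})_\mathfrak{p}, \mathbb{Z}) \cong \mathbb{Z}$, represent $\vartheta^\prime$ by an explicit $n$-cycle built from a chosen $\mathbb{Z}$-basis $\langle \pi, \varepsilon_1, \varepsilon_2 \rangle$ of $E_+(\mathfrak{f})_\mathfrak{p}$ (with $\pi$ the generator of $\mathfrak{p}^e$ and $\varepsilon_1, \varepsilon_2$ a basis of a finite-index $V \subset E_+(\mathfrak{f})$), cap with the Eisenstein cocycle $\nu_{\mathfrak{b}, \lambda}^\mathfrak{p}$ to land in $H^1(F_\mathfrak{p}^\ast, \mathrm{Meas}(F_\mathfrak{p}, K))$ — here one uses the explicit formula $\nu_{\mathfrak{b},\lambda}^\mathfrak{p}(x_1,\dots,x_n)(U) = \delta(x_1,\dots,x_n)\zeta_{R,\lambda}(\mathfrak{b}, \overline{C}_{e_1}(x_1,\dots,x_n), U, 0)$ and the Colmez Lemma \ref{colmezlemma} to recognise the resulting measure as supported on a Colmez domain — and finally cap with $c_g = [z_{f,g}]$ and evaluate. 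The cap product with $z_{f,g}$, by (\ref{zeqn}), produces a sum of two terms: one of the shape $\sum g(x) \cdot (\text{measure of a translate})$ and one of the shape $\int_{F_\mathfrak{p}} (\dots) \, d\nu$, and the claim will be that this matches $g$ applied to the three factors of $u_{\mathfrak{p},\lambda}(\mathfrak{b},\mathcal{D})$ in Definition \ref{defnformula}: the error term $\epsilon(\mathfrak{b},\mathcal{D},\pi)$, the factor $\pi^{\zeta_{R,T}(H/F,\mathfrak{b},0)}$, and the multiplicative integral $\multint_\mathbb{O} x \, d\nu(\mathfrak{b},\mathcal{D},x)$.

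The second step is the bookkeeping that matches the three pieces. Applying $g$ to Definition \ref{defnformula} gives $\sum_{\epsilon \in E_+(\mathfrak{f})} \nu(\mathfrak{b}, \epsilon\mathcal{D} \cap \pi^{-1}\mathcal{D}, \mathcal{O}_\mathfrak{p})\, g(\epsilon) + \zeta_{R,T}(H/F,\mathfrak{b},0)\, g(\pi) + \int_\mathbb{O} g(x)\, d\nu(\mathfrak{b},\mathcal{D},x)$. The plan is to recognise the $g(\epsilon)$-sum and the $g(\pi)$-term together as the "$\sum (xf)(y) g(x)$" part of the cap with $z_{f,g}$ after choosing $f$ appropriately (with $f(0) = 1$, exploiting the choice-independence of $c_g$), and the $\int_\mathbb{O} g\, d\nu$ term as the "$((f - xf)\cdot g)$" part, using the relations recorded before Definition of Colmez domains, namely $\nu(\mathfrak{b},\mathcal{D},\mathbb{O}) = 0$ and $\nu(\mathfrak{b},\mathcal{D},\mathcal{O}_\mathfrak{p}) = \zeta_{R,T}(H/F,\mathfrak{b},0)$, to reconcile the domain $\mathbb{O} = \mathcal{O}_\mathfrak{p} - \pi\mathcal{O}_\mathfrak{p}$ with $\mathcal{O}_\mathfrak{p}$. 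Here the change-of-variable Lemma \ref{changeofvariable} is the workhorse: it converts $\zeta_{R,\lambda}(\mathfrak{bq}, D, U, 0)$ into $\zeta_{R,\lambda}(\mathfrak{b}, \beta D, \beta U, 0)$ and lets one move translates of $\mathcal{D}$ and rescalings of $U$ around at will, which is exactly what is needed to identify $\epsilon\mathcal{D} \cap \pi^{-1}\mathcal{D}$ with the boundary-correction measures that arise when passing between the Shintani domain $\mathcal{D}$ and the explicit Colmez domain $\mathcal{D}_V$ from Lemma \ref{colmezlemma}. The reduction to $V$ announced in the introduction — that it suffices to prove the formulas agree using a Colmez domain $\mathcal{D}_V$ — is invoked here so that one may assume $\mathcal{D} = \mathcal{D}_V = \bigcup_{\tau \in S_2} \overline{C}_{e_1}([\varepsilon_{\tau(1)} \mid \varepsilon_{\tau(2)}])$, whose decomposition into signed cones is exactly the one appearing in $\nu_{\mathfrak{b},\lambda}^\mathfrak{p}$.

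The third step is the genuinely hard part, and it is the one flagged in the introduction as the novelty of the paper: controlling the translates of $\mathcal{D}_V$. Carrying out the cap-product computation honestly forces one to compare Shintani sets of the form $\epsilon \mathcal{D}_V \cap \pi^{-1}\mathcal{D}_V$ and various $\beta$-translates, and in $\mathbb{R}_+^3$ there is in general no a priori bound on which $\epsilon \in V$ can contribute (this is precisely where Tsosie's argument breaks, as the appendix's counterexample to his Lemma 2.1.3 shows). So I would not attempt to bound the translates geometrically in general; instead, following the strategy sketched in \S 6.2, I would first choose the basis $\varepsilon_1, \varepsilon_2$ of $V$ very carefully — using Colmez's construction in \cite{MR922806} to get the sign condition $\delta([\varepsilon_{\tau(1)} \mid \varepsilon_{\tau(2)}]) = \sign(\tau)$ for all $\tau \in S_2$, and then imposing the additional numerical conditions (developed in \S 6.2) that pin down the finitely many $\epsilon$ for which $\epsilon \mathcal{D}_V \cap \pi^{-1}\mathcal{D}_V$ is nonempty and make each such intersection a manageable union of Shintani cones. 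With this control in hand, each side of the two identities becomes a finite sum of cone-zeta-values that can be matched term by term via Lemma \ref{changeofvariable}, the sign of $\vartheta^\prime$ accounting for the $\pm$. I expect that establishing the existence of a basis with these extra properties, and verifying that it indeed tames the intersections, is where essentially all the difficulty lies; once that combinatorial-geometric input is secured, the cohomological comparison is a (lengthy but mechanical) unwinding of cap products.
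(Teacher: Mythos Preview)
Your proposal is correct and follows the paper's approach closely: reduce via restriction/corestriction to a finite-index subgroup $V \subset E_+(\mathfrak{f})$ (Proposition~\ref{firstchangeofdom}), choose $\varepsilon_1,\varepsilon_2$ generating $V$ carefully as in \S6.2 so that the translates $\epsilon\mathcal{B} \cap \pi^{-1}\mathcal{B}$ are controlled (Propositions~\ref{propofcolmezdom2} and~\ref{whereispiinverse}), then unwind the cap product $c_g \cap (\omega^\mathfrak{p}_{\mathfrak{f},\mathfrak{b},\lambda,V} \cap \vartheta'_V)$ explicitly using $f=\mathbbm{1}_{\pi\mathcal{O}_\mathfrak{p}}$ and match the resulting terms to the three factors of $u_{\mathfrak{p},\lambda}$ via Lemma~\ref{changeofvariable}, the final matching of the error term reducing to an equality of Shintani sets. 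One small point to tighten: the paper distinguishes the Colmez domain $\mathcal{D}_V$ (a union of $E_+(\mathfrak{f})$-translates of the given Shintani domain $\mathcal{D}$, used for the reduction in Proposition~\ref{firstchangeofdom}) from the explicit two-cone domain $\mathcal{B}=\overline{C}_{e_1}([\varepsilon_1\mid\varepsilon_2])\cup\overline{C}_{e_1}([\varepsilon_2\mid\varepsilon_1])$ (used for the actual computation), and passes between them via Proposition~\ref{canchangedom}; your write-up conflates these, and also implicitly uses that $g$ is trivial on $E_+(\mathfrak{f})$ in the reduction step, but both are easily repaired.
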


As indicated above, we are only able to show Conjecture \ref{finalconj} when $F$ is of degree $3$. Yet, the first step required in the proof can be done for $F$ of any degree. Thus, in \S 6.1 we keep $F$ of degree $n$.

\begin{remark}
If we take $g=\text{id}:F_\mathfrak{p}^\ast \rightarrow F_\mathfrak{p}^\ast$ in Definition \ref{1cocycle}, then Dasgupta-Spie\ss \ conjectured that 
\[ c_{\text{id}} \cap (\omega_{\mathfrak{f}, \mathfrak{b}, \lambda}^\mathfrak{p} \cap \vartheta^\prime) = \sigma_\mathfrak{b}(u_\lambda), \]
i.e., that we have a cohomological formula for the Gross-Stark unit.
\end{remark}

Having shown our main result, Theorem \ref{thmforneq3}. We show how the methods we have developed in fact allow us to show the following stronger result.

\begin{theorem}
Let $F$ be a totally real field of degree $3$. Then,
\[ c_{\text{id}} \cap (\omega_{\mathfrak{f}, \mathfrak{b}, \lambda}^\mathfrak{p} \cap \vartheta^\prime) = u_{\mathfrak{p}, \lambda}(\mathfrak{b},  \mathcal{D}), \]
where $\mathcal{D}$ is as in Theorem \ref{thmforneq3}.
\label{thmforequality}
\end{theorem}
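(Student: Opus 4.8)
The plan is to prove the multiplicative identity of Theorem~\ref{thmforequality} directly, refining the proof of Theorem~\ref{finalprop} so as to keep track of the $F_\mathfrak{p}^\ast$-valued cocycle itself rather than only of its images under $o_\mathfrak{p}$ and $l_\mathfrak{p}$. Indeed $o_\mathfrak{p}=\ord_\mathfrak{p}$ and $l_\mathfrak{p}=\log_p\circ\text{Norm}_{F_\mathfrak{p}/\mathbb{Q}_p}$ are obtained by post-composing $\text{id}\colon F_\mathfrak{p}^\ast\to F_\mathfrak{p}^\ast$ with continuous homomorphisms, so $c_{o_\mathfrak{p}}$ and $c_{l_\mathfrak{p}}$ are the images of $c_{\text{id}}$ under the induced maps on coefficients (this is immediate from the explicit formula~(\ref{zeqn})); since cap products are functorial in the coefficients, applying $\ord_\mathfrak{p}$, resp. $\log_p\circ\text{Norm}$, to the identity of Theorem~\ref{thmforequality} yields $o_\mathfrak{p}(u_{\mathfrak{p},\lambda}(\mathfrak{b},\mathcal{D}))=c_{o_\mathfrak{p}}\cap(\omega_{\mathfrak{f},\mathfrak{b},\lambda}^\mathfrak{p}\cap\vartheta^\prime)$, resp. $l_\mathfrak{p}(u_{\mathfrak{p},\lambda}(\mathfrak{b},\mathcal{D}))=c_{l_\mathfrak{p}}\cap(\omega_{\mathfrak{f},\mathfrak{b},\lambda}^\mathfrak{p}\cap\vartheta^\prime)$. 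Thus Theorem~\ref{thmforequality} recovers Theorem~\ref{finalprop} and hence the main Theorem~\ref{thmforneq3}; the ambiguous sign in Theorem~\ref{finalprop} corresponds to the two generators of $H_n(E_+(\mathfrak{f})_\mathfrak{p},\mathbb{Z})\cong\mathbb{Z}$, and Theorem~\ref{thmforequality} singles out the $\vartheta^\prime$ for which equality holds on the nose.

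The first step is the reduction to a Colmez domain, carried out for $F$ of general degree in \S6.1: using Lemma~\ref{changeofvariable} on the automorphic side and the transfer/corestriction formalism in group homology on the cohomological side (the cohomological quantity $c_{\text{id}}\cap(\omega_{\mathfrak{f},\mathfrak{b},\lambda}^\mathfrak{p}\cap\vartheta^\prime)$ does not even involve a Shintani domain), I would show that it suffices to prove the identity with $\mathcal{D}$ replaced by an explicit Colmez domain $\mathcal{D}_V$ for a well-chosen finite-index free subgroup $V=\langle\varepsilon_1,\varepsilon_2\rangle\subset E_+(\mathfrak{f})$ of rank $2$, both sides transforming compatibly under this replacement. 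The second step, in \S6.2 and specific to $n=3$, is to pin down the generators: following Colmez~\cite{MR922806} and Lemma~\ref{colmezlemma}, I would choose $\varepsilon_1,\varepsilon_2$ so that $\delta([\varepsilon_{\tau(1)}\mid\varepsilon_{\tau(2)}])=\sign(\tau)$ for all $\tau\in S_2$ — making $\mathcal{D}_V=\bigcup_{\tau\in S_2}\overline{C}_{e_1}([\varepsilon_{\tau(1)}\mid\varepsilon_{\tau(2)}])$ a genuine Colmez domain — and, crucially, impose further sign and inequality conditions that bound the translates $\varepsilon\mathcal{D}_V$ for the finitely many $\varepsilon\in V$ that will occur. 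Producing generators meeting all these constraints at once is the new ingredient.

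With these choices fixed I would unwind the cap products. Representing $\vartheta^\prime$ by an explicit fundamental cycle in the bar complex built from $\varepsilon_1,\varepsilon_2$ and $\pi$, I would compute $\omega_{\mathfrak{f},\mathfrak{b},\lambda}^\mathfrak{p}\cap\vartheta^\prime\in H_1(E_+(\mathfrak{f})_\mathfrak{p},\text{Meas}(F_\mathfrak{p},K))$, then cap with $c_{\text{id}}=[z_{f,\text{id}}]$ via~(\ref{zeqn}) and evaluate the resulting multiplicative integral over $F_\mathfrak{p}$. Sorting the output by support, the terms over $\mathbb{O}=\mathcal{O}_\mathfrak{p}-\pi\mathcal{O}_\mathfrak{p}$ should assemble into $\multint_\mathbb{O}x\,d\nu(\mathfrak{b},\mathcal{D}_V,x)$, the scalar part into $\pi^{\zeta_{R,\lambda}(H/F,\mathfrak{b},0)}$, and the part supported on $\pi\mathcal{O}_\mathfrak{p}$ — recording the discrepancy between the $\varepsilon_i\mathcal{D}_V$ and $\pi^{-1}\mathcal{D}_V$ — into the error term $\epsilon(\mathfrak{b},\mathcal{D}_V,\pi)$ of~(\ref{errorterm}), reproducing $u_{\mathfrak{p},\lambda}(\mathfrak{b},\mathcal{D}_V)$ of Definition~\ref{defnformula}. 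This last identification is not formal: it reduces to a finite list of equalities of Shintani sets, each asserting that a translate $\varepsilon\mathcal{D}_V\cap\pi^{-1}\mathcal{D}_V$, or a half-open boundary piece of one of the cones $\overline{C}_{e_1}$, equals a prescribed finite union of cones, and these are then verified from the translation control established in \S6.2.

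The hard part will be exactly that translation control. In general there is no bound on how far a Colmez domain is displaced by an element it contains, and without such a bound the equalities of Shintani sets in the final step fail; the whole argument hinges on producing generators $\varepsilon_1,\varepsilon_2$ satisfying enough supplementary conditions — simultaneously with the Colmez sign condition — to recover it, which is why this is the technical core of the paper. Making the bookkeeping cohere is also delicate: the $\delta$-values, the orientation $\vartheta^\prime$, and the half-open boundaries encoded by the perturbation direction $e_1$ in the cones $\overline{C}_{e_1}$ must all be tracked consistently through the reduction to $V$, the unwinding of the cap products, and the final set-theoretic comparison. This difficulty is absent for $n=2$, where the Shintani domains live in $\mathbb{R}_+^2$ and are controlled for free, which is why that case is much shorter.
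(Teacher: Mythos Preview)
Your outline tracks the paper's argument closely, and in particular the computation at the level of the subgroup $V$ is exactly what \S 6.3 achieves: the equalities (\ref{toshow1}) and (\ref{toshow2}) are identities of integers, so once they are established one has $c_{\text{id}}\cap(\omega_{\mathfrak{f},\mathfrak{b},\lambda,V}^\mathfrak{p}\cap\vartheta_V^\prime)=u_{\mathfrak{p},\lambda}(\mathfrak{b},\mathcal{B})$ in $F_\mathfrak{p}^\ast$ itself, not merely after applying $o_\mathfrak{p}$ or $l_\mathfrak{p}$.

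The gap is in your first step. The reduction of Proposition~\ref{firstchangeofdom} shows, via corestriction on the cohomological side and the direct computation on the Dasgupta side,
\[
c_{\text{id}}\cap(\omega_{\mathfrak{f},\mathfrak{b},\lambda,V}^\mathfrak{p}\cap\vartheta_V^\prime)=\bigl(c_{\text{id}}\cap(\omega_{\mathfrak{f},\mathfrak{b},\lambda}^\mathfrak{p}\cap\vartheta^\prime)\bigr)^{[E_+(\mathfrak{f}):V]},\qquad u_{\mathfrak{p},\lambda}(\mathfrak{b},\mathcal{D}_V)=u_{\mathfrak{p},\lambda}(\mathfrak{b},\mathcal{D})^{[E_+(\mathfrak{f}):V]}.
\]
So ``both sides transforming compatibly'' means precisely that both are raised to the $[E_+(\mathfrak{f}):V]$th power. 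In Proposition~\ref{firstchangeofdom} the target is the additive group of $K$, which is torsion-free, and one cancels the factor; for $g=\text{id}$ the target is $F_\mathfrak{p}^\ast$, which has nontrivial roots of unity. Hence the identity for $V$ only yields
\[
u_{\mathfrak{p},\lambda}(\mathfrak{b},\mathcal{D})=\gamma\cdot\bigl(c_{\text{id}}\cap(\omega_{\mathfrak{f},\mathfrak{b},\lambda}^\mathfrak{p}\cap\vartheta^\prime)\bigr)
\]
for some $\gamma\in F_\mathfrak{p}^\ast$ with $\gamma^{[E_+(\mathfrak{f}):V]}=1$, and your proposal gives no mechanism to force $\gamma=1$.

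The paper's extra ingredient is exactly this: run the entire argument a second time for another subgroup $V^\prime\subset E_+(\mathfrak{f})$ with $\gcd\bigl([E_+(\mathfrak{f}):V],[E_+(\mathfrak{f}):V^\prime]\bigr)=1$. The constructions of \S 6.2 only impose lower bounds on $r$, $M_1$, and the exponent $l$, so there is enough freedom to choose $g_1,g_2$ and $g_1^\prime,g_2^\prime$ (and then $l,l^\prime$) so that the resulting indices are coprime. One then obtains the same equality up to a root of unity $\gamma^\prime$ with $(\gamma^\prime)^{[E_+(\mathfrak{f}):V^\prime]}=1$; since $\gamma=\gamma^\prime$ and the orders are coprime, $\gamma=1$. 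This coprime-index trick is the missing idea in your plan.
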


This combined with the recent work of Dasgupta-Kakde in \cite{intgrossstark} gives the following corollary.

\begin{corollary}
Let $F$ be a totally real field of degree $3$ and suppose that condition (\ref{assumptionforDK}) holds. Then,
\[ c_{\text{id}} \cap (\omega_{\mathfrak{f}, \mathfrak{b}, \lambda}^\mathfrak{p} \cap \vartheta^\prime) = \sigma_\mathfrak{b}(u_\lambda), \]
up to multiplication by a root of unity.
\label{corofcohmform}
\end{corollary}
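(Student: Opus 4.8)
The plan is to deduce Theorem~\ref{thmforequality} from the machinery already built up in the proof of Theorem~\ref{finalprop}. Recall that to prove Theorem~\ref{finalprop} one must compute the cap products $c_{l_\mathfrak{p}} \cap (\omega_{\mathfrak{f}, \mathfrak{b}, \lambda}^\mathfrak{p} \cap \vartheta^\prime)$ and $c_{o_\mathfrak{p}} \cap (\omega_{\mathfrak{f}, \mathfrak{b}, \lambda}^\mathfrak{p} \cap \vartheta^\prime)$ in terms of the multiplicative-integral formula defining $u_{\mathfrak{p},\lambda}(\mathfrak{b},\mathcal{D})$. The key observation is that the homomorphisms $o_\mathfrak{p}$ and $l_\mathfrak{p}$ enter the construction of $c_g$ in Definition~\ref{1cocycle} only through their \emph{values} as a homomorphism $g : F_\mathfrak{p}^\ast \to K$; nothing in the cocycle $z_{f,g}$ or in the cap-product computation actually uses that the target is $\mathbb{Z}$ or $\mathbb{Z}_p$. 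Consequently the same computation, carried out verbatim with $g = \mathrm{id} : F_\mathfrak{p}^\ast \to F_\mathfrak{p}^\ast$ in place of $o_\mathfrak{p}$ or $l_\mathfrak{p}$, will produce $c_{\mathrm{id}} \cap (\omega_{\mathfrak{f},\mathfrak{b},\lambda}^\mathfrak{p} \cap \vartheta^\prime) = \pm\, u_{\mathfrak{p},\lambda}(\mathfrak{b},\mathcal{D})$, because applying $o_\mathfrak{p}$ (resp.\ $l_\mathfrak{p}$) to this identity and using functoriality of $c_g$ in $g$ recovers exactly the two identities of Theorem~\ref{finalprop}.

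First I would record the functoriality statement precisely: for a continuous homomorphism $\phi : F_\mathfrak{p}^\ast \to K$, one has $\phi_\ast(c_{\mathrm{id}}) = c_\phi$ as classes in $H^1(F_\mathfrak{p}^\ast, C_c(F_\mathfrak{p}, K))$ (or the appropriate coefficient module), where $\phi_\ast$ is induced on coefficients. In particular $o_\mathfrak{p}$ and $l_\mathfrak{p}$ are obtained from $\mathrm{id}$ by postcomposition, so $c_{o_\mathfrak{p}} = (o_\mathfrak{p})_\ast c_{\mathrm{id}}$ and $c_{l_\mathfrak{p}} = (l_\mathfrak{p})_\ast c_{\mathrm{id}}$. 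Then I would observe that cap product with the fixed class $\omega_{\mathfrak{f},\mathfrak{b},\lambda}^\mathfrak{p} \cap \vartheta^\prime$ commutes with the coefficient map induced by $\phi$, so that
\[
\phi\bigl(c_{\mathrm{id}} \cap (\omega_{\mathfrak{f},\mathfrak{b},\lambda}^\mathfrak{p} \cap \vartheta^\prime)\bigr) = c_\phi \cap (\omega_{\mathfrak{f},\mathfrak{b},\lambda}^\mathfrak{p} \cap \vartheta^\prime).
\]
Applying this with $\phi = o_\mathfrak{p}$ and $\phi = l_\mathfrak{p}$ shows that the element $c_{\mathrm{id}} \cap (\omega_{\mathfrak{f},\mathfrak{b},\lambda}^\mathfrak{p} \cap \vartheta^\prime) \in F_\mathfrak{p}^\ast \otimes K$ has the same $o_\mathfrak{p}$ and $l_\mathfrak{p}$ as $u_{\mathfrak{p},\lambda}(\mathfrak{b},\mathcal{D})$, up to the ambiguous sign coming from the choice of generator $\vartheta^\prime$.

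The remaining point is that matching $o_\mathfrak{p}$ and $l_\mathfrak{p}$ is not quite enough to pin down an element of $F_\mathfrak{p}^\ast \otimes K$; one must either run the cap-product computation directly with $g = \mathrm{id}$ and read off the multiplicative integral, or argue that the pair $(o_\mathfrak{p}, l_\mathfrak{p})$ separates the relevant elements. I expect the cleanest route is the former: re-examine the explicit chain-level computation from the proof of Theorem~\ref{finalprop} — the decomposition of $\vartheta^\prime$ using the Colmez domain $\mathcal{D}_V$ of Lemma~\ref{colmezlemma}, the translation-control estimates from \S6.2, and the evaluation of the cocycle $z_{f,\mathrm{id}}$ against $\nu_{\mathfrak{b},\lambda}^\mathfrak{p}$ — and note that at every stage the formula with $g$ general specialises to the $\mathrm{id}$ case, yielding precisely the product $\epsilon(\mathfrak{b},\mathcal{D},\pi)\,\pi^{\zeta_{R,\lambda}(H/F,\mathfrak{b},0)}\,\multint_\mathbb{O} x\, d\nu(\mathfrak{b},\mathcal{D},x)$ that defines $u_{\mathfrak{p},\lambda}(\mathfrak{b},\mathcal{D})$. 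The main obstacle here is purely bookkeeping: one must check that none of the reductions in the earlier proof secretly exploited the discreteness of the target group (for instance in justifying convergence of the multiplicative integral or in identifying coboundaries), and that the sign $\pm$ is the same for both $l_\mathfrak{p}$ and $o_\mathfrak{p}$ so that it genuinely comes from $\vartheta^\prime$ and can be absorbed. Once that is verified, Theorem~\ref{thmforequality} follows, and Corollary~\ref{corofcohmform} is immediate by combining it with Theorem~1.6 of \cite{intgrossstark}, which identifies $u_{\mathfrak{p},\lambda}(\mathfrak{b},\mathcal{D})$ with $\sigma_\mathfrak{b}(u_\lambda)$ up to a root of unity under hypothesis~(\ref{assumptionforDK}).
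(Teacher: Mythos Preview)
Your overall plan---establish Theorem~\ref{thmforequality} and then invoke Theorem~1.6 of \cite{intgrossstark}---is exactly the paper's route to the corollary. The difficulty is in your sketch of Theorem~\ref{thmforequality} itself.

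The explicit computation in \S6.3 is carried out not for $E_+(\mathfrak{f})$ directly but for the auxiliary finite-index subgroup $V=\langle\varepsilon_1,\varepsilon_2\rangle$; the passage back to $E_+(\mathfrak{f})$ is handled by Proposition~\ref{firstchangeofdom}. That proposition, however, is stated only for homomorphisms $g$ that are \emph{trivial on $E_+(\mathfrak{f})$}---a hypothesis satisfied by $o_\mathfrak{p}$ and $l_\mathfrak{p}$ but not by $g=\mathrm{id}$. If you trace through its proof with $g=\mathrm{id}$, the corestriction argument still gives
\[
c_{\mathrm{id}}\cap(\omega_{\mathfrak{f},\mathfrak{b},\lambda,V}^\mathfrak{p}\cap\vartheta_V^\prime)
=\bigl(c_{\mathrm{id}}\cap(\omega_{\mathfrak{f},\mathfrak{b},\lambda}^\mathfrak{p}\cap\vartheta^\prime)\bigr)^{[E_+(\mathfrak{f}):V]},
\]
and one also has $u_{\mathfrak{p},\lambda}(\mathfrak{b},\mathcal{D}_V)=u_{\mathfrak{p},\lambda}(\mathfrak{b},\mathcal{D})^{[E_+(\mathfrak{f}):V]}$. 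So the $V$-level equality produced by \S6.3 only yields
\[
u_{\mathfrak{p},\lambda}(\mathfrak{b},\mathcal{D})=\gamma\cdot\bigl(c_{\mathrm{id}}\cap(\omega_{\mathfrak{f},\mathfrak{b},\lambda}^\mathfrak{p}\cap\vartheta^\prime)\bigr)
\]
for some root of unity $\gamma$ of order dividing $[E_+(\mathfrak{f}):V]$. This is not ``pure bookkeeping'': it is a genuine ambiguity that your outline does not address. The paper removes it by running the whole argument for \emph{two} subgroups $V,V'\subset E_+(\mathfrak{f})$ chosen with $\gcd([E_+(\mathfrak{f}):V],[E_+(\mathfrak{f}):V'])=1$; the resulting roots of unity must then both equal $1$.

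That said, for Corollary~\ref{corofcohmform} itself this gap is harmless, since the statement is only asserted up to a root of unity; your argument (without the coprime-index refinement) together with Dasgupta--Kakde already gives the corollary. The paper makes exactly this observation in the remark following Corollary~\ref{corofcohmform}. What your sketch does not deliver is the exact equality of Theorem~\ref{thmforequality} that you claim along the way.
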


\begin{proof}
We apply Theorem 1.6 in \cite{intgrossstark} to Theorem \ref{thmforequality}.
\end{proof}

\begin{remark}
Though it is not clear at this stage, the full strength of Theorem \ref{thmforequality} is not required to show Corollary \ref{corofcohmform}. In fact, Corollary \ref{corofcohmform} will follow from our proof of Theorem \ref{finalprop}. However, since we are able to show Theorem \ref{thmforequality} we have used this in the proof of Corollary \ref{corofcohmform}.
\end{remark}

\subsection{Reduction of the Shintani domain}

In this section we let $F$ be of degree $n>1$. One of the difficulties in proving Theorem \ref{finalprop} is being able to choose a suitably nice Shintani set to work with. We do not have a Shintani domain as the Shintani set we work with is a fundamental domain for a free finite index subgroup of $E_+(\mathfrak{f})$ rather than for the full $E_+(\mathfrak{f})$. In (Lemma 2.1, \cite{MR922806}), Colmez showed that it is possible to find units $g_1, \dots ,g_{n-1} \in E_+(\mathfrak{f})$ which can be used in Lemma \ref{colmezlemma} to give a Colmez domain for $\langle g_1, \dots, g_{n-1} \rangle$. However, this choice does not give us enough control over the domain for our calculations. The main work of this paper is making a more precise choice than is given by Colmez in the case $n=3$. This is done in $\S 6.2$. We are required to show that there exist units which keep the properties required by Colmez while also satisfying some additional necessary properties. It is worth noting that currently the methods used to make this choice do not appear to extend nicely to the case $n>3$.

In this section, we show that proving our result with a free finite index subgroup of $E_+(\mathfrak{f})$ is enough to prove the result with the full $E_+(\mathfrak{f})$. We make this idea precise below. This section provides the results which give us the freedom to choose a suitable Shintani set as mentioned above.

Let $V$ be a finite index free subgroup of $E_+(\mathfrak{f})$ of rank $n-1$. Recall that $\pi$ is totally positive, congruent $1$ modulo $\mathfrak{f}$ and such that $(\pi)= \mathfrak{p}^e$ where $e$ is the order of $\mathfrak{p}$ in $G_\mathfrak{f}$. Let $\mathcal{D}_V^\prime$ be a Shintani set which is a fundamental domain for the action of $V$ on $\mathbb{R}_+^n$. As before, we shall refer to such Shintani sets as Colmez domains. We now give the notation we use for the constructions in this case. Let $\vartheta_V^\prime \in H_n(V \oplus \langle \pi \rangle, \mathbb{Z})$  be a generator. For $x_1, \dots , x_n \in V \oplus \langle \pi \rangle $ and compact open $U \subset F_\mathfrak{p}$ we put 
\[ \nu_{\mathfrak{b}, \lambda,V}^\mathfrak{p}(x_1, \dots , x_n)(U) \coloneqq \delta(x_1, \dots , x_n) \zeta_{R, \lambda}(\mathfrak{b},\overline{C}_{e_1}(x_1, \dots , x_n),U,0). \]
As before, it follows from Theorem 2.6 of \cite{MR3351752} that $\nu_{\mathfrak{b}, \lambda,V}^\mathfrak{p}$ is a homogeneous $n-1$-cocycle on $V \oplus \langle \pi \rangle$ with values in the space of $\mathbb{Z}$-distribution on $F_\mathfrak{p}$. Hence, we obtain a class
\[ \omega_{\mathfrak{f}, \mathfrak{b}, \lambda,V}^\mathfrak{p} \coloneqq [ \nu_{\mathfrak{b}, \lambda,V}^\mathfrak{p} ] \in H^{n-1}(V \oplus \langle \pi \rangle , \text{Meas}(F_\mathfrak{p},K)).  \]
We also define
\[ u_{\mathfrak{p},\lambda}(\mathfrak{b}, \mathcal{D}_V^\prime) \coloneqq \prod_{\epsilon \in V} \epsilon^{\zeta_{R,\lambda}(\mathfrak{b}, \epsilon \mathcal{D}_V^\prime \cap \pi^{-1} \mathcal{D}_V^\prime, \mathcal{O}_\mathfrak{p},0) } \pi^{\zeta_{R,\lambda}( \mathfrak{b}, \mathcal{D}_V^\prime,\mathcal{O}_\mathfrak{p},0)} \multint_\mathbb{O} x d \nu (\mathfrak{b}, \mathcal{D}_V^\prime,x). \]
At this point we have not shown that this definition makes sense. In fact, it will not make sense for all possible fundamental domains. In Proposition \ref{firstchangeofdom} we show that for the particular choice of domain we require, the definition above is sensible. We require the following comparison result later.

\begin{proposition}
Let $\mathcal{K}$ and $\mathcal{K}^\prime$ be two Colmez domains for $V$ and $\lambda$ a prime of $F$ such that $\lambda$ is $\pi$-good for $\mathcal{K}$ and $\mathcal{K}^\prime$. If $\lambda$ is also good for $(\mathcal{K}, \mathcal{K}^\prime)$, then $u_{\mathfrak{p}, \lambda}(\mathfrak{b},  \mathcal{K}) = u_{\mathfrak{p}, \lambda}(\mathfrak{b},  \mathcal{K}^\prime)$.
\label{canchangedom}
\end{proposition}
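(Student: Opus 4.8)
The plan is to compare the three factors in the definition of $u_{\mathfrak{p},\lambda}(\mathfrak{b},\mathcal{K})$ and $u_{\mathfrak{p},\lambda}(\mathfrak{b},\mathcal{K}')$ one at a time, reducing everything to the change-of-variable identity of Lemma \ref{changeofvariable} together with the simultaneous-decomposition property of Colmez domains. First I would invoke the proposition following (\ref{eqn9}) to fix a simultaneous decomposition $\mathcal{K}=\bigcup_{i=1}^d C_i$, $\mathcal{K}'=\bigcup_{i=1}^d C_i'$ with $C_i'=\eta_i C_i$ for suitable $\eta_i\in V$; since $\lambda$ is assumed good for the pair $(\mathcal{K},\mathcal{K}')$ we may take this decomposition to be one witnessing that goodness, so that each cone-wise zeta value at $0$ is integral and all the manipulations below stay inside $\mathbb{Z}$. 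The multiplicative-integral factor is the easiest: additivity of $\nu$ over the cones and Lemma \ref{changeofvariable} applied cone by cone (with $\beta=\eta_i$, whose associated ideal $\mathfrak{q}=(\eta_i)$ is trivial since $\eta_i$ is a unit) give $\multint_{\mathbb{O}} x\, d\nu(\mathfrak{b},\mathcal{K}',x)=\prod_i \eta_i^{\nu(\mathfrak{b},\eta_i C_i,\mathbb{O})}\cdot(\text{stuff fixed by }\eta_i)$; tracking the $\eta_i$-exponents produces a unit in $V$ that must be compensated by the other two factors.

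Next I would handle the $\pi$-power factor. Because $\mathcal{K}$ and $\mathcal{K}'$ are both fundamental domains for $V$, the total zeta value $\zeta_{R,\lambda}(\mathfrak{b},\mathcal{K},\mathcal{O}_\mathfrak{p},0)$ equals $\zeta_{R,\lambda}(\mathfrak{b},\mathcal{K}',\mathcal{O}_\mathfrak{p},0)$ — each equals the partial zeta value $\zeta_{R,\lambda}(H/F,\mathfrak{b},0)$ by the displayed identities after the definition of Shintani domains (the analogous statement holds verbatim for a fundamental domain of $V$, by summing the $\mathcal{O}_\mathfrak{p}$-valuations over a set of coset representatives of $V$ in $E_+(\mathfrak{f})$). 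Hence the $\pi$-factors agree outright, and the whole discrepancy between $u_{\mathfrak{p},\lambda}(\mathfrak{b},\mathcal{K})$ and $u_{\mathfrak{p},\lambda}(\mathfrak{b},\mathcal{K}')$ is concentrated in the error-term product $\prod_{\epsilon\in V}\epsilon^{\zeta_{R,\lambda}(\mathfrak{b},\epsilon\mathcal{D}\cap\pi^{-1}\mathcal{D},\mathcal{O}_\mathfrak{p},0)}$ versus the multiplicative-integral unit computed in the previous step.

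The main step, and the expected obstacle, is showing that the error-term factor transforms under $\mathcal{K}\mapsto\mathcal{K}'$ exactly so as to cancel the unit coming from the multiplicative integral. I would set this up by writing $\mathbbm{1}_{\epsilon\mathcal{K}'\cap\pi^{-1}\mathcal{K}'}$ in terms of the $C_i$ and $\eta_i$, expanding $\epsilon\mathcal{K}'\cap\pi^{-1}\mathcal{K}'=\bigcup_{i,j}(\epsilon\eta_j C_j\cap\pi^{-1}\eta_i C_i)$, and applying Lemma \ref{changeofvariable} to each piece (again with a unit $\beta$, so no ideal appears) to rewrite the exponents in terms of zeta values attached to translates of the original cones. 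Re-indexing the product over $V$ by absorbing the $\eta_i$'s — this is where one must be careful that the reindexing is a genuine bijection of $V$ and that only finitely many exponents are nonzero, using Lemma 3.14 of \cite{MR2420508} as cited after (\ref{errorterm}) — one collects a correction of the form $\prod_i\eta_i^{m_i}$ with $m_i$ a sum of zeta values that telescopes, via the fundamental-domain property $\sum_{\epsilon\in V}\mathbbm{1}_{\epsilon C_i}=\mathbbm{1}_{\mathbb{R}_+^n}$ applied at the level of measures, precisely to $\nu(\mathfrak{b},C_i,\mathcal{O}_\mathfrak{p})$ (or its $\mathbb{O}$-analogue). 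Matching this against the $\eta_i$-exponent extracted from the multiplicative integral in the first step, and using $\nu(\mathfrak{b},\mathcal{K},\mathcal{O}_\mathfrak{p})=\nu(\mathfrak{b},\mathcal{K},\mathbb{O})+\nu(\mathfrak{b},\mathcal{K},\pi\mathcal{O}_\mathfrak{p})$ together with the vanishing $\nu(\mathfrak{b},\mathcal{K},\mathbb{O})=0$, gives the cancellation. The delicate bookkeeping is entirely in this reindexing-and-telescoping argument; everything else is a direct appeal to Lemma \ref{changeofvariable} and the decomposition proposition. This argument is the natural $V$-analogue of Dasgupta's proof that $u_{\mathfrak{p},T}(\mathfrak{b},\mathcal{D})$ is independent of $\mathcal{D}$ (the content of part (1) of Conjecture \ref{conj3.21} that is actually a theorem of \cite{MR2420508} for $E_+(\mathfrak{f})$), carried out with $V$ in place of $E_+(\mathfrak{f})$ and keeping track of the hypothesis that $\lambda$ is $\pi$-good and good for the pair so that all exponents remain integral.
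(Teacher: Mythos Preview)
Your proposal is correct and is exactly the approach the paper takes: the paper's entire proof reads ``Theorem 5.3 of \cite{MR2420508} proves this result when $V=E_+(\mathfrak{f})$. The proof of this proposition is analogous,'' and your sketch simply unpacks that analogous argument --- comparing the three factors via the simultaneous decomposition and Lemma \ref{changeofvariable}, with the reindexing/telescoping of the error term as the main step --- and you even identify it yourself as the $V$-analogue of Dasgupta's independence-of-domain theorem.
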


\begin{proof}
Theorem $5.3$ of \cite{MR2420508} proves this result when $V=E_+(\mathfrak{f})$. The proof of this proposition is analogous. 
\end{proof}

The following proposition shows that to prove our main result it is good enough to work with a finite index free subgroup $V \subset E_+(\mathfrak{f})$ rather than the full group. By making a good choice of $V$, we are then able to find a suitably nice Shintani set.

\begin{proposition}
Let $\mathcal{D}$ be a Shintani domain for $E_+(\mathfrak{f})$. Let $V$ be a free, finite index, subgroup of $E_+(\mathfrak{f})$ of rank $n-1$, such that $E_+(\mathfrak{f})/ V \cong \mathbb{Z}/b_1  \times \dots \times \mathbb{Z}/b_{n-1}$ with $b_1, \dots , b_{n-1} > M$, where $M = M(\pi, g_1, \dots , g_{n-1})$ is some constant that depends on $g_1, \dots , g_{n-1}$ and $\pi$ up to multiplication by an element of $E_+(\mathfrak{f})$ which we will define later. Here, we have chosen $g_1, \dots , g_{n-1}$ to be a $\mathbb{Z}$-basis for $E_+(\mathfrak{f})$ such that $g_1^{b_1}, \dots , g_{n-1}^{b_{n-1}}$ is a $\mathbb{Z}$-basis for $V$. We now define 
\[ \mathcal{D}_V \coloneqq \bigcup_{j_1=0}^{b_1-1} \dots  \bigcup_{j_{n-1}=0}^{b_{n-1}-1} g_1^{j_1} \dots g_{n-1}^{j_{n-1}} \mathcal{D}. \]
Then, for any continuous homomorphism $g: F_\mathfrak{p}^\ast \rightarrow K$, such that $g$ is trivial of $E_+(\mathfrak{f})$, if we have
\[  g(u_{\mathfrak{p}, \lambda}(\mathfrak{b},  \mathcal{D}_V))=  c_{g} \cap (\omega_{\mathfrak{f}, \mathfrak{b}, \lambda,V}^\mathfrak{p} \cap \vartheta_V^\prime), \]
then
\[ g(u_{\mathfrak{p}, \lambda}(\mathfrak{b},  \mathcal{D}))=  c_{g} \cap (\omega_{\mathfrak{f}, \mathfrak{b}, \lambda}^\mathfrak{p} \cap \vartheta^\prime). \]
We note that both $l_\mathfrak{p}$ and $o_\mathfrak{p}$ have the property that they are trivial on $E_+(\mathfrak{f})$. It is clear that $\mathcal{D}_V$ is a Colmez domain for $V$. Furthermore, since $T$ is $\pi$-good for $\mathcal{D}$ we also have that $T$ is $\pi$-good for $\mathcal{D}_V$. Thus, our definition of $u_{\mathfrak{p}, \lambda}(\mathfrak{b},  \mathcal{D}_V)$ makes sense. 
\label{firstchangeofdom}
\end{proposition}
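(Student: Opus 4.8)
The plan is to reduce everything to a careful bookkeeping of how the error-term product, the $\pi$-power factor, and the multiplicative integral behave when we pass from $\mathcal{D}$ (a fundamental domain for $E_+(\mathfrak{f})$) to the enlarged set $\mathcal{D}_V = \bigcup_{0 \le j_k \le b_k-1} g_1^{j_1}\cdots g_{n-1}^{j_{n-1}} \mathcal{D}$ (a fundamental domain for $V$). First I would observe that $\mathcal{D}_V$ is manifestly a disjoint union of $E_+(\mathfrak{f})/V$-translates of $\mathcal{D}$, hence a Colmez domain for $V$, and that $\pi$-goodness is inherited from $\mathcal{D}$ to each translate $g_1^{j_1}\cdots g_{n-1}^{j_{n-1}}\mathcal{D}$ (translation by a unit sends a good decomposition to a good decomposition, since $\lambda$-goodness of a cone $C(v_1,\dots,v_r)$ is preserved under scaling the $v_i$ by units congruent to $1 \bmod \mathfrak{f}$ that are units at $\lambda$); this justifies the final sentences of the statement. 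Then I would compare $u_{\mathfrak{p},\lambda}(\mathfrak{b},\mathcal{D}_V)$ with $u_{\mathfrak{p},\lambda}(\mathfrak{b},\mathcal{D})$ directly. Writing $Q = E_+(\mathfrak{f})/V$ and picking the coset representatives $\gamma = g_1^{j_1}\cdots g_{n-1}^{j_{n-1}}$, the measure satisfies $\nu(\mathfrak{b},\mathcal{D}_V,\,\cdot\,) = \sum_{\gamma \in Q} \nu(\mathfrak{b},\gamma\mathcal{D},\,\cdot\,)$, and by Lemma \ref{changeofvariable} each $\nu(\mathfrak{b},\gamma\mathcal{D},\gamma U)$ relates to a twist of $\nu(\mathfrak{b},\mathcal{D},U)$ by the ideal $(\gamma) = \mathcal{O}$ (a unit, so trivial in $G_\mathfrak{f}$); thus the multiplicative integral over $\mathbb{O}$ against $\nu(\mathfrak{b},\mathcal{D}_V,\,\cdot\,)$ differs from that against $\nu(\mathfrak{b},\mathcal{D},\,\cdot\,)$ only by a product of powers of the units $\gamma$, which get absorbed into (or cancel against) the error term. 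The net effect is an identity of the form
\[
u_{\mathfrak{p},\lambda}(\mathfrak{b},\mathcal{D}_V) = u_{\mathfrak{p},\lambda}(\mathfrak{b},\mathcal{D}) \cdot (\text{an element of } E_+(\mathfrak{f})),
\]
and since $g$ is assumed trivial on $E_+(\mathfrak{f})$ we get $g(u_{\mathfrak{p},\lambda}(\mathfrak{b},\mathcal{D}_V)) = g(u_{\mathfrak{p},\lambda}(\mathfrak{b},\mathcal{D}))$.

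The cohomological side is handled by a parallel corestriction argument. The inclusion $V \oplus \langle\pi\rangle \hookrightarrow E_+(\mathfrak{f})_\mathfrak{p}$ is of finite index $[E_+(\mathfrak{f}):V] = b_1\cdots b_{n-1}$, and the class $\omega_{\mathfrak{f},\mathfrak{b},\lambda,V}^\mathfrak{p}$ is precisely the restriction of $\omega_{\mathfrak{f},\mathfrak{b},\lambda}^\mathfrak{p}$ (both are represented by the same Eisenstein cocycle formula $\nu_{\mathfrak{b},\lambda}^\mathfrak{p}(x_1,\dots,x_n)(U) = \delta(x_1,\dots,x_n)\zeta_{R,\lambda}(\mathfrak{b},\overline{C}_{e_1}(x_1,\dots,x_n),U,0)$ evaluated on the subgroup). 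Under the compatibility of cap product with restriction/corestriction, the pairing $c_g \cap (\omega_{\mathfrak{f},\mathfrak{b},\lambda,V}^\mathfrak{p} \cap \vartheta_V^\prime)$ equals $c_g \cap (\omega_{\mathfrak{f},\mathfrak{b},\lambda}^\mathfrak{p} \cap \mathrm{cor}(\vartheta_V^\prime))$, where $\mathrm{cor}: H_n(V\oplus\langle\pi\rangle,\mathbb{Z}) \to H_n(E_+(\mathfrak{f})_\mathfrak{p},\mathbb{Z})$; since $c_g$ lives over $F_\mathfrak{p}^\ast \supset E_+(\mathfrak{f})_\mathfrak{p}$ and $\omega$ over $E_+(\mathfrak{f})_\mathfrak{p}$, the cap products are taken after inflation/restriction to a common group, and $\mathrm{cor}(\vartheta_V^\prime) = [E_+(\mathfrak{f}):V]\cdot \vartheta^\prime = (b_1\cdots b_{n-1})\,\vartheta^\prime$ up to sign (comparing generators of the two top homology groups $\cong \mathbb{Z}$). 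So I would need to show that the analytic side acquires the \emph{same} multiplicative factor $b_1\cdots b_{n-1}$: that is, that $u_{\mathfrak{p},\lambda}(\mathfrak{b},\mathcal{D}_V)$ should really be compared not with $u_{\mathfrak{p},\lambda}(\mathfrak{b},\mathcal{D})$ but with its $(b_1\cdots b_{n-1})$-th power, up to a unit — which matches the expectation that $u_{\mathfrak{p},\lambda}(\mathfrak{b},\mathcal{D}_V)$ is a fundamental-domain-for-$V$ analogue, where one expects a factor equal to the index.

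I would therefore restructure the first step to track this index factor carefully: using that $\mathcal{D}_V$ is the disjoint union of the $b_1\cdots b_{n-1}$ translates $\gamma\mathcal{D}$, and that the zeta values $\zeta_{R,\lambda}(\mathfrak{b},\mathcal{D}_V,\mathcal{O}_\mathfrak{p},0) = \sum_\gamma \zeta_{R,\lambda}(\mathfrak{b},\gamma\mathcal{D},\mathcal{O}_\mathfrak{p},0) = \sum_\gamma \zeta_{R,\lambda}((\gamma)^{-1}\mathfrak{b}, \mathcal{D}, \gamma^{-1}\mathcal{O}_\mathfrak{p}, 0) = b_1\cdots b_{n-1} \cdot \zeta_{R,\lambda}(\mathfrak{b},\mathcal{D},\mathcal{O}_\mathfrak{p},0)$ by Lemma \ref{changeofvariable} (each $(\gamma)$ being trivial), and similarly for the error-term exponents $\nu(\mathfrak{b},\epsilon\mathcal{D}_V \cap \pi^{-1}\mathcal{D}_V,\mathcal{O}_\mathfrak{p})$, one gets $u_{\mathfrak{p},\lambda}(\mathfrak{b},\mathcal{D}_V) = u_{\mathfrak{p},\lambda}(\mathfrak{b},\mathcal{D})^{b_1\cdots b_{n-1}} \cdot w$ with $w \in E_+(\mathfrak{f})$. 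Applying $g$ (trivial on $E_+(\mathfrak{f})$, and note $l_\mathfrak{p}, o_\mathfrak{p}$ are trivial on $E_+(\mathfrak{f})$ precisely because units are $\mathfrak{p}$-adic units of norm a root of unity) gives $g(u_{\mathfrak{p},\lambda}(\mathfrak{b},\mathcal{D}_V)) = (b_1\cdots b_{n-1})\cdot g(u_{\mathfrak{p},\lambda}(\mathfrak{b},\mathcal{D}))$, which cancels exactly against the index factor on the cohomological side, yielding the claimed implication. \textbf{The main obstacle} I anticipate is the sign/normalization bookkeeping: matching up the chosen generators $\vartheta_V^\prime$ and $\vartheta^\prime$ so that the corestriction introduces exactly $+(b_1\cdots b_{n-1})$ rather than its negative, and making sure the $\pm$ ambiguities in the $\delta$-signs appearing in $\nu_{\mathfrak{b},\lambda}^\mathfrak{p}$ versus $\nu_{\mathfrak{b},\lambda,V}^\mathfrak{p}$ are consistent — though since the final statements (Theorem \ref{finalprop}, Conjecture \ref{finalconj}) only assert equality up to sign, I expect these ambiguities to be harmless and the core content to be the index-factor matching described above.
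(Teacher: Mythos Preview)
Your proposal is correct in outline and follows the same two-part strategy as the paper: a restriction/corestriction argument for the cohomological side (yielding the index factor $[E_+(\mathfrak{f}):V]$), and a direct comparison of the three constituents of $u_{\mathfrak{p},\lambda}(\mathfrak{b},\,\cdot\,)$ for the analytic side. The cohomological half matches the paper's proof essentially verbatim.

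Where you diverge is on the error term. The paper actually proves the \emph{exact} equality
\[
u_{\mathfrak{p},\lambda}(\mathfrak{b},\mathcal{D}_V)=u_{\mathfrak{p},\lambda}(\mathfrak{b},\mathcal{D})^{[E_+(\mathfrak{f}):V]},
\]
and it is precisely the error-term comparison $\prod_{\epsilon\in V}\epsilon^{\nu(\epsilon\mathcal{D}_V\cap\pi^{-1}\mathcal{D}_V)}=\bigl(\prod_{\epsilon\in E_+(\mathfrak{f})}\epsilon^{\nu(\epsilon\mathcal{D}\cap\pi^{-1}\mathcal{D})}\bigr)^{[E_+(\mathfrak{f}):V]}$ that consumes most of the work and is the only place the hypothesis $b_i>M$ is used: one normalises $\pi$ so that $\pi^{-1}\mathcal{D}\subset\bigcup_{0\le i_k\le\alpha_k}g_1^{i_1}\cdots g_{n-1}^{i_{n-1}}\mathcal{D}$, takes $M=\max(\alpha_k)$, and then does an explicit reindexing/telescoping of the double sums in the exponents. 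Your phrase ``similarly for the error-term exponents'' glosses over this; there is no direct scaling of those exponents by the index without that calculation.

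That said, your weaker conclusion $u_{\mathfrak{p},\lambda}(\mathfrak{b},\mathcal{D}_V)=u_{\mathfrak{p},\lambda}(\mathfrak{b},\mathcal{D})^{[E_+(\mathfrak{f}):V]}\cdot w$ with $w\in E_+(\mathfrak{f})$ is in fact \emph{trivially} true once the $\pi$-power and multiplicative-integral pieces are matched, simply because both error terms already lie in $E_+(\mathfrak{f})$ (one in $V$, the other in $E_+(\mathfrak{f})$). So your route is a genuine shortcut for \emph{this} proposition: it never needs the $b_i>M$ hypothesis at all. The price is that you forfeit the exact equality, which the paper relies on later (in the proof of Theorem~\ref{thmforequality}) to remove the root-of-unity ambiguity by varying $V$ over subgroups of coprime index. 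If you only care about Proposition~\ref{firstchangeofdom} as stated, your argument is cleaner; just replace the misleading ``similarly'' with the observation that both error products are elements of $E_+(\mathfrak{f})$ and hence annihilated by $g$.
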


\begin{remark}
The proof of Proposition \ref{firstchangeofdom} builds on the work of Tsosie in \cite{tsosie2018compatibility}. We follow the strategy in his proof of Proposition 2.1.4 in \cite{tsosie2018compatibility}. When considering Dasgupta-Spie\ss's formula we follow the ideas exactly. However, when considering Dasgupta's formula, $u_{\mathfrak{p}, \lambda}(\mathfrak{b}, \mathcal{D})$, we are required to alter the proof. The reason for this is that we have found a counterexample to the statement of Lemma 2.1.3 of \cite{tsosie2018compatibility}, which is used in his proof. In the appendix, we give this counterexample explicitly. It is possible to prove Proposition \ref{firstchangeofdom} without our additional assumption that $b_1, \dots, b_{n-1}>M$ however the proof becomes more lengthy. Since our strategy is to make $V$ small enough to satisfy other properties, we do not lose anything by including this simplifying assumption.
\end{remark}

\begin{proof}[Proof of Proposition \ref{firstchangeofdom}]
With the notation as given in the statement of the proposition, it is enough show the following two equalities:
\begin{equation}
    c_{g} \cap (\omega_{\mathfrak{f}, \mathfrak{b}, \lambda,V}^\mathfrak{p} \cap \vartheta_V^\prime) = [E_+(\mathfrak{f}) : V] c_{g} \cap (\omega_{\mathfrak{f}, \mathfrak{b}, \lambda}^\mathfrak{p} \cap \vartheta^\prime),
    \label{eqn1}
\end{equation}
and 
\begin{equation}
    g(u_{\mathfrak{p}, \lambda}(\mathfrak{b},  \mathcal{D}_V)) = [E_+(\mathfrak{f}):V]g(u_{\mathfrak{p}, \lambda}(\mathfrak{b},  \mathcal{D})). 
    \label{eqn2}
\end{equation}

For the first equality, we mimic the proof of Theorem $1.5$ of \cite{MR3351752}. General properties of group cohomology (see pp. 112-114, \cite{MR1324339}) yield the following commutative diagrams.
\begin{equation}
\begin{tikzcd}[cramped ,sep=small] 
{H^{n-1}(V, \text{Meas}(F_\mathfrak{p}, K))}  & \times & {H_n(V \oplus \langle \pi \rangle , \mathbb{Z} )} \arrow[r, "\cap"] \arrow[d, "\text{cores}"] & {H_1(V \oplus \langle \pi \rangle , \text{Meas}(F_\mathfrak{p}, K))} \arrow[d, "\text{cores}"] \\
{H^{n-1}(E_+(\mathfrak{f}), \text{Meas}(F_\mathfrak{p}, K))} \arrow[u, "\text{res}"] & \times & {H_n(E_+(\mathfrak{f}) \oplus \langle \pi \rangle , \mathbb{Z} )} \arrow[r, "\cap"]           & {H_1(E_+(\mathfrak{f}) \oplus \langle \pi \rangle , \text{Meas}(F_\mathfrak{p}, K))}          
\end{tikzcd}
\label{diag1}
\end{equation}
and
\begin{equation}
\begin{tikzcd}[cramped ,sep=small] 
{H^{1}(F_\mathfrak{p}^\times, C_c(F_\mathfrak{p}, K))} \arrow[d, "\text{id}"] & \times & {H_1(V \oplus \langle \pi \rangle , \text{Meas}(F_\mathfrak{p}, K))} \arrow[r, "\cap"] \arrow[d, "\text{cores}"] & K \arrow[d, "\text{id}"] \\
{H^{1}(F_\mathfrak{p}^\times, C_c(F_\mathfrak{p}, K))}           & \times & {H_1(E_+(\mathfrak{f}) \oplus \langle \pi \rangle , \text{Meas}(F_\mathfrak{p}, K))} \arrow[r, "\cap"]           & K.         
\end{tikzcd}
    \label{diag2}
\end{equation}
By Proposition 9.5 in Section 3 of \cite{MR1324339}, we have following identities,
\begin{align*}
    \text{cores}(\vartheta_V^\prime) &= [E_+(\mathfrak{f}): V] \vartheta^\prime , \\
    \text{res}(\omega_{\mathfrak{f}, \mathfrak{b}, \lambda}^\mathfrak{p}) &= \omega_{\mathfrak{f}, \mathfrak{b}, \lambda,V}^\mathfrak{p}.
\end{align*}
Diagram (\ref{diag1}) gives the equality
\[ \omega_{\mathfrak{f}, \mathfrak{b}, \lambda}^\mathfrak{p} \cap \text{cores}(\vartheta_V^\prime) = \text{cores}( \text{res}(\omega_{\mathfrak{f}, \mathfrak{b}, \lambda}^\mathfrak{p}) \cap \vartheta_V^\prime ) . \]
The identities above then show that
\[ \omega_{\mathfrak{f}, \mathfrak{b}, \lambda}^\mathfrak{p} \cap [E_+(\mathfrak{f}): V] \vartheta^\prime = \text{cores}( \omega_{\mathfrak{f}, \mathfrak{b}, \lambda,V}^\mathfrak{p} \cap \vartheta_V^\prime ) .  \]
Applying diagram (\ref{diag2}) to the above equality gives us (\ref{eqn1}). It remains to show (\ref{eqn2}). To prove (\ref{eqn2}), we prove the stronger equality
\[ u_{\mathfrak{p}, \lambda}(\mathfrak{b},  \mathcal{D}_V) = u_{\mathfrak{p}, \lambda}(\mathfrak{b},  \mathcal{D})^{[E_+(\mathfrak{f}):V]}. \]
By a result of Colmez in \S2 of \cite{MR922806} (p. 372), we have $[E_+(\mathfrak{f}):V] \zeta_\lambda(\mathfrak{b}, \mathcal{D}, U, s)=\zeta_\lambda(\mathfrak{b}, \mathcal{D}_V, U, s) $. This immediately implies that
\[ \pi^{[E_+(\mathfrak{f}):V] \zeta_{R,\lambda}( \mathfrak{b}, \mathcal{D},\mathcal{O}_\mathfrak{p},0)} = \pi^{ \zeta_{R,\lambda}( \mathfrak{b}, \mathcal{D}_V,\mathcal{O}_\mathfrak{p},0)} \]
and
\[ \left( \multint_\mathbb{O} x d \nu (\mathfrak{b}, \mathcal{D},x) \right)^{[E_+(\mathfrak{f}):V]} = \multint_\mathbb{O} x d \nu (\mathfrak{b}, \mathcal{D}_V,x). \]

It remains to show that 
\[ \left( \prod_{\epsilon \in E_+(\mathfrak{f})} \epsilon^{\zeta_{R,\lambda}(\mathfrak{b}, \epsilon \mathcal{D}\cap \pi^{-1} \mathcal{D}, \mathcal{O}_\mathfrak{p}, 0) } \right)^{[E_+(\mathfrak{f}):V]} = \prod_{\epsilon \in V} \epsilon^{\zeta_{R,\lambda}(\mathfrak{b}, \epsilon \mathcal{D}_V\cap \pi^{-1} \mathcal{D}_V, \mathcal{O}_\mathfrak{p}, 0) }.  \]

We now consider $\pi^{-1} \mathcal{D}$. By multiplying $\pi$ by an appropriate element of $E_+(\mathfrak{f})$, we can assume 
\[ \pi^{-1} \mathcal{D} \subset \bigcup_{i_1=0}^{\alpha_1} \dots  \bigcup_{i_{n-1}=0}^{\alpha_{n-1}} g_1^{i_1} \dots  g_{n-1}^{i_{n-1}} \mathcal{D}, \]
for some $\alpha_1, \dots, \alpha_{n-1}\in \mathbb{Z}_{>1}$. If we further impose that $g_1^{-1} \dots  g_{n-1}^{-1} \pi^{-1} \mathcal{D}$ is not fully contained in the positive translates of $\mathcal{D}$ and, for each $i$, choosing the minimal $\alpha_i$, then the required element of $E_+(\mathfrak{f})$ is chosen uniquely. Since the formula is independent of the choice of $\pi$ we are allowed this assumption. Now, let $M=M(\pi, g_1, \dots , g_{n-1})= \max(\alpha_1, \dots,  \alpha_{n-1})$. Since we have assumed $b_i> M$, it is easy to see that
\[ \pi^{-1} \mathcal{D}_V \subset \bigcup_{k_1=0}^{1} \dots \bigcup_{k_{n-1}=0}^{1} g_1^{k_1 b_1} \dots  g_{n-1}^{k_{n-1} b_{n-1}} \mathcal{D}_V. \]
For ease of notation, we write, for a Shintani set $D$, $\nu(D) = \zeta_{R,\lambda}(\mathfrak{b}, D, \mathcal{O}_\mathfrak{p}, 0)$. We now calculate
\begin{equation}
    \prod_{\epsilon \in V} \epsilon^{\zeta_{R,\lambda}(\mathfrak{b}, \epsilon \mathcal{D}_V\cap \pi^{-1} \mathcal{D}_V, \mathcal{O}_\mathfrak{p}, 0) }  = \prod_{i=1}^{n-1}  g_i^{  S_i}, \ \text{where} \ S_i=b_i\left(\sum_{ k_j =0 }^{1} \right)_{j \neq i} \nu(g_i^{b_i} (\prod_{j \neq i}  g_j^{b_jk_j}) \mathcal{D}_V \cap \pi^{-1} \mathcal{D}_V)    .
    \label{eqntoprove}
\end{equation}
Here we have the notation
\[ \left(\sum_{ k_j =0 }^{1} \right)_{j \neq i} = \sum_{k_1=0}^1 \dots \sum_{k_{i-1}=0}^1 \sum_{k_{i+1}=0}^1 \dots \sum_{k_{n-1}=0}^1 .  \]
To make the notation clearer, we note that
\[ S_1 = b_1 \sum_{k_2=0}^1 \dots \sum_{k_{n-1}=0}^1 \nu(g_1^{b_1} (\prod_{j =2}^{n-1}  g_j^{b_jk_j}) \mathcal{D}_V \cap \pi^{-1} \mathcal{D}_V).  \]
Consider the power above $g_1$ in (\ref{eqntoprove}). Substituting the domain $\mathcal{D}_V = \bigcup_{j_1=0}^{b_1-1} \dots  \bigcup_{j_{n-1}=0}^{b_{n-1}-1} g_1^{j_1} \dots g_{n-1}^{j_{n-1}} \mathcal{D}$ on each side of the intersection, and expanding unions and inverting the elements on the right-hand side of the intersection we have
\[
    S_1  = b_1  \left( \sum_{k_j =0 }^{1} \right)_{j=2}^{n-1} \left( \sum_{c_l=0}^{b_l-1} \sum_{a_l=0}^{b_l-1} \right)_{l=1}^{n-1} \nu(g_1^{b_1+c_1-a_1} ( \prod_{j=2}^{n-1} g_j^{b_j k_j +c_j-a_j})\mathcal{D} \cap \pi^{-1} \mathcal{D}).
\]
Since $1-b_i \leq c_i-a_i \leq b_i -1$, it is possible to rewrite our sums and deduce that the power above $g_1$ is equal to
\[ S_1 =   b_1  \left( \sum_{k_j =0 }^{1} \right)_{j=2}^{n-1} \left( \sum_{m_l=1-b_l}^{b_l-1}  \right)_{l=1}^{n-1} \prod_{l=1}^{n-1} (b_l-\mid m_l \mid)  \nu(g_1^{b_1+m_1} (\prod_{j=2}^{n-1} g_j^{b_j k_j +m_j})\mathcal{D} \cap \pi^{-1} \mathcal{D}). \]
The terms in the sum are only non-zero when $0 \leq b_1+m_1 \leq \alpha_1 $ and for $j=2, \dots, n-1$, when
\[ \begin{cases} 0 \leq m_j \leq \alpha_j &  \text{if} \ k_j=0 \\ 0 \leq b_j + m_j \leq \alpha_j & \text{if} \ k_j=1. \end{cases} \]
We now apply this to our sums, working term by term. For the $m_1$ sum we shift the index of the summand by $b_1$. We now expand the $k_2$ sum out. For the $k_2=1$ part we shift the index of the $m_2$ sum by $b_2$. Thus, we see that the power above $g_1$ in (\ref{eqntoprove}) is equal to
\begin{multline*}
     b_1 \sum_{m_1=1}^{\alpha_1} \left( \sum_{k_j =0 }^{1} \right)_{j=3}^{n-1} \left( \sum_{m_l=1-b_l}^{b_l-1}  \right)_{l=3}^{n-1} (m_1 \prod_{l=3}^{n-1} (b_l-\mid m_l \mid) ) \\ \left( \sum_{m_2=0}^{\alpha_2}   (b_2- m_2)    + \sum_{m_2=1}^{\alpha_2}  m_2    \right) \nu(g_1^{m_1}  g_{2}^{m_{2}}(\prod_{j=2}^{n-1} g_j^{b_j k_j +m_j})\mathcal{D} \cap \pi^{-1} \mathcal{D}).
\end{multline*}
Cancelling the $m_2$ terms in the sums then gives that the power above $g_1$ in (\ref{eqntoprove}) is in fact
\[  b_1 b_2 \sum_{m_1=1}^{\alpha_1} \sum_{m_2=0}^{\alpha_2} \left( \sum_{k_j =0 }^{1} \right)_{j=3}^{n-1} \left( \sum_{m_l=1-b_l}^{b_l-1}  \right)_{l=3}^{n-1} (m_1 \prod_{l=3}^{n-1} (b_l-\mid m_l \mid) )      \nu(g_1^{m_1}  g_{2}^{m_{2}}(\prod_{j=2}^{n-1} g_j^{b_j k_j +m_j})\mathcal{D} \cap \pi^{-1} \mathcal{D}). \]
Continuing to work term by term for $j=3, \dots, n-1$, and noting that $[E_+(\mathfrak{f}):V] = b_1 \dots b_{n-1}$, we are able to deduce that
\[ S_1= [E_+(\mathfrak{f}):V] \sum_{m_1=1}^{\alpha_1} \sum_{m_2=0}^{\alpha_2} \dots \sum_{m_{n-1}=0}^{\alpha_{n-1}} m_1 \nu(g_1^{m_1} \dots g_{n-1}^{m_{n-1}}\mathcal{D} \cap \pi^{-1} \mathcal{D}). \]
Similarly, the power above $g_i$ in (\ref{eqntoprove}), for $i=2, \dots, n-1$, is equal to
\[ [E_+(\mathfrak{f}):V] \sum_{m_i=1}^{\alpha_i} \left( \sum_{m_{j}=0}^{\alpha_{j}} \right)_{j \neq i} m_i \nu(g_1^{m_1} \dots g_{n-1}^{m_{n-1}}\mathcal{D} \cap \pi^{-1} \mathcal{D}) .  \]
Thus,
\[
    \prod_{\epsilon \in V} \epsilon^{\zeta_{R,\lambda}(\mathfrak{b}, \epsilon \mathcal{D}_V\cap \pi^{-1} \mathcal{D}_V, \mathcal{O}_\mathfrak{p}, 0) }  = \left( \prod_{i=1}^{n-1}  g_i^{  S_i^\prime} \right)^{[E_+(\mathfrak{f}):V]} ,  \]
where
\[ S_i^\prime=\sum_{m_i=1}^{\alpha_i} \left( \sum_{m_{j}=0}^{\alpha_{j}} \right)_{j \neq i} m_i \nu(g_1^{m_1} \dots g_{n-1}^{m_{n-1}}\mathcal{D} \cap \pi^{-1} \mathcal{D}) . \]
It remains for us to consider the error term for $u_{\mathfrak{p}, \lambda}(\mathfrak{b}, \mathcal{D})$. We calculate
\begin{align*}
    \prod_{\epsilon \in E_+(\mathfrak{f})} \epsilon^{\zeta_{R,\lambda}(\mathfrak{b}, \epsilon \mathcal{D}\cap \pi^{-1} \mathcal{D}, \mathcal{O}_\mathfrak{p}, 0) } &= \prod_{m_1=0}^{\alpha_1} \dots \prod_{m_{n-1}=0}^{\alpha_{n-1}} (g_1^{m_1} \dots g_{n-1}^{m_{n-1}})^{\nu(g_1^{m_1} \dots g_{n-1}^{m_{n-1}} \mathcal{D} \cap \pi^{-1} \mathcal{D} )} \\
    &=  \prod_{i=1}^{n-1} g_i^{S^\prime_i}.
\end{align*}
This completes the result.
\end{proof}

\subsection{Choosing a Colmez domain}

We are required to make a good choice of our finite index free subgroup $ V \subset E_+(\mathfrak{f})$. We follow the ideas initially of Colmez in \cite{MR922806}. Here, the choice of $V$ is used to give a nice Colmez domain $\mathcal{D}_V$. However, we need to use our choice of $V$ to give us both the existence of a suitable Colmez domain $\mathcal{D}_V$, and to give us some control over the translation of $\mathcal{D}_V$. This approach was not used in \cite{tsosie2018compatibility}. Instead, they used a stronger statement (Lemma 2.1.3, \cite{tsosie2018compatibility}). However, we find a counterexample to this statement. We therefore require a new approach. It is at this stage that we need to reduce to the case when $F$ is a field of degree $3$ i.e., we assume $n=3$ from now on. Note that in this case $E_+(\mathfrak{f})$ is free of rank $2$. The main aim of this section is to prove the following proposition. We remark that currently we have not been able to prove such a proposition for $n>3$.

\begin{proposition}
Let $\pi \in F_+$ then there exists $\varepsilon_1, \varepsilon_2, \omega \in E_+(\mathfrak{f})$ such that
\begin{enumerate}[1)]
    \item $\langle \varepsilon_1, \varepsilon_2 \rangle \subseteq E_+(\mathfrak{f})$ is a finite index subgroup, free of rank 2,
    \item $\delta([\varepsilon_1 \mid \varepsilon_2 ] )= - \delta([\varepsilon_2 \mid \varepsilon_1 ] )=1$,
    \item $\delta([\varepsilon_1 \mid \omega \pi ] )= - \delta([\omega \pi \mid \varepsilon_1 ] )=\delta([\varepsilon_2 \mid \omega \pi ] )= - \delta([ \omega \pi \mid \varepsilon_2 ] ) = 1$,
    \item $\omega^{-1} \pi^{-1} \in {C}([\varepsilon_1 \mid \varepsilon_2]) \cup {C}([\varepsilon_2 \mid \varepsilon_1]) \cup C(1, \varepsilon_1 \varepsilon_2) $.
\end{enumerate}
\label{propofcolmezdom2}
\end{proposition}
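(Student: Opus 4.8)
The plan is to reduce the statement to two-dimensional convex geometry. Since $n=3$, I would pass to the quotient $\mathbb{R}_+^3/\mathbb{R}_+$ of the positive orthant by diagonal scaling; the logarithm induces an identification $\mathbb{R}_+^3/\mathbb{R}_+\cong\mathbb{R}^3/\mathbb{R}(1,1,1)\cong\mathbb{R}^2$ under which $E_+(\mathfrak{f})$ acts by translations of a lattice of rank $2$, a finite-index free rank-$2$ subgroup becomes a finite-index sublattice, a simplicial cone $C(v_1,v_2,v_3)$ becomes the triangle with vertices $[v_1],[v_2],[v_3]$ (straight-sided in the projectivized orthant, curved in logarithmic coordinates), and $\delta(x_1,x_2,x_3)$ — being the sign of the $3\times3$ determinant of representatives — is the orientation of that triangle. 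In these terms: (1)--(2) ask for a positively oriented basis $[\varepsilon_1],[\varepsilon_2]$ of a finite-index sublattice; (3) places $[\omega\pi]$ in the open region cut out by the line through $[1]$ and $[\varepsilon_1]$ and the line through $[1]$ and $[\varepsilon_2]$, on the side away from $[\varepsilon_1\varepsilon_2]$; and (4) places $[\omega^{-1}\pi^{-1}]$ inside the union of the triangles $([1],[\varepsilon_1],[\varepsilon_1\varepsilon_2])$ and $([1],[\varepsilon_2],[\varepsilon_1\varepsilon_2])$ together with their common edge, i.e. essentially inside the Colmez domain $\mathcal{D}_V=\overline{C}_{e_1}([\varepsilon_1\mid\varepsilon_2])\cup\overline{C}_{e_1}([\varepsilon_2\mid\varepsilon_1])$ attached to $\varepsilon_1,\varepsilon_2$ by Lemma \ref{colmezlemma}.

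For (1)--(2) I would invoke Colmez's Lemma 2.1 in \cite{MR922806}: it produces multiplicatively independent units of $E_+(\mathfrak{f})$ whose sign function $\delta$ takes the values required to feed Lemma \ref{colmezlemma} — this is where one genuinely uses that $\delta$ is the sign of the embedded determinant, which near degenerate configurations is subtler than the literal orientation of a log-triangle — and reordering gives $\delta([\varepsilon_1\mid\varepsilon_2])=-\delta([\varepsilon_2\mid\varepsilon_1])=1$, hence a first candidate Colmez domain.

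The substance of the proof is to upgrade this pair so that (3) and (4) also hold for a suitable $\omega\in E_+(\mathfrak{f})$. Here I would exploit three degrees of freedom: $\omega$ may be moved through the coset $E_+(\mathfrak{f})\pi$; the generators may be replaced by another ordered basis of $\langle\varepsilon_1,\varepsilon_2\rangle$, or by a basis of a finer finite-index sublattice, still respecting (2) — by Proposition \ref{firstchangeofdom} this costs nothing for the eventual application, and by Proposition \ref{canchangedom} and Proposition \ref{prop3.19} the constructions are insensitive to such changes and to the choice of $\pi$; and the boundary edges of $\mathcal{D}_V$, being curved in logarithmic coordinates, can be made to bulge by an appropriate choice of generators. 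The strategy is then: first choose $\varepsilon_1,\varepsilon_2$ so that the open region of (3) and the image of the region of (4) under the involution $[v]\mapsto[v^{-1}]$ — which need not meet for an arbitrary choice of generators — acquire a common interior; then, after passing to a sufficiently fine sublattice so that this common interior is large relative to the lattice $E_+(\mathfrak{f})$, choose $\omega$ with $[\omega\pi]$ inside it; finally, dispose of any boundary coincidence (the point $[\omega\pi]$ landing on a wall of a relevant cone) by one last generic adjustment, legitimate by the independence statements above. Carrying this out requires knowing where $\mathcal{D}_V$ sits relative to $[\pi]$ and how it deforms as the generators vary, and I expect it to split into finitely many cases according to the position of $[\pi]$ relative to the walls determined by $[\varepsilon_1],[\varepsilon_2]$.

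I expect the third step to be the main obstacle: exhibiting generators for which (3) and (4) are simultaneously satisfiable. The tension is real, because for an arbitrary pair of generators the regions prescribed by (3) and by (4) (after inversion) need not overlap; arranging that they do is precisely the quantitative control of the translate of a Colmez domain by an element it contains, which is the new ingredient of $\S6.2$ and the reason the argument does not obviously extend to $n>3$.
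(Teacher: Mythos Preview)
Your outline matches the paper's approach at the conceptual level: the paper also passes to two-dimensional logarithmic coordinates (via the map $\varphi_{(g_1,g_2)}$), invokes Colmez's argument for conditions (1)--(2), and obtains (3)--(4) by placing a lattice translate of $\pi^{-1}$ inside the Colmez domain near the corner $[\varepsilon_1\varepsilon_2]$ while controlling the sign conditions on its inverse. You have also correctly identified that the entire difficulty is your ``third step'' --- making the region prescribed by (3) and the inverse of the region prescribed by (4) overlap on a set large enough to contain a coset representative of $\pi$.

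What your proposal does not supply, and what constitutes essentially all of the proof, is a mechanism for producing that overlap. The paper does \emph{not} argue abstractly that the two regions can be made to intersect; it instead executes a quantitative argument that has no analogue in your sketch. First, the initial units $g_1,g_2$ are not chosen merely by Colmez's Lemma 2.1 but are placed in specific balls $B(l_i(M),r)$ in the trace-zero hyperplane and required to satisfy the extra inequalities $g_1(2)>g_1(1)^{-2}>g_1(1)^{-1}>1$ and $g_2(1)<g_2(2)<1$ (Lemma~\ref{fixgi}); these inequalities are exactly what drives the later estimates. Second, setting $\varepsilon_i=g_i^l$, the paper computes the endpoint derivatives of the boundary curves $\mathcal{C}_{i,l}$ of the Colmez domain in the $\varphi_{(g_1,g_2)}$-plane, shows they have prescribed signs as $l\to\infty$ (Lemma~\ref{derivativeinlimit}), and deduces that the curves stay on fixed sides of the coordinate axes (Corollary~\ref{directionofsides}). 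Third --- and this is the point your sketch elides --- from these derivative bounds one extracts a triangle $T(\theta,Q,(l,l))$ of \emph{fixed} size (depending on the limiting slopes, not on $l$) sitting at the corner $(l,l)$ and contained in the domain for all large $l$ (Lemma~\ref{lemmafindpi-1}); since the coset $\pi_\mathcal{H}^{-1}E_+(\mathfrak{f})$ is a lattice translate, this triangle eventually contains a point $\alpha$ of it, giving (4). Finally, the size constraint $\Log(\alpha)\in B(-l_0(lM_1),4lr)$ feeds back into the sign lemma (Lemma~\ref{lemmatomakembig}, via the choice $M_1>4K_1'(r)$) to yield (3) for $g_\pi=\alpha^{-1}$.

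Your phrase ``passing to a sufficiently fine sublattice so that this common interior is large relative to the lattice'' is the right intuition, but without the derivative analysis there is no reason the common interior should exist, let alone grow: for generic Colmez generators the boundary curves can bend the wrong way, and the domain need not contain any neighbourhood of its corner on the side you need (this is closely related to the failure of Statement~\ref{STconj} discussed in the appendix). The specific positioning of $g_1,g_2$ and the limit computation of Lemma~\ref{derivativeinlimit} are what rule this out, and they are the content of the proof.
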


Recall the definition of $\delta$ from (\ref{epsilonnotation}). The choices we make through Proposition \ref{propofcolmezdom2} allow us to form a nice Colmez domain, and in the process of choosing $\varepsilon_1, \varepsilon_2, \omega$ we also allow ourselves to have some control over the translation of $\mathcal{D}_V$. We note that the hardest part of this proposition is being able to have \textit{3)} and \textit{4)} at the same time.

First, we define
\[ \Log : \mathbb{R}_+^3 \rightarrow \mathbb{R}^3, \quad (x_1, x_2, x_3) \mapsto ( \log(x_1), \log(x_2), \log(x_3)). \]
We remark that the map $\Log$ is the Dirichlet regulator on $E_+(\mathfrak{f})$. Let $\mathcal{H} \subset \mathbb{R}^3$ be the hyperplane defined by $\text{Tr}(z)=0$. Then, $\Log(E_+(\mathfrak{f}))$ is a lattice in $\mathcal{H}$. If $z = (z_1, z_2, z_3) \in \mathbb{R}_+^3$ and $\Log(z) \in \mathbb{R}^3$ is not an element of $\mathcal{H}$, then we define the projection
\[ z_\mathcal{H} = (z_1 z_2 z_3)^{-\frac{1}{3}} \cdot z . \]
We have that $\Log( z_\mathcal{H}) \in \mathcal{H}$. Note that $z$ and $z_\mathcal{H}$ lie on the same ray in $\mathbb{R}_+^3$. For any $M>0$ and $i=0,1,2$, write $l_i(M)$ for the element of $\mathcal{H}$ which has value $M$ in the $(i+1)$ place and $-M/2$ in the other places. We endow $\mathbb{R}^3$ with the sup-norm. We denote by $B(x, r)$ the ball centred at $x$ of radius $r$.

The following lemma, which builds on Lemma 2.1 of \cite{MR922806}, allows us to find a collection of possible subsets $V=\langle \varepsilon_1, \varepsilon_2 \rangle$ such that \textit{1)}, \textit{2)} and \textit{3)} in Proposition \ref{propofcolmezdom2} hold. After the proof of this lemma, we show that if we make $V$ small enough (inside $E_+(\mathfrak{f})$), then we have the freedom to choose $\varepsilon_1$, $\varepsilon_2$ and $\omega$ such that \textit{4)} also holds. We also note that Lemma \ref{lemmatomakembig} can be proven for $F$ of any degree. To keep the notation simple, we only give the proof for $n=3$.

\begin{lemma}
There exists $R_1 >0$ such that for all $R > R_1$, $M >K_1(R)$ ($K_1(R)$ is some constant we define which depends only on $R$). We have the following: For $i=1,2$ let $g_i \in E_+(\mathfrak{f}) $ and $g_\pi \in  \pi_\mathcal{H} E_+(\mathfrak{f})$ such that $\Log(g_i) \in B(l_i(M), R) $ and $\Log ( g_\pi) \in  B(l_0(M),R)$, we have
\begin{itemize}
    \item $\langle g_1, g_2 \rangle \subseteq E_+(\mathfrak{f})$ is a finite index subgroup, free of rank 2,
    \item $\delta([g_1 \mid g_2 ] )= - \delta([g_2 \mid g_1 ] )=1$,
    \item $\delta([g_1 \mid g_\pi ] )= - \delta([g_\pi \mid g_1 ] )=\delta([g_2 \mid g_\pi ] )= - \delta([ g_\pi \mid g_2 ] ) = -1$.
\end{itemize}
\label{lemmatomakembig}
\end{lemma}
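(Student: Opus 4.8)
The plan is to push everything onto the trace-zero hyperplane $\mathcal{H}$ and to reduce each $\delta$-statement to computing the sign of an explicit integer $3\times 3$ determinant, obtained as the $M\to\infty$ limit of the matrix in the definition of $\delta$. Since $\delta([x\mid y]) = \sign\det[\,\mathbf 1\mid \vec x\mid \vec{xy}\,]$ depends only on the rays of the three columns, and hence only on their $\Log$-images in $\mathcal{H}$, it is this limiting picture that governs everything. The first bullet and the hyperplane set-up are essentially Colmez's Lemma 2.1 of \cite{MR922806}; the genuinely new content is the simultaneous control of the $\delta$-signs for the three prescribed directions $l_0,l_1,l_2$ together with the uniformity in $R$.

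First I would record the structural facts. Every $\epsilon\in E_+(\mathfrak f)$ is a totally positive unit, so $N_{F/\mathbb Q}(\epsilon)=1$ and $\Log(\epsilon)\in\mathcal H$; likewise $\Log(g_\pi)\in\mathcal H$ because $\Log(\pi_{\mathcal H})\in\mathcal H$ by construction. I take $R_1$ to be the covering radius of the lattice $\Log(E_+(\mathfrak f))$ in $\mathcal H$ (equivalently of its coset $\Log(\pi_{\mathcal H}E_+(\mathfrak f))$): this is all $R_1$ is used for, namely to guarantee that for $R>R_1$ the balls $B(l_i(M),R)$ and $B(l_0(M),R)$ actually contain elements of the relevant group and coset, so that the hypothesis on $g_1,g_2,g_\pi$ is non-vacuous. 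Writing $\Log(g_i)=l_i(M)+r_i$ and $\Log(g_\pi)=l_0(M)+r_0$ with $\|r_j\|_\infty\le R$, and using that $l_1(M)$ and $l_2(M)$ are non-parallel and scale linearly in $M$, one gets $\Log(g_1)\wedge\Log(g_2)=l_1(M)\wedge l_2(M)+O(MR)+O(R^2)$ with leading term of size $\asymp M^2$; hence this wedge is nonzero once $M$ exceeds a constant multiple of $R$. Thus $\langle g_1,g_2\rangle$ has rank $2$, and being a rank-$2$ subgroup of the rank-$2$ free abelian group $E_+(\mathfrak f)$ it is free of rank $2$ and of finite index, settling the first bullet.

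For the remaining bullets I would analyse the limits. In each of the six expressions $[g_1\mid g_2],[g_2\mid g_1],[g_1\mid g_\pi],[g_\pi\mid g_1],[g_2\mid g_\pi],[g_\pi\mid g_2]$ the $\Log$-image of each of the three columns lies, up to sup-norm error $\le 2R$, at one of the vectors $0$, $\pm l_i(M)$, or $l_i(M)+l_j(M)$ (and, using $l_0+l_1+l_2=0$, the last equals $-l_k(M)$ with $k$ the third index). As $M\to\infty$ the corresponding ray converges to the ray of $\mathbf 1=(1,1,1)$, of a standard basis vector $e_m$, or of a sum $e_a+e_b$, the indices being read off from which coordinates of the combination are largest. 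Plugging these limits in produces six explicit integer $3\times 3$ matrices; one checks directly that each has nonzero determinant and that the signs are those recorded in the statement. The relations $\delta([g_1\mid g_2])=-\delta([g_2\mid g_1])$, $\delta([g_1\mid g_\pi])=-\delta([g_\pi\mid g_1])$, and so on, are already visible at this level: passing from $[x\mid y]$ to $[y\mid x]$ fixes the columns $\mathbf 1$ and $\vec{xy}$ and swaps $\vec x$ for $\vec y$, and in the limit the two configurations differ by interchanging two coordinate axes, which reverses orientation. Finally, $\sign\det$ is locally constant on the complement of the (closed) degeneracy locus, and the columns converge to their limits uniformly over $\|r_j\|_\infty\le R$; hence there is a threshold $K_1(R)$, which the estimates allow one to take of the form $\mathrm{const}\cdot R$, beyond which each of the six signs equals its limiting value.

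The main obstacle is exactly this uniformity in $R$: the conclusion is demanded for all $R>R_1$ at once, with $K_1$ depending on $R$ alone, so one cannot simply invoke openness of the sign for $M$ large at a single fixed $R$ — as $R$ grows the perturbation balls grow and threaten to flip the signs, and the conclusion is rescued only because $M>K_1(R)$ is taken to grow with $R$ as well. The quantitative point that makes it work is that the six limiting determinants are bounded below in absolute value by an absolute constant, whereas a $\Log$-perturbation of size $\le 2R$ distorts each simplex-normalised column away from its limiting vertex by a factor growing like $e^{O(R)}$, which must be dominated by the $e^{-cM}$ decay of the subdominant coordinates of $\exp$ of the relevant combination of the $l_i(M)$. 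Balancing these two effects forces, and only forces, $M\gtrsim R$, which is precisely the shape of $K_1(R)$; once this estimate is in hand the rest — the bookkeeping of which coordinate of each $l_i(M)+l_j(M)$ dominates and the six small determinant evaluations — is routine.
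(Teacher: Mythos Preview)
Your proposal is correct and follows essentially the same route as the paper. Both arguments (i) choose $R_1$ so that the balls $B(l_i(M),R)$ meet the relevant lattice or coset, (ii) deduce the first bullet from the fact that $\Log(g_1),\Log(g_2)$ are perturbations of the independent vectors $l_1(M),l_2(M)$, and (iii) obtain each sign by isolating the dominant term of the $3\times 3$ determinant and bounding the remaining terms, which forces a threshold of the shape $M\gtrsim R$.

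The only stylistic difference is presentational: the paper writes out the matrix $[g_1\mid g_2]$ with explicit entries $\beta_{i,j}E_j$ or $\beta_{i,j}F_j$ (where $e^{-M/16}<\beta_{i,j}<e^{M/16}$), expands the determinant, and shows the diagonal term $e^{3M/2}\beta_{2,2}\beta_{3,3}$ dominates, leading to the concrete constants $K_1(r)=\max(2^5 r,\,2^2\log 6,\,k(r))$; you phrase the same computation as ``the ray of each column converges to $\mathbf 1$, $e_m$, or $e_a+e_b$, so the six limiting integer determinants are $\pm 1$, and a sup-norm perturbation of size $O(R)$ in $\Log$ produces an $e^{O(R)}$ multiplicative distortion beaten by the $e^{-cM}$ gap between coordinates.'' These are the same estimate. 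Your remark that the pairs $[x\mid y]$ and $[y\mid x]$ have limiting configurations differing by a coordinate transposition is a tidy way to package the three antisymmetry relations, which the paper leaves implicit in ``We then show the other required sign properties in the same way.''
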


\begin{proof}
This proof largely follows the ideas of Colmez in his proof of Lemma 2.1 in \cite{MR922806}. First, note that both $\Log(E_+(\mathfrak{f})) $ and $ \Log( \pi_\mathcal{H} E_+(\mathfrak{f}))$ are lattices inside $\mathcal{H}$. There exists a constant $R_1 \coloneqq R(E_+(\mathfrak{f}), \pi)$ such that for all $M>0$ and any $r > R(E_+(\mathfrak{f}), \pi)$ there exist $g_1, g_2 \in  E_+(\mathfrak{f})$ and $g_\pi \in  \pi_\mathcal{H} E_+(\mathfrak{f})$ such that $\Log(g_i) \in B(l_i(M),r)$ for $i=1,2$ and $\Log(g_\pi ) \in B(l_0(M),r)$. The existence of $R_1$ follows from Dirichlet's Unit Theorem and, in particular, the non-vanishing of the regulator of a number field. Since the $l_i(M)$ form a basis of $\mathcal{H}$, the $\Log(g_i)$ form a free family, of finite index in $\Log(E_+(\mathfrak{f}))$, if $M$ is large enough relative to $r$, say $M>k(r)$. 

Now take $M$ satisfying:
\begin{enumerate}[i)]
    \item $M \geq 2^5 r$,
    \item $M > 2^2 \log(6)$,
    \item $M >k (r)$.
\end{enumerate}
For simplicity, let $K_1(r)= \max (2^5 r, 2^2 \log(6), k(r))$ so that we only require $M> K_1(r)$.

Let $\Delta = \det([g_1 \mid g_2])$. Put $E_i = \exp (M( 1-\frac{i-2}{2}))$ and $F_i= \exp(-M(\frac{i-1}{2}))$. Hence, the matrix given by $[g_1 \mid g_2]$ is written
\[ \begin{pmatrix} 1 & \beta_{1,2} F_2 & \beta_{1,3}F_3 \\
1 & \beta_{2,2} E_2 & \beta_{2,3}E_3 \\
1 & \beta_{3,2} F_2 & \beta_{3,3}E_3
\end{pmatrix}, \]
where by i),
\[ e^{\frac{-M}{2^4}} < \beta_{i,j} < e^{\frac{M}{2^4}}.  \]
Expand $\Delta $ and isolate the diagonal term; using the bounds we defined previously we obtain
\[ \mid \Delta - e^{\frac{3 M}{2}} \beta_{2,2} \beta_{3,3} \mid \leq 5e^{\frac{M}{2^3}}  \]
and so 
\[ \Delta \geq e^{\frac{3M}{2}}(e^{\frac{-M}{2^3}}- 5 e^{(\frac{M}{2^3}-\frac{3M}{2})}) >0 \]
according to ii). We then show the other required sign properties in the same way.
\end{proof}

Note that if we choose $R>R_1^\prime \coloneqq \max( 1, R(E_+(\mathfrak{f}), \pi))$, then $K_1(R)= \max(2^5 R, k(R))$. The proof of Lemma \ref{lemmatomakembig} also gives the following, for all $R>R_1^\prime$ and $M>2^5R$. For $i=1,2$ let $g_i \in E_+(\mathfrak{f}) $ and $g_\pi \in  \pi_\mathcal{H} E_+(\mathfrak{f})$ such that $\Log(g_i) \in B(l_i(M), R) \neq \emptyset $ and $\Log ( g_\pi) \in  B(l_0(M),R) \neq \emptyset $, then
\begin{itemize}
    \item $\delta([g_1 \mid g_2 ] )= - \delta([g_2 \mid g_1 ] )=1$,
    \item $\delta([g_1 \mid g_\pi ] )= - \delta([g_\pi \mid g_1 ] )=\delta([g_2 \mid g_\pi ] )= - \delta([ g_\pi \mid g_2 ] ) = -1$.
\end{itemize}
I.e., we only lose the condition that the group, generated by $g_1, g_2$, is free of rank $2$. For later use we let $K_1^\prime(R)=2^5 R$.

We need to define a projection that depends on elements $g_1, g_2 \in E_+(\mathfrak{f})$ that generate a free group of rank $2$ and acts on $(\mathbb{R}^3_+ / \sim )$. Here, $x \sim y$ if $\exists \gamma \in \mathbb{R}_+$ such that $x=\gamma y$. We define below $\varphi_{(g_1, g_2)} : (\mathbb{R}^3_+/ \sim ) \rightarrow \mathbb{R}^2$ such that
\begin{enumerate}[i)]
    \item $\varphi_{(g_1, g_2)}(g_1) =(1,0) $ and $\varphi_{(g_1, g_2)}(g_2) =(0,1)$,
    \item for $\alpha, \beta \in \mathbb{R}_+^3$, $\varphi_{(g_1, g_2)}(\alpha \beta) =\varphi_{(g_1, g_2)}(\alpha) + \varphi_{(g_1, g_2)}(\beta)$.
\end{enumerate}
Write $g_1=(g_1(1), g_1(2), g_1(3))$ and $g_2=(g_2(1), g_2(2), g_2(3))$. If $\alpha \in \mathbb{R}_+^3/ \sim$ and $\alpha_\mathcal{H} = (\alpha_{\mathcal{H},1}, \alpha_{\mathcal{H},2}, \\ \alpha_{\mathcal{H},3})$, we define
\begin{multline}
    \varphi_{(g_1, g_2)}(\alpha) \coloneqq \left( \frac{\log(\alpha_{\mathcal{H},2}) \log(g_2(1)) - \log(\alpha_{\mathcal{H},1})\log(g_2(2))}{\log(g_2(1)) \log(g_1(2)) - \log(g_2(2))\log(g_1(1))} , \right. \\ \left. \frac{\log(\alpha_{\mathcal{H},2}) \log(g_1(1)) - \log(\alpha_{\mathcal{H},1})\log(g_1(2))}{\log(g_1(1)) \log(g_2(2)) - \log(g_1(2))\log(g_2(1))} \right).
    \label{projdefn}
\end{multline}
Choosing $\langle g_1, g_2 \rangle \subseteq E_+(\mathfrak{f})$ to be of finite index, combined with Dirichlet's Unit Theorem, gives that the denominators in (\ref{projdefn}) are non-zero and the terms are therefore well defined. This is equivalent to the fact that $\{ \Log(g_1),\Log( g_2) \}$ is a basis for $\mathcal{H}$ over $\mathbb{R}$. The idea for the function $\varphi_{(g_1, g_2)}$ comes from the following: We take $\Log(\alpha)$ and then project onto the hyperplane $\mathcal{H}$ (this is the same as choosing $\alpha_\mathcal{H}$), we then write the element of $\mathcal{H}$ in terms of the basis $\{ \Log(g_1),\Log( g_2) \}$. It is clear from the definition that we have the properties i) and ii) as required.

Now consider $g_1, g_2 \in E_+(\mathfrak{f})$ that satisfy the first two properties of Lemma \ref{lemmatomakembig}. We define
\begin{equation}
    D(g_1, g_2)= \overline{C}_{e_1}([g_1 \mid g_2]) \cup \overline{C}_{e_1}([g_2 \mid g_1]).
    \label{Deqn}
\end{equation}
Since we assume $g_1, g_2$ satisfy the second property of Lemma \ref{lemmatomakembig}, Lemma \ref{colmezlemma} gives that $D(g_1, g_2)$ is a Colmez domain for $\langle g_1, g_2 \rangle $. Additionally, we let $\overline{D}(g_1, g_2)$ be the union of $C([g_1 \mid g_2]) \cup C([g_2 \mid g_1])$ with all of their boundary cones. Then, $D(g_1, g_2) \subset \overline{D}(g_1, g_2)$ and they only differ on some of the boundary cones. Consider $\varphi_{(g_1, g_2)}(\overline{D}(g_1, g_2))$. Write
\begin{align*}
    \mathcal{C}_1(g_1, g_2) &= \varphi_{(g_1, g_2)}(C(1, g_1) \cup C(1) \cup C(g_1)), \\
    \mathcal{C}_2(g_1, g_2) &= \varphi_{(g_1, g_2)}(C(1, g_2) \cup C(1) \cup C(g_2)).
\end{align*}
Thus, $\varphi_{(g_1, g_2)}(\overline{D}(g_1, g_2))$ is bounded by $\mathcal{C}_1 \cup \mathcal{C}_2 \cup ((0,1)+\mathcal{C}_1) \cup ((1,0)+\mathcal{C}_2)$. We note that $\mathcal{C}_1$ and $\mathcal{C}_2$ are smooth lines in $\mathbb{R}^2$ with an increasing or decreasing derivative. Our next aim is to calculate the derivatives of $\mathcal{C}_1$ and $\mathcal{C}_2$ at their endpoints. For $i=1,2$ and $t \in [0,1]$, let $L_i(t)$ be the line from $(1,1,1)$ to $(g_i(1), g_i(2), g_i(3))$. We now calculate the projection of the line $L_i(t)$ under the map $z \mapsto z_\mathcal{H}$. Explicitly, we have, for $t \in [0,1]$,
\begin{multline*}
    L_i(t)_\mathcal{H} = \left( \left( \frac{(1+t(g_i(1)-1))^2}{(1+t(g_i(2)-1))(1+t(g_i(3)-1))} \right)^\frac{1}{3} , \right. \\ \left( \frac{(1+t(g_i(2)-1))^2}{(1+t(g_i(1)-1))(1+t(g_i(3)-1))} \right)^\frac{1}{3} , \\ \left. \left( \frac{(1+t(g_i(3)-1))^2}{(1+t(g_i(1)-1))(1+t(g_i(2)-1))} \right)^\frac{1}{3} \right) .
\end{multline*}
All the terms in brackets lie in $\mathbb{R}$. When we take the cube root, we are choosing $1$ as the root of unity so that $L_i(t) \in \mathbb{R}^3$. We define $\mathcal{C}_i(t) = \varphi_{(g_1, g_2)}(L_i(t))=(x_i(t), y_i(t))$ and using our formula for $L_i(t)_\mathcal{H}$, we calculate
\begin{align*}
    x_i(t) &= \frac{\log \left( \frac{(1+t(g_i(2)-1))^2}{(1+t(g_i(1)-1))(1+t(g_i(3)-1))} \right) \log(g_2(1))-\log \left( \frac{(1+t(g_i(1)-1))^2}{(1+t(g_i(2)-1))(1+t(g_i(3)-1))} \right) \log (g_2(2))  }{3 (\log(g_2(1)) \log(g_1(2)) - \log(g_2(2))\log(g_1(1)))}, \\
    y_i(t) &= \frac{\log \left( \frac{(1+t(g_i(2)-1))^2}{(1+t(g_i(1)-1))(1+t(g_i(3)-1))} \right) \log(g_1(1))-\log \left( \frac{(1+t(g_i(1)-1))^2}{(1+t(g_i(2)-1))(1+t(g_i(3)-1))} \right) \log (g_1(2))  }{3 (\log(g_1(1)) \log(g_2(2)) - \log(g_1(2))\log(g_2(1)))}.
\end{align*}
Let $l \geq 1$ be an integer. For $i=1,2$ and $t \in [0,1]$, let $L_{i,l}(t)$ be the line from $(1,1,1)$ to $(g_i(1)^l, g_i(2)^l, g_i(3)^l)$. Similar to before, we write $\mathcal{C}_{i,l}(t) = \varphi_{(g_1, g_2)}(L_{i,l}(t))=(x_{i,l}(t), y_{i,l}(t))$. We calculate $\frac{d y_{i,l}(t)}{dx_{i,l}(t)}(t=0)$ and $\frac{d y_{i,l}(t)}{dx_{i,l}(t)}(t=1)$ for $i=1,2$ and $l \geq 1$.

\begin{lemma}
We have
\[ \frac{d y_{i,l}(t)}{dx_{i,l}(t)}(t=0) =  (-1)\frac{(2g_i(2)^l-g_i(1)^l-g_i(3)^l) \log (g_1(1)) - (2g_i(1)^l-g_i(2)^l-g_i(3)^l) \log (g_1(2)) }{(2g_i(2)^l-g_i(1)^l-g_i(3)^l) \log (g_2(1)) - (2g_i(1)^l-g_i(2)^l-g_i(3)^l) \log (g_2(2))}, \]
and
\begin{multline*}
    \frac{d y_{i,l}(t)}{dx_{i,l}(t)}(t=1) = \\ (-1)\frac{(2g_i(2)^{-l}-g_i(1)^{-l}-g_i(3)^{-l}) \log (g_1(1)) - (2g_i(1)^{-l}-g_i(2)^{-l}-g_i(3)^{-l}) \log (g_1(2)) }{(2g_i(2)^{-l}-g_i(1)^{-l}-g_i(3)^{-l}) \log (g_2(1)) - (2g_i(1)^{-l}-g_i(2)^{-l}-g_i(3)^{-l}) \log (g_2(2))}.
\end{multline*}
\end{lemma}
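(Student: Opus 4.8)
The statement is a direct computation. Since $\mathcal{C}_{i,l}$ is given parametrically by the explicit functions $x_{i,l}(t),y_{i,l}(t)$, we have
\[
\frac{dy_{i,l}(t)}{dx_{i,l}(t)}=\frac{dy_{i,l}/dt}{dx_{i,l}/dt},
\]
so it suffices to differentiate the two displayed expressions for $x_{i,l}$ and $y_{i,l}$ and evaluate at the endpoints. The plan is: (a) rewrite the numerators of $x_{i,l}$ and $y_{i,l}$ as linear combinations, with constant coefficients, of $\log\bigl(1+t(g_i(1)^l-1)\bigr)$, $\log\bigl(1+t(g_i(2)^l-1)\bigr)$, $\log\bigl(1+t(g_i(3)^l-1)\bigr)$; (b) differentiate these logarithms; (c) evaluate at $t=0$, and then reduce the $t=1$ case to $t=0$ by the reflection $t\mapsto 1-t$.

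For (a), write $a(t)=1+t(g_i(1)^l-1)$, $b(t)=1+t(g_i(2)^l-1)$, $c(t)=1+t(g_i(3)^l-1)$, suppressing $i$ and $l$. From the formula for $L_{i,l}(t)_{\mathcal H}$, the numerator of $x_{i,l}(t)$ is $\bigl(2\log b-\log a-\log c\bigr)\log g_2(1)-\bigl(2\log a-\log b-\log c\bigr)\log g_2(2)$ with constant denominator $3\bigl(\log g_2(1)\log g_1(2)-\log g_2(2)\log g_1(1)\bigr)$, and the expression for $y_{i,l}(t)$ is the same but with $g_2$ replaced by $g_1$ in the numerator and the sign of the denominator reversed; these denominators are nonzero because $\{\Log g_1,\Log g_2\}$ is an $\mathbb R$-basis of $\mathcal H$ (finite index of $\langle g_1,g_2\rangle$), as recorded after (\ref{projdefn}). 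For (b), $\tfrac{d}{dt}\log a=\tfrac{g_i(1)^l-1}{a(t)}$, and likewise for $b,c$; since $a(0)=b(0)=c(0)=1$ and $a(1)=g_i(1)^l$, $b(1)=g_i(2)^l$, $c(1)=g_i(3)^l$, we obtain
\[
\tfrac{d}{dt}\bigl(2\log b-\log a-\log c\bigr)\big|_{t=0}=2g_i(2)^l-g_i(1)^l-g_i(3)^l,
\]
\[
\tfrac{d}{dt}\bigl(2\log b-\log a-\log c\bigr)\big|_{t=1}=-\bigl(2g_i(2)^{-l}-g_i(1)^{-l}-g_i(3)^{-l}\bigr),
\]
together with the analogous identities with $a$ and $b$ interchanged. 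Forming $\tfrac{dy_{i,l}/dt}{dx_{i,l}/dt}$ at $t=0$, the common factor $3$ cancels and the opposite signs of the two constant denominators produce the overall $(-1)$; collecting terms gives the first identity.

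For $t=1$ I would avoid redoing the bookkeeping: writing the $j$-th coordinate of $L_{i,l}(1-t)$ as $t+(1-t)g_i(j)^l=g_i(j)^l\bigl((1-t)+tg_i(j)^{-l}\bigr)$ exhibits $L_{i,l}(1-t)=g_i^{\,l}\cdot L'(t)$ componentwise, where $L'(t)$ is the line from $(1,1,1)$ to $\bigl(g_i(1)^{-l},g_i(2)^{-l},g_i(3)^{-l}\bigr)$ and $g_i^{\,l}$ acts on $\mathbb R_+^3$ through the real embeddings. Since $\varphi_{(g_1,g_2)}$ is additive with $\varphi_{(g_1,g_2)}(g_i^{\,l})=l\,\varphi_{(g_1,g_2)}(g_i)$, we get $\varphi_{(g_1,g_2)}(L_{i,l}(1-t))=l\,\varphi_{(g_1,g_2)}(g_i)+\varphi_{(g_1,g_2)}(L'(t))$; the constant shift does not change the slope, and $t\mapsto 1-t$ negates both $dx/dt$ and $dy/dt$, leaving their ratio fixed. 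Hence $\frac{dy_{i,l}}{dx_{i,l}}\big|_{t=1}$ equals the $t=0$ slope of the curve built from $L'(t)$, i.e. the $t=0$ formula with every $g_i(j)$ replaced by $g_i(j)^{-1}$, which is exactly the second identity. There is no genuine obstacle here; the computation is routine, and the only points to watch are the sign conventions (handled cleanly by the reflection) and the tacit requirement that $dx_{i,l}/dt\neq 0$ at the endpoint so the parametric slope is defined, which holds in the regime where the lemma is used — the same non-degeneracy that makes the quantities $\delta(\cdots)$ in Lemma \ref{lemmatomakembig} equal to $\pm1$ rather than $0$. I would still confirm the two final formulas by a direct differentiation as a cross-check.
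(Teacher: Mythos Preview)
Your proof is correct and is essentially the direct computation the paper has in mind; the paper's own proof merely records that the calculation is ``long but straightforward'' and that ``L'H\^opital's rule is required in both calculations.'' Your parametric identity $\frac{dy_{i,l}}{dx_{i,l}}=\frac{dy_{i,l}/dt}{dx_{i,l}/dt}$ is precisely what the L'H\^opital remark encodes (both $x_{i,l}$ and $y_{i,l}$ vanish at $t=0$, so the limit of the secant slope equals the ratio of $t$-derivatives). Your reflection $t\mapsto 1-t$, rewriting $L_{i,l}(1-t)=g_i^{\,l}\cdot L'(t)$ with $L'$ the segment from $1$ to $g_i^{-l}$ and using the additivity of $\varphi_{(g_1,g_2)}$, is a genuine simplification over what the paper suggests: it reduces the $t=1$ endpoint to the already-proved $t=0$ formula with $g_i(j)^l$ replaced by $g_i(j)^{-l}$, avoiding a second round of differentiation and the attendant sign bookkeeping.
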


\begin{proof}
The calculation is long but straightforward. L'H\^opital's rule is required in both calculations.
\end{proof}

In Lemma \ref{derivativeinlimit}, we show that under conditions on the units $g_1, g_2$, we have some control over the derivatives of the curves $\mathcal{C}_{1,l}(t)$ and $\mathcal{C}_{2,l}(t)$ at $t=0$ and $t=1$ for large enough $l$. We then show in Lemma \ref{fixgi} that there exist units as in Lemma \ref{lemmatomakembig} which satisfy these conditions.

\begin{lemma}
Let $g_1, g_2$ be as above. Assume further that
\begin{itemize}
    \item $g_1(2)> g_1(1)^{-2}>g_1(1)^{-1}>1 $ and,
    \item $g_2(1)< g_2(2)<1$.
\end{itemize}
Then, we have the limits
\begin{enumerate}[1)]
    \item \[ \lim_{l \rightarrow \infty} \frac{d y_{1,l}(t)}{dx_{1,l}(t)}(t=0) = (-1)\frac{2\log(g_1(1))+\log(g_1(2))}{2\log(g_2(1))+\log(g_2(2))} >0, \]
    \item \[ \lim_{l \rightarrow \infty} \frac{d y_{1,l}(t)}{dx_{1,l}(t)}(t=1) = (-1)\frac{-\log(g_1(1))+\log(g_1(2))}{-\log(g_2(1))+\log(g_2(2))} <0, \]
    \item \[ \lim_{l \rightarrow \infty} \frac{d y_{2,l}(t)}{dx_{2,l}(t)}(t=0) = (-1)\frac{-\log(g_1(1))+ \log(g_1(2))}{-\log(g_2(1))+ \log(g_2(2))} <0, \]
    \item \[ \lim_{l \rightarrow \infty} \frac{d y_{2,l}(t)}{dx_{2,l}(t)}(t=1) = (-1)\frac{ \log(g_1(1))+ 2 \log(g_1(2))}{ \log(g_2(1))+2 \log(g_2(2))} >0 . \]
\end{enumerate}
\label{derivativeinlimit}
\end{lemma}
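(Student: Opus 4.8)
The plan is to take the closed-form expressions for $\frac{dy_{i,l}(t)}{dx_{i,l}(t)}$ at $t=0$ and $t=1$ established just above as the starting point, and simply extract the dominant exponential term as $l\to\infty$. The first step is to record, using the hypotheses together with the fact that each $g_i$ is a totally positive unit (so $g_i(1)g_i(2)g_i(3)=1$ and hence $g_i(3)=(g_i(1)g_i(2))^{-1}$), a strict ordering of the three coordinates. From $g_1(2)>g_1(1)^{-2}>g_1(1)^{-1}>1$ one reads off $g_1(1)\in(0,1)$ and $g_1(2)>1$, and since $g_1(3)<g_1(1)$ is equivalent to $1<g_1(1)^2 g_1(2)$, i.e. to $g_1(2)>g_1(1)^{-2}$, one obtains the chain $g_1(3)<g_1(1)<1<g_1(2)$, equivalently $g_1(3)^{-1}>g_1(1)^{-1}>1>g_1(2)^{-1}$. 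Likewise $g_2(1)<g_2(2)<1$ forces $g_2(3)=(g_2(1)g_2(2))^{-1}>1$, so $g_2(1)<g_2(2)<1<g_2(3)$, equivalently $g_2(1)^{-1}>g_2(2)^{-1}>1>g_2(3)^{-1}$.

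For the second step, in each of the four limits I would divide numerator and denominator of the corresponding closed formula by the dominant power: $g_1(2)^l$ for 1), $g_1(3)^{-l}$ for 2), $g_2(3)^l$ for 3), and $g_2(1)^{-l}$ for 4). Because the orderings above are strict, every other exponential that appears is $o$ of the dominant one, so each of the bracketed coefficients of $\log g_1(1)$, $\log g_1(2)$, $\log g_2(1)$, $\log g_2(2)$ converges to $\pm 1$ or $\pm 2$, and one simply reads off the stated limits. For instance in 1), dividing by $g_1(2)^l$ sends $2g_1(2)^l-g_1(1)^l-g_1(3)^l$ to $2$ and $2g_1(1)^l-g_1(2)^l-g_1(3)^l$ to $-1$, producing $(-1)\frac{2\log g_1(1)+\log g_1(2)}{2\log g_2(1)+\log g_2(2)}$; the other three are identical in spirit.

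For the third step, the sign statements follow from the same inequalities. The numerators of the limiting ratios are positive: $2\log g_1(1)+\log g_1(2)>0$ and $\log g_1(1)+2\log g_1(2)>0$ both use $\log g_1(2)>-2\log g_1(1)$ (that is, $g_1(2)>g_1(1)^{-2}$), while $-\log g_1(1)+\log g_1(2)>0$ uses only $g_1(1)<1<g_1(2)$. The denominators are governed by $g_2(1)<g_2(2)<1$: both $2\log g_2(1)+\log g_2(2)$ and $\log g_2(1)+2\log g_2(2)$ are negative since both logarithms are negative, whereas $-\log g_2(1)+\log g_2(2)>0$ since $g_2(1)<g_2(2)$. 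In particular every limiting denominator is nonzero, so the limits are genuine, and combining with the overall factor $(-1)$ yields exactly the asserted signs.

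I expect the only point requiring care is the bookkeeping in the second step — correctly identifying which of $g_i(1)^{\pm l}$, $g_i(2)^{\pm l}$, $g_i(3)^{\pm l}$ strictly dominates in each case and checking that the coefficient $2$ never produces a tie — rather than any conceptual difficulty; the sign analysis of the third step is precisely what certifies that the limiting denominators do not vanish, so the second and third steps reinforce one another.
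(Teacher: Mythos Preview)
Your proposal is correct and follows essentially the same approach as the paper: substitute $g_i(3)=g_i(1)^{-1}g_i(2)^{-1}$, divide numerator and denominator by the dominant exponential term in each of the four cases (the paper uses exactly the same normalizations you list), and then read off the limiting ratios and their signs from the assumed inequalities. Your presentation is in fact slightly more streamlined than the paper's, which carries out each of the four cases by hand, but the arguments are identical in substance.
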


\begin{proof}
We first note that since $g_1, g_2 \in E_+(\mathfrak{f})$ we have $g_i(3)=g_i(1)^{-1}g_i(2)^{-1}$. We work with each statement individually. Considering \textit{1)}, we have
\begin{multline*}
    \lim_{l \rightarrow \infty} \frac{d y_{1,l}(t)}{dx_{1,l}(t)}(t=0) =  \lim_{l \rightarrow \infty} (-1) \\ \frac{(2g_1(2)^l-g_1(1)^l-g_1(1)^{-l}g_1(2)^{-l}) \log (g_1(1)) - (2g_1(1)^l-g_1(2)^l-g_1(1)^{-l}g_1(2)^{-l}) \log (g_1(2)) }{(2g_1(2)^l-g_1(1)^l-g_1(1)^{-l}g_1(2)^{-l}) \log (g_2(1)) - (2g_1(1)^l-g_1(2)^l-g_1(1)^{-l}g_1(2)^{-l}) \log (g_2(2))}.
\end{multline*}
Dividing the numerator and denominator by $g_1(2)^l$, we see that
\begin{multline*}
    \lim_{l \rightarrow \infty} \frac{d y_{1,l}(t)}{dx_{1,l}(t)}(t=0) = \\  \lim_{l \rightarrow \infty} (-1)  \frac{(2- \left( \frac{g_1(1)}{g_1(2)} \right)^l - \left( \frac{g_1(1)^{-1}}{g_1(2)^{2}} \right)^l) \log (g_1(1)) - (2\left( \frac{g_1(1)}{g_1(2)} \right)^l-1-\left( \frac{g_1(1)^{-1}}{g_1(2)^{2}} \right)^l) \log (g_1(2)) }{(2- \left( \frac{g_1(1)}{g_1(2)} \right)^l - \left( \frac{g_1(1)^{-1}}{g_1(2)^{2}} \right)^l) \log (g_2(1)) - (2\left( \frac{g_1(1)}{g_1(2)} \right)^l-1-\left( \frac{g_1(1)^{-1}}{g_1(2)^{2}} \right)^l) \log (g_2(2))}.
\end{multline*}
Since $g_1(2)> g_1(1)^{-2}>g_1(1)^{-1}>1 $, the fractions $\left( \frac{g_1(1)}{g_1(2)} \right)^l, \left( \frac{g_1(1)^{-1}}{g_1(2)^{2}} \right)^l \rightarrow 0 $. Hence, 
\[ \lim_{l \rightarrow \infty} \frac{d y_{1,l}(t)}{dx_{1,l}(t)}(t=0) = (-1)\frac{2\log(g_1(1))+\log(g_1(2))}{2\log(g_2(1))+\log(g_2(2))}. \]
This value is greater than $0$ as, from the conditions we assume, $2\log(g_1(1))+\log(g_1(2)) >0$ and $2\log(g_2(1))+\log(g_2(2))<0$ thus giving \textit{1)}.

For \textit{2)}, we have 
\begin{multline*}
    \lim_{l \rightarrow \infty} \frac{d y_{1,l}(t)}{dx_{1,l}(t)}(t=1) =  \lim_{l \rightarrow \infty} (-1) \\ \frac{(2g_1(2)^{-l}-g_1(1)^{-l}-g_1(1)^{l}g_1(2)^{l}) \log (g_1(1)) - (2g_1(1)^{-l}-g_1(2)^{-l}-g_1(1)^{l}g_1(2)^{l}) \log (g_1(2)) }{(2g_1(2)^{-l}-g_1(1)^{-l}-g_1(1)^{l}g_1(2)^{l}) \log (g_2(1)) - (2g_1(1)^{-l}-g_1(2)^{-l}-g_1(1)^{l}g_1(2)^{l}) \log (g_2(2))}.
\end{multline*}
Multiplying the numerator and denominator by $g_1(1)^{-l}g_1(2)^{-l}$, we see that
\begin{multline*}
    \lim_{l \rightarrow \infty} \frac{d y_{1,l}(t)}{dx_{1,l}(t)}(t=1) =  \lim_{l \rightarrow \infty} (-1) \\ \frac{(2 \left( \frac{g_1(1)^{-1}}{g_1(2)^2} \right)^l -  \left( \frac{g_1(1)^{-2}}{g_1(2)} \right)^l -1)  \log (g_1(1)) - (2\left( \frac{g_1(1)^{-2}}{g_1(2)} \right)^l-\left( \frac{g_1(1)^{-1}}{g_1(2)^2} \right)^l-1) \log (g_1(2)) }{(2 \left( \frac{g_1(1)^{-1}}{g_1(2)^2} \right)^l -  \left( \frac{g_1(1)^{-2}}{g_1(2)} \right)^l -1) \log (g_2(1)) - (2\left( \frac{g_1(1)^{-2}}{g_1(2)} \right)^l-\left( \frac{g_1(1)^{-1}}{g_1(2)^2} \right)^l-1) \log (g_2(2))}.
\end{multline*}
Since $g_1(2)> g_1(1)^{-2}>g_1(1)^{-1}>1 $, the fractions $\left( \frac{g_1(1)^{-1}}{g_1(2)^2} \right)^l, \left( \frac{g_1(1)^{-2}}{g_1(2)} \right)^l \rightarrow 0$. Hence,
\[ \lim_{l \rightarrow \infty} \frac{d y_{1,l}(t)}{dx_{1,l}(t)}(t=1) = (-1)\frac{-\log(g_1(1))+\log(g_1(2))}{-\log(g_2(1))+\log(g_2(2))}. \]
From the conditions we assume, $-\log(g_1(1))+\log(g_1(2))>0$ and $-\log(g_2(1))+\log(g_2(2)) >0$. Hence, we get the correct sign.

For \textit{3)}, consider $\lim_{l \rightarrow \infty} \frac{d y_{2,l}(t)}{dx_{2,l}(t)}(t=0)$ and multiply the numerator and denominator of the corresponding fraction by $g_2(1)^lg_2(2)^l$. Since $g_2(1)^l, g_2(2)^l \rightarrow 0$, we see that
\[ \lim_{l \rightarrow \infty} \frac{d y_{2,l}(t)}{dx_{2,l}(t)}(t=0) = (-1)\frac{-\log(g_1(1))+\log(g_1(2))}{-\log(g_2(1))+\log(g_2(2))}. \]
From the conditions we assume, $-\log(g_1(1))+\log(g_1(2))>0$ and $-\log(g_2(1))+\log(g_2(2)) >0$. Hence, we get the correct sign.

For \textit{4)}, consider $\lim_{l \rightarrow \infty} \frac{d y_{2,l}(t)}{dx_{2,l}(t)}(t=1)$ and multiply the numerator and denominator of the corresponding fraction by $g_2(1)^l$. Since $g_2(1)^l, g_2(2)^l \rightarrow 0$, we see that
\[ \lim_{l \rightarrow \infty} \frac{d y_{2,l}(t)}{dx_{2,l}(t)}(t=1) = (-1)\frac{-\log(g_1(1))- 2 \log(g_1(2))}{-\log(g_2(1))- 2 \log(g_2(2))}= (-1)\frac{\log(g_1(1))+ 2 \log(g_1(2))}{\log(g_2(1))+ 2 \log(g_2(2))} . \]
From the conditions we assume, $\log(g_1(1))+2\log(g_1(2))>0$ and $\log(g_2(1))+2\log(g_2(2)) <0$. Hence, we get the correct sign.

\end{proof}

We now show that it is possible to find elements that satisfy the properties in the statement of Lemma \ref{derivativeinlimit}. Note that in Lemma \ref{fixgi} we do not show that $g_1, g_2$ generate a finite index subgroup in $E_+(\mathfrak{f})$. After the proof of the lemma, we choose $r$ and $M$ to be large enough so that the conditions of Lemma \ref{lemmatomakembig} are satisfied as well.

\begin{lemma}
There exists $R_2>0 $ such that for all $R>R_2$, $M>K_2(R)$ ($K_2(R)$ is some constant we define which depends only on $R$). We have the following: For $i=1,2$, there exists $g_i \in E_+(\mathfrak{f})$ such that $ \Log(g_i) \in B(l_i(M),R)$ and if we write $g_i=(g_i(1), g_i(2), g_i(3))$, 
\begin{enumerate}[i)]
    \item $g_1(2)> g_1(1)^{-2}>g_1(1)^{-1}>1 $,
    \item $g_2(1)< g_2(2)<1$.
\end{enumerate}
\label{fixgi}
\end{lemma}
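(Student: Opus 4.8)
The plan is to rephrase conditions i) and ii) logarithmically as membership of $\Log(g_i)$ in a half-plane of $\mathcal{H}$, to observe that $l_i(M)$ lies precisely on the boundary of the relevant half-plane, and then to run a half-ball and covering-radius argument inside $\mathcal{H}$. Writing $z = \Log(g_1) = (z_1,z_2,z_3) \in \mathcal{H}$, condition i) is equivalent to the conjunction $z_1 < 0$ and $2z_1 + z_2 > 0$: indeed $g_1(1)^{-1} > 1$ and $g_1(1)^{-2} > g_1(1)^{-1}$ both amount to $z_1 < 0$, while $g_1(2) > g_1(1)^{-2}$ amounts to $z_2 > -2z_1$. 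Similarly, writing $w = \Log(g_2)$, condition ii) is equivalent to $w_2 < 0$ together with $w_2 - w_1 > 0$. If $\Log(g_i) \in B(l_i(M),R)$ and $M > 2R$, the two ``$<0$'' conditions come for free, since the $l_i(M)$ have the entry $-M/2$ in the relevant slot, forcing $z_1, w_2 < -M/2 + R < 0$. Hence it suffices to produce units $g_1, g_2 \in E_+(\mathfrak{f})$ with $\Log(g_1) \in B(l_1(M),R) \cap P_1$ and $\Log(g_2) \in B(l_2(M),R) \cap P_2$, where $P_1 = \{z \in \mathcal{H} : 2z_1 + z_2 > 0\}$ and $P_2 = \{w \in \mathcal{H} : w_2 - w_1 > 0\}$.

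The key observation is that $l_1(M) = (-M/2, M, -M/2)$ satisfies $2z_1 + z_2 = 0$ and $l_2(M) = (-M/2,-M/2,M)$ satisfies $w_2 - w_1 = 0$, so each $l_i(M)$ lies on $\partial P_i$. Consequently $B(l_i(M),R) \cap P_i$ equals, after translating by $-l_i(M)$, the fixed region $B(0,R) \cap \{\ell_i > 0\}$ (with $\ell_i$ the linear functional cutting out $P_i$), which is independent of $M$ and scales linearly in $R$. A short sup-norm estimate --- translate the centre of a small ball a distance $tR$ into $P_i$ along a unit vector on which $\ell_i$ is maximal, and optimise $t$ --- shows this half-ball contains a ball of radius $\kappa R$ with $\kappa$ an absolute positive constant (one may take $\kappa = 1/3$). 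Since $\Lambda := \Log(E_+(\mathfrak{f}))$ is a full-rank lattice in $\mathcal{H}$, it has finite covering radius $\rho$ for the sup-norm; setting $R_2 := \rho/\kappa$ and $K_2(R) := 2R$, for every $R > R_2$ and $M > K_2(R)$ the half-ball $B(l_i(M),R) \cap P_i$ contains a ball of radius $\kappa R > \rho$, hence a point of $\Lambda$ lying in the \emph{open} half-plane $P_i$; this point is $\Log(g_i)$ for the desired unit.

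The one place with genuine content --- as opposed to the soft reductions above --- is this half-ball estimate: one must check that the inscribed ball has radius bounded below by a \emph{positive, $M$-independent} multiple of $R$, and that the extracted lattice point lands in the open half-plane so that the inequalities in i) and ii) are strict. The naive assertion ``every sufficiently large ball meets $\Lambda$'' is not enough here, precisely because $l_i(M)$ sits on $\partial P_i$ rather than in its interior.
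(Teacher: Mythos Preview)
Your proof is correct and follows essentially the same strategy as the paper's: rewrite the conditions logarithmically, observe that $M>2R$ forces the ``$<0$'' parts automatically, and then use that $\Log(E_+(\mathfrak{f}))$ is a full lattice in $\mathcal{H}$ to locate a lattice point in a region of fixed shape inside $B(l_i(M),R)$. The only cosmetic difference is that the paper cuts out a \emph{quarter}-box via the two coordinate conditions $\log(x_1)>-M/2$ and $\log(x_2)>M$ (which together imply $2z_1+z_2>0$), whereas you work directly with the half-plane $\{2z_1+z_2>0\}$ and run an explicit inscribed-ball/covering-radius estimate; both arrive at $K_2(R)=2R$.
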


\begin{proof}
We only give the proof for $g_1$ since the proof for $g_2$ is similar and easier. Recall that $l_1(M)=(-M/2, M, -M/2)$. Since $\Log(E_+(\mathfrak{f}))$ is a lattice inside $\mathcal{H}$, we are able to fix $R_2>0$ such that if $R>R_2$ then for all $M>0$ there exists $x=(x_1, x_2, x_3) \in E_+(\mathfrak{f})$ such that
\begin{itemize}
    \item $\Log(x) \in B(l_1(M), R)$,
    \item $\log(x_1)+\frac{M}{2}>0$,
    \item $\log(x_2)-M>0$.
\end{itemize} 
Such a choice is possible since $\Log(E_+(\mathfrak{f}))$ is a lattice in $\mathcal{H}$. We let $K_2(R)=2R$ and impose that $M>K_2(R)$. With this assumption we then have, in addition to the properties above, $\log(x_1)<0$. The result now follows by noting that \textit{i)} is equivalent to 
\begin{enumerate}[$\textit{i}^\prime)$]
    \item $\log(g_1(2))> -2 \log( g_1(1))> - \log( g_1(1))>0 $.
\end{enumerate}
\end{proof}

We fix $r> \max(R_1^\prime, R_2,1)$ and $M_1> \max(K_1(r),K_2(r), 4K_1^\prime(r))$. We choose $g_1, g_2 \in E_+(\mathfrak{f})$ such that, for $i=1,2$, $\Log(g_i) \in B(l_i(M_1), r)$ and satisfies \textit{i)} and \textit{ii)} in the statement of Lemma \ref{fixgi}, respectively. We remark that the reason for taking $4K_1^\prime(r)$ rather than simply $K_1^\prime(r)$ will not be apparent until Lemma \ref{lemmafindpi-1}. The choices we make here are henceforth fixed. For clarity, we note that under these conditions we have, by Lemma \ref{lemmatomakembig} and Lemma \ref{fixgi}, the existence of $g_1, g_2 \in E_+(\mathfrak{f})$ such that 
\begin{itemize}
    \item $\langle g_1, g_2 \rangle \subseteq E_+(\mathfrak{f})$ is a finite index subgroup, free of rank 2,
    \item $\delta([g_1 \mid g_2 ] )= - \delta([g_2 \mid g_1 ] )=1$,
    \item $g_1(2)> g_1(1)^{-2}>g_1(1)^{-1}>1 $,
    \item $g_2(1)< g_2(2)<1$.
\end{itemize}
We fix this choice of $g_1$ and $g_2$ for the remainder of the paper. We now show that when choosing our subgroup $V$, we are allowed to raise our current choices to positive powers. This enables us to make use of the controls we obtained in Lemma \ref{derivativeinlimit}.

\begin{proposition}
For all $l \geq 1$, we have
\begin{enumerate}[1)]
    \item $\langle g_1^l, g_2^l \rangle \subseteq E_+(\mathfrak{f})$ is a finite index subgroup, free of rank 2,
    \item $\delta([g_1^l \mid g_2^l ] )= - \delta([g_2^l \mid g_1^l ] )=1$.
\end{enumerate}
\label{pffor12}
\end{proposition}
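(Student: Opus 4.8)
The plan is to deduce both statements from the corresponding facts for $g_1, g_2$ (which we have already fixed) together with an observation about how the $\Log$ map and the sign map $\delta$ behave under raising to powers. The key point is that $\Log(g_i^l) = l \cdot \Log(g_i)$, so the $\Log$-images of $g_1^l, g_2^l$ are just positive scalings of those of $g_1, g_2$; scaling a basis of $\mathcal{H}$ by a nonzero scalar keeps it a basis, which handles the rank and finite-index claim, and scaling the columns of a matrix by positive scalars multiplies the determinant by a positive scalar, which handles the sign claim. Concretely, for \emph{1)}: since $\langle g_1, g_2\rangle$ has finite index in $E_+(\mathfrak{f})$ and is free of rank $2$, the set $\{\Log(g_1), \Log(g_2)\}$ is an $\mathbb{R}$-basis of $\mathcal{H}$; hence so is $\{l\Log(g_1), l\Log(g_2)\} = \{\Log(g_1^l), \Log(g_2^l)\}$, so $\langle g_1^l, g_2^l\rangle$ is free of rank $2$. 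It sits inside the finitely generated abelian group $\langle g_1, g_2\rangle$ with $\langle g_1, g_2\rangle / \langle g_1^l, g_2^l\rangle$ of order $l^2$ (it surjects onto $(\mathbb{Z}/l)^2$ via the obvious map, and has no larger index since the images remain $\mathbb{R}$-independent), so the index of $\langle g_1^l, g_2^l\rangle$ in $E_+(\mathfrak{f})$ is $l^2[E_+(\mathfrak{f}):\langle g_1,g_2\rangle] < \infty$.

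For \emph{2)}: recall $[g_1^l \mid g_2^l] = (1, g_1^l, g_1^l g_2^l)$ with entries taken in $\mathbb{R}_+^3$ via the real embeddings, so $\omega([g_1^l \mid g_2^l])$ is the $3\times 3$ matrix with columns $(1,1,1)^{\mathrm{T}}$, $(g_1(1)^l, g_1(2)^l, g_1(3)^l)^{\mathrm{T}}$, $(g_1(1)^l g_2(1)^l, g_1(2)^l g_2(2)^l, g_1(3)^l g_2(3)^l)^{\mathrm{T}}$. One way to proceed is to relate this determinant directly to $\det \omega([g_1 \mid g_2])$. Writing $u_i = \Log(g_i) \in \mathcal{H}$, a short computation (e.g.\ via the formula for the volume of the image cone, or by noting that all three of $(1,1,1)$, $g_1$, $g_1 g_2$ lie in $\mathbb{R}_+^3$ and the sign of the determinant of such a matrix is determined by the cyclic orientation of their $\Log$-projections to $\mathcal{H}$) shows $\sign \det \omega([g_1^l \mid g_2^l])$ depends only on the ordered pair of rays $(\mathbb{R}_+ g_1, \mathbb{R}_+ (g_1g_2))$ relative to the base ray $\mathbb{R}_+(1,1,1)$, which is unchanged when $g_1, g_2$ are replaced by $g_1^l, g_2^l$ since $\Log(g_i^l) = l\,\Log(g_i)$ lies on the same ray from the origin in $\mathcal{H}$. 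Hence $\delta([g_1^l \mid g_2^l]) = \delta([g_1 \mid g_2]) = 1$, and symmetrically $\delta([g_2^l \mid g_1^l]) = \delta([g_2 \mid g_1]) = -1$.

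The main obstacle is making the orientation argument in \emph{2)} fully rigorous, since $\delta$ is defined on the matrix $\omega$ whose columns are $1$, $g_1$, $g_1g_2$ rather than directly on $\Log$-data, and one must check that passing to the powers $g_i^l$ genuinely preserves the sign of the relevant $3\times 3$ determinant and does not accidentally pass through a degenerate (zero-determinant) configuration. I would handle this cleanly by projecting everything to the hyperplane $\mathcal{H}$: replace each column $v$ of $\omega$ by $v_\mathcal{H} = (v_1 v_2 v_3)^{-1/3} v$, which scales each column by a positive real and hence does not change $\sign\det$; then in $\mathcal{H}$ the three points $(1,1,1)$, $(g_1)_\mathcal{H}$, $(g_1 g_2)_\mathcal{H}$ are determined by $0$, $u_1$, $u_1 + u_2$ up to the fixed homeomorphism $\Log\colon \mathcal{H}\to\mathcal{H}$ (note $\Log((g_1g_2)_\mathcal{H}) = u_1 + u_2$), and for the powers one gets $0$, $l u_1$, $l(u_1 + u_2)$; since $u_1, u_2$ are $\mathbb{R}$-independent, both triangles are nondegenerate with the same orientation for every $l \geq 1$, giving the invariance of $\delta$ under $l$-th powers. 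I would also remark that \emph{1)} and \emph{2)} for $g_1^l, g_2^l$ are exactly the hypotheses needed to apply Lemma \ref{colmezlemma}, so Proposition \ref{pffor12} lets us pass freely to powers in the construction of the Colmez domain $D(g_1^l, g_2^l)$, which is what the subsequent argument requires.
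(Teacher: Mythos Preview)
Your argument for \emph{1)} is correct and essentially the same as the paper's.

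Your argument for \emph{2)} has a genuine gap, and in fact the general statement you are implicitly trying to prove is \emph{false}. You claim that the sign of $\det\omega([g_1^l\mid g_2^l])$ is determined by the orientation of the triangle $(0,\,lu_1,\,l(u_1+u_2))$ in $\mathcal{H}$, because the columns can be normalised to lie on the surface $S=\{v_1v_2v_3=1\}$ and $\Log\colon S\to\mathcal{H}$ is a homeomorphism. But a homeomorphism does \emph{not} send the locus ``three points on $S$ lying in a common plane through the origin'' (i.e.\ $\det=0$) to the locus ``three collinear points in $\mathcal{H}$''. Concretely, take the plane $\{2v_1=v_2+v_3\}$: it meets $S$ in a curve through $(1,1,1)$ whose $\Log$-image is \emph{not} a straight line (one checks directly that $(\log(v_2/v_1),\log(v_3/v_1))=(\log s,\log(2-s))$ is not linear in $s$). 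Pick two further points $e^a,e^b$ on this curve; then $(1,1,1),e^a,e^b$ are linearly dependent while $0,a,b$ are non-collinear in $\mathcal{H}$. Rescaling, one obtains $g_1,g_2\in\mathbb{R}_+^3$ with $\delta([g_1\mid g_2])=1$ but $\delta([g_1^l\mid g_2^l])=-1$ for some $l>1$ (a numerical instance: $a\approx(0.095,-0.598,0.501)$, $b\approx(0.554,1.197,-1.749)$, $g_i=e^{0.5u_i}$ gives positive sign at $l=1$ and negative at $l=4$). So the orientation argument cannot work without using more about the particular $g_1,g_2$.

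The paper's proof is completely different and exploits precisely what you discard: the $g_i$ were chosen with $\Log(g_i)\in B(l_i(M_1),r)$ for specific $r,M_1$ satisfying $M_1>K_1'(r)=2^5r$. Then $\Log(g_i^l)\in B(l_i(lM_1),lr)$, and since $lM_1>2^5(lr)=K_1'(lr)$, the quantitative hypothesis of Lemma~\ref{lemmatomakembig} (more precisely, the remark following its proof) is again satisfied with the rescaled parameters, yielding $\delta([g_1^l\mid g_2^l])=1$ directly. The point is that the inequalities defining the admissible region are homogeneous of the right degree, so they persist under $l$-th powers; your orientation approach throws this information away.
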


\begin{proof}
Since $\langle g_1, g_2 \rangle $ is free of rank $2$ and finite index, we must also have that $\langle g_1^l, g_2^l \rangle $ is also free of rank $2$ and finite index. Let $i=1,2$, since $\Log(g_i) \in B(l_i(M_1), r)$, we have $\Log(g_i^l) \in B(l_i(M_1l), rl)$. Thus, $rl \geq r> R_1^\prime  $ and $lM_1>2^5rl$. By the paragraph following the proof of Lemma \ref{lemmatomakembig}, we therefore get that \textit{2)} holds as well.
\end{proof}

We are now able to use our choices to control the curves $\mathcal{C}_{1,l}(t)$ and $\mathcal{C}_{2,l}(t)$.
 
\begin{corollary}
There exists $L_1>0$ such that for all $l >L_1$
\begin{enumerate}[i)]
    \item $y_{1,l}(t) \geq 0$,
    \item $x_{2,l}(t) \leq 0$,
    \item $ 0 \leq x_{1,l}(t) \leq l $,
    \item $ 0 \leq y_{2,l}(t) \leq l $,
\end{enumerate}
for all $t \in [0,1]$.
\label{directionofsides}
\end{corollary}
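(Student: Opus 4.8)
The plan is to extract all four inequalities from the qualitative shape of the arcs $\mathcal{C}_{1,l}$ and $\mathcal{C}_{2,l}$, together with the endpoint-slope formulas of Lemma~\ref{derivativeinlimit}. First I would record the endpoints: by property ii) of $\varphi_{(g_1,g_2)}$ we have $\mathcal{C}_{i,l}(0)=\varphi_{(g_1,g_2)}(1,1,1)=(0,0)$, while $\mathcal{C}_{1,l}(1)=\varphi_{(g_1,g_2)}(g_1^l)=l\,(1,0)$ and $\mathcal{C}_{2,l}(1)=\varphi_{(g_1,g_2)}(g_2^l)=l\,(0,1)$. Second, I would observe that $\mathcal{C}_{i,l}$ is $l$ times the curve obtained by running the $\mathcal{C}_i$-construction with the units $g_1^l,g_2^l$ in place of $g_1,g_2$, since $\varphi_{(g_1^l,g_2^l)}=\tfrac{1}{l}\,\varphi_{(g_1,g_2)}$ and $L_{i,l}$ is precisely the segment from $(1,1,1)$ to $g_i^l$. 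As $g_1^l,g_2^l$ still satisfy the first two hypotheses of Lemma~\ref{lemmatomakembig} (Proposition~\ref{pffor12}), the structural remark already recorded for $\mathcal{C}_1,\mathcal{C}_2$ applies after rescaling: $\mathcal{C}_{1,l}$ is the graph of a $C^1$ function $h_1$ on the $x$-interval $[0,l]$ with monotone derivative (in particular $x_{1,l}$ is monotone in $t$), and $\mathcal{C}_{2,l}$ is the graph of a $C^1$ function $h_2$ on the $y$-interval $[0,l]$ with monotone derivative (in particular $y_{2,l}$ is monotone in $t$). This can alternatively be checked directly from the explicit formulas for $x_{i,l},y_{i,l}$. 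Granting it, iii) and iv) follow at once: $x_{1,l}$ increases from $x_{1,l}(0)=0$ to $x_{1,l}(1)=l$, and $y_{2,l}$ increases from $0$ to $l$.

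For i) and ii) I would bring in Lemma~\ref{derivativeinlimit}. Its four limiting slopes are $>0$, $<0$, $<0$, $>0$ respectively, so by the definition of a limit there is a threshold $L_1>0$, namely the maximum of the four individual thresholds, with the property that for every $l>L_1$
\[
\frac{dy_{1,l}}{dx_{1,l}}(0)>0,\qquad \frac{dy_{1,l}}{dx_{1,l}}(1)<0,\qquad \frac{dy_{2,l}}{dx_{2,l}}(0)<0,\qquad \frac{dy_{2,l}}{dx_{2,l}}(1)>0.
\]
Fix such an $l$. Then $h_1'=dy_{1,l}/dx_{1,l}$ is monotone and passes from a positive value at $x=0$ to a negative value at $x=l$, so it is decreasing; hence $h_1$ is concave on $[0,l]$, and since $h_1(0)=h_1(l)=0$ a concave function must then be $\ge 0$ on $[0,l]$, which is i). Similarly $h_2'=1/(dy_{2,l}/dx_{2,l})$ is monotone and passes from a negative value at $y=0$ to a positive value at $y=l$, so it is increasing; hence $h_2$ is convex on $[0,l]$, and since $h_2(0)=h_2(l)=0$ a convex function must then be $\le 0$ on $[0,l]$, which is ii).

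The only real work here is the shape input of the first paragraph — that each arc is a monotone graph whose slope varies monotonically. I expect this to be the main obstacle, since it is not literally contained in the endpoint-derivative computations already done: one must either invoke the corresponding statement for $\mathcal{C}_1,\mathcal{C}_2$ through the rescaling above, or differentiate the closed formulas for $x_{i,l},y_{i,l}$ and verify that $dx_{1,l}/dt$, $dy_{2,l}/dt$, and the relevant second derivatives keep a constant sign. Everything after that reduces to the elementary fact that a $C^1$ concave (resp.\ convex) function vanishing at the two endpoints of an interval is nonnegative (resp.\ nonpositive) on that interval, used here on $[0,l]$.
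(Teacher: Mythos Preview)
Your approach is genuinely different from the paper's, and the difference matters. You want to establish first that $\mathcal{C}_{1,l}$ is the graph of a $C^1$ function on the $x$-interval $[0,l]$ with monotone slope (and symmetrically for $\mathcal{C}_{2,l}$); once that is granted, i)--iv) drop out by elementary concavity/convexity. The paper does \emph{not} prove or use this strong shape input. Its proof uses only the weaker assertion that the image of a line segment under $\varphi_{(g_1,g_2)}$ lies strictly on one side of its chord, and then leans essentially on the \emph{fundamental domain} property of $D(g_1^l,g_2^l)$ from Lemma~\ref{colmezlemma}: from Proposition~\ref{pffor12} one knows $\mathcal{C}_{1,l}\cap((0,l)+\mathcal{C}_{1,l})=\emptyset$, $\mathcal{C}_{2,l}\cap((l,0)+\mathcal{C}_{2,l})=\emptyset$, and that $\mathcal{C}_{1,l},\mathcal{C}_{2,l}$ meet only at their common endpoint. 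The paper then argues i) and ii) by contradiction, playing these non-intersection facts against the endpoint-slope signs, and only afterwards deduces iii) and iv) from i) and ii). In particular, the paper never asserts that $x_{1,l}$ (or $y_{2,l}$) is monotone in $t$ a priori.

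Your ``shape input'' is therefore the whole content of your argument, and it is not yet established. The sentence in the paper about $\mathcal{C}_1,\mathcal{C}_2$ having ``increasing or decreasing derivative'' is a heuristic remark offered without proof, and the paper's own proof of the corollary carefully avoids invoking it. Your rescaling observation $\varphi_{(g_1^l,g_2^l)}=\tfrac{1}{l}\varphi_{(g_1,g_2)}$ is correct, but it only transfers the problem; it does not prove that the resulting curve is a monotone graph in the first place. If you want to push your route through, you must actually show that $dx_{1,l}/dt$ keeps a fixed sign on $[0,1]$ and that $d(y_{1,l})/d(x_{1,l})$ is monotone, and likewise for the second curve. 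From the explicit formulas this is not a one-line check: after the linear change of basis the $x$-coordinate is a nontrivial combination of two log-ratio terms, and its monotonicity genuinely depends on the inequalities imposed in Lemma~\ref{fixgi}. Until that computation is carried out, your argument has a real gap precisely at the step you flagged; the paper's contradiction argument via the Colmez domain structure is how the author sidesteps it.
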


\begin{proof}
By Lemma \ref{derivativeinlimit}, there exists $L_1>0$ such that for all $l>L_1$
\begin{align*}
    \frac{d y_{1,l}(t)}{dx_{1,l}(t)}(t=0) >0, \quad \frac{d y_{1,l}(t)}{dx_{1,l}(t)} (t=1) &<0,  \\
    \frac{d y_{2,l}(t)}{dx_{2,l}(t)}(t=0) <0, \quad
    \frac{d y_{2,l}(t)}{dx_{2,l}(t)}(t=1) &>0.
\end{align*}
We recall the definition of $D(g_1^l, g_2^l)$ from equation (\ref{Deqn}) and note that from \textit{2)} in Proposition \ref{pffor12} we have the sign properties required to show that $D(g_1^l, g_2^l)$ forms a fundamental domain for the action of $\langle g_1^l, g_2^l \rangle $ on $\mathbb{R}_+^3$. This follows from Lemma 2.2 of \cite{MR922806}. From this we deduce two key properties. Firstly, we have
\[ \mathcal{C}_{1,l} \cap ((0,l)+\mathcal{C}_{1,l}) = \emptyset \quad \text{and} \quad \mathcal{C}_{2,l} \cap ((l,0)+\mathcal{C}_{2,l})= \emptyset. \]
Secondly, the curves $\mathcal{C}_{1,l}$ and $\mathcal{C}_{2,l}$ can only intersect at the endpoints. More precisely, we have 
\begin{align*}
    \mathcal{C}_{1,l} \cap \mathcal{C}_{2,l} &=\{ (0,0) \} , \\
    ((0,l)+ \mathcal{C}_{1,l}) \cap \mathcal{C}_{2,l} &= \{(0, l)\} , \\
    \mathcal{C}_{1,l} \cap ((l,0)+ \mathcal{C}_{2,l}) &= \{(l,0) \} , \\
    ((0,l)+ \mathcal{C}_{1,l}) \cap ((l,0)+ \mathcal{C}_{2,l})&= \{ (l,l) \}.
\end{align*}

Henceforth, we choose $l>L_1$. Note that since the map $\varphi_{(g_1, g_2)}$ is equivalent to taking a projection followed by the $\Log$ map, followed by a base change, it maps straight lines in $\mathbb{R}_+^{3}$, which are not contained in rays, to continuous strictly convex curves in $\mathbb{R}^2$. We have strictly convex curves as we can never obtain straight lines in $\mathbb{R}^2$ from straight lines in $\mathbb{R}^3_+$ that are not contained in rays. More precisely, let $\gamma(t)$, $t\in [0,1]$, be any straight line of finite length in $\mathbb{R}^3_+$ where $\gamma(0)$ and $\gamma(1)$ are not both lying on the same ray. Then, we have 
\begin{multline*}
    \{ \varphi_{(g_1, g_2)}(\gamma(0)) +k (\varphi_{(g_1, g_2)}(\gamma(1))-\varphi_{(g_1, g_2)}(\gamma(0))) \mid k \in [0,1] \} \cap \{ \varphi_{(g_1, g_2)}(\gamma(t)) \mid t \in [0,1]   \} \\ =\{ \varphi_{(g_1, g_2)}(\gamma(0)), \varphi_{(g_1, g_2)}(\gamma(1)) \}.
\end{multline*}

We first show \textit{ii)}. Proceeding by contradiction, we suppose that $x_{2,l}(T)>0$ for some $T \in [0,1]$. Since $\mathcal{C}_{2,l}$ is strictly convex and has contains the points $(0,0)$ and $(0,l)$, we deduce that $x_{2,l}(t) \geq 0$ for all $t \in [0,1]$. Since
\[ \frac{d y_{2,l}(t)}{dx_{2,l}(t)}(t=0) <0 \quad \text{and} \quad \frac{d y_{2,l}(t)}{dx_{2,l}(t)}(t=1) >0, \]
there exist $T_1, T_2 \in [0,1] $ such that $y_{2,l}(T_1)<0$ and $y_{2,l}(T_2)>l$.

Consider $\mathcal{C}_{1,l}$. Since $\frac{d y_{1,l}(t)}{dx_{1,l}(t)}(t=0) >0$ and $\mathcal{C}_{1,l}$ is strictly convex, we must have that $y_{1,l}(t) \leq 0$ for all $t \in [ 0,1]$. Note that if we had $y_{1,l}(t) > 0$ for some $t \in [0,1]$ then $\mathcal{C}_{1,l}$ and $\mathcal{C}_{2,l}$ would intersect at at least one point other than $(0,0)$.

We now consider the curve $(0,l)+\mathcal{C}_{1,l}$. Since we have $\mathcal{C}_{2,l} \cap ((0,l)+\mathcal{C}_{1,l}) = \{ (0,l) \} $, $y_{1,l}(t) \leq 0$ for all $t \in [ 0,1]$ and the existence of $T_1$, there exists $K \in [0,1]$ such that
\begin{itemize}
    \item $l+ y_{1,l}(K) <0$,
    \item $x_{1,l}(K)=0$, and
    \item $x_{1,l}(t) \leq 0$ for all $t \in [0,K]$.
\end{itemize}
These three conditions imply that $\mathcal{C}_{1,l} \cap ((0,l)+ \mathcal{C}_{1,l}) \neq \emptyset$ which is a contradiction. This gives a contradiction to the existence of $T \in [0,1]$ such that $x_{2,l}(T)>0$. Hence, we have that $x_{2,l}(t)\leq 0$ for all $t \in [0,1]$ and so \textit{ii)} holds. 

To prove \textit{i)} we again work by contradiction and suppose that $y_{1,l}(T) < 0$ for some $T \in [0,1]$. As before, we deduce that $y_{1,l}(t) \leq 0$ for all $t \in [0,1]$. Since 
\[\frac{d y_{1,l}(t)}{dx_{1,l}(t)}(t=0) >0 \quad \text{and} \quad \frac{d y_{1,l}(t)}{dx_{1,l}(t)} (t=1) <0, \]
there exist $T_1, T_2 \in [0,1] $ such that $x_{1,l}(T_1)<0$ and $x_{1,l}(T_2)>l$. As before, we consider the curve $(0,l)+\mathcal{C}_{1,l}$. Using a similar argument as above, we are able to show that $\mathcal{C}_{1,l} \cap ((0,l)+ \mathcal{C}_{1,l}) \neq \emptyset$. This contradiction then gives us that \textit{i)} holds.

From what we deduced about the derivatives and the fact that the first two statements hold, it is clear that \textit{iii)} and \textit{iv)} must also hold.
\end{proof}

The results of Corollary \ref{directionofsides}, combined with the fact that $\mathcal{C}_{1,l}$ and $\mathcal{C}_{2,l}$ are strictly convex curves, gives us that the image of $\mathcal{C}_{1,l} \cup \mathcal{C}_{2,l} \cup ((0,l)+ \mathcal{C}_{1,l}) \cup ((l,0)+ \mathcal{C}_{2,l}) $ is always in a similar form to the following example. Note that in the image below we choose an example where we can take $l=1$. Throughout the following proofs one should try to keep the image below in mind. We give more details on the explicit choices and calculations needed to form this image in the appendix. Although the image appears to show that the lines $\mathcal{C}_{1,l}$ and $(l,0)+\mathcal{C}_{2,l}$ overlap, this in fact does not happen. This only appears in the diagram due to the fixed thickness of the lines.

\begin{figure}[h]
    \centering
    \includegraphics[scale=0.35]{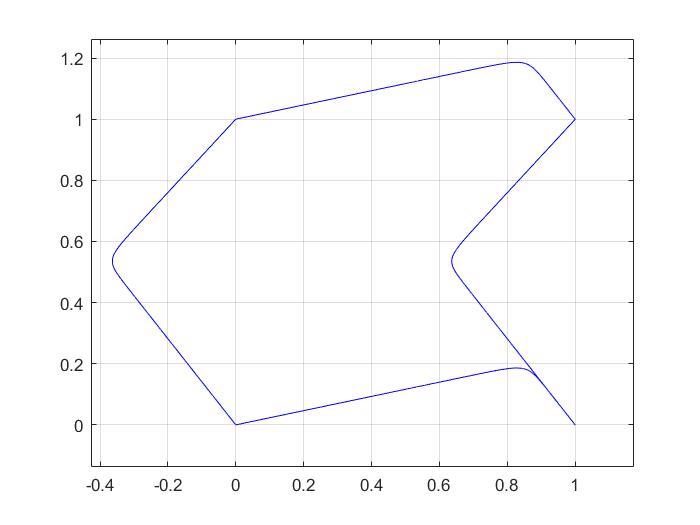}
    \caption{A Colmez domain chosen as in Corollary \ref{directionofsides}.}
    \label{fig:colmezdom}
\end{figure}

Using the corollary above, the next lemma shows that we are now able to find an element of $\pi^{-1}_\mathcal{H} E_+(\mathfrak{f})$ which satisfies properties similar to \textit{3)} and \textit{4)} of Proposition \ref{propofcolmezdom2}. Note that the element we find in the next lemma will directly give rise an element which satisfies \textit{3)} and \textit{4)} of Proposition \ref{propofcolmezdom2}.

\begin{lemma}
There exists $L_2>0$ such that for all $l>\max(L_1,L_2)$, there exists $\alpha \in \pi^{-1}_\mathcal{H} E_+(\mathfrak{f})$ such that
\begin{itemize}
    \item $\alpha \in C([g_1^l \mid g_2^l ]) \cup C([g_2^l \mid g_1^l]) \cup C(1, g_1^l g_2^l) $,
    \item $\Log(\alpha ) \in B(-l_0(lM_1), 4lr)$.
\end{itemize}
\label{lemmafindpi-1}
\end{lemma}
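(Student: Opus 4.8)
The plan is to produce $\alpha$ in the shape $\alpha=\pi^{-1}_{\mathcal H}\,u\,g_1^l g_2^l$ for a suitable $u\in E_+(\mathfrak{f})$. The key observation is that the ``top vertex'' $g_1^l g_2^l$ of the Colmez domain $D(g_1^l,g_2^l)$ already points almost exactly in the target direction: since $l_0(M)+l_1(M)+l_2(M)=0$ and $l_i(cM)=c\,l_i(M)$, we have $-l_0(lM_1)=l\,l_1(M_1)+l\,l_2(M_1)$, while $\Log(g_i)\in B(l_i(M_1),r)$ gives $\Log(g_1^l g_2^l)=l\Log(g_1)+l\Log(g_2)\in B(-l_0(lM_1),2lr)$. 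Hence it suffices to find $u$ such that $\pi^{-1}_{\mathcal H}u$ has bounded $\Log$-size (at most $2lr$, with room to spare) and such that $\alpha$ lands in $S_l:=C([g_1^l\mid g_2^l])\cup C([g_2^l\mid g_1^l])\cup C(1,g_1^l g_2^l)$.

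I would carry this out in the coordinates $\varphi_{(g_1, g_2)}$ from (\ref{projdefn}). Write $\Lambda:=\varphi_{(g_1, g_2)}(E_+(\mathfrak{f}))$; since $\varphi_{(g_1, g_2)}$ sends $g_1\mapsto(1,0)$, $g_2\mapsto(0,1)$ and $\langle g_1,g_2\rangle$ has finite index in $E_+(\mathfrak{f})$, the set $\Lambda$ is a lattice with $\mathbb{Z}^2\subseteq\Lambda\subseteq\mathbb{R}^2$ of finite index, so its covering radius for the sup-norm is at most $\tfrac12$; moreover $\varphi_{(g_1, g_2)}(\pi^{-1}_{\mathcal H}E_+(\mathfrak{f}))=v_0+\Lambda$ with $v_0:=\varphi_{(g_1, g_2)}(\pi^{-1}_{\mathcal H})$ fixed, and for $\beta\in\mathbb{R}_+^3$ with $\Log\beta$ in the trace-zero hyperplane one has $\varphi_{(g_1, g_2)}(\beta)=(a,b)$ where $\Log\beta=a\Log g_1+b\Log g_2$, an isomorphism of operator norm at most $C_\varphi:=2(M_1+r)$. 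Unwinding the definitions, $P_l:=\varphi_{(g_1, g_2)}(S_l)$ is the interior of the curvilinear quadrilateral with vertices $(0,0),(l,0),(0,l),(l,l)$ bounded by the arcs $\mathcal{C}_{1,l}$, $\mathcal{C}_{2,l}$, $(0,l)+\mathcal{C}_{1,l}$, $(l,0)+\mathcal{C}_{2,l}$, and $(l,l)=\varphi_{(g_1, g_2)}(g_1^l g_2^l)$. The geometric heart of the matter is the following claim: \emph{there is a fixed open wedge $\mathcal{W}\subset\mathbb{R}^2$ of positive opening angle, pointing into the interior of $P_l$ from the vertex $(l,l)$, and a threshold $L_2'\ge L_1$, such that for all $l>L_2'$ one has $P_l\supseteq(l,l)+\bigl(\mathcal{W}\cap B(0,D)\bigr)$, where $D>0$ is a constant to be fixed below.}

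Granting the claim I would conclude as follows. Fix $D$ large enough that $\mathcal{W}\cap B(0,D)$ contains a closed sup-norm ball of radius $\tfrac12$; then $(l,l)+\bigl(\mathcal{W}\cap B(0,D)\bigr)\subseteq P_l$ contains such a ball, which meets the coset $v_0+\Lambda$ since $\Lambda$ has covering radius at most $\tfrac12$. Pick $z_0=(l,l)+w_0$ in this intersection, so $w_0\in\mathcal{W}$, $\|w_0\|\le D$ and $z_0\in v_0+\Lambda$; as $(l,l)=\varphi_{(g_1, g_2)}(g_1^l g_2^l)\in\Lambda$ we get $w_0-v_0=z_0-v_0-(l,l)\in\Lambda$, so there is $u\in E_+(\mathfrak{f})$ with $\varphi_{(g_1, g_2)}(\pi^{-1}_{\mathcal H}u)=w_0$. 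Set $\alpha:=\pi^{-1}_{\mathcal H}\,u\,g_1^l g_2^l$. Then $\varphi_{(g_1, g_2)}(\alpha)=z_0\in P_l$, and since $S_l$ is a union of cones (hence stable under positive scaling) and $\varphi_{(g_1, g_2)}$ is a bijection on rays, $\alpha\in S_l$, which is the first bullet. For the second, $\Log\alpha=\Log(g_1^l g_2^l)+\Log(\pi^{-1}_{\mathcal H}u)$ with $\|\Log(\pi^{-1}_{\mathcal H}u)\|=\|\varphi_{(g_1, g_2)}^{-1}(w_0)\|\le C_\varphi D$, so $\Log\alpha\in B\bigl(-l_0(lM_1),\,2lr+C_\varphi D\bigr)$, which lies inside $B(-l_0(lM_1),4lr)$ once $l>C_\varphi D/(2r)=:L_2''$. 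Taking $L_2:=\max(L_2',L_2'')$ then gives the lemma for $l>\max(L_1,L_2)$.

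The main obstacle is the claim. That the boundary arcs $(0,l)+\mathcal{C}_{1,l}$ and $(l,0)+\mathcal{C}_{2,l}$ through $(l,l)$ are strictly convex, meet only at $(l,l)$, and bound $P_l$ on the side of the vertex, is exactly what Corollary \ref{directionofsides} and its proof provide (together with the fact that the translates of $D(g_1^l,g_2^l)$ tile $\mathbb{R}_+^3$, which forces the tangent cone of $P_l$ at $(l,l)$ to be a proper wedge once the two limiting slopes are distinct); and Lemma \ref{derivativeinlimit}(2),(4) shows those slopes at the shared endpoint converge, as $l\to\infty$, to $s_1<0$ and $s_2>0$, so the tangent cone converges to a fixed proper wedge $\mathcal{W}_\infty$, and one takes for $\mathcal{W}$ a slightly smaller open wedge cut off from the apex. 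The remaining, quantitative, point is that a truncation of $\mathcal{W}$ of \emph{fixed} radius $D$ really fits inside $P_l$ for $l$ large: this needs that the convex arcs $\mathcal{C}_{i,l}$ deviate from their limiting tangent lines by $o(1)$ over windows of bounded size near their endpoints — equivalently, that their heights above the chords are $o(l)$. I expect establishing this uniform-in-$l$ flatness, by pushing the explicit formulas for $x_{i,l}(t),y_{i,l}(t)$ and the slope computation behind Lemma \ref{derivativeinlimit} one order further, to be the genuinely laborious step.
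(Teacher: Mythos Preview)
Your approach is essentially the paper's: work at the vertex $(l,l)=\varphi_{(g_1,g_2)}(g_1^lg_2^l)$, carve out a fixed-size region of $P_l$ there using the limiting slopes from Lemma~\ref{derivativeinlimit}, pick a coset point of $\pi_{\mathcal H}^{-1}E_+(\mathfrak f)$ by a lattice-covering argument, and get the ball estimate from $\Log(g_1^lg_2^l)\in B(-l_0(lM_1),2lr)$ plus the bounded size of the region. The paper does exactly this, only phrasing the region as an explicit triangle and the covering step as Dirichlet's unit theorem.

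Where you diverge is in the ``main obstacle''. You predict needing a second-order estimate --- that the arcs deviate from their limiting tangents by $o(1)$ over bounded windows --- and call this laborious. This is a misdiagnosis: no such estimate is needed, because the arcs are \emph{strictly convex} (this is stated and used in the proof of Corollary~\ref{directionofsides}), hence each lies entirely on one side of any of its tangent lines. First-order information (the endpoint slopes) therefore already separates the region from the boundary arcs. The paper exploits this via a two-triangle construction: a small fixed triangle $T(\theta,Q,(l,l))$ with hypotenuse of slope $\tan\theta=d_2/2$ emanating from $(l,l)$, which the tangent to $(l,0)+\mathcal C_{2,l}$ at $(l,l)$ (slope $>d_2/2$) separates from that arc; and a large triangle $T(\gamma,(l,l))$ with apex at $(l,0)$, which the tangent to $\mathcal C_{1,l}$ at $(l,0)$ separates from the lower boundary. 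The intersection of the two triangles then lies in $P_l$, and since the second triangle scales with $l$ while the first is fixed, for large $l$ the small one is contained in the large one. Your wedge $\mathcal W$ at $(l,l)$ can be handled the same way: use convexity plus the actual (not limiting) tangent slopes at the relevant endpoints, rather than an approximation argument.

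One small wrinkle in your covering step: a closed sup-norm ball of radius exactly $\tfrac12$ need not meet the coset $v_0+\Lambda$; take the radius strictly larger than $\tfrac12$ (or than the covering radius of $\Lambda$) when choosing $D$.
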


\begin{proof}
We assume that $l>L_1$. By Lemma \ref{derivativeinlimit}, we have the limit
\[ d_2 = \lim_{l \rightarrow \infty} \frac{d y_{2,l}(t)}{dx_{2,l}(t)}(t=1) >0. \]
Then, there exists $ L_2^\prime >0$ such that for all $l >L_2^\prime$, 
\[  \frac{d y_{2,l}(t)}{dx_{2,l}(t)}(t=1) >\frac{d_2}{2}. \]
Let $\theta = \arctan(d_2/2)>0$ and for $Q>0$ define $T(\theta, Q , (l,l))$ to be the triangle drawn below. 

\begin{center}
\begin{tikzpicture}[
  my angle/.style={
    every pic quotes/.append style={text=cyan},
    draw=cyan,
    angle radius=1cm,
  }]
  \coordinate [label=right:$ (l {,} l) $] (C) at (5,2);
  \coordinate [label=left:$(l-Q \cos (\theta) {,} l)$ ] (A) at (0,2);
  \coordinate [] (B) at (0,0);
  \draw (C) -- node[below] {$Q$} (B) -- node[right] {} (A) -- node[below] {} (C);
  \draw (A) +(.25,0) |- +(0,-.25);
  \pic [my angle, "$\theta$"] {angle=A--C--B};
\end{tikzpicture}
\end{center}

We choose $Q$ big enough such that for all $l>0$, there exists $\alpha \in \pi_\mathcal{H}^{-1} E_+(\mathfrak{f}) \cap T(\theta, Q , (l,l)) $. As seen in the proof of Lemma \ref{lemmatomakembig}, the existence of such a $Q$ follows from Dirichlet's Unit Theorem and, in particular, the non-vanishing of the regulator of a number field. The next idea of the proof is to make $l$ big enough such that the triangle $T(\theta, Q , (l,l))$ is guaranteed to be contained inside $C([g_1^l \mid g_2^l ]) \cup C([g_2^l \mid g_1^l]) \cup C(1, g_1^l g_2^l)$. The triangle is chosen such that for all $l >L_2^\prime $, it lies to the left of the curve $(l,0) + \mathcal{C}_{2,l}$. Again by Lemma \ref{derivativeinlimit}, we have the limit
\[ d_1 = \lim_{l \rightarrow \infty} \frac{d y_{1,l}(t)}{dx_{1,l}(t)}(t=1) <0. \]
Then, there exists $ L_2^{\prime \prime} >0$ such that for all $l >L_2^{\prime \prime}$, 
\[ 0> \frac{d y_{1,l}(t)}{dx_{1,l}(t)}(t=1) >\frac{d_1}{2}. \]
Note that the first inequality above follows from our assumption that $l>L_1$. Let $\gamma = - \arctan(d_1/2) \\ >0$ and  define $T(\gamma , (l,l))$ to be the triangle drawn below. 

\begin{center}
\begin{tikzpicture}[
  my angle/.style={
    every pic quotes/.append style={text=cyan},
    draw=cyan,
    angle radius=1cm,
  }]
  \coordinate [label= below:$(l{,}0)$] (C) at (5,0);
  \coordinate [label=right:$(l{,}l)$] (A) at (5,5);
  \coordinate (B) at (2,5);
  \draw (C) -- node[above] {} (B) -- node[right] {} (A) -- node[below] {} (C);
  \draw (A) +(-.25,0) |- +(0,-.25);
  \pic [my angle, "$\gamma$"] {angle=A--C--B};
\end{tikzpicture}
\end{center}

Note that for all $l> \max(L_2^\prime, L_2^{\prime \prime}) $, we have $T(\theta, Q , (l,l)) \cap T(\gamma , (l,l)) \subset C([g_1^l \mid g_2^l ]) \cup C([g_2^l \mid g_1^l]) \cup C(1, g_1^l g_2^l)  $. Since the size of $T(\theta, Q , (l,l))$ is fixed, there exists $L_2^{\prime \prime \prime}$ such that for all $l>L_2^{\prime \prime \prime}$, $T(\theta, Q , (l,l)) \subset T(\gamma , (l,l))$. Thus, if we choose $\widetilde{L_2} = \max(L_2^\prime , L_2^{\prime \prime}, L_2^{\prime \prime \prime})$, then for $l> \widetilde{L_2}$, there exists $\alpha \in \pi_\mathcal{H}^{-1} E_+(\mathfrak{f}) $ such that  $\alpha \in C([g_1^l \mid g_2^l ]) \cup C([g_2^l \mid g_1^l]) \cup C(1, g_1^l g_2^l)$. Since $\Log( g_i) \in B(l_i(M_1), r)$, we have $\Log(g_1^l g_2^l) \in B(-l_0(lM_1), 2lr)  $. The size of the triangle $T(\theta, Q , (l,l))$ is fixed and always has a point at $(l,l)$. It is therefore clear that for $l$ big enough (say $l>\widetilde{L_2}^\prime$) the pre-image of the triangle before the change of basis is contained in $B(-l_0(lM_1), 4lr)$. Note that we achieved this by simply doubling the radius of the ball. We finish by setting $L_2= \max (\widetilde{L_2}, \widetilde{L_2}^\prime)$ to ensure that we obtain all the required conditions.
\end{proof}

We are now ready to prove the proposition we stated at the start of this section. 

\begin{proof}[Proof of Proposition \ref{propofcolmezdom2}]
Let $l>\max (L_1, L_2)$, and write $\varepsilon_i=g_i^l$ for $i=1,2$. By Proposition \ref{pffor12}, we get \textit{1)} and \textit{2)} in Proposition \ref{propofcolmezdom2}. By Lemma \ref{lemmafindpi-1}, there exists $\alpha \in \pi^{-1}_\mathcal{H} E_+(\mathfrak{f})$ such that 
\begin{itemize}
    \item $\alpha \in C([g_1^l \mid g_2^l ]) \cup C([g_2^l \mid g_1^l]) \cup C(1, g_1^l g_2^l) $,
    \item $\Log(\alpha ) \in B(-l_0(lM_1), 4lr)$.
\end{itemize}
We then define $\omega = \alpha^{-1} \pi_\mathcal{H}^{-1} \in E_+(\mathfrak{f})$. Since $\alpha= \pi_\mathcal{H}^{-1} \omega^{-1} = k \cdot \pi^{-1} \omega^{-1} $ for some $k \in \mathbb{R}_{>0}$, in the second equality we consider the elements as vectors in $\mathbb{R}_+^3$. Hence, we have
\[ \omega^{-1} \pi^{-1} \in  C([g_1^l \mid g_2^l ]) \cup C([g_2^l \mid g_1^l]) \cup C(1, g_1^l g_2^l) \subset \overline{C}_{e_1}([\varepsilon_1 \mid \varepsilon_2]) \cup \overline{C}_{e_1}([\varepsilon_2 \mid \varepsilon_1]). \]
Thus, we obtain \textit{4)} of the proposition. Now, let $g_\pi = \alpha^{-1}= \pi_\mathcal{H} \omega$. Then,
\[ \Log(g_\pi) \in B(l_0(lM_1), 4lr). \]
Since $M_1 > 4K_1^\prime(r)=4 \cdot 2^5 r$, we have $lM_1>K_1^\prime(4lr)$. Thus, by Lemma \ref{lemmatomakembig}, we obtain \textit{3)}. This completes the proof of the proposition.
\end{proof}

We fix the choice of $\varepsilon_1, \varepsilon_2$ and, for ease of notation, write $\pi = \omega \pi$, as is prescribed by Proposition \ref{propofcolmezdom2}. We assume in addition to the properties given by Proposition \ref{lemmatomakembig} that $\langle \varepsilon_1, \varepsilon_2 \rangle \cong \mathbb{Z}/b_1 \mathbb{Z} \times \mathbb{Z}/b_1 \mathbb{Z} $ with $b_1, b_2 $ large enough to satisfy the conditions required in Proposition \ref{firstchangeofdom}. This is achieved by simply choosing a larger $l$ than in the proof of Proposition \ref{propofcolmezdom2}, if it is required. Let  
\[\mathcal{B} \coloneqq  \overline{C}_{e_1} ([\varepsilon_{1}  \mid  \varepsilon_{2}]) \cup \overline{C}_{e_1} ([\varepsilon_{2}  \mid  \varepsilon_{1}]) . \]
By $2)$ of Proposition \ref{propofcolmezdom2} and Lemma \ref{colmezlemma}, this is a Colmez domain for $\langle \varepsilon_1, \varepsilon_2 \rangle$. We also define
\begin{align*}
    \mathcal{B}_1 & \coloneqq  \overline{C}_{e_1} ([\varepsilon_{2}  \mid  \pi ]) \cup \overline{C}_{e_1} ([\pi \mid  \varepsilon_{2}]), \\
    \mathcal{B}_2 & \coloneqq  \overline{C}_{e_1} ([\varepsilon_{1}  \mid  \pi]) \cup \overline{C}_{e_1} ([\pi \mid  \varepsilon_{1}]).
\end{align*}
Then by \textit{3)} of Proposition \ref{propofcolmezdom2}, $\mathcal{B}_1$ is a fundamental domain for the action of $\langle \varepsilon_2, \pi \rangle$ on $\mathbb{R}_+^3$ and $\mathcal{B}_2$ is a fundamental domain for the action of $\langle \varepsilon_1, \pi \rangle$ on $\mathbb{R}_+^3$. We are now ready to show that through our choice of $\varepsilon_1, \varepsilon_2$ and $\pi$, we can obtain control over the $\pi^{-1}$ translate of $\mathcal{B}$.

\begin{proposition}
With the choice of $\pi$ fixed before, we have 
\[ \pi^{-1} \mathcal{B} \subset \bigcup_{k_1=0}^1 \bigcup_{k_2=0}^{2} \varepsilon_1^{k_1} \varepsilon_2^{k_2} \mathcal{B}.  \]
\label{whereispiinverse}
\end{proposition}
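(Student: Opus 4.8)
The plan is to project the whole configuration to $\mathbb{R}^2$ by the map $\varphi\coloneqq\varphi_{(g_1,g_2)}$ of (\ref{projdefn}) and argue there. Since $\varphi$ is a bijection $(\mathbb{R}_+^3/\!\sim)\to\mathbb{R}^2$ with $\varphi(\alpha\beta)=\varphi(\alpha)+\varphi(\beta)$, and $\mathcal{B}$, $\pi^{-1}\mathcal{B}$ and each $\varepsilon_1^{k_1}\varepsilon_2^{k_2}\mathcal{B}$ are unions of simplicial cones (hence $\sim$-stable), the assertion is equivalent to
\[ p+\mathcal{R}\ \subseteq\ \bigcup_{k_1=0}^{1}\bigcup_{k_2=0}^{2}\bigl((k_1 l,k_2 l)+\mathcal{R}\bigr),\qquad \mathcal{R}\coloneqq\varphi(\mathcal{B}),\quad p\coloneqq\varphi(\pi^{-1}), \]
where $\varphi(\varepsilon_i)=l\,\varphi(g_i)$, so $\varphi(\varepsilon_1)=(l,0)$, $\varphi(\varepsilon_2)=(0,l)$, $\varphi(\varepsilon_1\varepsilon_2)=(l,l)$. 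As $\mathcal{B}$ is a Colmez domain for $\langle\varepsilon_1,\varepsilon_2\rangle$, the translates $\{(k_1 l,k_2 l)+\mathcal{R}\}_{(k_1,k_2)\in\mathbb{Z}^2}$ tile $\mathbb{R}^2$, so it suffices to show that $p+\mathcal{R}$ is disjoint from the interior of every translate with $k_1\le-1$, $k_1\ge2$, $k_2\le-1$ or $k_2\ge3$.

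I would then record two geometric facts. First, $\mathcal{R}$ is the curved quadrilateral with corners $(0,0),(l,0),(l,l),(0,l)$ bounded by $\mathcal{C}_{1,l}$, $(l,0)+\mathcal{C}_{2,l}$, $(0,l)+\mathcal{C}_{1,l}$, $\mathcal{C}_{2,l}$; by Corollary \ref{directionofsides} and the strict convexity of these arcs they lie in $\{y\ge0\}$, resp.\ $\{x\le0\}$, and since the relevant endpoint slopes from Lemma \ref{derivativeinlimit} are small for our choice of $g_1,g_2$ (made close to $l_i(M_1)$ with $M_1$ enormous), the arcs deviate from the segments joining their endpoints by an amount $<l$ once $l$ is large; hence $\mathcal{R}\subseteq[-\delta_x,l]\times[0,l+\delta_y]$ with $\delta_x,\delta_y<l$. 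Moreover, $\mathcal{C}_{2,l}$ is, by Corollary \ref{directionofsides} and strict convexity, the graph of a convex function $x=h_2(y)$ on $[0,l]$ with $h_2\le0$, $h_2(0)=h_2(l)=0$, and $h_2'$ non-decreasing with maximum $1/d_{2,l}$ at $y=l$, where $d_{2,l}=\tfrac{dy_{2,l}}{dx_{2,l}}(t=1)>0$. Second, by Lemma \ref{lemmafindpi-1} — applied after the renaming $\pi=\omega\pi$, under which $\pi^{-1}$ spans the ray of $\alpha$ — the point $p=\varphi(\pi^{-1})=\varphi(\alpha)$ lies in the triangle $T(\theta,Q,(l,l))$, so
\[ p=(l,l)-(s,t),\qquad 0\le s\le Q\cos\theta,\quad 0\le t\le s\tan\theta, \]
with $\theta=\arctan(d_2/2)$ and $d_2=\lim_{l\to\infty}d_{2,l}$; here $s,t$ are bounded independently of $l$, and, enlarging $l$ if necessary, $d_{2,l}>d_2/2=\tan\theta$, hence $t<s\,d_{2,l}$.

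The argument then splits into the four boundary directions. Taking $l$ large, $p+\mathcal{R}$ is the translate of the tile $(l,l)+\mathcal{R}$ by the bounded vector $-(s,t)$, so one reads off $p+\mathcal{R}\subseteq\{x\ge l-s-\delta_x\}\cap\{y\ge l-t\}\cap\{y\le 2l-t+\delta_y\}$; since $l-s-\delta_x>0$, $l-t>\delta_y$ and $2l-t+\delta_y<3l$ for $l$ large, this rules out all tiles with $k_1\le-1$ (contained in $\{x\le0\}$), $k_2\le-1$ (contained in $\{y\le\delta_y\}$) and $k_2\ge3$ (contained in $\{y\ge3l\}$). The essential case is $k_1=2$. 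A tile $(2l,k_2l)+\mathcal{R}$ meets $p+\mathcal{R}$ only near its left boundary $\{(2l+h_2(y-k_2l),y):y\in[k_2l,(k_2+1)l]\}$, whereas the right boundary of $p+\mathcal{R}$ is $\{(2l-s+h_2(y-l+t),y):y\in[l-t,2l-t]\}$. For $k_2=1$ and a common height $y$, writing $u=y-l$, the mean value theorem and $h_2'\le1/d_{2,l}$ give
\[ \bigl(2l-s+h_2(u+t)\bigr)-\bigl(2l+h_2(u)\bigr)=\bigl(h_2(u+t)-h_2(u)\bigr)-s\ \le\ \tfrac{t}{d_{2,l}}-s\ <\ 0 \]
by $t<s\,d_{2,l}$; the cases $k_2=0,2$ are identical once one notes which part of the boundary of $(2l,k_2l)+\mathcal{R}$ faces $p+\mathcal{R}$. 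Hence the right boundary of $p+\mathcal{R}$ stays strictly to the left of the left boundary of every $k_1=2$ tile, and because horizontal lines meet $\mathcal{R}$ and its translates in intervals (the $y$-coordinate along $\mathcal{C}_{2,l}$ being monotone, by strict convexity and Corollary \ref{directionofsides}), $p+\mathcal{R}$ is disjoint from their interiors. Assembling the cases yields the Proposition.

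I expect the $k_1=2$ step to be the main obstacle: for large $l$ the leftward bulge $\delta_x$ of $\mathcal{C}_{2,l}$ far exceeds the bounded horizontal shift $s$, so a crude comparison of bounding boxes is useless, and one must use the exact shape of the triangle in Lemma \ref{lemmafindpi-1} — the inequality $t\le s\tan\theta$ with $\tan\theta<d_{2,l}$ — to see that the vertical tilt of $\pi^{-1}$ away from $\varepsilon_1\varepsilon_2$ is too slight for $\pi^{-1}\mathcal{B}$ to wrap around the bulge into the tiles two steps along in the $\varepsilon_1$-direction. A secondary point needing care is the passage from ``the right boundary of one region lies to the left of the left boundary of the other'' to genuine disjointness of the regions, which is exactly where the monotonicity of the coordinate along $\mathcal{C}_{2,l}$ enters.
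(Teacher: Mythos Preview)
There is a genuine gap. Your argument for the directions $k_1\le-1$, $k_2\le-1$ and $k_2\ge3$ rests on the bounding-box claim $\mathcal{R}\subseteq[-\delta_x,l]\times[0,l+\delta_y]$ with $\delta_x,\delta_y<l$. Corollary~\ref{directionofsides} gives only $y_{1,l}\ge0$ and $x_{2,l}\le0$; it provides no upper bound on the bulges. Your justification --- ``the relevant endpoint slopes from Lemma~\ref{derivativeinlimit} are small'' --- is incorrect: those limits are ratios of the form $-\frac{a\log g_1(1)+b\log g_1(2)}{a\log g_2(1)+b\log g_2(2)}$, and with $\Log(g_i)$ near $l_i(M_1)$ one computes, for instance, $d_2\to 1$ (numerator and denominator both $\approx \pm 3M_1/2$). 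The tangent-line bound this yields is $\delta_x\le l/d_{2,l}\approx l$, so the inequality $l-s-\delta_x>0$ does not follow for large $l$. The same defect undermines the $k_2\le-1$ and $k_2\ge3$ cases. Your $k_1=2$ argument via the mean-value theorem is the one part that is on the right track, but it cannot carry the other three directions.

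The paper's proof avoids any bounding box. It reduces to showing the two \emph{boundary} curves $\pi^{-1}C(1,\varepsilon_1)$ and $\pi^{-1}C(1,\varepsilon_2)$ land in the correct tiles, and controls their vertical (resp.\ horizontal) excursion using property~\textit{3)} of Proposition~\ref{propofcolmezdom2}, which you never invoke. The point is that $\mathcal{B}_2=\overline{C}_{e_1}([\varepsilon_1\mid\pi])\cup\overline{C}_{e_1}([\pi\mid\varepsilon_1])$ is itself a Colmez domain for $\langle\varepsilon_1,\pi\rangle$, so the projected boundary curve $\varphi(C(1,\varepsilon_1\pi^{-1}))$ must lie above $\mathcal{C}_{1,l}$ and pass through $p=\varphi(\pi^{-1})$ without crossing $p+\mathcal{C}_{1,l}$; if $p+\mathcal{C}_{1,l}$ reached into the $k_2=2$ tiles, this curve would be forced to be non-convex, contradicting the fact that $\varphi$ sends line segments to strictly convex arcs. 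An analogous use of $\mathcal{B}_1$ handles $\pi^{-1}C(1,\varepsilon_2)$. Property~\textit{3)} was set up precisely for this step; any repair of your approach should use it.
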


\begin{remark}
The purpose of the careful choice of $\varepsilon_1$ and $\varepsilon_2$ is to obtain this proposition. In \cite{tsosie2018compatibility}, a stronger statement than this is used (Lemma 2.1.3, \cite{tsosie2018compatibility}). However, as stated before, we obtain a counterexample to the statement of Lemma 2.1.3. This counterexample is given explicitly in the appendix.
\end{remark}

\begin{proof}[Proof of Proposition \ref{whereispiinverse}]
We show the following containments. The result follows from this.
\begin{enumerate}[i)]
    \item $\pi^{-1} C(1, \varepsilon_1)  \subset \mathcal{B} \cup \varepsilon_1 \mathcal{B} \cup \varepsilon_2 \mathcal{B} \cup \varepsilon_1 \varepsilon_2 \mathcal{B}$, 
    \item $\pi^{-1} C(1, \varepsilon_2)  \subset \mathcal{B} \cup \varepsilon_2 \mathcal{B}$.
\end{enumerate}
It is enough to show i) and ii) since there are no holes in $\bigcup_{k_1=0}^1 \bigcup_{k_2=0}^{2} \varepsilon_1^{k_1} \varepsilon_2^{k_2} \mathcal{B}$. Thus, if we can show that the boundary of $\overline{\mathcal{B}}$ lies in $\bigcup_{k_1=0}^1 \bigcup_{k_2=0}^{2} \varepsilon_1^{k_1} \varepsilon_2^{k_2} \mathcal{B}$, then we are done. The combination of i) and ii) gives us exactly this.

We begin with i). We consider the curves under our map $\varphi_{(g_1, g_2)}$. Throughout this proof we refer to the positive second coordinate as ``up'', the positive first coordinate as ``right'', and similarly for ``down'' and ``left''. Since $\pi^{-1} $ is chosen to be in the interior of $\mathcal{B}$ and by Corollary \ref{directionofsides}, we must have that $ \varphi_{(g_1, g_2)}(\pi^{-1})$ lies above $\mathcal{C}_{1,l}$ in $\mathbb{R}_2$. Since the curve $\mathcal{C}_{1,l}$ is strictly convex, as defined before, we see that the curve
\[ \varphi_{(g_1, g_2)}(\pi^{-1}) + \mathcal{C}_{1,l} \quad \text{lies above} \quad \bigcup_{k \in \mathbb{Z}} ((kl,0)+\mathcal{C}_{1,l}). \]
By \textit{2)} of Proposition \ref{propofcolmezdom2}, $\mathcal{B}$ forms a fundamental domain. From this, it follows that $\mathcal{C}_{1,l}$ must lie between $\bigcup_{k \in \mathbb{Z}} ((0,kl)+\mathcal{C}_{2,l})$ and $\bigcup_{k \in \mathbb{Z}} ((l,kl)+\mathcal{C}_{2,l})$. Hence, 
\[ \bigcup_{k \in \mathbb{Z}} ((0,kl)+\mathcal{C}_{2,l}) \ \text{is to the left of} \ \varphi_{(g_1, g_2)}(\pi^{-1}) + \mathcal{C}_{1,l} \ \text{is to the left of} \ \bigcup_{k \in \mathbb{Z}} ((2l,kl)+\mathcal{C}_{2,l}).  \]
At this point, we have shown that
\[ \pi^{-1} C(1, \varepsilon_1) \subset \bigcup_{k_2 \geq 0} \varepsilon_2^{k_2} (\mathcal{B} \cup \varepsilon_1 \mathcal{B}). \] 
Now, suppose that $\pi^{-1} C(1, \varepsilon_1) \cap \varepsilon_2^{2} (\mathcal{B} \cup \varepsilon_1 \mathcal{B}) \neq \emptyset$. Then, this means that after moving back to $\mathbb{R}^2$ we see that some point on $\mathcal{C}_{1,l}$ has $y$ value greater than $1$. Consider the cone $C(1, \pi^{-1} \varepsilon)$. By \textit{3)} of Proposition \ref{propofcolmezdom2}, we have that $\mathcal{B}_2$ is well defined, and thus $\pi^{-1} \mathcal{B}_2$ is also well defined. Hence, in $\mathbb{R}^2$ we must have that $\varphi_{(g_1, g_2)}(C(1, \pi^{-1} \varepsilon))$ is above $\mathcal{C}_{1,l}$ but also passes below $\varphi_{(g_1, g_2)}(\pi^{-1})$. Yet, since some point on $\mathcal{C}_{1,l}$ has $y$ value greater than $1$, the curve $\varphi_{(g_1, g_2)}(C(1, \pi^{-1} \varepsilon))$ cannot be strictly convex. This gives us a contradiction. Hence, i) holds.

For ii), we use similar methods as above to deduce that 
\[ \bigcup_{k \in \mathbb{Z}} ((kl,0)+\mathcal{C}_{1,l}) \quad \text{is below} \quad \varphi_{(g_1, g_2)}(\pi^{-1}) + \mathcal{C}_{2,l} \quad \text{is below} \quad \bigcup_{k \in \mathbb{Z}} ((kl,2l)+\mathcal{C}_{1,l}). \]
Using Corollary \ref{directionofsides}, we have
\[ \pi^{-1} C(1, \varepsilon_2) \subset \bigcup_{k_1 \leq 0} \varepsilon_1^{k_1} (\mathcal{B} \cup \varepsilon_2 \mathcal{B}). \] 
As before, we then use \textit{3)} of Proposition \ref{propofcolmezdom2} to deduce that $C(1, \varepsilon_2) \cap \pi^{-1}C(1, \varepsilon_2)= \emptyset$. This allows us to conclude.
\end{proof}

\begin{remark}
We remark here that for some choices of $\pi$, $\varepsilon_1$ and $\varepsilon_2$ we have the stronger inclusion
\[ \pi^{-1} \mathcal{B} \subset \bigcup_{k_1=0}^1 \bigcup_{k_2=0}^{1} \varepsilon_1^{k_1} \varepsilon_2^{k_2} \mathcal{B}.  \]
In the next section, we need to divide into these two cases. At this point we include examples of how each case can look to aid the reader when considering our proofs.
\label{remarkinclusion}
\end{remark}

\subsection{Explicit calculations}

Let $ V = \langle \varepsilon_1, \varepsilon_{2} \rangle$, where $\varepsilon_1, \varepsilon_2$ are as chosen before and write $\varepsilon_3=\pi$. Before continuing we are required to choose an auxiliary prime $\lambda$ such that:
\begin{itemize}
    \item $\lambda$ is $\pi$-good for $\mathcal{B}$ and $\mathcal{D}_V$, where $\mathcal{D}_V$ is as defined in Proposition \ref{firstchangeofdom},
    \item $\lambda$ is good for $(\mathcal{D}_V, \mathcal{B}$).
\end{itemize}
In \cite{MR2420508} (after Definition 3.16) Dasgupta notes that given a Shintani domain $D$ all but finitely many prime ideals $\eta$ of $F$, with $\N \eta$ prime, are $\pi$-good for $D$. In particular Dasgupta notes that the set of such primes has Dirichlet density 1. Again in \cite{MR2420508} (after the proof of Theorem 5.3) Dasgupta notes that for any pair of Shintani domain $(D,D^\prime)$ all but finitely many prime ideals $\eta$ of $F$, with $\N \eta$ prime, are good for $D$.

It follows that there are an infinite number of primes $\lambda$ which satisfy the properties written above. Note that moving from a Shintai domain to a Colmez domain will not cause any issues here. Hence, such a choice of $\lambda$ is always possible. We fix this choice of $\lambda$ from now on. Proposition \ref{canchangedom} implies
\[ u_{\mathfrak{p}, \lambda}(\mathfrak{b},  \mathcal{B})= u_{\mathfrak{p}, \lambda}(\mathfrak{b},  \mathcal{D}_V). \]
By Proposition \ref{firstchangeofdom} we see that to prove Theorem \ref{thmforneq3} it only remains for us to show that, for any continuous homomorphism $g:F_\mathfrak{p}^\ast \rightarrow K$, such that $g$ is trivial on $E_+(\mathfrak{f})$, we have
\[ g(u_{\mathfrak{p}, \lambda}(\mathfrak{b},  \mathcal{B}))=  c_{g} \cap (\omega_{\mathfrak{f}, \mathfrak{b}, \lambda,V}^\mathfrak{p} \cap \vartheta_V^\prime). \]
Recall the definitions of the left and right side of the above equation from the start of $\S 6.1$. We will show the above equality by explicitly calculating each side. We begin by considering the right hand side. For $i=1, 2,3$ write, 
\[ \mathcal{B}_i \coloneqq  \bigcup_{\substack{\tau \in S_{3} \\ \tau(3)=i}} \overline{C}_{e_1}([\varepsilon_{\tau (1)} \mid \varepsilon_{\tau(2)} ]). \]
this was already defined for $i=1,2$ and note that $\mathcal{B}_3=\mathcal{B}$. We choose the following generator for $H_3(E_+(\mathfrak{f})_{\mathfrak{p}},\mathbb{Z})$,
\[ \vartheta_V^\prime =\sum_{\tau \in S_3} \sign(\tau)[\varepsilon_{\tau(1)} \mid \varepsilon_{\tau(2)} \mid \varepsilon_{\tau(3)}]\otimes 1. \]
This choice is stated by Spie\ss \ in Remark 2.1(c) of \cite{MR3201900}. We can now calculate
\[ \omega_{\mathfrak{f}, \mathfrak{b}, \lambda,V}^\mathfrak{p} \cap \vartheta^\prime_V = (-1)^{6} \sum_{i=1}^3 \sum_{\substack{ \tau \in S_{3} \\ \tau (3)=i}} \sign(\tau) \omega_{\mathfrak{f}, \mathfrak{b}, \lambda,V}^\mathfrak{p}([\varepsilon_{\tau(1)} \mid \varepsilon_{\tau(2)} ]) \otimes [\varepsilon_i]. \]
We recall the definition of $\omega_{\mathfrak{f}, \mathfrak{b}, \lambda,V}^\mathfrak{p}$ from the start of $\S 6.1$. Using that we have chosen $V$ and $\pi$ through Proposition \ref{propofcolmezdom2} we note that for $\tau \in S_{3}$ and a compact open $U \subseteq \mathcal{O}_\mathfrak{p}$, we have by definition that,
\begin{equation}
    \sign(\tau) \omega_{\mathfrak{f}, \mathfrak{b}, \lambda,V}^\mathfrak{p}([\varepsilon_{\tau(1)} \mid \varepsilon_{\tau(2)}]) = \zeta_{R, \lambda}(\mathfrak{b},\overline{C}_{e_1} ([\varepsilon_{\tau(1)} \mid  \varepsilon_{\tau(2)}]), U,0).
    \label{signcalc1}
\end{equation}

Recall that we can choose as a representative of $c_g$ the inhomogeneous $1$-cocycle $z_g=z_{\mathbbm{1}_{\pi\mathcal{O}_\mathfrak{p}},g} $, i.e. we take $f=\mathbbm{1}_{\pi\mathcal{O}_\mathfrak{p}}$ in Definition \ref{1cocycle}. One can easily compute, as is done by Dasgupta-Spie\ss \ in the proof of Proposition 4.6 in \cite{MR3968788}, that for $i=1,2$,
\begin{equation}
    \varepsilon_i^{-1} z_{g}(\varepsilon_i)=\mathbbm{1}_{\pi \mathcal{O}_\mathfrak{p}} \cdot g(\varepsilon_i), 
    \label{calc2}
\end{equation}
and
\begin{equation}
    \pi^{-1}z_{g}(\pi) = \mathbbm{1}_{\mathbb{O}}\cdot g +\mathbbm{1}_{\mathcal{O}_\mathfrak{p}} \cdot g(\pi) .
    \label{calc1}
\end{equation}

Returning to our main calculation, using (\ref{signcalc1}) we have, 
\[ c_g \cap (\omega_{\mathfrak{f}, \mathfrak{b}, \lambda, V}^\mathfrak{p} \cap \vartheta^\prime_V ) =  \sum_{i=1}^3  \sum_{\substack{ \tau \in S_{3} \\ \tau (3)=i}} \int_{F_\mathfrak{p}} z_{g}(\varepsilon_i)(x)d(\varepsilon_i \zeta_{R, \lambda}( \mathfrak{b},\overline{C}_{e_1}
([ \varepsilon_{\tau(1)} \mid \varepsilon_{\tau(2)} ]),x,0) ). \]
Applying (\ref{calc2}) and (\ref{calc1}) and piecing together the appropriate Shintani sets we further deduce, 
\begin{multline}
    c_g \cap (\omega_{\mathfrak{f}, \mathfrak{b}, \lambda,V}^\mathfrak{p} \cap \vartheta^\prime_V ) = \int_{\mathbb{O}} g(x)d(\zeta_{R,\lambda}(\mathfrak{b}, \mathcal{B},x,0)  ) + \int_{\mathcal{O}_\mathfrak{p}} g(\pi)d(\zeta_{R,\lambda}(\mathfrak{b}, \mathcal{B},x,0)  ) \\ + \sum_{i=1}^{2} \int_{\pi\mathcal{O}_\mathfrak{p}}  g(\varepsilon_i)d(\zeta_{R,\lambda}(\mathfrak{b}, \mathcal{B}_i,x,0)  ) .
    \label{calcstep1}
\end{multline}
Considering the first two terms on the right hand side of (\ref{calcstep1}) it is clear that 
\begin{multline*}
    \int_{\mathbb{O}} g(x)d(\zeta_{R,\lambda}(\mathfrak{b}, \mathcal{B},x,0)  ) + \int_{\mathcal{O}_\mathfrak{p}} g(\pi)d(\zeta_{R,\lambda}(\mathfrak{b}, \mathcal{B},x,0)  ) \\ = g \left( \pi^{\zeta_{R,\lambda}(\mathfrak{b}, \mathcal{B},\mathcal{O}_\mathfrak{p},0) }   \multint_{\mathbb{O}} x  d( \zeta_{R,\lambda}(\mathfrak{b}, \mathcal{B},x,0) )(x)    \right).
\end{multline*}
We now consider the sum on the right hand side of (\ref{calcstep1}). It is straight forward to see that
\[ \sum_{i=1}^{2} \int_{\pi\mathcal{O}_\mathfrak{p}}  g(\varepsilon_i)d(\zeta_{R,\lambda}(\mathfrak{b}, \mathcal{B}_i,x,0)  ) = g \left( \prod_{i=1}^{2} \varepsilon_i^{\zeta_{R,\lambda}(\mathfrak{b}, \mathcal{B}_i,\pi \mathcal{O}_\mathfrak{p},0)} \right). \]
Thus it only remains for us to prove the following equality
\[ g \left( \prod_{i=1}^{2} \varepsilon_i^{\zeta_{R,\lambda}(\mathfrak{b}, \mathcal{B}_i,\pi \mathcal{O}_\mathfrak{p},0)} \right) = g \left( \prod_{\epsilon \in V} \epsilon ^{\zeta_{R,\lambda}(\mathfrak{b}, \epsilon \mathcal{B} \cap \pi^{-1} \mathcal{B} ,\mathcal{O}_\mathfrak{p},0)} \right) . \]
By Proposition \ref{whereispiinverse} we have 
\[
    \prod_{\epsilon \in V} \epsilon ^{\zeta_{R,\lambda}(\mathfrak{b}, \epsilon \mathcal{B} \cap \pi^{-1} \mathcal{B} ,\mathcal{O}_\mathfrak{p},0)}  = \varepsilon_1^{\sum_{k_2=0}^{2} \zeta_{R,\lambda}(\mathfrak{b}, \varepsilon_1\varepsilon_2^{k_2} \mathcal{B} \cap \pi^{-1} \mathcal{B} ,\mathcal{O}_\mathfrak{p},0) } \varepsilon_2^{\sum_{k_2=1}^{2} \sum_{k_1=0}^1 k_2 \zeta_{R,\lambda}(\mathfrak{b}, \varepsilon_1^{k_1}\varepsilon_2^{k_2} \mathcal{B} \cap \pi^{-1} \mathcal{B} ,\mathcal{O}_\mathfrak{p},0) } .
\]
Thus it remains for us to show that the following two equalities hold.
\begin{align}
    \zeta_{R,\lambda}(\mathfrak{b}, \mathcal{B}_1,\pi \mathcal{O}_\mathfrak{p},0) &= \sum_{k_2=0}^{2} \zeta_{R,\lambda}(\mathfrak{b}, \varepsilon_1\varepsilon_2^{k_2} \mathcal{B} \cap \pi^{-1} \mathcal{B} ,\mathcal{O}_\mathfrak{p},0), \label{toshow1} \\
    \zeta_{R,\lambda}(\mathfrak{b}, \mathcal{B}_2,\pi \mathcal{O}_\mathfrak{p},0) &= \sum_{k_2=1}^{2} \sum_{k_1=0}^1 k_2 \zeta_{R,\lambda}(\mathfrak{b}, \varepsilon_1^{k_1}\varepsilon_2^{k_2} \mathcal{B} \cap \pi^{-1} \mathcal{B} ,\mathcal{O}_\mathfrak{p},0). \label{toshow2}
\end{align}
We begin by considering the left hand sides and note that for $i=1,2$ by Proposition \ref{changeofvariable}
\[ \zeta_{R,\lambda}(\mathfrak{b}, \mathcal{B}_i,\pi \mathcal{O}_\mathfrak{p},0) = \zeta_{R,\lambda}(\mathfrak{b}, \pi^{-1} \mathcal{B}_i, \mathcal{O}_\mathfrak{p},0). \]

It will be useful for our remaining calculations to make explicit the boundary cones that are contained in $\mathcal{B}$, $\mathcal{B}_1$ and $\mathcal{B}_2$. To achieve this we first define 
\[ \mathcal{B}^\prime = C(1) \cup C(1, \varepsilon_1) \cup C(1, \varepsilon_2) \cup C(1, \varepsilon_1 \varepsilon_2) \cup C(1, \varepsilon_1, \varepsilon_1 \varepsilon_2) \cup C(1, \varepsilon_2, \varepsilon_1 \varepsilon_2). \]
By Lemma \ref{changeofvariable} and the fact that $\mathcal{B}$ and $\mathcal{B}^\prime$ are equal up to translation of the boundary cones by $E_+(\mathfrak{f})$, we note that for any $k_1, k_2 \in \{ 0,1,2 \}$ we have
\[ \zeta_{R,\lambda}(\mathfrak{b}, \varepsilon_1^{k_1}\varepsilon_2^{k_2} \mathcal{B} \cap \pi^{-1} \mathcal{B} ,\mathcal{O}_\mathfrak{p},0) = \zeta_{R,\lambda}(\mathfrak{b}, \varepsilon_1^{k_1}\varepsilon_2^{k_2} \mathcal{B}^\prime \cap \pi^{-1} \mathcal{B}^\prime ,\mathcal{O}_\mathfrak{p},0). \]
Here we are also making use of the fact that in Proposition \ref{whereispiinverse} we made no assumptions about the boundary cones of $\mathcal{B}$. Thus, from now on we will assume that $\mathcal{B} = \mathcal{B}^\prime$. We now consider $\mathcal{B}_1$ and $\mathcal{B}_2$. For $a,b,c \in \{ 0,1 \}$ we define the Shintani sets
\begin{align*}
    \mathcal{B}_1^\prime(a,b) &= C(\pi^a )  \cup C(\pi^b,  \varepsilon_2 \pi^b) \cup C( 1,\pi) \cup C(1, \varepsilon_2 \pi ) \cup C(1, \varepsilon_2 ,  \varepsilon_2 \pi ) \cup C( 1, \pi , \varepsilon_2 \pi ), \\
    \mathcal{B}_2^\prime(a,b) &= C(\pi^a ) \cup C(\pi^b, \varepsilon_1 \pi^b ) \cup C( 1,\pi)  \cup C(1, \varepsilon_1 \pi ) \cup C(1, \varepsilon_1 ,  \varepsilon_1 \pi ) \cup C( 1, \pi , \varepsilon_1 \pi ).
\end{align*}
By the definition of $\mathcal{B}_i$, for $i=1,2$, there exists $a_i, b_i \in \{0,1\}$ such that $\mathcal{B}_i$ and $\mathcal{B}_i^\prime(a_i,b_i)$ are equal up to translation of the boundary cones by $E_+(\mathfrak{f})$. Thus, by Lemma \ref{changeofvariable} we have the equalities
\[ \zeta_{R,\lambda}(\mathfrak{b}, \pi^{-1} \mathcal{B}_1, \mathcal{O}_\mathfrak{p},0) =\zeta_{R,\lambda}(\mathfrak{b}, \pi^{-1} \mathcal{B}^\prime_1(a_1 ,b_1), \mathcal{O}_\mathfrak{p},0), \]
and 
\[ \zeta_{R,\lambda}(\mathfrak{b}, \pi^{-1} \mathcal{B}_2, \mathcal{O}_\mathfrak{p},0) = \zeta_{R,\lambda}(\mathfrak{b}, \pi^{-1} \mathcal{B}^\prime_2(a_2, b_2), \mathcal{O}_\mathfrak{p},0). \]
From this point on we will assume that $a_i=b_i=1$ for $i=1,2$ and write $\mathcal{B}_i=\mathcal{B}_i^\prime(1,1)$ for $i=1,2$. The proof of our main result in all other cases will follow with exactly the same ideas and the calculations are almost identical. Hence we fix the choices of $\mathcal{B}$, $\mathcal{B}_1$ and $\mathcal{B}_2$ we have made. Note that we can make the same choice of $\mathcal{B}$ in all cases. We now recall that from this point on we have assumed
\begin{align*}
    \mathcal{B} &= C(1) \cup C(1, \varepsilon_1) \cup C(1, \varepsilon_2) \cup C(1, \varepsilon_1 \varepsilon_2) \cup C(1, \varepsilon_1, \varepsilon_1 \varepsilon_2) \cup C(1, \varepsilon_2, \varepsilon_1 \varepsilon_2), \\
    \mathcal{B}_1 &= C(\pi )  \cup C(\pi,  \varepsilon_2 \pi) \cup C( 1,\pi) \cup C(1, \varepsilon_2 \pi ) \cup C(1, \varepsilon_2 ,  \varepsilon_2 \pi ) \cup C( 1, \pi , \varepsilon_2 \pi ), \\
    \mathcal{B}_2 &= C(\pi )  \cup C(\pi,  \varepsilon_1 \pi) \cup C( 1,\pi) \cup C(1, \varepsilon_1 \pi ) \cup C(1, \varepsilon_1 ,  \varepsilon_1 \pi ) \cup C( 1, \pi , \varepsilon_1 \pi ).
\end{align*}
With these choices we will now show that the equalities (\ref{toshow1}) and (\ref{toshow2}) hold. We begin with the following simple lemma.

\begin{lemma}
We have the following inclusions
\begin{align*}
    \pi^{-1} \mathcal{B}_1 & \subset \mathcal{B} \cup \varepsilon_2 \mathcal{B}, \\
    \pi^{-1} \mathcal{B}_2 & \subset \bigcup_{k_1=0}^1 \bigcup_{k_2=0}^{1} \varepsilon_1^{k_1} \varepsilon_2^{k_2} \mathcal{B} .
\end{align*}
\end{lemma}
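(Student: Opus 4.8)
The plan is to transfer both inclusions to the plane by means of the map $\varphi:=\varphi_{(g_1,g_2)}$ of $\S6.2$, using that it is a bijection from $\mathbb{R}_+^3/\!\sim$ onto $\mathbb{R}^2$ converting multiplication into addition, with $\varphi(\varepsilon_1)=(l,0)$ and $\varphi(\varepsilon_2)=(0,l)$ (recall $\varepsilon_i=g_i^l$), and that $\varphi(\pi^{-1})$ lies in the interior of the curved square $\varphi(\mathcal{B})$ --- as was noted in the proof of Proposition \ref{whereispiinverse}, $\pi^{-1}$ lies in the interior of $\mathcal{B}$. First I would record that, up to boundary cones, $\mathcal{B}=C(1,\varepsilon_1,\varepsilon_1\varepsilon_2)\cup C(1,\varepsilon_2,\varepsilon_1\varepsilon_2)$ is the \emph{convex} cone on the four rays $1,\varepsilon_1,\varepsilon_2,\varepsilon_1\varepsilon_2$: the two simplicial cones share the facet $C(1,\varepsilon_1\varepsilon_2)$, and property $2)$ of Proposition \ref{propofcolmezdom2} forces these four rays into convex position, so that $v,w\in\mathcal{B}$ implies $C(v,w)\subseteq\mathcal{B}$; the same applies to $\mathcal{B}_1,\mathcal{B}_2$ and to all their $E_+(\mathfrak{f})$-translates. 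Multiplying the fixed decompositions $\mathcal{B}_1=\mathcal{B}_1^\prime(1,1)$, $\mathcal{B}_2=\mathcal{B}_2^\prime(1,1)$ through by $\pi^{-1}$ gives, up to boundary cones,
\[
\pi^{-1}\mathcal{B}_1=C(\pi^{-1},1,\varepsilon_2)\cup C(\pi^{-1},\varepsilon_2\pi^{-1},\varepsilon_2),\qquad
\pi^{-1}\mathcal{B}_2=C(\pi^{-1},1,\varepsilon_1)\cup C(\pi^{-1},\varepsilon_1\pi^{-1},\varepsilon_1),
\]
and the lemma reduces to locating these four simplicial cones among the translates $\varepsilon_1^{k_1}\varepsilon_2^{k_2}\mathcal{B}$.

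For $\pi^{-1}\mathcal{B}_1$: all three generators of $C(\pi^{-1},1,\varepsilon_2)$ lie in $\mathcal{B}$, so by convexity $C(\pi^{-1},1,\varepsilon_2)\subseteq\mathcal{B}$; and the generators of $C(\pi^{-1},\varepsilon_2\pi^{-1},\varepsilon_2)$ lie in $\mathcal{B}\cup\varepsilon_2\mathcal{B}$ (with $\pi^{-1},\varepsilon_2\in\mathcal{B}$ and $\varepsilon_2\pi^{-1},\varepsilon_2\in\varepsilon_2\mathcal{B}$). Since this last cone is swept out by the segments joining the ray $\varepsilon_2$ to the edge $C(\pi^{-1},\varepsilon_2\pi^{-1})$, and $\varepsilon_2\in\mathcal{B}\cap\varepsilon_2\mathcal{B}$, it is enough by convexity of $\mathcal{B}$ and of $\varepsilon_2\mathcal{B}$ to show that this edge does not leave $\mathcal{B}\cup\varepsilon_2\mathcal{B}$. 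Under $\varphi$ the edge is the translate $\varphi(\pi^{-1})+\mathcal{C}_{2,l}$ of the left boundary curve of $\varphi(\mathcal{B})$ (as $C(\pi^{-1},\varepsilon_2\pi^{-1})=\pi^{-1}C(1,\varepsilon_2)$), while $\mathcal{B}\cup\varepsilon_2\mathcal{B}$ maps to the curved $1\times 2$ rectangle with left boundary $\mathcal{C}_{2,l}\cup((0,l)+\mathcal{C}_{2,l})$, right boundary $((l,0)+\mathcal{C}_{2,l})\cup((l,l)+\mathcal{C}_{2,l})$, bottom $\mathcal{C}_{1,l}$ and top $(0,2l)+\mathcal{C}_{1,l}$. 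Using the strict convexity of $\mathcal{C}_{1,l}$ and $\mathcal{C}_{2,l}$, the sign and monotonicity bounds of Corollary \ref{directionofsides}, and the fact that $\varphi(\pi^{-1})$ is interior to $\varphi(\mathcal{B})$, one checks that $\varphi(\pi^{-1})+\mathcal{C}_{2,l}$ stays inside this rectangle, and the first inclusion follows. The inclusion for $\pi^{-1}\mathcal{B}_2$ is proved identically, interchanging $\mathcal{C}_{1,l}\leftrightarrow\mathcal{C}_{2,l}$ and $\varepsilon_1\leftrightarrow\varepsilon_2$; here the target is the curved $2\times 2$ block $\bigcup_{k_1,k_2\in\{0,1\}}\bigl((k_1l,k_2l)+\varphi(\mathcal{B})\bigr)$, and two translates are needed in each coordinate direction precisely because $\varphi(\pi^{-1})$ may be anywhere in the interior of $\varphi(\mathcal{B})$.

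The one step I expect to be the genuine obstacle is the last one --- ruling out that $\varphi(\pi^{-1})+\mathcal{C}_{2,l}$ (respectively $\varphi(\pi^{-1})+\mathcal{C}_{1,l}$) protrudes through the boundary of the target region. As in the proof of Proposition \ref{whereispiinverse} I would argue by contradiction: a protrusion past, say, the left boundary $\mathcal{C}_{2,l}\cup((0,l)+\mathcal{C}_{2,l})$ would force a point of the strictly convex curve $\varphi(\pi^{-1})+\mathcal{C}_{2,l}$ to lie on the wrong side of a nontrivial translate of $\mathcal{C}_{2,l}$, and chasing this through the strict convexity of the curves produces an intersection forbidden by the disjointness identities for $\mathcal{C}_{1,l},\mathcal{C}_{2,l}$ recorded in the proof of Corollary \ref{directionofsides} --- equivalently, it contradicts that $\mathcal{B}=D(g_1^l,g_2^l)$ is a fundamental domain for $\langle\varepsilon_1,\varepsilon_2\rangle$ (Lemma \ref{colmezlemma}; Lemma 2.2 of \cite{MR922806}). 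Neither the convexity observation nor this contradiction argument uses anything about the boundary cones, so passing to the particular $\mathcal{B},\mathcal{B}_1,\mathcal{B}_2$ fixed earlier is harmless (by Lemma \ref{changeofvariable}, exactly as already invoked above), and the same argument covers both of the cases of Remark \ref{remarkinclusion}.
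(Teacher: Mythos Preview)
Your approach rests on the assertion that $\mathcal{B}$ is a \emph{convex} cone --- that property $2)$ of Proposition~\ref{propofcolmezdom2} ``forces these four rays into convex position''. This is not justified and is the genuine gap. Property $2)$ gives only $\operatorname{sign}\det(1,\varepsilon_1,\varepsilon_1\varepsilon_2)=+1$ and $\operatorname{sign}\det(1,\varepsilon_2,\varepsilon_1\varepsilon_2)=-1$, i.e.\ that $\varepsilon_1$ and $\varepsilon_2$ lie on opposite sides of the plane spanned by $1$ and $\varepsilon_1\varepsilon_2$. For the quadrilateral $1,\varepsilon_1,\varepsilon_1\varepsilon_2,\varepsilon_2$ to be in convex position you also need $1$ and $\varepsilon_1\varepsilon_2$ to lie on opposite sides of the plane through $\varepsilon_1,\varepsilon_2$ --- that is, a sign condition on $\det(1,\varepsilon_1,\varepsilon_2)$ and $\det(\varepsilon_1,\varepsilon_2,\varepsilon_1\varepsilon_2)$ --- and these are nowhere supplied. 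Without convexity your inclusion $C(\pi^{-1},1,\varepsilon_2)\subseteq\mathcal{B}$ fails in general (if $\pi^{-1}\in C(1,\varepsilon_1,\varepsilon_1\varepsilon_2)$, the segment from $\pi^{-1}$ to $\varepsilon_2$ need not stay in $\mathcal{B}$), and the ``sweep from $\varepsilon_2$'' argument for the second cone breaks for the same reason.

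There is a second, related gap: your treatment of the key edge $C(\pi^{-1},\varepsilon_2\pi^{-1})$ uses only that $\pi^{-1}$ is interior to $\mathcal{B}$, and then says ``one checks'' the translated curve stays in the target. The paper does not get this for free. It first invokes Proposition~\ref{whereispiinverse} (already proven) to place $C(\pi^{-1},\varepsilon_2\pi^{-1})$ in $\bigcup_{k_2=0}^{2}\varepsilon_2^{k_2}\mathcal{B}$, and then uses the \emph{specific} location of $\pi^{-1}$ chosen in Lemma~\ref{lemmafindpi-1} --- namely that $\varphi_{(g_1,g_2)}(\pi^{-1})=(a,b)$ has $b<l$ --- together with Corollary~\ref{directionofsides} to exclude the $\varepsilon_2^2\mathcal{B}$ piece. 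Your sketch invokes neither Proposition~\ref{whereispiinverse} nor this refined location, so the protrusion argument you outline has no anchor. The paper's route avoids convexity entirely: it checks that the four $2$--dimensional boundary facets of $\pi^{-1}\mathcal{B}_1$ lie in $\mathcal{B}\cup\varepsilon_2\mathcal{B}$ (three are immediate because $1$ is a common vertex of the two simplicial pieces of $\mathcal{B}$; the fourth is the hard one just described), and then concludes since the target is a hole-free tile in the plane picture.
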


\begin{proof}
We begin by considering $\mathcal{B}_1$. By definition we have that $\pi^{-1} \mathcal{B}_1$ is bounded by the cones
\[ C(1), C(\pi^{-1}), C(\varepsilon_2), C(\varepsilon_2 \pi^{-1}), C(1, \varepsilon_2), C(1, \pi^{-1}), C(\varepsilon_2, \varepsilon_2 \pi^{-1}) , C(\pi^{-1}, \varepsilon_2 \pi^{-1}).  \]
Note that not all of the above cones will be contained in $\pi^{-1}\mathcal{B}_1$. By the definition of $\mathcal{B}$ and the fact that $\pi^{-1} \in \mathcal{B}$ we see that all of the following Shintani cones are contained in $\mathcal{B} \cup \varepsilon_2 \mathcal{B}$,
\[ C(1), C(\pi^{-1}), C(\varepsilon_2), C(\varepsilon_2 \pi^{-1}), C(1, \varepsilon_2), C(1, \pi^{-1}), C(\varepsilon_2, \varepsilon_2 \pi^{-1}). \]
It remains for us to show that $C(\pi^{-1}, \varepsilon_2 \pi^{-1}) \subset \mathcal{B} \cup \varepsilon_2 \mathcal{B}$. Since $C(\pi^{-1}, \varepsilon_2 \pi^{-1})$ and $C(\varepsilon_1 \pi^{-1}, \varepsilon_1 \varepsilon_2 \pi^{-1})$ are boundary cones for $\pi^{-1}\mathcal{B}$, Proposition \ref{whereispiinverse} gives the inclusions
\begin{align*}
    C(\pi^{-1}, \varepsilon_2 \pi^{-1})  & \subset \bigcup_{k_1=0}^1 \bigcup_{k_2=0}^{2} \varepsilon_1^{k_1} \varepsilon_2^{k_2} \mathcal{B}, \\
     C(\varepsilon_1 \pi^{-1}, \varepsilon_1 \varepsilon_2 \pi^{-1}) & \subset \bigcup_{k_1=0}^1 \bigcup_{k_2=0}^{2} \varepsilon_1^{k_1} \varepsilon_2^{k_2} \mathcal{B}.
\end{align*}
These inclusions together imply that
\[ C(\pi^{-1}, \varepsilon_2 \pi^{-1})  \subset \bigcup_{k_2=0}^{2} \varepsilon_2^{k_2} \mathcal{B}. \]
If we write $\varphi_{(g_1,g_2)}(\pi^{-1}) = (a,b)$ then by the choices made in Lemma \ref{lemmafindpi-1} we see that $b<l$. Hence, by Corollary \ref{directionofsides}, the curve $\varphi_{(g_1,g_2)}(C(\pi^{-1}, \varepsilon_2 \pi^{-1})) = \varphi_{(g_1,g_2)}(\pi^{-1}) + \mathcal{C}_{2,l} $ lies strictly below the curve $(0,2l)+\mathcal{C}_{2,l}$, while still being contained in $\bigcup_{k_2=0}^{2} \varepsilon_2^{k_2} \mathcal{B}$. Hence we have $C(\pi^{-1}, \varepsilon_2 \pi^{-1}) \subset \mathcal{B} \cup \varepsilon_2 \mathcal{B}$. This gives us the result for $\mathcal{B}_1$.

The proof of the result for $\mathcal{B}_2$ is almost identical. As before we use Proposition \ref{whereispiinverse} to deal with the cone $C(\pi^{-1}, \varepsilon_1 \pi^{-1})$.
\end{proof}

Using the above lemma we deduce 
\[ \zeta_{R,\lambda}(\mathfrak{b}, \pi^{-1} \mathcal{B}_1, \mathcal{O}_\mathfrak{p},0) = \zeta_{R,\lambda}(\mathfrak{b}, (\pi^{-1} \mathcal{B}_1 \cap \mathcal{B}) \cup  \varepsilon_2^{-1} (\pi^{-1} \mathcal{B}_1 \cap \varepsilon_2 \mathcal{B}) , \mathcal{O}_\mathfrak{p},0) \]
and
\begin{multline*}
    \zeta_{R,\lambda}(\mathfrak{b}, \pi^{-1} \mathcal{B}_2, \mathcal{O}_\mathfrak{p},0) = \zeta_{R,\lambda}(\mathfrak{b}, (\pi^{-1} \mathcal{B}_2 \cap \mathcal{B}) \cup  \varepsilon_1^{-1} (\pi^{-1} \mathcal{B}_2 \cap \varepsilon_1 \mathcal{B}) , \mathcal{O}_\mathfrak{p},0) \\ + \zeta_{R, \lambda}(\mathfrak{b}, \pi^{-1} \mathcal{B}_2 \cap (\varepsilon_2 \mathcal{B} \cup \varepsilon_1 \varepsilon_2 \mathcal{B})  , \mathcal{O}_\mathfrak{p}, 0).
\end{multline*}
We now need to consider two possible cases. It is possible that the final zeta function in the sum above will be $0$. This will happen when, as noted in Remark \ref{remarkinclusion}, we have the stronger inclusion
\[ \pi^{-1} \mathcal{B} \subset \bigcup_{k_1=0}^1 \bigcup_{k_2=0}^{1} \varepsilon_1^{k_1} \varepsilon_2^{k_2} \mathcal{B},  \]
rather than that which is written in the statement of Proposition \ref{whereispiinverse}. We note that in this case the sums on the right hand sides of (\ref{toshow1}) and (\ref{toshow2}) become
\[ \sum_{k_2=0}^1 \zeta_{R,\lambda}(\mathfrak{b}, \varepsilon_1\varepsilon_2^{k_2} \mathcal{B} \cap \pi^{-1} \mathcal{B} ,\mathcal{O}_\mathfrak{p},0), \]
and
\[  \sum_{k_1=0}^1 \zeta_{R,\lambda}(\mathfrak{b}, \varepsilon_1^{k_1}\varepsilon_2 \mathcal{B} \cap \pi^{-1} \mathcal{B} ,\mathcal{O}_\mathfrak{p},0), \]
    respectively. In the following proposition we will need to divide the proof into two cases to deal with this possibility. In the case of the stronger inclusion the following proposition will complete the proof of our main result, Theorem \ref{thmforneq3}. We will refer to the case of the stronger inclusion as Case 1 and the other as Case 2. We now include 2 pictures showing how Case 1 and Case 2 can arise in the example from before by making different choices of $\pi$. Note that we can choose $\pi$ up to a factor of $E_+(\mathfrak{f})$. Both these diagrams are calculated making explicit choices, as before we will give more details on this in the appendix. In each of the diagrams the blue lines are boundary cones of the translates of $\mathcal{B}$ required in each case, and the red lines are the boundary cones of $\pi^{-1} \mathcal{B}$ for each choice of $\pi$. We note as before that although the image appears to show that some of the lines overlap, this does not happen. This only occurs in the diagram due to the fixed thickness of the lines.

\begin{figure}[h]
   \centering
    \begin{minipage}{0.5\textwidth}
       \centering
        \includegraphics[width=\textwidth]{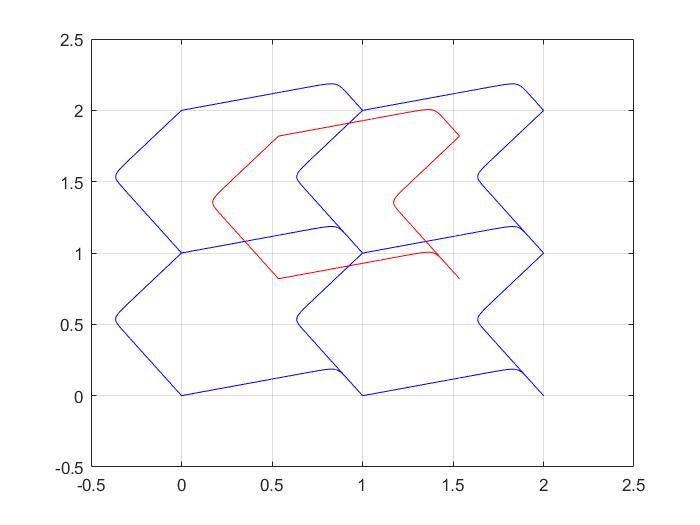} 
       \caption{Case 1}
       \label{fig:case1}
  \end{minipage}\hfill
   \begin{minipage}{0.5\textwidth}
      \centering
       \includegraphics[width=\textwidth]{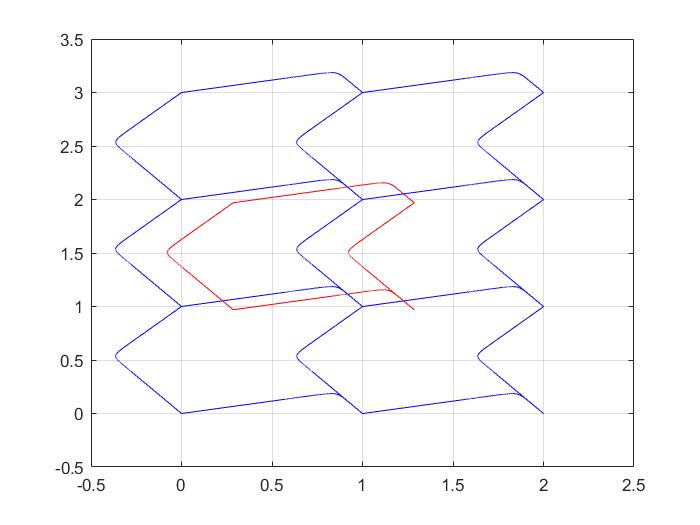} 
        \caption{Case 2}
       \label{fig:case2}
   \end{minipage}
\end{figure}

\begin{remark}
Figure \ref{fig:case2}, which concerns Case 2, has not been chosen by the methods outlines in Lemma \ref{lemmafindpi-1}. The reason for this is that the calculations necessary to draw the figures work very badly when working with subgroups $V \subset E_+(\mathfrak{f})$ of large index. Thus for the units we have chosen for the figures we would never choose an element $\pi$ so that we are in Case 2. However, to give the reader an idea of how this case would look we have found a choice of $\pi^{-1}$ that lies in the Colmez domin and is close to the region that Lemma \ref{lemmafindpi-1} gives to contain $\pi^{-1}$. Note that when working with subgroups $V \subset E_+(\mathfrak{f})$ of large index we are not able to guarantee that there always exists a choice of $\pi^{-1}$ in the region given by Lemma \ref{lemmafindpi-1} such that we land in Case 1. Hence we must continue to work with both cases.
\end{remark}

\begin{proposition}
In Case 1 we have
\begin{align*}
    \zeta_{R,\lambda}(\mathfrak{b}, (\pi^{-1} \mathcal{B}_1 \cap \mathcal{B}) \cup  \varepsilon_2^{-1} (\pi^{-1} \mathcal{B}_1 \cap \varepsilon_2 \mathcal{B}) , \mathcal{O}_\mathfrak{p},0) &= \sum_{k_2=0}^{1}\zeta_{R,\lambda}(\mathfrak{b},  \varepsilon_1 \varepsilon_2^{k_2}\mathcal{B} \cap \pi^{-1}\mathcal{B}, \mathcal{O}_\mathfrak{p}, 0),  \\
    \zeta_{R,\lambda}(\mathfrak{b}, (\pi^{-1} \mathcal{B}_2 \cap \mathcal{B}) \cup  \varepsilon_1^{-1} (\pi^{-1} \mathcal{B}_2 \cap \varepsilon_1 \mathcal{B}) , \mathcal{O}_\mathfrak{p},0) &= \sum_{k_1=0}^1  \zeta_{R,\lambda}(\mathfrak{b},  \varepsilon_1^{k_1} \varepsilon_2\mathcal{B} \cap \pi^{-1}\mathcal{B}, \mathcal{O}_\mathfrak{p}, 0).
\end{align*}
In Case 2 we have
\begin{align*}
    \zeta_{R,\lambda}(\mathfrak{b}, (\pi^{-1} \mathcal{B}_1 \cap \mathcal{B}) \cup  \varepsilon_2^{-1} (\pi^{-1} \mathcal{B}_1 \cap \varepsilon_2 \mathcal{B}) , \mathcal{O}_\mathfrak{p},0) &= \sum_{k_2=0}^{2}\zeta_{R,\lambda}(\mathfrak{b},  \varepsilon_1 \varepsilon_2^{k_2}\mathcal{B} \cap \pi^{-1}\mathcal{B}, \mathcal{O}_\mathfrak{p}, 0),  \\
    \zeta_{R,\lambda}(\mathfrak{b}, (\pi^{-1} \mathcal{B}_2 \cap \mathcal{B}) \cup  \varepsilon_1^{-1} (\pi^{-1} \mathcal{B}_2 \cap \varepsilon_1 \mathcal{B}) , \mathcal{O}_\mathfrak{p},0) &= \sum_{k_1=0}^1 \sum_{k_2=1}^{2} \zeta_{R,\lambda}(\mathfrak{b},  \varepsilon_1^{k_1} \varepsilon_2^{k_2}\mathcal{B} \cap \pi^{-1}\mathcal{B}, \mathcal{O}_\mathfrak{p}, 0).
\end{align*}
\end{proposition}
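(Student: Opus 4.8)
Write $\nu(D) = \zeta_{R,\lambda}(\mathfrak{b}, D, \mathcal{O}_\mathfrak{p}, 0)$ for a Shintani set $D$, as before. The argument will only use that $\nu$ is finitely additive over disjoint unions of Shintani sets, that it is invariant under translating $D$ by any $\eta \in E_+(\mathfrak{f})$ (apply Lemma \ref{changeofvariable} with $\beta = \eta^{-1}$: then $\mathfrak{q} = \mathcal{O}$ and $\eta^{-1}\mathcal{O}_\mathfrak{p} = \mathcal{O}_\mathfrak{p}$), and hence that it is unchanged when any boundary cone of $D$ is translated by an element of $E_+(\mathfrak{f})$. By the reductions carried out immediately before the statement, the left-hand sides of the four identities equal $\nu(\pi^{-1}\mathcal{B}_1)$ and $\nu(\pi^{-1}\mathcal{B}_2 \cap \mathcal{B}) + \nu\bigl(\varepsilon_1^{-1}(\pi^{-1}\mathcal{B}_2 \cap \varepsilon_1\mathcal{B})\bigr)$ (the same expressions in both cases), while on the right each term satisfies $\nu(\varepsilon_1^{k_1}\varepsilon_2^{k_2}\mathcal{B} \cap \pi^{-1}\mathcal{B}) = \nu(\varepsilon_2^{k_2}\mathcal{B} \cap \varepsilon_1^{-k_1}\pi^{-1}\mathcal{B})$. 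Thus, via additivity and $E_+(\mathfrak{f})$-invariance, each identity reduces to a set-theoretic equality, valid up to $E_+(\mathfrak{f})$-translation of boundary cones, between a prescribed region of $\pi^{-1}\mathcal{B}_i$ folded into $\mathcal{B}$ by powers of $\varepsilon_1,\varepsilon_2$ and a union of intersections $\varepsilon_2^{k_2}\mathcal{B} \cap \varepsilon_1^{-k_1}\pi^{-1}\mathcal{B}$; the whole proposition comes down to these set equalities.

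To prove them I would pass to $\mathbb{R}^2$ through the projection $\varphi := \varphi_{(g_1,g_2)}$ of \S6.2, as in Figures \ref{fig:colmezdom}, \ref{fig:case1} and \ref{fig:case2}. Under $\varphi$ the Colmez domain $\mathcal{B}$ maps to the region $R$ bounded by the convex arcs $\mathcal{C}_{1,l}$, $\mathcal{C}_{2,l}$, $(0,l)+\mathcal{C}_{1,l}$, $(l,0)+\mathcal{C}_{2,l}$; its translate $\varepsilon_1^{k_1}\varepsilon_2^{k_2}\mathcal{B}$ maps to $(k_1 l, k_2 l) + R$; the element $\pi^{-1}$ maps to a point $q = \varphi(\pi^{-1})$ lying in $R$, close to the corner $(l,l)$ and with second coordinate strictly less than $l$, by the construction in Lemma \ref{lemmafindpi-1}; hence $\pi^{-1}\mathcal{B}$ maps to $q + R$, and Proposition \ref{whereispiinverse} — or, in Case $1$, the stronger inclusion of Remark \ref{remarkinclusion} — records exactly which translated tiles $(k_1 l, k_2 l) + R$ the set $q + R$ meets. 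From the explicit lists of cones constituting $\mathcal{B}_1$ and $\mathcal{B}_2$, these project to the fundamental parallelograms with sides $\varphi(\varepsilon_2),\varphi(\pi)$ and $\varphi(\varepsilon_1),\varphi(\pi)$, so that $\pi^{-1}\mathcal{B}_1$ and $\pi^{-1}\mathcal{B}_2$ project to parallelograms based at the origin with sides $\varphi(\varepsilon_2), q$ and $\varphi(\varepsilon_1), q$ respectively. Using the strict convexity of the arcs $\mathcal{C}_{i,l}$ and the sign and monotonicity information of Corollary \ref{directionofsides} to decide which arc lies on which side, one reads off from these pictures that the fold of $\pi^{-1}\mathcal{B}_1$ into $\mathcal{B}$ along $\varepsilon_2$ coincides, up to boundary cones, with the $\varepsilon_2$-fold of $\bigcup_{k_2}\bigl(\varepsilon_2^{k_2}\mathcal{B} \cap \varepsilon_1^{-1}\pi^{-1}\mathcal{B}\bigr)$ into $\mathcal{B}$, and likewise for $\pi^{-1}\mathcal{B}_2$; the two cases of the proposition correspond precisely to whether $q + R$ reaches the column $k_2 = 2$ or not. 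The lower-dimensional boundary cones are then matched using the normalisations fixed before the statement ($\mathcal{B} = \mathcal{B}^\prime$, $\mathcal{B}_i = \mathcal{B}_i^\prime(1,1)$) together with the freedom to move boundary cones by $E_+(\mathfrak{f})$.

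The step I expect to be the main obstacle is exactly this last verification: showing that the region of $\pi^{-1}\mathcal{B}_1$ (resp. $\pi^{-1}\mathcal{B}_2$) folded into $\mathcal{B}$ is \emph{literally} the same Shintani set — boundary cones included — as the corresponding union of intersections $\varepsilon_2^{k_2}\mathcal{B} \cap \varepsilon_1^{-k_1}\pi^{-1}\mathcal{B}$. Both are fundamental-domain-like regions inside $\mathcal{B}$ with equal $\nu$-measure, but to obtain genuine equality one must invoke the precise position of $q$ guaranteed by Lemma \ref{lemmafindpi-1} — in particular that its second coordinate is $<l$, which is what prevents $q + \mathcal{C}_{2,l}$ from crossing $(0,2l) + \mathcal{C}_{2,l}$ — together with the strict convexity of the projected curves; aligning the boundary cones in Case $2$, where the extra column $k_2 = 2$ intervenes, is the most delicate part of the bookkeeping. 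Once the four set equalities are in hand, additivity and $E_+(\mathfrak{f})$-invariance of $\nu$ yield the proposition, which combined with the reductions preceding it establishes the equalities (\ref{toshow1}) and (\ref{toshow2}) and hence completes the computation of the right-hand side of the main equality.
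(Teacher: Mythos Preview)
Your reduction to set-theoretic equalities via additivity and $E_+(\mathfrak{f})$-invariance of $\nu$ matches the paper exactly: the paper also reduces each identity to a literal equality of Shintani sets (its equations labelled (\ref{shintanieqn1}) and (\ref{shintanieqn2})). Where you diverge is in how that set equality is verified. You propose to argue through the planar picture under $\varphi_{(g_1,g_2)}$, using convexity and the position of $q=\varphi(\pi^{-1})$ to see that the two regions coincide. The paper instead carries out a direct cone-by-cone computation in $\mathbb{R}_+^3$: it introduces the specific intersection points $\alpha \in C(\pi^{-1},\varepsilon_2\pi^{-1})\cap C(\varepsilon_2,\varepsilon_1\varepsilon_2)$ and $\beta \in C(\varepsilon_1,\varepsilon_1\varepsilon_2)\cap C(\pi^{-1},\varepsilon_1\pi^{-1})$ (with analogous points in Case~2), writes each side of the set equality as an explicit finite union of simplicial cones with these points among the generators, and checks that the two lists agree.

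Your geometric plan is the correct intuition behind those computations, but as written it has a soft spot: calling the images of $\mathcal{B}_1,\mathcal{B}_2$ ``parallelograms'' is inaccurate (the sides are convex arcs, not segments), and ``reading off from the picture'' does not by itself pin down which boundary cones belong to which piece after folding. The paper's explicit enumeration with the auxiliary points $\alpha,\beta$ is precisely what makes that bookkeeping rigorous; in particular, in Case~1 the identity $(\varepsilon_1\mathcal{B}\cap\pi^{-1}\mathcal{B})\cup\varepsilon_2^{-1}(\varepsilon_1\varepsilon_2\mathcal{B}\cap\pi^{-1}\mathcal{B})=\varepsilon_1\bigl((\pi^{-1}\mathcal{B}_1\cap\mathcal{B})\cup\varepsilon_2^{-1}(\pi^{-1}\mathcal{B}_1\cap\varepsilon_2\mathcal{B})\bigr)$ is established by showing both sides equal the same list of eight cones. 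If you want to push your projection argument through, you would still need to name those intersection rays and track them, at which point you recover the paper's computation.
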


\begin{proof}
We first calculate 
\begin{align*}
    \sum_{k_2=0}^{2}\zeta_{R,\lambda}(\mathfrak{b},  \varepsilon_1 \varepsilon_2^{k_2}\mathcal{B} \cap \pi^{-1}\mathcal{B}, \mathcal{O}_\mathfrak{p}, 0) &= \zeta_{R,\lambda}(\mathfrak{b}, \bigcup_{k_2=0}^{2} \varepsilon_1^{-1} \varepsilon_2^{-k_2} ( \varepsilon_1 \varepsilon_2^{k_2}\mathcal{B} \cap \pi^{-1}\mathcal{B} ), \mathcal{O}_\mathfrak{p}, 0), \\
    \sum_{k_1=0}^1 \sum_{k_2=1}^{2} \zeta_{R,\lambda}(\mathfrak{b},  \varepsilon_1^{k_1} \varepsilon_2^{k_2}\mathcal{B} \cap \pi^{-1}\mathcal{B}, \mathcal{O}_\mathfrak{p}, 0) &= \zeta_{R,\lambda}(\mathfrak{b}, \bigcup_{k_1=0}^1 \bigcup_{k_2=1}^{2} \varepsilon_1^{-k_1} \varepsilon_2^{-k_2}( \varepsilon_1^{k_1} \varepsilon_2^{k_2}\mathcal{B} \cap \pi^{-1}\mathcal{B}), \mathcal{O}_\mathfrak{p}, 0).
\end{align*}
Thus if we can show the following equalities of Shintani sets we will be done
\begin{align}
    (\pi^{-1} \mathcal{B}_1 \cap \mathcal{B}) \cup  \varepsilon_2^{-1} (\pi^{-1} \mathcal{B}_1 \cap \varepsilon_2 \mathcal{B}) &= \bigcup_{k_2=0}^{2} \varepsilon_1^{-1} \varepsilon_2^{-k_2} ( \varepsilon_1 \varepsilon_2^{k_2}\mathcal{B} \cap \pi^{-1}\mathcal{B} ), \label{shintanieqn1} \\
    (\pi^{-1} \mathcal{B}_2 \cap \mathcal{B}) \cup  \varepsilon_1^{-1} (\pi^{-1} \mathcal{B}_2 \cap \varepsilon_1 \mathcal{B}) &= \bigcup_{k_1=0}^1 \bigcup_{k_2=1}^{2} \varepsilon_1^{-k_1} \varepsilon_2^{-k_2}( \varepsilon_1^{k_1} \varepsilon_2^{k_2}\mathcal{B} \cap \pi^{-1}\mathcal{B}).
    \label{shintanieqn2}
\end{align}
To show the above we will need to calculate each side in terms of explicit Shintani cones. We will begin by showing (\ref{shintanieqn1}). Recall we have defined the following 
\begin{align*}
    \mathcal{B} &= C(1) \cup C(1, \varepsilon_1) \cup C(1, \varepsilon_2) \cup C(1, \varepsilon_1 \varepsilon_2) \cup C(1, \varepsilon_1, \varepsilon_1 \varepsilon_2)  \cup C(1, \varepsilon_2, \varepsilon_1 \varepsilon_2), \\
    \pi^{-1} \mathcal{B}_1 &= C(1) \cup C(1, \varepsilon_2) \cup C(1, \pi^{-1}) \cup C(\pi^{-1}, \varepsilon_2 ) \cup C(\pi^{-1}, \varepsilon_2 , \varepsilon_2 \pi^{-1}) \cup C(1, \varepsilon_2,  \pi^{-1}).
\end{align*}
Let $\alpha \in C(\pi^{-1}, \varepsilon_2\pi^{-1}) \cap C(\varepsilon_2, \varepsilon_1 \varepsilon_2 ) $, we then have
\[ \pi^{-1} \mathcal{B}_1 \cap \mathcal{B} = C(1) \cup C(1, \varepsilon_2) \cup C(1, \pi^{-1}) \cup C(\pi^{-1} , \varepsilon_2 ) \cup C(\varepsilon_2, \pi^{-1}, \alpha) \cup C(1, \varepsilon_2, \pi^{-1}) \]
and
\[ \pi^{-1} \mathcal{B}_1 \cap \varepsilon_2 \mathcal{B} = C(\varepsilon_2, \alpha) \cup C(\varepsilon_2, \alpha,  \varepsilon_2 \pi^{-1}). \]
We can now explicitly write the left hand side of (\ref{shintanieqn1}). In particular, we have
\begin{multline*}
    (\pi^{-1} \mathcal{B}_1 \cap \mathcal{B}) \cup  \varepsilon_2^{-1} (\pi^{-1} \mathcal{B}_1 \cap \varepsilon_2 \mathcal{B}) \\ = C(1) \cup C(1, \varepsilon_2) \cup C(1, \pi^{-1}) \cup C(\pi^{-1} , \varepsilon_2 ) \cup C(\varepsilon_2, \pi^{-1}, \alpha) \cup C(1, \varepsilon_2, \pi^{-1}) \\ \cup C(1, \varepsilon_2^{-1} \alpha) \cup C(1, \varepsilon_2^{-1} \alpha, \pi^{-1} ).
\end{multline*}
We now consider the right hand side of (\ref{shintanieqn1}). Suppose that we are in Case 1, in this case the right hand side of (\ref{shintanieqn1}) becomes $\varepsilon_1^{-1} ( \varepsilon_1 \mathcal{B} \cap \pi^{-1}\mathcal{B} ) \cup \varepsilon_1^{-1} \varepsilon_2^{-1} ( \varepsilon_1 \varepsilon_2\mathcal{B} \cap \pi^{-1}\mathcal{B} )$. Let $\beta \in C(\varepsilon_1, \varepsilon_1 \varepsilon_2) \cap C(\pi^{-1}, \pi^{-1}\varepsilon_1)$, we can then calculate
\begin{multline*}
    \varepsilon_1 \mathcal{B} \cap \pi^{-1} \mathcal{B} = \\ C(\beta) \cup C(\beta, \varepsilon_1 \varepsilon_2) \cup C(\beta, \varepsilon_1 \pi^{-1}) \cup C(\varepsilon_1 \varepsilon_2, \varepsilon_1 \pi^{-1} ) \cup C( \varepsilon_1 \varepsilon_2 , \beta , \varepsilon_1 \pi^{-1}) \cup C( \varepsilon_1 \varepsilon_2, \varepsilon_1 \alpha , \varepsilon_1 \pi^{-1} ) 
\end{multline*}
and
\begin{multline*}
     \varepsilon_1\varepsilon_2 \mathcal{B} \cap \pi^{-1} \mathcal{B} = C(\varepsilon_1\varepsilon_2 ) \cup C(\varepsilon_1\varepsilon_2 , \varepsilon_2\beta) \cup C(\varepsilon_1\varepsilon_2 , \varepsilon_1 \alpha ) \cup C(\varepsilon_1\varepsilon_2 , \varepsilon_1\varepsilon_2  \pi^{-1}) \\ \cup C(\varepsilon_1\varepsilon_2 ,  \varepsilon_1 \alpha , \varepsilon_1\varepsilon_2 \pi^{-1}) \cup C(\varepsilon_1\varepsilon_2 ,  \varepsilon_2 \beta, \varepsilon_1\varepsilon_2 \pi^{-1}).
\end{multline*}
Using the fact that $\beta \in C(\varepsilon_1, \varepsilon_1 \varepsilon_2)$ we have
\begin{multline*}
    (\varepsilon_1 \mathcal{B} \cap \pi^{-1} \mathcal{B}) \cup \varepsilon_2^{-1}(\varepsilon_1\varepsilon_2 \mathcal{B} \cap \pi^{-1} \mathcal{B}) \\ = C(\varepsilon_1) \cup C(\varepsilon_1, \varepsilon_1 \varepsilon_2) \cup C(\varepsilon_1, \varepsilon_1 \pi^{-1}) \cup C(\varepsilon_1 \pi^{-1} ,\varepsilon_1 \varepsilon_2 ) \cup C(\varepsilon_1 \varepsilon_2, \varepsilon_1 \pi^{-1}, \varepsilon_1 \alpha) \cup C(\varepsilon_1, \varepsilon_1 \varepsilon_2, \varepsilon_1\pi^{-1}) \\ \cup C(\varepsilon_1, \varepsilon_1 \varepsilon_2^{-1} \alpha) \cup C(\varepsilon_1, \varepsilon_1 \varepsilon_2^{-1} \alpha, \varepsilon_1 \pi^{-1} ).
\end{multline*}
By multiplying the above by $\varepsilon_1^{-1}$, it is then clear that (\ref{shintanieqn1}) holds in Case 1. The proof of (\ref{shintanieqn1}) in Case 2 is very similar. The extra calculations which arise from being in Case 2 are very similar to those which we will deal with in our proof of (\ref{shintanieqn2}) in Case 2.

We now consider (\ref{shintanieqn2}). In Case 1 the proof is symmetric to the proof of (\ref{shintanieqn1}) in Case 1. So it only remains to show (\ref{shintanieqn2}) when we are in Case 2. Let $\alpha \in C(\pi^{-1}, \varepsilon_1 \pi^{-1}) \cap C(\varepsilon_2, \varepsilon_1 \varepsilon_2)$ and $ \beta \in C(\pi^{-1}, \varepsilon_1 \pi^{-1}) \cap C(\varepsilon_1 \varepsilon_2, \varepsilon_1^2 \varepsilon_2)$. Using similar calculations as before we deduce
\begin{multline*}
    (\pi^{-1} \mathcal{B}_2 \cap \mathcal{B}) \cup  \varepsilon_1^{-1} (\pi^{-1} \mathcal{B}_2 \cap \varepsilon_1 \mathcal{B}) \\ = C(1) \cup C(1, \varepsilon_2) \cup C(1, \varepsilon_1) \cup C(1, \varepsilon_1^{-1} \beta) \cup C(1, \pi^{-1}) \cup C(\pi^{-1}, \varepsilon_1) \cup C(\pi^{-1}, \varepsilon_1 \varepsilon_2) \\ \cup C(1, \varepsilon_2, \varepsilon_1^{-1} \beta ) \cup C(1, \pi^{-1}, \varepsilon_1^{-1} \beta) \cup C(1, \pi^{-1}, \varepsilon_1) \cup C(\varepsilon_1, \pi^{-1}, \varepsilon_1 \varepsilon_2) \cup C(\pi^{-1}, \alpha, \varepsilon_1 \varepsilon_2).
\end{multline*}
We are able to calculate that the same is also true for $\bigcup_{k_1=0}^1 \bigcup_{k_2=1}^{2} \varepsilon_1^{-k_1} \varepsilon_2^{-k_2}( \varepsilon_1^{k_1} \varepsilon_2^{k_2}\mathcal{B}^\prime \cap \pi^{-1}\mathcal{B}^\prime)$ and thus we complete the proof.

\end{proof}

This final proposition completes the proof our main result, Theorem \ref{thmforneq3}.

\begin{proposition}
If we are in Case 2 then,
\[ \zeta_{R, \lambda}(\mathfrak{b}, (\varepsilon_2 \mathcal{B} \cup \varepsilon_1 \varepsilon_2 \mathcal{B}) \cap \mathcal{B}_2, \mathcal{O}_\mathfrak{p}, 0) = \zeta_{R, \lambda}(\mathfrak{b}, (\varepsilon_1 \varepsilon_2\mathcal{B} \cup \varepsilon_2^2 \mathcal{B}) \cap \pi^{-1}\mathcal{B}, \mathcal{O}_{\mathfrak{p}}, 0). \]
\end{proposition}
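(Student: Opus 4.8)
The plan is to prove this exactly as the preceding proposition was proved: expand each side as a finite disjoint union of explicit Shintani cones and then match the two collections cone by cone, using Lemma \ref{changeofvariable} to identify corresponding $\zeta$-values. Throughout one uses that $\zeta_{R,\lambda}(\mathfrak{b},\,\cdot\,,U,0)$ is additive on disjoint unions of Shintani sets, that multiplying a Shintani set by an element of $E_+(\mathfrak{f})$ leaves $\zeta_{R,\lambda}(\mathfrak{b},\,\cdot\,,\mathcal{O}_\mathfrak{p},0)$ unchanged (such an element is totally positive, congruent to $1$ modulo $\mathfrak{f}$, and a unit at $\mathfrak{p}$, so it fixes $\mathcal{O}_\mathfrak{p}$), and that multiplication by $\pi^{\pm 1}$ is governed by the full statement of Lemma \ref{changeofvariable}, with the ideal $\mathfrak{b}$ and the compact open adjusted consistently.

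First I would split $(\varepsilon_2\mathcal{B}\cup\varepsilon_1\varepsilon_2\mathcal{B})\cap\mathcal{B}_2$ into the disjoint pieces $\varepsilon_2\mathcal{B}\cap\mathcal{B}_2$ and $\varepsilon_1\varepsilon_2\mathcal{B}\cap\mathcal{B}_2$ and, using the explicit cone description of $\mathcal{B}_2$ fixed before the statement together with $\zeta_{R,\lambda}(\mathfrak{b},\mathcal{B}_2,\pi\mathcal{O}_\mathfrak{p},0)=\zeta_{R,\lambda}(\mathfrak{b},\pi^{-1}\mathcal{B}_2,\mathcal{O}_\mathfrak{p},0)$, rewrite the left-hand side as the remaining term $\zeta_{R,\lambda}(\mathfrak{b},\pi^{-1}\mathcal{B}_2\cap(\varepsilon_2\mathcal{B}\cup\varepsilon_1\varepsilon_2\mathcal{B}),\mathcal{O}_\mathfrak{p},0)$ isolated in the decomposition of $\zeta_{R,\lambda}(\mathfrak{b},\pi^{-1}\mathcal{B}_2,\mathcal{O}_\mathfrak{p},0)$ above. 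I would then write $\pi^{-1}\mathcal{B}_2\cap(\varepsilon_2\mathcal{B}\cup\varepsilon_1\varepsilon_2\mathcal{B})$ as a disjoint union of Shintani cones, introducing the auxiliary subdivision points (playing the role of the points $\alpha$ and $\beta$ in the preceding proof) at which a boundary cone of one domain cuts through a cone of the other; which cones actually occur is read off by passing to $\mathbb{R}^2$ via $\varphi_{(g_1,g_2)}$ and invoking the strict convexity of $\mathcal{C}_{1,l}$ and $\mathcal{C}_{2,l}$, Corollary \ref{directionofsides}, Proposition \ref{whereispiinverse}, and the position of $\varphi_{(g_1,g_2)}(\pi^{-1})$ in the Case 2 configuration (Case 2 being precisely the case in which $\pi^{-1}\mathcal{B}$ reaches into the third column of translates of $\mathcal{B}$, which is what produces this extra term). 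Next I would perform the analogous expansion of the right-hand side: split $(\varepsilon_1\varepsilon_2\mathcal{B}\cup\varepsilon_2^2\mathcal{B})\cap\pi^{-1}\mathcal{B}$ into its two disjoint pieces and expand each into Shintani cones using the same explicit data. Finally I would exhibit a bijection between the cones of the two decompositions under which matched cones differ by multiplication by an element of $V=\langle\varepsilon_1,\varepsilon_2\rangle$; Lemma \ref{changeofvariable} then forces equality of the matched $\zeta$-values, hence of the sums, once one checks that all the cones produced are pairwise disjoint.

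The main obstacle is the explicit geometric bookkeeping, just as in the preceding proposition: keeping straight which boundary cones lie in each of $\mathcal{B}$, $\mathcal{B}_2$ and $\pi^{-1}\mathcal{B}$ (these sets agree with their closed hulls only away from certain boundary cones), and locating the subdivision points correctly. This is where the careful choice of $\varepsilon_1$, $\varepsilon_2$ and $\pi$ from Proposition \ref{propofcolmezdom2} is decisive: the containment in Proposition \ref{whereispiinverse} and the sign and convexity information from Corollary \ref{directionofsides} are what keep the configuration from degenerating, and carrying the whole picture into the plane via $\varphi_{(g_1,g_2)}$ — with Figure \ref{fig:case2} in mind — is what makes the matching visible. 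Once the two explicit cone decompositions are in hand, the bijection and the final comparison are routine.
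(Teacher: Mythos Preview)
Your plan is in the right spirit---explicit cone decompositions matched via Lemma~\ref{changeofvariable}---and this is exactly what the paper does. But the paper's execution is much shorter, and your proposed detour is both unnecessary and slightly off.

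The paper does not pass through the ``remaining term'' $\zeta_{R,\lambda}(\mathfrak{b},\pi^{-1}\mathcal{B}_2\cap(\varepsilon_2\mathcal{B}\cup\varepsilon_1\varepsilon_2\mathcal{B}),\mathcal{O}_\mathfrak{p},0)$ at all. Instead it observes that since $\varepsilon_2\in V\subset E_+(\mathfrak{f})$, Lemma~\ref{changeofvariable} reduces the proposition to a \emph{single equality of Shintani sets}:
\[
(\varepsilon_2 \mathcal{B} \cup \varepsilon_1 \varepsilon_2 \mathcal{B}) \cap \mathcal{B}_2 \;=\; \varepsilon_2^{-1}\bigl((\varepsilon_1 \varepsilon_2\mathcal{B} \cup \varepsilon_2^2 \mathcal{B}) \cap \pi^{-1}\mathcal{B}\bigr).
\]
Both sides are then computed directly (using the same auxiliary points $\alpha\in C(\pi^{-1},\varepsilon_1\pi^{-1})\cap C(\varepsilon_2,\varepsilon_1\varepsilon_2)$ and $\beta\in C(\pi^{-1},\varepsilon_1\pi^{-1})\cap C(\varepsilon_1\varepsilon_2,\varepsilon_1^2\varepsilon_2)$ from the preceding proof) and found to be the same four-cone union
\[
C(\varepsilon_1\varepsilon_2)\cup C(\alpha,\varepsilon_1\varepsilon_2)\cup C(\varepsilon_1\varepsilon_2,\beta)\cup C(\alpha,\varepsilon_1\varepsilon_2,\beta).
\]
That is the whole proof. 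There is no bijection between two separate cone lists and no invocation of Proposition~\ref{whereispiinverse} or Corollary~\ref{directionofsides}; the geometry has already been pinned down.

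Your step rewriting the left-hand side via $\zeta_{R,\lambda}(\mathfrak{b},\mathcal{B}_2,\pi\mathcal{O}_\mathfrak{p},0)=\zeta_{R,\lambda}(\mathfrak{b},\pi^{-1}\mathcal{B}_2,\mathcal{O}_\mathfrak{p},0)$ is not justified as stated: that identity is for the full $\mathcal{B}_2$ with compact open $\pi\mathcal{O}_\mathfrak{p}$, not for an intersection of $\mathcal{B}_2$ with another set at compact open $\mathcal{O}_\mathfrak{p}$. You cannot replace $\mathcal{B}_2$ by $\pi^{-1}\mathcal{B}_2$ inside the intersection while keeping $\mathcal{O}_\mathfrak{p}$ fixed. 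Drop this step entirely and translate the right-hand Shintani set by $\varepsilon_2^{-1}$ instead; then you are comparing two sets that you can simply check are equal.
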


\begin{proof}
Using Lemma \ref{changeofvariable} it is enough to show the following equality of Shintani sets
\[ (\varepsilon_2 \mathcal{B} \cup \varepsilon_1 \varepsilon_2 \mathcal{B}) \cap \mathcal{B}_2 = \varepsilon_2^{-1} ((\varepsilon_1 \varepsilon_2\mathcal{B} \cup \varepsilon_2^2 \mathcal{B}) \cap \pi^{-1}\mathcal{B}). \]
Again letting $\alpha \in C(\pi^{-1}, \varepsilon_1 \pi^{-1}) \cap C(\varepsilon_2, \varepsilon_1 \varepsilon_2)$ and $ \beta \in C(\pi^{-1}, \varepsilon_1 \pi^{-1}) \cap C(\varepsilon_1 \varepsilon_2, \varepsilon_1^2 \varepsilon_2)$ we are able to calculate that each side of the above equation is equal to
\[ C(\varepsilon_1 \varepsilon_2) \cup C(\alpha , \varepsilon_1 \varepsilon_2 ) \cup C(\varepsilon_1 \varepsilon_2, \beta)  \cup C(\alpha, \varepsilon_1 \varepsilon_2 , \beta ).   \]
This concludes the result.
\end{proof}

We end this section by proving Theorem \ref{thmforequality}. The key step is to note that if we replace $g$ by $\text{id}:F_\mathfrak{p}^\ast \rightarrow F_\mathfrak{p}^\ast$ in Proposition \ref{propofcolmezdom2} then we see that if we can show 
\[  u_{\mathfrak{p}, \lambda}(\mathfrak{b},  \mathcal{D}_V)=  c_{\text{id}} \cap (\omega_{\mathfrak{f}, \mathfrak{b}, \lambda,V}^\mathfrak{p} \cap \vartheta_V^\prime), \]
then we have
\[ u_{\mathfrak{p}, \lambda}(\mathfrak{b},  \mathcal{D})= \gamma_{[E_+(\mathfrak{f}):V]} ( c_{\text{id}} \cap (\omega_{\mathfrak{f}, \mathfrak{b}, \lambda}^\mathfrak{p} \cap \vartheta^\prime)). \]
Where $\gamma_{[E_+(\mathfrak{f}):V]}$ is a root of unity of order $[E_+(\mathfrak{f}):V]$. To prove Theorem \ref{thmforequality} it is thus enough for us to find two free subgroups $V, V^\prime \subseteq E_+(\mathfrak{f}) $ such that they are small enough to use in our work for Theorem \ref{thmforneq3} and such that $\gcd( [E_+(\mathfrak{f}):V] , [E_+(\mathfrak{f}):V^\prime] )=1$.

\begin{proof}[Proof of Theorem \ref{thmforequality}]
When we choose $g_1$ and $g_2$ we do so such that $\text{Log}( g_i)  \in B(l_i(M_1, r$) where $r$ and $M_1$ are as we write after Lemma \ref{fixgi}. Note that there is no upper bound on these choices, it is therefor clear that if we allow $r$ and $M_1$ to be large enough we can choose $g_1, g_2$ and  $g_1^\prime , g_2^\prime$ such that
\begin{itemize}
    \item $\langle g_1, g_2 \rangle$ and $\langle g_1^\prime, g_2^\prime \rangle$ are free of rank $2$,
    \item $g_1, g_2$ and  $g_1^\prime , g_2^\prime$ satisfy the properties of Lemma \ref{fixgi} and
    \item $[E_+(\mathfrak{f}): \langle g_1, g_2 \rangle ]  $ and $[E_+(\mathfrak{f}): \langle g_1^\prime, g_2^\prime \rangle ]$ are coprime.
\end{itemize}
Next we raise the $g_i$ by a large power $l$ in Corollary \ref{directionofsides} and Lemma \ref{lemmafindpi-1}. Again the only condidion on $l$ is that it is greater than a fixed lower bound, hence we can choose $l$ and $l^\prime$ such that they are coprime to each other and to \[ [E_+(\mathfrak{f}): \langle g_1, g_2 \rangle ] [E_+(\mathfrak{f}): \langle g_1^\prime, g_2^\prime \rangle ]. \]
We then get $V= \langle g_1^l , g_2^l \rangle$ and $V^\prime =\langle (g_1^\prime )^{l^\prime}, (g_2^\prime )^{l^\prime}\rangle $. Following our work for Theorem \ref{thmforneq3} we then see that 
\[  u_{\mathfrak{p}, \lambda}(\mathfrak{b},  \mathcal{D}_V)=   c_{\text{id}} \cap (\omega_{\mathfrak{f}, \mathfrak{b}, \lambda,V}^\mathfrak{p} \cap \vartheta_V^\prime) \quad \text{and} \quad  u_{\mathfrak{p}, \lambda}(\mathfrak{b},  \mathcal{D}_{V^\prime})=    c_{\text{id}} \cap (\omega_{\mathfrak{f}, \mathfrak{b}, \lambda,V^\prime}^\mathfrak{p} \cap \vartheta_{V^\prime}^\prime). \]
Hence,
\[  u_{\mathfrak{p}, \lambda}(\mathfrak{b},  \mathcal{D})= \gamma_{[E_+(\mathfrak{f}):V]} ( c_{\text{id}} \cap (\omega_{\mathfrak{f}, \mathfrak{b}, \lambda}^\mathfrak{p} \cap \vartheta^\prime)), \]
and 
\[  u_{\mathfrak{p}, \lambda}(\mathfrak{b},  \mathcal{D})= \gamma_{[E_+(\mathfrak{f}):V^\prime]} (  c_{\text{id}} \cap (\omega_{\mathfrak{f}, \mathfrak{b}, \lambda}^\mathfrak{p} \cap \vartheta^\prime)). \]
In the above, $\gamma_{[E_+(\mathfrak{f}):V]}$ is a root of order $[E_+(\mathfrak{f}):V]$ and $\gamma_{[E_+(\mathfrak{f}):V^\prime]}$ is a root of order $[E_+(\mathfrak{f}):V^\prime]$. Our choice of $V$ and $V^\prime$ gives that $\gcd( [E_+(\mathfrak{f}):V] , [E_+(\mathfrak{f}):V^\prime] )=1$. Thus $\gamma_{[E_+(\mathfrak{f}):V]}=\gamma_{[E_+(\mathfrak{f}):V^\prime]}=1$ and so we get the result.
\end{proof}

\appendix 
\section{Appendix: Translating Shintani domains}

Overcoming the lack of a nice translation property for Shintani domains in \S 6.2, is the main work of this paper. In this section, we first provide an explicit counterexample which shows why this work is necessary. We then show the calculations which give rise to the figures. These figures demonstrate our method to overcome this counterexample, namely, Figure \ref{fig:colmezdom}, Figure \ref{fig:case1},  and Figure \ref{fig:case2}. We begin by finding a counterexample to the following statement of Tsosie in \cite{tsosie2018compatibility}. The statement below is given for $F$ of any degree $n>1$. We will provide a counterexample with $F$ a cubic field as this is the case we work with in this paper.

\begin{statement}
Let $V$ be a finite index subgroup of $E_+(\mathfrak{f})$ and let $\epsilon_1, \dots , \epsilon_{n-1}$ be a $\mathbb{Z}$-basis for $V$. Further, let $\mathcal{D}$ be a fundamental domain for the action of $V$ on $\mathbb{R}^n_+$ and $\pi^{-1} \in \mathcal{D}$, then for $\epsilon = \prod_{i=1}^{n-1} \epsilon_i^{m_i}$,
\[ \epsilon \mathcal{D} \cap \pi^{-1} \mathcal{D} = \emptyset \]
unless $m_i \in \{ 0,1 \}$, $1 \leq i \leq n-1$.
\label{STconj}
\end{statement}

We note that in general there appears to be no bounds that can be put on the set which the $m_i$'s are allowed to be in to make this statement hold. However, we do not provide explicit evidence for this here.

\begin{remark}
It is straightforward to show that this statement holds when $F$ is of degree 2. It is for this reason that Dasgupta-Spie\ss's proof for the consistency of Conjecture \ref{conj3.21} and Conjecture \ref{conjDS}, in the case $F$ is of degree $2$, is much shorter.
\end{remark}

The computations used to find our counterexample below are done using Magma. Let $F$ be the number field with defining polynomial $2x^3-4x^2-x+1$ over $\mathbb{Q}$. $F$ is then a totally real number field of degree $3$. We define 
\[H=F(\sqrt{-2}),\]
$H$ is then totally complex. It is also a degree $2$ extension of $F$ so $H$ is a CM-number field. We note that the extension $H/F$ is abelian. Now, choose $y \in F$ such that we can write
\[ F=\mathbb{Q}(y). \]
Let $\mathfrak{f}$ be the conductor of $H/F$. We calculate, as the generators of $E_+(\mathfrak{f})$, the elements $g_1 = -96y^2+152y+113$ and $g_2 = 160y^2 +32y -31$, i.e., we have
\[ \langle  -96y^2+152y+113 \ , \ 160y^2 +32y -31  \rangle = E_+(\mathfrak{f}) . \]
We choose as our rational prime $p=113$. We make this choice as there are two primes of $F$ above $113$ and both of them split completely in $H$. We choose $\mathfrak{p} \mid p$, a prime ideal of $F$ that splits completely in $H$. We find that the order of $\mathfrak{p}$ in $G_\mathfrak{f}$ is $2$. We choose an element $\pi$ to satisfy:
\begin{itemize}
    \item $\pi$ is totally positive,
    \item $\pi \equiv 1 \pmod{\mathfrak{f}}$,
    \item $(\pi)=\mathfrak{p}^2$,
    \item $\pi^{-1} \in \overline{C}_{e_1}([g_1 \mid g_2]) \cup \overline{C}_{e_1}([g_2 \mid g_1])$.
\end{itemize}
In particular, we choose $\pi = 192y^2-488y+177$. Let $\mathcal{D}=\overline{C}_{e_1}([g_1 \mid g_2]) \cup \overline{C}_{e_1}([g_2 \mid g_1])$ and note that this is a Shintani domain. With these choices, we calculate that $\pi^{-1} \mathcal{D} \cap g_1g_2^{-1}\mathcal{D} \neq \emptyset$ and $\pi^{-1} \mathcal{D} \cap g_2^{-1}\mathcal{D} \neq \emptyset$. This completes our counterexample to Statement \ref{STconj}. Furthermore, the curved nature of the domains, as illustrated further with the picture below, gives us a good reason as to why results bounding where $\pi^{-1}\mathcal{D}$ is contained should not be possible without considerable work.

To make our example clearer, we include below a plot of $\mathcal{D} \cup g_1 \mathcal{D} \cup g_2 \mathcal{D} \cup g_1g_2 \mathcal{D}$ (in blue) and $\pi^{-1}\mathcal{D}$ (in red) under the map $\varphi_{(g_1, g_2)}$. This plot is drawn using MATLAB. Notice that the boundary of $\pi^{-1}\mathcal{D}$ falls outside that of $\mathcal{D} \cup g_1 \mathcal{D} \cup g_2 \mathcal{D} \cup g_1g_2 \mathcal{D}$. As we remarked with the other diagrams, although the image appears to show that some of the lines overlap, this does not happen. This only appears in the diagram due to the fixed thickness of the lines.

\begin{figure}[h]
    \centering
    \includegraphics[scale=0.7]{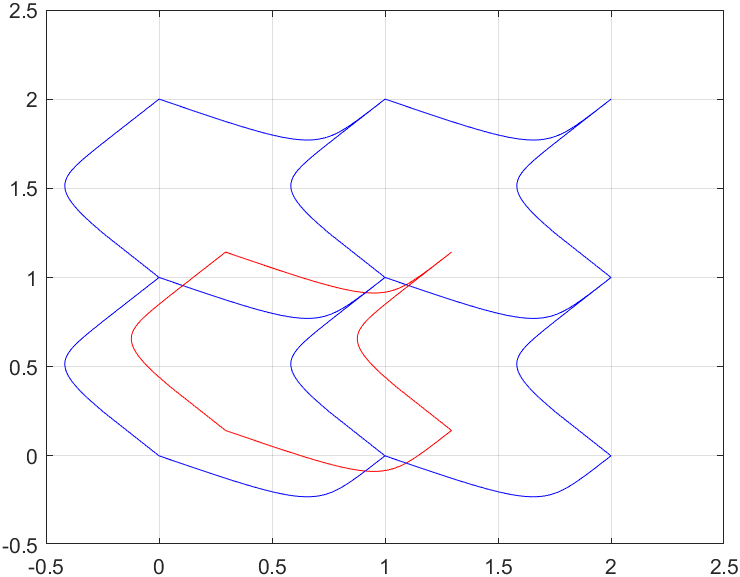}
    \caption{The counter-example}
    \label{fig:counterexample}
\end{figure}

We now make note of the calculations we made to obtain Figure \ref{fig:colmezdom}, Figure \ref{fig:case1}, and Figure \ref{fig:case2}. We continue to hold all of the choices which have been made so far in this appendix. We define
\[ \varepsilon_1=g_1^{-3} g_2^4 \quad  \text{and} \quad  \varepsilon_2=g_1^{-5}. \]
These choices are found using Magma so that $\varepsilon_1$ and $\varepsilon_2$ satisfy the conditions in Lemma \ref{fixgi}. We find that when considering Corollary \ref{directionofsides}, we can choose $l=1$ to satisfy the conditions given, i.e., $\varepsilon_1$ and $\varepsilon_2$ are already good enough to obtain Corollary \ref{directionofsides}. Using MATLAB, we plot Figure \ref{fig:colmezdom}. We define 
\[ \pi_1= g_1^{-6} g_2^2 \pi \quad \text{and} \quad \pi_2= g_1^{-6} g_2 \pi , \]
where $\pi$ is as we defined before.fa Using $\pi_1$ as our choice of $\pi$, and using MATLAB, we plot Figure \ref{fig:case1} which shows Case 1. Similarly, using $\pi_2$ as our choice of $\pi$, Figure \ref{fig:case2} shows Case 2.

\addcontentsline{toc}{section}{References}

\bibliography{bib}
\bibliographystyle{plain}

\end{document}